\documentclass[11pt]{amsart}

\usepackage{amssymb}
\usepackage{amsmath}
\usepackage{amsthm}
\usepackage{mathrsfs}
\usepackage{dsfont}
\usepackage{enumitem}
\usepackage[left=2.7cm,right=2.7cm,top=2.2cm,bottom=2.2cm]{geometry}
\usepackage{mathtools}
\usepackage{hyperref}
\hypersetup{
	colorlinks=true,
	linkcolor=red
}
\usepackage{xcolor}

\numberwithin{equation}{section}

\newtheorem{Theorem}[equation]{Theorem}
\newtheorem{Lemma}[equation]{Lemma}
\newtheorem{Corollary}[equation]{Corollary}
\newtheorem{Proposition}[equation]{Proposition}

\newtheorem{Remark}[equation]{Remark}
\theoremstyle{definition}
\newtheorem{Definition}[equation]{Definition}

\setlist{nosep}

\title{Pseudolocality of \texorpdfstring{$G_2$}{G2}-Laplacian Flow}

\author{Weiyong He}
\address{Department of Mathematics, University of Oregon,
Eugene, OR 97403, USA}
\email{whe@uoregon.edu}

\author{Longhuan Yan}
\address{School of Mathematical Sciences,
University of Science and Technology of China,
No. 96, Jinzhai Road, Hefei, Anhui, China}
\email{yanlonghuan@mail.ustc.edu.cn}
\date{\today}

\begin{document}

\begin{abstract}
	In this paper, we give a new proof of Perelman's pseudolocality theorem for Ricci flow, which does not rely on Perelman's Harnack-type inequality.
    The key step is an estimate relating initial local $\mu$-entropy to pointed Nash entropy at later spacetime points.
	The argument also works for other reasonable geometric flows, like the $G_2$-Laplacian flow, at a scale depending on its deviation from Ricci flow and its scalar curvature lower bound.
	As a consequence, we can also prove the quantitative rigidity results concerning pointed Nash entropy due to Hein--Naber and Bamler for such geometric flows.
    
    In the appendix, we also show that the other results in \cite{bamler2020entropy}, when suitably modified, remain valid for Laplacian flow with bounded torsion. These include log-Sobolev/Poincar\'{e} inequality for conjugate heat kernel measure, heat kernel estimates, and dependence of the pointed Nash entropy on its basepoint.
\end{abstract}

\keywords{Pseudolocality, $G_2$-Laplacian flow, Entropy, Heat kernel}

\maketitle
\tableofcontents
\bibliographystyle{amsalpha}

\newcommand{\C}{{\mathbb C}}
\newcommand{\Q}{{\mathbb Q}}
\newcommand{\R}{{\mathbb R}}
\newcommand{\Z}{{\mathbb Z}}
\newcommand{\N}{{\mathbb N}}
\newcommand*\norm[1]{\left\lVert#1\right\rVert}
\newcommand*\normm[1]{\lVert#1\rVert}
\newcommand*\abs[1]{\left\lvert#1\right\rvert}
\newcommand*\abss[1]{\lvert#1\rvert}
\newcommand*\inn[2]{\langle#1,#2\rangle}
\renewcommand{\geq}{\geqslant}
\renewcommand{\leq}{\leqslant}

\newcommand{\tr}{{\mathrm{tr}}}
\newcommand{\Ric}{{\mathrm{Ric}}}
\newcommand{\Rm}{{\mathrm{Rm}}}

\section{Introduction}
The primary goal of this paper is to study the $G_2$ Laplacian flow with bounded torsion. We refer to the foundational paper for the $G_2$ Laplacian flow Lotay--Wei \cite{lotay2017laplacian} for the background and references. In particular, Lotay--Wei proved that the flow extends once the Ricci curvature is uniformly bounded. 
In the same paper the authors raised the question  on compact manifolds, whether the flow extends once the torsion, or equivalently the scalar curvature remains uniformly bounded. The first step toward this is given by Chen in \cite{chen2018shi}, where he proved a no local collapsing theorem by using Perelman's $\mathcal{W}$-functional. To some extend, \cite{chen2018shi} shed light on the possibility of carrying over the entropy approach for Ricci flow to our setting of Laplacian flow with bounded torsion. In this paper, we will pursue this idea and prove pseudolocality for a variant of Ricci flow, which contains the Laplacian flow with bounded torsion as a special case.

\subsection{Pseudolocality}
After fixing the diffeomorphism invariance, the Ricci flow is a nonlinear heat-type equation.
As for the heat equation, perturbations propagate with infinite speed, and therefore the evolution of the metric on a region is not determined solely by the initial geometry of that region.
Nevertheless, the effect of distant geometry should be weak over short time intervals, and pseudolocality is a quantitative substitute for locality in this setting.
Roughly speaking, it asserts that a region which is sufficiently regular at the initial time cannot immediately develop large curvature in a slightly smaller region, even when no
curvature bound is imposed outside the original region.

This principle was introduced by Perelman in Section~10 of \cite{perelman2002entropy}.
It has since been extended and reformulated in several directions.
Chau--Tam--Yu extended the argument to complete noncompact Ricci flows in \cite{chau2011pseudolocality}.
A particularly useful development is Bing Wang's theory of local entropy.
In \cite{wang2018local,wang2020local}, Wang localized Perelman's entropy functionals and obtained a more flexible pseudolocality theorem.
Besides, Bamler's pointed Nash entropy and heat-kernel estimates have provided a closely related analytic framework for local regularity and $\epsilon$-regularity \cite{bamler2020entropy}.

In this paper, we will combine the ideas of \cite{wang2018local,wang2020local} and \cite{bamler2020entropy} to generalize the pseudolocality theorem to reasonable geometric flows (by which we mean those admitting Shi type estimates and compactness theorem) over a scale depending only on $\sup|\partial_tg+2\mathrm{Ric}_g|$, $\inf R_g$ and the dimension.
Besides, we will show that the results in \cite{bamler2020entropy}, when suitably modified, remain valid in this setting.
These include log-Sobolev/Poincar\'{e} inequality for conjugate heat kernel measure, heat kernel estimates, dependence of the pointed Nash entropy on its basepoint, and the $\epsilon$-regularity theorem.
The philosophy is that flows satisfying $\partial_tg+2\Ric_g\geq-Ag$ for some $A\geq 0$ (resp. $|\partial_tg+2\Ric_g|\leq Cg$ for some $C\geq 0$) behave like super Ricci flow (resp. Ricci flow) over a small but definite spacetime scale.

\subsection{Laplacian flow}\label{sec:introduction-Laplacian-flow}
Let $M$ be a $7$-manifold. We call $\varphi\in\Omega^3(M)$ positive if the bilinear form $B_\varphi$ defined by
\begin{equation*}
	B_\varphi(u, v)
	=\frac{1}{6}(u\lrcorner\varphi)\wedge(v\lrcorner\varphi)\wedge \varphi,
	\qquad\forall\,x\in M,\ u,v\in T_xM
\end{equation*}
is positive definite. By definition, a $G_2$-structure on $M$ is a positive $3$-form. Any $G_2$-structure $\varphi$ will induce an orientation and a unique Riemannian metric $g_\varphi$ on $M$. Let $*_\varphi$ be the induced Hodge star operator, $\psi:=*_\varphi\varphi$, and $\nabla$ be the Levi-Civita connection associated to $g_\varphi$. One can show that there exists a unique $2$-tensor $T_\varphi$ satisfying $\nabla_X\varphi=T_\varphi(X)\lrcorner\psi$ for all vector field $X$. This $T_\varphi$ is called the torsion tensor of $\varphi$. When $T_\varphi=0$, the $G_2$-structure $\varphi$ is called torsion-free.

For any torsion-free $G_2$-structure $\varphi$, $\mathrm{Hol}(g_\varphi)\subset G_2$ and hence $g_\varphi$ is Ricci-flat. If in addition $M$ is compact, then $\mathrm{Hol}(g_\varphi)=G_2$ if and only if $\pi_1(M)$ is finite. Thus finding torsion-free $G_2$ structures is essential for constructing compact manifolds with holonomy $G_2$. For this purpose, Bryant introduced in \cite{bryant2003some} the Laplacian flow for closed $G_2$-structure (over a compact $7$-manifold)
\begin{equation*}
	\begin{cases*}
		\frac{\partial}{\partial t}\varphi
		=\Delta_\varphi\varphi,\\
		d\varphi=0,\\
		\varphi(0)=\varphi_0.
	\end{cases*}
\end{equation*}
Here $\Delta_\varphi=dd^*_\varphi+d^*_\varphi d$ is the Hodge Laplacian operator. The short-time existence and uniqueness is proved in \cite{bryant2011laplacian}. Later, Lotay and Wei developed the foundational theory for the Laplacian flow in \cite{lotay2017laplacian}. To mention a few of their results, they established long-time existence, Shi-type derivative estimates for $\Rm(g_\varphi)$ and a compactness theorem for the flow. Moreover, they proved $|\Ric|(g_\varphi)$ must blow up at a finite-time singularity, paralleling \v{S}e\v{s}um's result for Ricci flow in \cite{vsevsum2005curvature}. Motivated by the extension problem in Ricci flow, Lotay and Wei asked whether the scalar curvature (or equivalently, the norm of torsion $|T_\varphi|$) must blow up at a finite-time singularity.

\subsection{Outline}
In the appendices, we recall some basic definitions and results needed for our discussion of pseudolocality and entropy rigidity.
The difficulty in proving pseudolocality for Laplacian flow is explained in \hyperref[sec:preliminaries-difficulty]{Section~\ref*{sec:preliminaries-difficulty}}.
We generalize the results in \cite{bamler2020entropy} (except the rigidity results of pointed Nash entropy) from (super) Ricci flow to smooth family of Riemannian metrics satisfying $\partial_tg+2\Ric_g\geq-Ag$ or $|\partial_tg+2\Ric_g|\leq C$, depending on the very situation.
The proofs are minor modifications of those in \cite{bamler2020entropy}, but we repeat most of them in detail for completeness and for the convenience of the readers.

In \hyperref[sec:pseudolocality]{Section~\ref*{sec:pseudolocality}}, we prove our pseudolocality theorem for Laplacian flow.
The method is closely related to the strategy in \cite{wang2018local,wang2020local}.
But there is one crucial difference between our case and Ricci flow, namely that we lack an effective monotonicity formula for the local $\mu$-functional.
To remedy this, we prove a new bound on pointed Nash entropy by local $\mu$-entropy (under a suitable Ricci curvature bound), and then reduce $\mu$-entropy rigidity problems to those for pointed Nash entropy.
This argument will in particular give a new proof of the pseudolocality theorem in \cite{wang2020local}, assuming a scalar curvature lower bound which is natural for Ricci flow.

In \hyperref[sec:rigidity-of-pointed-Nash-entropy]{Section~\ref*{sec:rigidity-of-pointed-Nash-entropy}}, we prove that the $\epsilon$-regularity theorem in \cite{bamler2020entropy} remains valid for Laplacian flow with bounded torsion, provided we restrict ourselves to a sufficiently small (but definite) spacetime scale.

\subsection{Notation and convention}
In this paper $I$ will always denote an interval on which the flow $(g(t))$ of Riemannian metrics is defined, and
\begin{equation}\label{eq:definition-of-error-term}
	E(g):=\partial_tg+2\Ric_g
\end{equation}
is its deviation from Ricci flow.
We write $|E|_{\mathrm{max}}$ for $\sup_{M\times I}|E|$.
Whenever it appears, we are implicitly assuming it is finite, i.e., $\sup_{M\times I}|E|<\infty$.
Similar conventions are made for other quantities like $R_{\mathrm{min}}$, etc.
In the presence of $|E|_{\mathrm{max}}$, we have $E\geq-Ag$ for $A=|E|_{\mathrm{max}}$.

We adopt the sign convention $\delta=-\mathrm{tr}_{12}\nabla$ for the divergence operator. For a set $X$, we write $\mathds{1}_X$ for the indicator function of $X$, i.e., $\mathds{1}_X(x)=1$ if $x\in X$ and $\mathds{1}_X(x)=0$ otherwise.

Here are the definition of some constants. We set
\begin{equation*}
	H_n:=\frac{(n-1)\pi^2}{2}+4,
	\qquad
	\alpha_n:=\frac{1}{100n}.
\end{equation*}
Let $\omega_n$ denote the volume of the unit ball in $\R^n$.

When passing to a subsequence, we will use the same symbol to label it.

We use the notation $C(a\mid b)$ to denote a generic function that satisfies $\lim_{a\to 0}C(a\mid b)=0$ for fixed $b$.

\subsection{Acknowledgments}
Both authors thank Professor Bing Wang for helpful discussions and valuable suggestions.
The second author is grateful to him for his guidance and constant support.

\subsection{Declaration}
This paper was completed in December 2024.
No AI tools were used in its preparation.
A related backward pseudolocality result is obtain with the help of AI, and it is separated into a companion paper \cite{he2026backward}.

\section{Pseudolocality}\label{sec:pseudolocality}
In this section we prove our generalized pseudolocality theorem.
Though stated only for $G_2$-Laplacian flow, the result actually holds for any geometric flow admitting Shi type estimates and compactness theorem, at the scale
\begin{equation*}
	\left(  \max\Big\{  \sup_{M\times I}|E(g)|,-\inf_{M\times I}R_g  \Big\}  \right)^{-1/2}.
\end{equation*}
The basic idea is very simple: smallness (by small we mean almost non-negative) of initial local $\mu$-entropy should imply smallness of pointed Nash entropy in the near future, and the rigidity of pointed Nash entropy then gives us the desired curvature bound.

\subsection{Bounding pointed Nash entropy by local \texorpdfstring{$\mu$}{mu}-entropy}
We shall start from the following lemma that relates pointed Nash entropy and local $\mu$-entropy.
One can compare this with \cite[Theorem 5.1]{wang2018local}.

\begin{Lemma}\label{lem:mu-and-N}
	Let $M^n$ be a compact manifold and $(g(t))_{t\in[0,T]}$ be a smooth family of Riemannian metrics over $M$.
	Assume $R\geq R_{\mathrm{min}}$ and $|E|\leq |E|_{\mathrm{max}}$ over $M\times[0,T]$. Given a point $x_0\in M$, define $u:=K(x_0,T;\cdot,\cdot)$ and $d\nu:=u\,dV_g$.
	Let $\Omega_0'\subset\Omega_0$ be two domains in $M$ and $h:M\to[0,1]$ be a cut-off function supported in $\Omega_0$ with $h\equiv 1$ over $\Omega_0'$.
	Set $S=\int_{\Omega_0}hu\,dV_g(0)$.
	Then
	\begin{align*}
		\mu(\Omega_0,g(0),T)
		&\leq
		\frac{\nu(\Omega_0')}{S}\mathcal{N}_{x_0,T}(T)
		+\frac{1}{S}(C_1+C_3\nu(M\setminus\Omega_0'))\\
		&\quad+\frac{1}{S}
		\bigg(4T\sup_{\Omega_0\setminus\Omega_0'}   \big|\nabla\sqrt{h}\big|^2+e^{-1}\bigg)
		\nu(\Omega_0\setminus\Omega_0')\\
		&\quad+\frac{1}{S}
		C_4\cdot
		\bigg(\sqrt{T}\sup_{\Omega_0\setminus\Omega_0'}|\nabla h|\bigg)
		\nu(\Omega_0\setminus\Omega_0')^{1/2}.
	\end{align*}
	Here $C_1=C_1(n,|E|_{\mathrm{max}}T)$ and $C_3,C_4$ depend only on $n,R_{\mathrm{min}}T,|E|_{\mathrm{max}}T$.
\end{Lemma}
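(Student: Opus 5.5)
The plan is to test the definition of $\mu(\Omega_0,g(0),T)$ with a localized conjugate heat kernel. Write $u=(4\pi T)^{-n/2}e^{-f}$ at time $0$, so that $\mathcal{N}_{x_0,T}(T)=\int_M f\,d\nu-\frac n2$. As competitor in the local $\mu$-functional I take the function $\tilde f$ defined by
\begin{equation*}
(4\pi T)^{-n/2}e^{-\tilde f}=\frac{hu}{S},
\qquad\text{i.e.}\qquad
\tilde f=f-\log h+\log S .
\end{equation*}
This competitor is supported in $\Omega_0$ and correctly normalized. Plugging it into $\mathcal{W}$, and using $\log S\le 0$ (because $S\le\int_M u\,dV_{g(0)}\le 1$ up to an $e^{C|E|_{\max}T}$ factor absorbed into $C_1$), gives
\begin{equation*}
\mu(\Omega_0,g(0),T)\le\frac1S\int_M h\Big[T\big(|\nabla f-\nabla\log h|^2+R\big)+f-\log h-n\Big]d\nu .
\end{equation*}
I then expand $h|\nabla f-\nabla\log h|^2=h|\nabla f|^2-2\langle\nabla f,\nabla h\rangle+4|\nabla\sqrt h|^2$. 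I also use $-h\log h\le e^{-1}$, which holds only on $\Omega_0\setminus\Omega_0'$ since $h\equiv1$ on $\Omega_0'$. Together these produce the term $\frac1S\big(4T\sup|\nabla\sqrt h|^2+e^{-1}\big)\nu(\Omega_0\setminus\Omega_0')$.

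The cross term is handled by Cauchy--Schwarz:
\begin{equation*}
2T\Big|\int\langle\nabla f,\nabla h\rangle\,d\nu\Big|\le 2\sqrt T\sup|\nabla h|\Big(T\int_M|\nabla f|^2d\nu\Big)^{1/2}\nu(\Omega_0\setminus\Omega_0')^{1/2}.
\end{equation*}
Here I invoke the appendix analogue of Bamler's gradient estimate $T\int_M|\nabla f|^2\,d\nu\le\frac n2+C(n,|E|_{\max}T)$, which gives $C_4$. The same type of estimate, $\int_M T(|\nabla f|^2+R)\,d\nu\le\frac n2+C_1$, comes from the (super) Ricci flow computation with the error $E$ (the $\mathcal{W}\le\mathcal{N}$ comparison). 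It controls the global part of the main term $\int h[T(|\nabla f|^2+R)+f-n]\,d\nu$.

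To localize the main term I split it into the contribution of $\Omega_0'$, where $h=1$, and that of $\Omega_0\setminus\Omega_0'$. For this I need a pointwise lower bound on the integrand $T(|\nabla f|^2+R)+f-n\ge -C_3$. This follows from $R\ge R_{\min}$ together with the heat kernel upper bound $u\le C(n,R_{\min}T,|E|_{\max}T)\,T^{-n/2}$, which gives $f\ge-C$; both are established in the appendices. With this bound, the missing mass on $M\setminus\Omega_0'$ costs at most $C_3\nu(M\setminus\Omega_0')$. On $\Omega_0'$, the $f$-part is rewritten through $\mathcal N_{x_0,T}(T)$, with the normalization by $\nu(\Omega_0')$ arranged so that the coefficient $\nu(\Omega_0')/S$ appears; the remaining gradient/scalar part is absorbed into $C_1$.

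I expect the main obstacle to be this bookkeeping step: bounding $\int_{\Omega_0'}(\cdots)\,d\nu$ by $\nu(\Omega_0')\mathcal N$ plus errors, with constants depending only on $n,R_{\min}T,|E|_{\max}T$. This hinges on the lower bound $f\ge -C$, i.e.\ the on-diagonal heat kernel upper bound in the non-Ricci-flow setting. My first attempt would be to use the appendix heat kernel estimate together with the dependence of the Nash entropy on scale.
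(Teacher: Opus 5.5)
\textbf{Gap: the pointwise lower bound on $f$.} Your outline matches the paper's proof: test $\mu$ with $hu/S$, use $\log S\le 0$, expand $h|\nabla f-\nabla\log h|^2$, handle the cross term by Cauchy--Schwarz and the energy estimate, use $-h\log h\le e^{-1}$, and bound the main term as the global $\mathcal{W}$ minus the complement. The gap is the lower bound you use on the complement. You assert $u\le C(n,R_{\min}T,|E|_{\max}T)\,T^{-n/2}$, i.e.\ $f\ge -C$. This is false, because no lower bound on $\mathcal{N}_{x_0,T}(T)$ is assumed. For a collapsed flow, e.g.\ a static flat torus of volume $V$ and diameter $\ll\sqrt T$, one has $u(\cdot,0)\approx V^{-1}$. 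Hence $f\approx\log V-\frac n2\log(4\pi T)\to-\infty$ as $V\to0$.

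What the appendix actually proves (Theorem~\ref{thm:heat-kernel-upper-bound}) is $u\le CT^{-n/2}\exp(-\mathcal{N}_{x_0,T}(T))$, i.e.\ $f(\cdot,0)\ge\mathcal{N}_{x_0,T}(T)-C_2$. With this, the integrand satisfies $T(|\nabla f|^2+R)+f-n\ge\mathcal{N}_{x_0,T}(T)-C_3$, where $C_3=C_2-\min\{R_{\min}T,0\}+n$. So the complement costs $(C_3-\mathcal{N}_{x_0,T}(T))\,\nu(M\setminus\Omega_0')$, not $C_3\,\nu(M\setminus\Omega_0')$. Any valid bound of the form $f\ge -C$ would need $C=C_2-\mathcal{N}_{x_0,T}(T)$, which depends on the flow. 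That would make the lemma useless in Theorem~\ref{thm:mu-and-N}, where a lower bound on $\mathcal{N}$ is exactly what is being proved.

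\textbf{The bookkeeping on $\Omega_0'$.} The correct bound also removes the obstacle you flagged: nothing needs to be done on $\Omega_0'$. Add the complement contribution to $\mathcal{W}(M,g(0),f(0),T)\le\mathcal{N}_{x_0,T}(T)+C_1$. This gives $(1-\nu(M\setminus\Omega_0'))\,\mathcal{N}_{x_0,T}(T)=\nu(\Omega_0')\,\mathcal{N}_{x_0,T}(T)$, which is the only source of the coefficient $\nu(\Omega_0')/S$. A direct bound of $\int_{\Omega_0'}f\,d\nu$ by $\nu(\Omega_0')\,\mathcal{N}$ is not available, since $\mathcal{N}$ is a global average.

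To pass from $\int_M(1-h)(C_3-\mathcal{N})\,d\nu$ to $(C_3-\mathcal{N})\,\nu(M\setminus\Omega_0')$ you need $C_3\ge\mathcal{N}$. This holds after enlarging $C_2$, because $\mathcal{N}\le C(|E|_{\max}T\mid n)$.

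\textbf{Minor points.}
\begin{itemize}
\item The energy estimate reads $T\int_M|\nabla f|^2\,d\nu\le\frac n2-R_{\min}T+C$. The $-R_{\min}T$ term matters here, since $R=-|T_\varphi|^2\le 0$ for Laplacian flow. It is harmless for the result, as $C_4$ may depend on $R_{\min}T$.
\item $\int_M u\,dV_{g(t)}=1$ holds exactly, because $\square^*$ contains the $-\frac12\mathrm{tr}\,E$ term. So no extra factor is needed to get $\log S\le 0$.
\end{itemize}
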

\begin{proof}
	Define $\tilde{u}:=hu/S$, $\tau(t):=T-t$ and $u=:(4\pi\tau)^{-n/2}e^{-f}$. Then
	\begin{align*}
		\mu(\Omega_0,g(0),\tau(0))
		\leq   \mathcal{W}(\Omega_0,g(0),\sqrt{\tilde{u}},\tau(0))
		=&\,\log S
		+\frac{1}{S}\int_{\Omega_0}   \{\tau(|\nabla f|^2+R)+f-n\}hu\,dV_g  \\
		&+\frac{1}{S}\int_{\Omega_0}   \{\tau|\nabla\log h|^2-2\tau\nabla f\cdot\nabla\log h-\log h\}hu\,dV_g,
	\end{align*}
	where the last expression is understood to be computed at time $t=0$. We estimate this term by term as follows. First we note $S\leq\int_{\Omega_0}u\,dV_g(0)=1$ and therefore $\log S\leq 0$. By \hyperref[cor:N-and-W]{Corollary~\ref*{cor:N-and-W}},
	\begin{equation*}
		\mathcal{W}(M,g(0),f(0),\tau(0))
		\leq
		\mathcal{N}_{x_0,T}(T)+C_1(|E|_{\mathrm{max}}T\mid n).
	\end{equation*}
	By \hyperref[thm:heat-kernel-upper-bound]{Theorem~\ref*{thm:heat-kernel-upper-bound}},
	\begin{equation*}
		f(\cdot,0)\geq\mathcal{N}_{x_0,T}(T)-C_2(n,R_{\mathrm{min}}T,|E|_{\mathrm{max}}T).
	\end{equation*}
	These two inequalities allow us to estimate the second term as
	\begin{align*}
		&\frac{1}{S}\int_{\Omega_0}   \{\tau(|\nabla f|^2+R)+f-n\}hu\,dV_g\\
		=\,&
		\frac{1}{S}\int_{M}   \{\tau(|\nabla f|^2+R)+f-n\}u\,dV_g
		-
		\frac{1}{S}\int_{M}   \{\tau(|\nabla f|^2+R)+f-n\}(1-h)u\,dV_g\\
		=\,&
		\frac{1}{S}\mathcal{W}(M,g(0),f(0),\tau(0))
		-
		\frac{1}{S}\int_{M}   \{\tau(|\nabla f|^2+R)+f-n\}(1-h)u\,dV_g\\
		\leq\,&
		\frac{1}{S}(\mathcal{N}_{x_0,T}(T)+C_1)
		+\frac{1}{S}   \left(
		-\min\{R_{\mathrm{min}}T,0\}
		+C_2-\mathcal{N}_{x_0,T}(T)
		+n
		\right)   \nu(M\setminus\Omega_0')\\
		=\,&
		\frac{1}{S}(\mathcal{N}_{x_0,T}(T)+C_1)
		+\frac{1}{S}
		\left(   C_3-\mathcal{N}_{x_0,T}(T)   \right) \nu(M\setminus\Omega_0')\\
		=\,&
		\frac{1}{S}(1-\nu(M\setminus\Omega_0'))\mathcal{N}_{x_0,T}(T)
		+\frac{1}{S}(C_1+C_3\nu(M\setminus\Omega_0')),
	\end{align*}
	where $C_3:=C_2-\min\{R_{\mathrm{min}}T,0\}+n$. Since
	$1-\nu(M\setminus\Omega_0')=\nu(\Omega_0')$, we get
	\begin{equation*}
		\frac{1}{S}\int_{\Omega_0}   \{\tau(|\nabla f|^2+R)+f-n\}hu\,dV_g
		\leq
		\frac{\nu(\Omega_0')}{S}\mathcal{N}_{x_0,T}(T)
		+\frac{1}{S}(C_1+C_3\nu(M\setminus\Omega_0')).
	\end{equation*}
	Next we deal with the last term.
	Note
	\begin{equation*}
		\frac{1}{S}\int_{\Omega_0}   \tau|\nabla\log h|^2   hu\,dV_g
		=\frac{1}{S}\int_{\Omega_0}   4\tau|\nabla\sqrt{h}|^2   d\nu\\
		\leq\frac{4T}{S}\bigg(  \sup_{\Omega_0\setminus\Omega_0'}|\nabla\sqrt{h}|^2  \bigg)
		\nu(\Omega_0\setminus\Omega_0').
	\end{equation*}
	By \hyperref[prop:energy-estimate]{Proposition~\ref*{prop:energy-estimate}},
	\begin{align*}
		\frac{1}{S}\int_{\Omega_0}   -2\tau(\nabla f\cdot\nabla\log h)   hu\,dV_g
		&=
		\frac{1}{S}\int_{\Omega_0}   -2\tau(\nabla f\cdot\nabla h)   d\nu\\
		&\leq
		\frac{2T}{S}\bigg(\int_{M}|\nabla f|^2d\nu\bigg)^{1/2}
		\bigg(\int_{\Omega_0\setminus\Omega_0'}|\nabla h|^2d\nu\bigg)^{1/2}\\
		&\leq
		\frac{\sqrt{T}}{S}
		C_4(n,R_{\mathrm{min}}T,|E|_{\mathrm{max}}T)
		\bigg(\sup_{\Omega_0\setminus\Omega_0'}|\nabla h|\bigg)
		\nu(\Omega_0\setminus\Omega_0')^{1/2}.
	\end{align*}
	Finally,
	\begin{equation*}
		\frac{1}{S}\int_{\Omega_0}   -(\log h)   hu\,dV_g
		=
		\frac{1}{S}\int_{\Omega_0}   -h\log h   \,d\nu
		\leq
		\frac{e^{-1}}{S}   \nu(\Omega_0\setminus\Omega_0').
	\end{equation*}
	The result follows by combining all these estimates together.
\end{proof}

When $\nu_{x_0,T;0}$ is concentrated near $x_0$ at time $t=0$ and the time interval $[0,T]$ is not too long, the above lemma will give an effective estimate on $\mathcal{N}_{x_0,T}(T)$ in term of $\mu(\Omega_0,g(0),T)$ for some neighborhood $\Omega_0$ of $x_0$.

\begin{Theorem}\label{thm:mu-and-N}
	Let $M^n$ be a compact manifold and $(g(t))_{t\in[0,T]}$ be a smooth family of Riemannian metrics over $M$ with $E(g)\geq-Ag$ for some $A\geq 0$.
	Assume $R\geq R_{\mathrm{min}}$ and $|E|\leq |E|_{\mathrm{max}}$ over $M\times[0,T]$.
	Suppose at $x_0\in M$ we have
	\begin{equation*}
		\Ric(x,t)\leq\frac{A'}{\beta^2\bar{\sigma}_{A}(t)},
		\quad
		\forall\,x\in B\big(x_0,t,\beta\sqrt{\bar{\sigma}_{A}(t)}\big),
		\ t\in(0,T]
	\end{equation*}
	for some $A'>0$ and $\beta\in(0,1]$.
	Set $A'':=(\frac{2}{3}A'+n-1)\beta^{-1}$ and $\Omega_0:=B\big(x_0,0,20A''\sqrt{\sigma_A(T)}\big)$.
	Then
	\begin{equation*}
		\mathcal{N}_{x_0,T}(T)
		\geq e^{2/5}\min\{\mu(\Omega_0,g(0),T),0\}
		-C_6(n,R_{\mathrm{min}}T,|E|_{\mathrm{max}}T)(A'')^{-1}
		-C_7(|E|_{\mathrm{max}}T\mid n).
	\end{equation*}
\end{Theorem}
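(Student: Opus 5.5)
The plan is to apply Lemma~\ref{lem:mu-and-N} with a cut-off $h$ adapted to the scale $A''\sqrt{\sigma_A(T)}$. The key input is that the conjugate heat kernel measure $\nu=\nu_{x_0,T;0}$ stays concentrated near $x_0$ at time $0$.

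\textbf{Step 1: concentration of $\nu$.} I would show that $\nu(M\setminus\Omega_0')$ is exponentially small in $A''$, where $\Omega_0'$ is the smaller ball $B(x_0,0,10A''\sqrt{\sigma_A(T)})$. The Ricci upper bound near $x_0$ is exactly the hypothesis of the Hamilton--Perelman distance distortion estimate. In the $E\geq -Ag$ setting this estimate is modified with the factor $\bar\sigma_A$. It gives $\partial_t d_t(x_0,\cdot)\geq -C A''/\sqrt{\bar\sigma_A(t)}$ in the barrier sense, away from $x_0$; the constant $\frac23A'+n-1$ over $\beta$ is exactly what appears there. Integrating in $t$, the distance to $x_0$ at time $0$ exceeds the distance at time $t$ by at most about $A''\sqrt{\sigma_A(T)}$.

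Next, consider $H_n$-centers $(z_t,t)$ of $(x_0,T)$ from the appendix. Near $t=T$ we have $\mathrm{Var}(\delta_{x_0},\nu_t)\leq H_n(T-t)$, so $z_t$ stays close to $x_0$. Monotonicity of the $W_1$/variance bounds under $E\geq-Ag$ then controls the drift of the center. Combined with the distance distortion, this places an $H_n$-center at time $0$ within roughly $2A''\sqrt{\sigma_A(T)}$ of $x_0$. The Gaussian concentration bound for $\nu_t$ around its $H_n$-center then gives
\begin{equation*}
\nu(M\setminus\Omega_0')\leq C\exp(-c(A'')^2).
\end{equation*}
\textbf{This is the main obstacle.} The expected difficulty is making the center argument work with only a local Ricci bound and the $A$-dependent time scale.

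\textbf{Step 2: plugging into the lemma.} I take $h$ to be a cut-off in the time-$0$ distance, with $\sqrt h$ Lipschitz and
\begin{equation*}
|\nabla\sqrt h|,\ |\nabla h|\leq \frac{C}{A''\sqrt{\sigma_A(T)}}.
\end{equation*}
For $T$ within the natural scale we have $\sigma_A(T)\approx T$. Hence
\begin{equation*}
T|\nabla\sqrt h|^2\leq C(A'')^{-2}\quad\text{and}\quad \sqrt T|\nabla h|\leq C(A'')^{-1}.
\end{equation*}
Also $S\geq\nu(\Omega_0')\geq 1-Ce^{-c(A'')^2}$. If $A''$ is below a dimensional threshold, the claim follows trivially by choosing $C_6$ large; for this we use the lower bound $\mathcal N\geq -C$ from the $\mu$/volume bounds. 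So assume $A''$ is large, which gives $S\geq e^{-2/5}$.

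\textbf{Step 3: solving for $\mathcal N$.} The lemma now reads
\begin{equation*}
\mu\leq \frac{\nu(\Omega_0')}{S}\mathcal N + \frac{C_1}{S} + \frac{C}{S}(A'')^{-1}.
\end{equation*}
Here I absorb $e^{-c(A'')^2}$ and $(A'')^{-2}$ into $(A'')^{-1}$; the $C_3$ term also carries a $-\mathcal N$ factor, which I handle in the next paragraph. Since $\mathcal N\leq0$ and $\nu(\Omega_0')/S\in[S,\,1/S]$, we get
\begin{equation*}
\mathcal N\geq S\mu - C_1 - C(A'')^{-1}.
\end{equation*}
If $\mu\geq0$, we use $\mathcal N\geq -C_1-C(A'')^{-1}$ directly. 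If $\mu<0$, then $S\mu\geq \mu\geq e^{2/5}\mu$. In either case, setting $C_7=C_1$ and collecting constants into $C_6$ gives the theorem.

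One point needs care: the term $C_3\nu(M\setminus\Omega_0')$ hides a $-\mathcal N$ coefficient. It has the favorable sign, since the lemma's proof combines it into $\nu(\Omega_0')\mathcal N$. So only bounded constants multiply the exponentially small mass.
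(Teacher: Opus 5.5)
Your overall frame is the paper's: apply Lemma~\ref{lem:mu-and-N} with a cut-off $h=\psi(d_0(x_0,\cdot)/C_0)$, bound the gradient terms, then solve for $\mathcal N_{x_0,T}(T)$. But Step~1, which you flag yourself, fails as described, and it carries the whole content of the theorem. With a Ricci bound only on $B(x_0,t,\beta\sqrt{\bar\sigma_A(t)})$, Lemma~\ref{lem:distance-distortion}(1) controls $\square d=(\partial_t-\Delta)d_t(x_0,\cdot)$, not $\partial_t d_t(x_0,\cdot)$. A pointwise lower bound on $\frac{d}{dt}d_t(x_0,y)$ (part~(2)) also needs a Ricci bound near $y$, and you have none at the points where $\nu_{x_0,T;t}$ may put its mass. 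So you cannot integrate in $t$ to compare $d_0$ with $d_t$.

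The center argument does not repair this. Corollary~\ref{cor:monotonicity-of-W1} only compares two conjugate heat flows, so it gives no control on how far the $(H_n,A)$-center of $(x_0,T)$ drifts from the fixed point $x_0$. A bound $\mathrm{Var}_t(\delta_{x_0},\nu_{x_0,T;t})\le H_n(T-t)$ is not available either. In fact the paper obtains the proximity of that center to $x_0$ (Corollary~\ref{cor:concentration-under-Ricci-bound}) \emph{from} the very concentration estimate you need, so this route is circular.

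The missing idea is to use the $\square d$ inequality through a barrier rather than pointwise. By Lemma~\ref{lem:consequence-of-distance-distortion}, $\eta=C_0^{-1}e^{At/2}(d_t(x_0,\cdot)+2A''\sqrt{\bar\sigma_A(t)})$ satisfies $\square\eta\ge0$ wherever $\eta\ge1$. With the cut-off $\psi$ ($\psi'\le0$, $\psi''\ge-10\psi$), the function $h=\exp(-10\sigma_A(t)/C_0^2)\psi(\eta)$ then satisfies $\square h\le0$. Hence $t\mapsto\int h\,d\nu_{x_0,T;t}$ is non-increasing. For $C_0=5A''\sqrt{\sigma_A(T)}$ this gives $\nu(\Omega_0')\ge\int h\,d\nu_{x_0,T;0}\ge h(x_0,T)=\exp(-\frac{2}{5(A'')^2})$. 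This is Theorem~\ref{thm:concentration-under-Ricci-bound} with Remark~\ref{rmk:concentration}, whose hypotheses are verbatim those of the statement.

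Consequences for the rest of your proposal:
\begin{itemize}
\item The bound $\nu(M\setminus\Omega_0')\le\frac{2}{5(A'')^2}$ is only polynomial, but it suffices. All terms of the lemma other than $C_1/S$ are $\lesssim\nu(M\setminus\Omega_0')^{1/2}\lesssim(A'')^{-1}$. For the gradient factors you only need $T\le\sigma_A(T)$, which always holds.
\item Since $A''\ge1$ automatically, $S\ge\nu(\Omega_0')\ge e^{-2/5}$ holds with no case split. This matters because your small-$A''$ fallback is invalid. There is no a priori bound $\mathcal N_{x_0,T}(T)\ge-C$ (it can be arbitrarily negative, cf.\ Theorem~\ref{thm:no-local-collapsing}), and a lower bound in terms of $\mu(\Omega_0,g(0),T)$ is exactly what is being proved.
\item The algebra in Step~3 needs fixing. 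Multiply the lemma by $S$ and use $S\mu\ge\min\{\mu,0\}$ to get $\nu(\Omega_0')\mathcal N\ge\min\{\mu,0\}-C_1-C(A'')^{-1}$. Then divide by $\nu(\Omega_0')\ge e^{-2/5}$, using that the right side is nonpositive. That division is where $e^{2/5}$ comes from; your intermediate claim $\mathcal N\ge S\mu-\cdots$ does not follow.
\end{itemize}
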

\begin{proof}
	Clearly, $A''\geq 1$.
	Let $h$ be the cut-off function constructed in the proof of \hyperref[thm:concentration-under-Ricci-bound]{Theorem~\ref*{thm:concentration-under-Ricci-bound}}.
	By \hyperref[rmk:gradient-bound-on-cut-off-function]{Remark~\ref*{rmk:gradient-bound-on-cut-off-function}} and the fact $T\leq\sigma_A(T)$, we have
	\begin{equation*}
		4T\sup_{\Omega_0\setminus\Omega_0'}   \big|\nabla\sqrt{h}\big|^2+e^{-1}
		\leq\frac{T}{10(A'')^2\sigma_A(T)}+e^{-1}
		\leq 1,
	\end{equation*}
	\begin{equation*}
		\sqrt{T}\sup_{\Omega_0\setminus\Omega_0'}|\nabla h|
		\leq
		\frac{\sqrt{T}}{A''\sqrt{\sigma_A(T)}}
		\leq 1.
	\end{equation*}
	Then \hyperref[lem:mu-and-N]{Lemma~\ref*{lem:mu-and-N}} implies
	\begin{align*}
		\mu(\Omega_0,g(0),T)
		&\leq
		\frac{\nu(\Omega_0')}{S}\mathcal{N}_{x_0,T}(T)
		+\frac{1}{S}(C_1+C_3\nu(M\setminus\Omega_0'))
		+\frac{1}{S}\nu(\Omega_0\setminus\Omega_0')
		+\frac{1}{S}C_4\nu(\Omega_0\setminus\Omega_0')^{1/2}\\
		&\leq
		\frac{\nu(\Omega_0')}{S}\mathcal{N}_{x_0,T}(T)
		+\frac{1}{S}C_1
		+\frac{1}{S}C_5\nu(M\setminus\Omega_0')^{1/2},
	\end{align*}
	where $C_1=C_1(|E|_{\mathrm{max}}T\mid n)$, and $C_5:=1+C_3+C_4$ depends only on $n,R_{\mathrm{min}}T,|E|_{\mathrm{max}}T$.
	Now \hyperref[rmk:concentration]{Remark~\ref*{rmk:concentration}} gives
	\begin{equation*}
		1\geq S  \geq\nu(\Omega_0')  \geq\exp\left(-\frac{2}{5(A'')^2}\right)  \geq e^{-2/5}.
	\end{equation*}
	Finally, using $1-e^{-x}\leq x$, we obtain
	\begin{equation*}
		\nu(M\setminus\Omega_0')
		=1-\nu(\Omega_0')
		\leq\frac{2}{5(A'')^2}.
	\end{equation*}
\end{proof}

\subsection{Stability and rigidity of entropy}
We record two elementary stability and rigidity facts that will be used in the blow-up arguments below.
The first is a local consequence of the scale invariance of the $\mu$-functional.
\begin{Lemma}[Stability of local $\mu$-entropy near Euclidean space]\label{lem:local-mu-euclidean-stability}
	Suppose $(M_i,g_i,x_i)$ converges smoothly to $(\mathbb{R}^n,g_{\mathrm{E}},0)$.
	Then, for every $L,T>0$,
	\begin{equation*}
		\liminf_{i\to\infty}\inf_{0<\tau\leq T}
		\mu\big(B_{g_i}(p_i,L\sqrt{\tau}),g_i,\tau\big)\geq 0.
	\end{equation*}
\end{Lemma}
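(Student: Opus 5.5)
Throughout, read $p_i$ as $x_i$. Argue by contradiction: suppose there are $L,T>0$, $\delta>0$, a subsequence, and scales $\tau_i\in(0,T]$ with
\begin{equation*}
\mu\big(B_{g_i}(x_i,L\sqrt{\tau_i}),g_i,\tau_i\big)\leq-\delta .
\end{equation*}
Passing to a further subsequence, either $\tau_i\to\tau_\infty\in(0,T]$ or $\tau_i\to 0$. The key tool is scale invariance of the local $\mu$-functional:
\begin{equation*}
\mu(\Omega,g,\tau)=\mu(\Omega,\lambda g,\lambda\tau).
\end{equation*}
Rescaling by $\lambda_i=\tau_i^{-1}$ turns the assumption into
\begin{equation*}
\mu\big(B_{\tilde g_i}(x_i,L),\tilde g_i,1\big)\leq-\delta,\qquad \tilde g_i=\tau_i^{-1}g_i .
\end{equation*}
If $\tau_i\to0$, then $(M_i,\tilde g_i,x_i)$ still converges smoothly to $(\R^n,g_{\mathrm E},0)$, since blowing up a sequence converging to flat space yields flat space; on each fixed ball the metrics become $C^\infty$-close to Euclidean. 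If $\tau_i\to\tau_\infty>0$, then $\tilde g_i$ converges to $\tau_\infty^{-1}g_{\mathrm E}\cong g_{\mathrm E}$. In both cases we are reduced to the following claim: if $(M_i,g_i,x_i)\to(\R^n,g_{\mathrm E},0)$ smoothly, then
\begin{equation*}
\liminf_i\mu\big(B_{g_i}(x_i,L),g_i,1\big)\geq0 .
\end{equation*}

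To prove the claim, use the diffeomorphisms $\Phi_i$ from the definition of smooth convergence to pull back. For large $i$, $B_{g_i}(x_i,L)$ is contained in $\Phi_i(B_{\mathrm E}(0,L+1))$, and $\Phi_i^*g_i\to g_{\mathrm E}$ in $C^2$ on $B_{\mathrm E}(0,L+2)$. Any test function $\phi$ with $\int\phi^2\,dV=1$ supported in the ball $B_{g_i}(x_i,L)$ pulls back to a compactly supported function on a fixed Euclidean ball. Write $\epsilon_i\to0$ for the $C^2$ distance on that ball. Then:
\begin{itemize}
\item the metric satisfies $(1-\epsilon_i)g_{\mathrm E}\leq\Phi_i^*g_i\leq(1+\epsilon_i)g_{\mathrm E}$;
\item the volume forms differ by a factor $1+O(\epsilon_i)$;
\item the scalar curvature satisfies $|R|\leq\epsilon_i$.
\end{itemize}
Comparing $\mathcal W$ term by term, after renormalizing $\phi$ in $L^2(g_{\mathrm E})$ (this changes $\int\phi^2\log\phi^2$ only by $O(\epsilon_i)$), we get
\begin{equation*}
\mathcal W(g_i,\phi,1)\geq(1-C\epsilon_i)\,\mathcal W_{\mathrm E}(\tilde\phi,1)-C\epsilon_i\big(1+\textstyle\int|\nabla\tilde\phi|^2\big).
\end{equation*}
By the sharp Euclidean log-Sobolev inequality, $\mathcal W_{\mathrm E}\geq0$ for every compactly supported $\tilde\phi$.

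The main obstacle is that $\int|\nabla\phi|^2$ is not a priori bounded along a minimizing sequence. I would absorb this term as follows. The gradient term in $\mathcal W_{\mathrm E}$ appears with coefficient $4$, and the log-Sobolev inequality with a slightly smaller gradient coefficient, $4(1-\eta)$, still gives $\mathcal W\geq\frac n2\log(1-\eta)$ by rescaling $\tau$. Taking $\eta=C\epsilon_i$ therefore yields
\begin{equation*}
\mu\big(B_{g_i}(x_i,L),g_i,1\big)\geq-C'\epsilon_i\to0,
\end{equation*}
which contradicts the bound $-\delta$. In writing this up, I would keep track that the constants depend only on $n$ and $L$.
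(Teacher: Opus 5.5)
Your argument is correct. Its skeleton is the same as the paper's: contradiction, a case split according to whether $\tau_i\to0$, parabolic rescaling, and scale invariance of $\mu$. The difference is in how you pass to the limit.

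The paper invokes continuity of the local $\mu$-functional under smooth convergence \cite[Corollary 2.5]{wang2018local}. This gives $\mu(B_{g_{\mathrm E}}(0,L),g_{\mathrm E},1)$ in the limit, which the paper then bounds below using domain monotonicity and $\mu(\mathbb R^n,g_{\mathrm E},1)=0$.

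You instead prove by hand the one-sided bound you need. You pull test functions back to a fixed Euclidean ball and compare with the Euclidean $\mathcal W$ on all of $\mathbb R^n$. This builds domain monotonicity into the argument, so you never have to match $g_i$-distance balls with Euclidean balls; the paper's appeal to continuity leaves that matching implicit. You then absorb the uncontrolled error $C\epsilon_i\int|\nabla\tilde\phi|^2$ by applying the Euclidean log-Sobolev inequality at scale $(1-C\epsilon_i)\tau$, at a cost of $\frac n2\log(1-C\epsilon_i)$.

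This route is more self-contained. It also makes the case distinction and the rescaling unnecessary: working on $B_{\mathrm E}(0,L\sqrt T+1)$, all your errors (including $\tau\int|R|\phi^2\le T\epsilon_i$) are uniform in $\tau\in(0,T]$.

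One precision about the renormalization. The claim that renormalizing changes $\int\phi^2\log\phi^2$ by only $O(\epsilon_i)$ holds exactly if you renormalize pointwise, setting $\tilde\phi=\phi\,\rho_i^{1/2}$ with $\rho_i=dV_{\Phi_i^*g_i}/dV_{\mathrm E}$. The change is then $\int\phi^2\log\rho_i\,dV_{\Phi_i^*g_i}=O(\epsilon_i)$, and the extra gradient term $\phi\nabla\rho_i^{1/2}$ is $O(\epsilon_i)\phi$ by $C^1$-closeness. With a constant normalization the error is instead $O(\epsilon_i)\int\phi^2|\log\phi^2|$, which is not a priori bounded. It can be controlled by $C\epsilon_i(1+\int|\nabla\phi|^2)$ and absorbed the same way as the gradient error, but it does not follow for free.
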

\begin{proof}
	If the assertion were false, then after passing to a subsequence there would be $\tau_i\in(0,T]$ such that
	\begin{equation*}
		\mu\big(B_{g_i}(p_i,L\sqrt{\tau}),g_i,\tau\big)<-\epsilon
	\end{equation*}
	for some $\epsilon>0$.
	If $\tau_i$ has a positive limit, the assertion follows from smooth convergence and continuity of the $\mu$-functional \cite[Corollary 2.5]{wang2018local}.
	If $\tau_i\to0$, rescale $g_i$ by $\tau_i^{-1}$.
	On the ball of radius $L$ the rescaled metrics converge smoothly to the Euclidean metric.
	Scale invariance and the same continuity give
	\begin{equation*}
		\lim_{i\to\infty}
		\mu\big(B_{g_i}(p_i,L\sqrt{\tau_i}),g_i,\tau_i\big)
		=\mu\big(B_{g_{\mathrm E}}(0,L),g_{\mathrm E},1\big)\geq0,
	\end{equation*}
	where the last inequality follows by domain monotonicity from
	$\mu(\mathbb R^n,g_{\mathrm E},1)=0$.
\end{proof}

The next lemma isolates the entropy-rigidity part of the proof of \cite[Theorem 10.2]{bamler2020entropy}.
\begin{Lemma}[Pointed entropy rigidity]\label{lem:pointed-entropy-rigidity}
	Let $(M_i^7,\varphi_i(t),g_i(t),x_i)_{t\in[0,1]}$ be a sequence of compact Laplacian flows satisfying
	\begin{equation*}
		\sup_{M_i\times[0,1]}|T_{\varphi_i}|^2\leq\lambda_i
		\qquad\text{and}\qquad
		|\Rm|(g_i)\leq\Lambda
		\text{ over }B_{g_i(t)}(x_i,\rho_i),\forall\,t\in[0,1]
	\end{equation*}
	for some $\lambda_i\to0$, $\Lambda>0$ and $\rho_i\to\infty$.
	Suppose $\mathcal N_{x_i,1}(1)\to0$, then every smooth subsequence limit is the static Euclidean flow.
\end{Lemma}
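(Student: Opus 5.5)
We follow the entropy-rigidity argument in the proof of \cite[Theorem 10.2]{bamler2020entropy}, adapted to Laplacian flow with vanishing torsion.

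\textbf{Step 1: identify the limit flow.} By the Shi-type estimates of Lotay--Wei, the uniform bound $|\Rm|\leq\Lambda$ on $B_{g_i(t)}(x_i,\rho_i)$ gives uniform local bounds on all derivatives of $\Rm$ and $T_{\varphi_i}$. Since $|T_{\varphi_i}|^2\leq\lambda_i\to0$, interior derivative bounds give $\nabla^kT_{\varphi_i}\to0$ locally. The no-local-collapsing theorem of Chen \cite{chen2018shi} supplies a lower injectivity radius bound. The Lotay--Wei compactness theorem then produces a smooth subsequential limit $(M_\infty,\varphi_\infty(t),g_\infty(t),x_\infty)_{t\in(0,1]}$, defined on all of $M_\infty$ because $\rho_i\to\infty$. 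In the limit the torsion vanishes, so $\varphi_\infty$ is torsion-free at each time. Also $E(g_i)=\partial_tg+2\Ric$ is quadratic in the torsion plus its derivative, hence $E(g_i)\to0$. Therefore $g_\infty(t)$ is both Ricci-flat and static, $\partial_tg_\infty=\Delta\varphi_\infty=0$. The limit is a complete static Ricci-flat flow with bounded curvature.

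\textbf{Step 2: pass the entropy to the limit.} Since $|E|_{\max}\to0$ and $R=-|T|^2\geq-\lambda_i$, the appendix results for flows with $|E|\leq C$ apply uniformly: heat kernel Gaussian bounds, the basepoint dependence of $\mathcal N$, and integral bounds on $|\nabla f|^2$. With these we show that the conjugate heat kernels $K_i(x_i,1;\cdot,t)$ converge locally smoothly, by parabolic estimates on the bounded-curvature region, to the conjugate heat kernel $K_\infty(x_\infty,1;\cdot,t)$ of the limit. The Gaussian upper and lower bounds give tightness of the measures $\nu_{x_i,1;t}$ against the weights $f$, $\tau|\nabla f|^2$ and $\tau R$. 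This yields $\mathcal N^\infty_{x_\infty,1}(\tau)=\lim\mathcal N_{x_i,1}(\tau)$ for $\tau\in(0,1]$.

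\textbf{Step 3: rigidity.} The appendix monotonicity (Nash entropy is almost nonincreasing in $\tau$, with error $O(|E|_{\max}+\lambda_i)\to0$) combined with Step 2 gives, in the limit,
\[
0\geq\mathcal N^\infty(\tau)\geq\mathcal N^\infty(1)=0\quad\text{for all }\tau\in(0,1].
\]
The upper bound $\mathcal N\leq0$ holds on a Ricci flow with bounded curvature by comparison with small-$\tau$ Euclidean asymptotics. Hence $\mathcal N^\infty$ is constant. Bamler's derivative formula
\[
\frac{d}{d\tau}(\tau\mathcal N)=\mathcal W,\qquad \frac{d}{d\tau}\mathcal W=-2\tau\int\Big|\Ric+\nabla^2f-\frac{g}{2\tau}\Big|^2d\nu
\]
then forces $\Ric+\nabla^2f=g/(2\tau)$. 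With $\Ric=0$ this says the flow is a shrinking soliton with $\nabla^2f=g/(2\tau)$. A complete manifold carrying such a potential is isometric to $\R^n$: $f$ is a strictly convex proper function with a unique critical point, and the exponential map there is an isometry. Since the flow is static, it is the static Euclidean flow.

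\textbf{Main obstacle.} The hardest step is Step 2. The entropy is an integral over the whole of $M_i$, while curvature is controlled only on $B(x_i,\rho_i)$. We would handle the far region by first showing, via the concentration results (the variance bounds of the appendix), that $\nu_{x_i,1;t}$ puts mass $\to0$ outside $B(x_i,\rho_i/2)$. We would then use $f\geq\mathcal N-C$ together with $L^2$ bounds on $\tau|\nabla f|^2$ to control the tail contributions.
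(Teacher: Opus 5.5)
Your overall route matches the paper's: compactness, vanishing torsion giving a static Ricci-flat limit, passing the conjugate heat kernels and $\mathcal N$ to the limit by a cut-off argument, and reading off $\nabla^2f_\infty=g_\infty/(2\tau)$ from $\mathcal N_\infty\equiv0$. However, Step~1 fails as written.

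Chen's no-local-collapsing theorem is proved with Perelman's global $\mathcal W$-functional. Its constant depends on the initial data of each flow, namely an entropy lower bound for $g_i(0)$ on all of $M_i$. Nothing in the hypotheses controls this uniformly in $i$, so you do not get the uniform bound $|B_{g_i(1)}(x_i,1)|\geq v>0$ that the compactness theorem needs. The missing idea is that the entropy hypothesis must already be used here. By Theorem~\ref{thm:no-local-collapsing} with $r=1$, $|B_{g_i(1)}(x_i,1)|\geq c\exp(\mathcal N_{x_i,1}(1))$. The constant $c$ is uniform because $R_{\min}\geq-\lambda_i$ and $|E|_{\max}\leq C\lambda_i$, and the hypothesis $R\leq r^{-2}$ is automatic since $R=-|T_{\varphi_i}|^2\leq0$. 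Then $\mathcal N_{x_i,1}(1)\to0$ gives noncollapsing. This is exactly how the paper extracts the limit.

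There is a second, smaller gap in Step~2. The variance bounds only say that $\nu_{x_i,1;t}$ concentrates around some $(H_n,A)$-center $z_i(t)$. They give no control of $d_{g_i(t)}(z_i(t),x_i)$. So by themselves they neither make the mass outside $B_{g_i(t)}(x_i,L)$ small nor show that the local limit $u_\infty$ is the conjugate heat kernel based at $(x_\infty,1)$. Your Step~2 asserts the latter without argument, and no Gaussian lower bounds are available in this setting.

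The paper gets both facts from the local curvature bound through Corollary~\ref{cor:concentration-under-Ricci-bound}, namely $d_{W_1}^{g_i(t)}(\nu_{x_i,1;t},\delta_{x_i})\leq C(\Lambda)\sqrt{\tau}$:
\begin{itemize}
\item Markov's inequality gives the tail estimate, hence total mass one for $\nu_\infty$.
\item Lower semicontinuity of $W_1$ gives $\nu_\infty(t)\to\delta_{x_\infty}$ as $t\to1$.
\item Positivity of $u_\infty$ then follows from the strong maximum principle.
\end{itemize}
With these two repairs, the rest is the same as the paper's. That is, your tail control via $f\geq\mathcal N-C$ and the $L^2$ bound of Proposition~\ref{prop:energy-estimate}, together with your rigidity step.
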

\begin{proof}
	The uniform bounds on curvature and pointed Nash entropy, together with \hyperref[thm:no-local-collapsing]{Theorem~\ref*{thm:no-local-collapsing}}, allow us to apply \hyperref[thm:compactness]{Theorem~\ref*{thm:compactness}} to extract a subsequence that converges smoothly to a complete hypersymplectic flow
	\begin{equation*}
		(M_\infty,\varphi_\infty(t),g_\infty(t),x_\infty)_{t\in(0,1]}.
	\end{equation*}
	Since $\lambda_i\to 0$, we have $T_{\varphi_\infty}=0$, and therefore $g_\infty(t)$ is Ricci flat and static.

	Write $\tau:=1-t$ and define $u_i,f_i$ by
	\begin{equation*}
		d\nu_i=u_idV_{g_i}=(4\pi\tau)^{-7/2}e^{-f_i}dV_{g_i}.
	\end{equation*}
	The uniform heat kernel upper bound \hyperref[thm:heat-kernel-upper-bound]{Theorem~\ref*{thm:heat-kernel-upper-bound}} and local parabolic estimates show that, after passing to a further subsequence, we may assume $u_i$ converges smoothly on compact subsets to some $u_{\infty}\geq 0$ satisfying $\square^*_{g_{\infty}}u_{\infty}=0$.

	Next we show that $u_{\infty}$ is the conjugate heat kernel based at $(x_\infty,1)$.
	By \hyperref[cor:concentration-under-Ricci-bound]{Corollary~\ref*{cor:concentration-under-Ricci-bound}}, we have $d_{W_1}^{g_i}(\nu_i,\delta_{x_i})\leq C\sqrt{\tau}$ for some $C(\Lambda)>0$.
	Then it follows from Markov's inequality that
	\begin{equation}\label{ineq:tail-estimates}
		\nu_i(M_i\setminus B_{g_i(t)}(x_i,L))
		\leq\frac{C\sqrt{\tau}}{L}.
	\end{equation}
	Combining the above inequality with the $C^{\infty}_{\mathrm{loc}}$-convergence of $u_i$, we see that $\nu_{\infty}:=u_{\infty}dV_{g_{\infty}}$ has total mass one.
	Finally, the lower semicontinuity of $W_1$-distance gives $d_{W_1}^{g_\infty}(\nu_\infty,\delta_{x_\infty})\leq C\sqrt{\tau}$, showing that $\nu_\infty(t)\to\delta_{x_{\infty}}$ as $t\to 1$.
	
	Now that $u_{\infty}$ has positive total mass, the strong maximum principle gives $u_{\infty}>0$, and we may define $f_{\infty}$ by $u_{\infty}=(4\pi\tau)^{-7/2}e^{-f_{\infty}}$.
	Clearly, $f_i\to f_{\infty}$ smoothly on compact subsets.
	By \hyperref[prop:time-derivative-of-pointed-Nash-entropy]{Proposition~\ref*{prop:time-derivative-of-pointed-Nash-entropy}} and (\ref{cor:upper-bound-on-N}), we have $\mathcal{N}_{x_i,1}(\tau)\to 0$ for all $\tau$.
	A standard cut-off argument, based on (\ref{ineq:tail-estimates}), \hyperref[prop:energy-estimate]{Proposition~\ref*{prop:energy-estimate}} and the above convergence, gives that $\mathcal{N}_{x_{\infty},1}(\tau)=0$ for all $\tau$.
	This forces $\Ric_{g_\infty}+\nabla^2_{g_\infty}f_\infty-\frac{1}{2\tau}g_\infty=0$.
	Since $\Ric_{g_\infty}=0$ and $(M_{\infty},g_{\infty})$ is complete, the limit has to be Euclidean.
\end{proof}

The following theorem corresponding to \cite[Proposition 3.4]{wang2020local} serves as an illustration of our methods towards pseudolocality.
Since we lack an effective monotonicity formula for the local $\mu$-functional, it will not be used to prove our pseudolocality theorem directly as \cite{wang2020local} did for Ricci flow.
\begin{Theorem}\label{thm:mu-gap}
	For each $\alpha\in(0,1]$, $H\geq 1$ and $L\geq 1$, there exists $\delta(\alpha,H,L)>0$ such that the following holds.
	Let $(M^7,\varphi(t),g(t))_{t\in[0,T]}$ be a Laplacian flow over a compact manifold satisfying the uniform torsion bound $|T_\varphi|^2\leq\Lambda$ for some $\Lambda>0$, and let $x_0$ be a point on $M$.
	Suppose
	\begin{equation}\label{mu-gap-condition}
		\begin{cases}
			T\leq\delta\Lambda^{-1};\\
			|\Rm|(\cdot,t)\leq H/T \text{ over } B(x_0,t,\delta^{-1}\sqrt{T})
			\text{ for all } t\in[0,T];\\
			\mu(B(x_0,0,\delta^{-1}\sqrt{T}),g(0),T)\geq -\delta.
		\end{cases}
	\end{equation}
	Then
	\begin{equation*}
		\sup_{B(x_0,T,L\sqrt{T})}|\Rm|(\cdot,T)\leq\alpha/T.
	\end{equation*}
\end{Theorem}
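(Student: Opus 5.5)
We argue by contradiction and blow-up. Suppose for some $\alpha,H,L$ there is no such $\delta$. Then there are Laplacian flows $(M_i,\varphi_i(t),g_i(t))_{t\in[0,T_i]}$ and points $x_i$ satisfying (\ref{mu-gap-condition}) with $\delta_i\to0$, but with $\sup_{B(x_i,T_i,L\sqrt{T_i})}|\Rm|(\cdot,T_i)>\alpha/T_i$. By parabolic rescaling we normalize $T_i=1$. For Laplacian flow, scaling $\varphi\mapsto c^3\varphi$ scales the metric by $c^2$ and $|T_\varphi|^2$ by $c^{-2}$. So after rescaling we have:
\begin{itemize}
\item $\sup|T_{\varphi_i}|^2\leq\lambda_i:=\delta_i\to0$;
\item $|\Rm|\leq H$ on $B(x_i,t,\rho_i)$ for all $t\in[0,1]$, with $\rho_i:=\delta_i^{-1}\to\infty$;
\item $\mu(B(x_i,0,\rho_i),g_i(0),1)\geq-\delta_i$;
\item $\sup_{B(x_i,1,L)}|\Rm|(\cdot,1)>\alpha$.
\end{itemize}
Since $E(g)$ and $R$ are controlled quadratically by the torsion (and $\nabla T$) for closed $G_2$-structures, we get $|E|_{\max}\cdot 1\to0$ and $R_{\min}\cdot1\to0$. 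Here $R=-|T|^2$, and the part of $E$ involving $\nabla T$ is handled via the curvature bound and Shi-type estimates in the region of interest. Strictly, what we need is that the global constants in Theorem~\ref{thm:mu-and-N} depend only on $|E|_{\max}T$ and $R_{\min}T$, which should be bounded in terms of $\Lambda T\leq\delta$. This is the point where the hypothesis $T\leq\delta\Lambda^{-1}$ enters.

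The key step is to show $\mathcal N_{x_i,1}(1)\to0$.
\begin{itemize}
\item \emph{Upper bound.} We have $\mathcal N\leq C(|E|_{\max}\mid n)\to0$ from the upper bound in the appendix (\ref{cor:upper-bound-on-N}), which is zero for super Ricci flow.
\item \emph{Lower bound.} We apply Theorem~\ref{thm:mu-and-N}. The curvature bound $|\Rm|\leq H$ on $B(x_i,t,\rho_i)$ gives $\Ric\leq (n-1)H\leq A'/(\beta^2\bar\sigma_A(t))$ on $B(x_i,t,\beta\sqrt{\bar\sigma_A(t)})$, with $\beta=1$ and $A'$ to be chosen large; $\bar\sigma_A(t)$ is comparable to $t\leq1$ since $A=|E|_{\max}\to0$. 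We pick $A'$ large so that $A''=(\frac23A'+n-1)$ is large. The radius $20A''\sqrt{\sigma_A(1)}$ stays below $\rho_i$ for $i$ large, so domain monotonicity gives $\mu(\Omega_0,g_i(0),1)\geq\mu(B(x_i,0,\rho_i),g_i(0),1)\geq-\delta_i$. Theorem~\ref{thm:mu-and-N} then yields
\[\liminf_i\mathcal N_{x_i,1}(1)\geq -C_6(n,0,0)(A'')^{-1}-0.\]
Letting $A'\to\infty$ (this is allowed since $H$ is fixed, and the constraint $\Ric\leq A'/\bar\sigma$ only gets weaker) gives $\liminf\mathcal N_{x_i,1}(1)\geq0$.
\end{itemize}
One technical caveat: the Ricci bound is required for all $t\in(0,1]$ on balls of radius $\sqrt{\bar\sigma_A(t)}\leq \rho_i$, which holds.

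Now Lemma~\ref{lem:pointed-entropy-rigidity} applies, using the uniform bound $|\Rm|\leq H$ on $B(x_i,t,\rho_i)$, $\lambda_i\to0$ and $\mathcal N_{x_i,1}(1)\to0$. So, after passing to a subsequence, the flows converge smoothly on compact sets of $M_\infty\times(0,1]$ to the static Euclidean flow. We also need the convergence to be valid at $t=1$ itself, which follows because the curvature bound holds up to $t=1$ and Shi-type estimates give local higher derivative bounds on $[1/2,1]$. Smooth convergence on $B(x_\infty,1,L+1)$ then forces $\sup_{B(x_i,1,L)}|\Rm|(\cdot,1)\to0$, contradicting the lower bound $\alpha$.

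The main obstacle is the application of Theorem~\ref{thm:mu-and-N}: checking that the dependence of $C_6,C_7$ on $|E|_{\max}T$ and $R_{\min}T$ is uniform in the blow-up. For Laplacian flow, $E$ involves $\nabla T$, and this is not bounded by $\Lambda$ alone. I would first try to use the known Laplacian-flow identities $\partial_tg=-2\Ric-\frac23|T|^2g-4T\ast T$ (after fixing signs). These give $|E|\leq C|T|^2\leq C\Lambda$ exactly, with no $\nabla T$ term, so $|E|_{\max}T\leq C\delta\to0$ and $R_{\min}T\geq-\delta$. If that identity holds as I recall, the whole argument closes.
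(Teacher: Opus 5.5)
Your proposal is correct and follows the paper's proof. The paper rescales to $T=1$ and gets $\liminf_i\mathcal N_{x_i,1}(1)\geq0$ from Theorem~\ref{thm:mu-and-N} with $\beta=1$. It then combines this with (\ref{cor:upper-bound-on-N}) and concludes via Lemma~\ref{lem:pointed-entropy-rigidity}. Your device of letting $A'\to\infty$ is legitimate, since $20A''\sqrt{\sigma_A(1)}<\rho_i$ for large $i$ and domain monotonicity applies. It simply makes explicit how the $C_6(A'')^{-1}$ term disappears, which the paper leaves implicit.

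Your worry about $\nabla T$ is unfounded, and the identity you recall is the right resolution. The appendix records $E=-\frac23|T|^2g-4T_i{}^kT_{kj}$ exactly, so $|E|_{\max}T\leq C\Lambda T\to0$ and $R_{\min}T\geq-\Lambda T$ hold globally. Drop the suggested local Shi-type fallback: $|E|_{\max}$ is a global supremum, so local estimates near $x_i$ could not have controlled it.
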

\begin{proof}
	Assume there is no such $\delta$. Then we may find $\delta_i\to 0$ and a sequence $(M^7_i,\varphi_i(t),g_i(t))_{t\in[0,T_i]}$ of Laplacian flows over compact manifolds satisfying $|T_{\varphi_i}|^2\leq\Lambda_i$, together with $x_i\in M_i$ such that
	\begin{equation*}
		\begin{cases}
			T_i\leq\delta_i\Lambda_i^{-1};\\
			|\Rm|_{g_i}(\cdot,t)\leq H/T_i \text{ over } B(x_i,t,\delta_i^{-1}\sqrt{T_i})
			\text{ for all } t\in[0,T_i];\\
			\mu(B(x_i,0,\delta_i^{-1}\sqrt{T_i}),g_i(0),T_i)\geq -\delta_i.
		\end{cases}
	\end{equation*}
	Moreover, there exist $z_i\in B(x_i,T_i,L\sqrt{T_i})$ such that $|\Rm|_{g_i}(z_i,T_i)>\alpha/T_i$.
	
	By parabolic rescaling $\tilde{\varphi}_i(t):=T_i^{-3/2}\varphi_i(T_it)$, we obtain a sequence $(M_i,\tilde{\varphi}_i(t),\tilde{g}_i(t),x_i)_{t\in[0,1]}$ of Laplacian flows with $|T_{\tilde{\varphi}_i}|^2\leq \tilde{\Lambda}_i:=T_i\Lambda_i\leq\delta_i\to 0$ and
	\begin{equation*}
		\begin{cases}
			|\Rm|_{\tilde{g}_i}(\cdot,t)\leq H \text{ over } B(x_i,t,\delta_i^{-1})
			\text{ for all } t\in[0,1];\\
			\mu(B(x_i,0,\delta_i^{-1}),\tilde{g}_i(0),1)\geq -\delta_i.
		\end{cases}
	\end{equation*}
	Moreover, there exist $z_i\in B(x_i,1,L)$ such that $|\Rm|_{\tilde{g}_i}(z_i,1)>\alpha$. By \hyperref[thm:mu-and-N]{Theorem~\ref*{thm:mu-and-N}} (with $\beta=1$),
	\begin{equation*}
		\liminf_{i\to\infty}\mathcal{N}_{x_i,1}(1)\geq 0
	\end{equation*}
	and (\ref{cor:upper-bound-on-N}) then gives $\mathcal N_{x_i,1}(1)\to 0$.
	It follows from \hyperref[lem:pointed-entropy-rigidity]{Lemma~\ref*{lem:pointed-entropy-rigidity}} that the subsequence limit has to be Euclidean and therefore $|\Rm|_{\tilde g_i}(z_i,1)\to 0$, contradicting $|\Rm|_{\tilde{g}_i}(z_i,1)>\alpha$.
\end{proof}

\subsection{Point-picking}
The following point-picking procedure for Ricci flow is taken from \cite[Proposition 4.1]{wang2020local}.
We include its proof as an illustration, and point out the necessary modifications for our generalization \hyperref[prop:point-picking]{Proposition~\ref*{prop:point-picking}}.
\begin{Proposition}[Ricci flow point-picking]\label{prop:point-picking-Ricci-flow}
	Let $(M^n,g(t))_{t\in[0,T]}$ be a Ricci flow with complete time-slices and $K\geq 1$, $\beta\in(0,1]$ be two constants.
	Suppose
	\begin{equation*}
		\Ric(x,t)\leq\frac{(n-1)\alpha_nA'}{\beta^2t},\qquad
		\forall\, x\in B(x_0,t,\beta\sqrt{t}),    \ t\in[0,T],
	\end{equation*}
	where $A'\geq\alpha_n^{-5}K^4$ is a large constant.
	Define
	\begin{align*}
		&\Omega_t=B(x_0,t,2K\beta\sqrt{t}),\\
		&\mathcal{D}=\{(x,t):x\in\Omega_t,\,0\leq t\leq T\},\\
		&\rho(x,t)=2K\beta\sqrt{t}-d_t(x,x_0),\\
		&F(x,t)=\sqrt{|\Rm|(x,t)}\rho(x,t).
	\end{align*}
	Let $(y_0,t_0)$ be a point where $F$ achieves the maximum value on $\mathcal{D}$.
	Define $Q_0=|\Rm|(y_0,t_0)$, $\rho_0=\rho(y_0,t_0)$ and $F_0=F(y_0,t_0)$. If $F_0^2\geq\frac{\alpha_nA'}{2}$, then for every $t\in[t_0-Q_0^{-1},t_0]$, we have
	\begin{equation}\label{containing-relation}
		B(y_0,t,\tfrac{1}{5}\rho_0)
		\subset
		B(x_0,t,2K\beta\sqrt{t}-\tfrac{3}{5}\rho_0),
	\end{equation}
	i.e., $\rho(x,t)\geq\frac{3}{5}\rho_0$ for all $x\in B(y_0,t,\frac{1}{5}\rho_0)$.
	Consequently,
	\begin{equation*}
		|\Rm|(x,t)\leq 4Q_0,
		\quad
		\forall\, x\in B(y_0,t,\tfrac{1}{5}\rho_0),\  t\in[t_0-Q_0^{-1},t_0].
	\end{equation*}
\end{Proposition}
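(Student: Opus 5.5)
The plan is to prove the containment \eqref{containing-relation} by a backward-in-time continuity argument. The curvature estimate will then follow directly from the maximality of $F$ at $(y_0,t_0)$.

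\textbf{Scale bookkeeping.} Since $\rho_0\leq 2K\beta\sqrt{t_0}$ and $Q_0\rho_0^2=F_0^2\geq\frac{\alpha_nA'}{2}$, we have
\begin{equation*}
Q_0^{-1}\leq\frac{2\rho_0^2}{\alpha_nA'}\leq\frac{8K^2\beta^2t_0}{\alpha_nA'}.
\end{equation*}
Because $A'\geq\alpha_n^{-5}K^4$, this is a tiny fraction of $t_0$. So every $t\in[t_0-Q_0^{-1},t_0]$ satisfies $t\geq t_0/2$. The shrinking of the reference radius is also harmless:
\begin{equation*}
2K\beta(\sqrt{t_0}-\sqrt{t})\leq 2K\beta\,\frac{Q_0^{-1}}{\sqrt{t_0}}\leq C\,\frac{K^3\beta^3\sqrt{t_0}}{\alpha_nA'}\ll\rho_0,
\end{equation*}
where the last step uses $\rho_0\geq\sqrt{\alpha_nA'/2}\,Q_0^{-1/2}$, which bounds $\rho_0$ from below.

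\textbf{Continuity setup.} Let $t_*$ be the infimum of those $t'\in[t_0-Q_0^{-1},t_0]$ such that $\rho\geq\frac{3}{5}\rho_0$ on $B(y_0,t,\frac{1}{5}\rho_0)$ for all $t\in[t',t_0]$. At $t=t_0$ the triangle inequality gives $\rho\geq\rho_0-\frac{1}{5}\rho_0$, which is strict, so $t_*<t_0$. On $[t_*,t_0]$ the maximality $F\leq F_0$ gives
\begin{equation*}
|\Rm|\leq\frac{F_0^2}{(3\rho_0/5)^2}=\frac{25}{9}Q_0\leq 4Q_0
\quad\text{on } B(y_0,t,\tfrac{1}{5}\rho_0).
\end{equation*}
This is the claimed consequence, and it also yields $\Ric\leq 4(n-1)Q_0$ there.

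\textbf{Distance control.} To close the argument, I will bound $d_t(x_0,y_0)$ from above for $t\in[t_*,t_0]$ using Perelman's distance distortion lemma (Lemma 8.3 in \cite{perelman2002entropy}). If $\Ric\leq(n-1)\kappa$ on $B(x_0,t,r)\cup B(y_0,t,r)$, then
\begin{equation*}
\partial_td_t(x_0,y_0)\geq-2(n-1)\bigl(\tfrac{2}{3}\kappa r+r^{-1}\bigr).
\end{equation*}
I take $r=Q_0^{-1/2}$, which is much smaller than both $\frac{1}{5}\rho_0$ and $\beta\sqrt{t}$ by the bookkeeping above. On this radius, $\kappa\lesssim 4Q_0+\alpha_nA'/(\beta^2t)$, combining the bound near $y_0$ from the continuity step with the hypothesis near $x_0$. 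Integrating over an interval of length at most $Q_0^{-1}$ bounds the backward growth of $d_t(x_0,y_0)$ by
\begin{equation*}
C(n)\Bigl(Q_0^{-1/2}+\frac{\alpha_nA'}{\beta^2t_0}\,Q_0^{-3/2}\Bigr)\leq C(n)\,\rho_0\Bigl((\alpha_nA')^{-1/2}+\frac{K^2}{\sqrt{\alpha_nA'}}\Bigr).
\end{equation*}
This is at most $\rho_0/10$ once $A'\geq\alpha_n^{-5}K^4$ and $\alpha_n=\frac{1}{100n}$ are used.

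\textbf{Closing the loop.} For $x\in B(y_0,t,\frac{1}{5}\rho_0)$,
\begin{equation*}
\rho(x,t)\geq 2K\beta\sqrt{t}-d_{t_0}(x_0,y_0)-\tfrac{\rho_0}{10}-\tfrac{\rho_0}{5}\geq\rho_0-\tfrac{\rho_0}{10}-\tfrac{\rho_0}{10}-\tfrac{\rho_0}{5}>\tfrac{3}{5}\rho_0,
\end{equation*}
where the second inequality uses $2K\beta\sqrt{t_0}-d_{t_0}(x_0,y_0)=\rho_0$ and the square-root bound. The inequality is strict, so the continuity argument forces $t_*=t_0-Q_0^{-1}$, which proves \eqref{containing-relation}.

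\textbf{Main obstacle.} The delicate step is the circularity in the distance control: the Ricci bound near $y_0$ is needed to control distance distortion, and that bound is itself a product of the containment. The continuity argument resolves this. The remaining work is checking that the constants from Perelman's lemma are absorbed by the largeness $A'\geq\alpha_n^{-5}K^4$.
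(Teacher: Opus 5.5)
Your proposal is correct and follows essentially the paper's argument: a first-failure-time (continuity) argument in which maximality of $F$ gives $|\Rm|\le\frac{25}{9}Q_0$ near $y_0$, and the hypothesis controls $\Ric$ near $x_0$. Perelman's distortion lemma at radius $Q_0^{-1/2}$, integrated over an interval of length $Q_0^{-1}$, then shows $d_t(x_0,y_0)$ cannot grow backward by the required $\frac15\rho_0$; the paper only differs in rewriting the bound near $x_0$ as $16(n-1)K^2Q_0$ and closing with the contradiction $F_0\le 300nK^2$.

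Two small slips should be fixed. First, with your stated bounds the final chain gives exactly $\rho_0-\frac{\rho_0}{10}-\frac{\rho_0}{10}-\frac{\rho_0}{5}=\frac35\rho_0$, so there is no strict uniform margin as claimed; your actual estimates are far below $\rho_0/10$, so sharper constants restore the margin. Second, in the bookkeeping, $2K\beta Q_0^{-1}/\sqrt{t_0}\ll\rho_0$ should be derived from $Q_0^{-1}\le 2\rho_0^2/(\alpha_nA')$ and $\sqrt{t_0}\ge\rho_0/(2K\beta)$, since your intermediate quantity $CK^3\beta^3\sqrt{t_0}/(\alpha_nA')$ need not be small compared with $\rho_0$.
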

\begin{proof}
	By the choice of $(y_0,t_0)$, we have
	\begin{equation*}
		|\Rm|(x,t)
		\leq
		\left(\frac{\rho_0}{\rho(x,t)}\right)^2Q_0,
		\quad\forall(x,t)\in\mathcal{D}.
	\end{equation*}
	This simply says we may bound the curvature at points distant to $\partial\Omega_t$.
	Since $\rho(x,t_0)\geq\frac{4}{5}\rho_0$ for $x\in B(y_0,t_0,\frac{1}{5}\rho_0)$, it holds
	\begin{equation*}
		|\Rm|(x,t_0)
		\leq
		\tfrac{25}{16}Q_0<4Q_0,
		\quad
		\forall\, x\in B(y_0,t_0,\tfrac{1}{5}\rho_0).
	\end{equation*}
	Suppose (\ref{containing-relation}) does not hold.
	Then there exists $s_0\in[t_0-Q_0^{-1},t_0)$ such that (\ref{containing-relation}) holds for all $t\in(s_0,t_0]$ but fails at time $s_0$.
	This choice of $s_0$ has the following consequences.
	\begin{itemize}
		\item One may find $z_0\in B(y_0,s_0,\frac{1}{5}\rho_0)\setminus B(x_0,s_0,2K\beta\sqrt{s_0}-\frac{3}{5}\rho_0)$ and therefore conclude
		\begin{align*}
			d_{s_0}(x_0,y_0)
			&\geq	d_{s_0}(x_0,z_0)-d_{s_0}(z_0,y_0)\\
			&\geq	2K\beta\sqrt{s_0}-\tfrac{3}{5}\rho_0-\tfrac{1}{5}\rho_0\\
			&=
			2K\beta\sqrt{s_0}-\tfrac{4}{5}\rho_0.
		\end{align*}
		
		\item For all $x\in B(y_0,t,\frac{1}{5}\rho_0)$ and $t\in[s_0,t_0]$, it holds $\rho(x,t)\geq\frac{3}{5}\rho_0$  and therefore $|\Rm|(x,t)\leq\frac{25}{9}Q_0<4Q_0$. In particular,
		\begin{equation*}
			\Ric(x,t)
			\leq
			4(n-1)Q_0,
			\quad\forall\,x\in B(y_0,t,\tfrac{1}{5}\rho_0),
			\ t\in[s_0,t_0].
		\end{equation*}
	\end{itemize}
	
	Also, we can estimate $\Ric$ around $(x_0,t_0)$ as follows. Clearly, $2K\beta\sqrt{t_0}\geq\rho_0=F_0Q_0^{-1/2}$, which implies
	\begin{equation*}
		Q_0t_0
		\geq\frac{F_0^2}{4K^2\beta^2}
		\geq\frac{\alpha_nA'}{8K^2\beta^2}
		\geq\frac{2}{\beta^2}
		\geq 2.
	\end{equation*}
	Thus for $t\in[s_0,t_0]$, we have
	\begin{equation*}
		\frac{t_0}{t}   \leq   \frac{t_0}{s_0}
		\leq   \frac{t_0}{t_0-Q_0^{-1}}
		=   \frac{Q_0t_0}{Q_0t_0-1}
		\leq   2.
	\end{equation*}
	Combine the above two facts with the assumption on Ricci curvature around $x_0$, we have
	\begin{equation*}
		\Ric(x,t)
		\leq
		\frac{(n-1)\alpha_nA'}{\beta^2t}
		=
		(n-1)\cdot\frac{\alpha_nA'}{\beta^2Q_0t_0}\cdot\frac{t_0}{t}\cdot Q_0
		\leq
		16(n-1)K^2Q_0
	\end{equation*}
	for all $x\in B(x_0,t,\beta\sqrt{t})$ and $t\in[s_0,t_0]$. Since
	\begin{equation*}
		Q_0t
		=Q_0t_0-Q_0(t_0-t)
		\geq Q_0t_0-1
		\geq\frac{2}{\beta^2}-1
		\geq\frac{1}{\beta^2}
	\end{equation*}
	for all $t\in[s_0,t_0]$, we conclude $B(x_0,t,Q_0^{-1/2})\subset B(x_0,t,\beta\sqrt{t})$.
	Therefore
	\begin{equation*}
		\Ric(x,t)\leq 16(n-1)K^2Q_0,
		\quad\forall\,x\in B(x_0,t,Q_0^{-1/2}),\  t\in[s_0,t_0].
	\end{equation*}
	
	Now that we have
	\begin{equation*}
		\Ric(x,t)\leq 16(n-1)K^2Q_0,
		\quad\forall
		\,x\in B(x_0,t,Q_0^{-1/2})\cup B(y_0,t,Q_0^{-1/2}),
		\ t\in[s_0,t_0],
	\end{equation*}
	the distance distortion estimate \hyperref[lem:distance-distortion]{Lemma~\ref*{lem:distance-distortion}}(2) implies
	\begin{equation*}
		\frac{d}{dt}d_t(x_0,y_0)
		\geq
		-2(n-1)\Big(\frac{2}{3}\cdot 16K^2Q_0\cdot Q_0^{-1/2}+Q_0^{1/2}\Big)
		\geq -40nK^2Q_0^{1/2}.
	\end{equation*}
	Integrating over $[s_0,t_0]$, we get
	\begin{align*}
		d_{s_0}(x_0,y_0)
		&\leq
		d_{t_0}(x_0,y_0)+40nK^2Q_0^{-1/2}\\
		&=
		2K\beta\sqrt{t_0}-F_0Q_0^{-1/2}+40nK^2Q_0^{-1/2}.
	\end{align*}
	Combining this with the lower bound of $d_{s_0}(x_0,y_0)$, it follows
	\begin{align*}
		2K\beta\sqrt{t_0-Q_0^{-1}}-\frac{4}{5}F_0Q_0^{-1/2}
		&\leq
		2K\beta\sqrt{s_0}-\frac{4}{5}F_0Q_0^{-1/2}\\
		&\leq
		2K\beta\sqrt{t_0}-F_0Q_0^{-1/2}+40nK^2Q_0^{-1/2}.
	\end{align*}
	Since $\sqrt{1-(Q_0t_0)^{-1}}\geq 1-(Q_0t_0)^{-1}$, we conclude
	\begin{equation*}
		F_0\leq 200nK^2+\frac{10K\beta}{\sqrt{Q_0t_0}}\leq 300nK^2.
	\end{equation*}
	This contradicts the fact that
	\begin{equation*}
		F_0\geq\sqrt{\frac{\alpha_nA'}{2}}
		\geq\frac{\alpha_n^{-2}K^2}{2}=5000n^2K^2.
	\end{equation*}
\end{proof}

Next we show that \hyperref[prop:point-picking-Ricci-flow]{Proposition~\ref*{prop:point-picking-Ricci-flow}} remains valid in the case $E(g)\geq -Ag$ if the time interval of the flow is not too large compared to $A^{-1}$.
\begin{Proposition}[Point-picking]\label{prop:point-picking}
	Let $K\geq 1$ and $\beta\in(0,1]$ be constants and $(g(t))_{t\in[0,T]}$ be a smooth family of complete Riemannian metrics over $M^n$ with $E(g)\geq-Ag$ for some $A\geq 0$ satisfying $AT\leq 100nK$. Suppose
	\begin{equation*}
		\Ric(x,t)\leq\frac{(n-1)\alpha_nA'}{\beta^2 t},
		\quad
		\forall\, x\in B(x_0,t,\beta\sqrt{t}),    \ t\in[0,T],
	\end{equation*}
	where $A'\geq\alpha_n^{-5}K^4$ is a large constant. Define
	\begin{align*}
		&\Omega_t=B(x_0,t,2K\beta\sqrt{t}),\\
		&\mathcal{D}=\{(x,t):x\in\Omega_t,\,0\leq t\leq T\},\\
		&\rho(x,t)=2K\beta\sqrt{t}-d_t(x,x_0),\\
		&F(x,t)=\sqrt{|\Rm|(x,t)}\rho(x,t).
	\end{align*}
	Let $(y_0,t_0)$ be a point where $F$ achieves the maximum value on $\mathcal{D}$.
	Define $Q_0=|\Rm|(y_0,t_0)$, $\rho_0=\rho(y_0,t_0)$ and $F_0=F(y_0,t_0)$. If $F_0^2\geq\frac{\alpha_nA'}{2}$, then for every $t\in[t_0-Q_0^{-1},t_0]$, we have
	\begin{equation*}
		B(y_0,t,\tfrac{1}{5}\rho_0)
		\subset
		B(x_0,t,2K\beta\sqrt{t}-\tfrac{3}{5}\rho_0)
		\subset\Omega_t.
	\end{equation*}
	Consequently,
	\begin{equation*}
		|\Rm|(x,t)\leq 4Q_0,
		\quad
		\forall\, x\in B(y_0,t,\tfrac{1}{5}\rho_0),\  t\in[t_0-Q_0^{-1},t_0].
	\end{equation*}
\end{Proposition}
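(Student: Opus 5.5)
We follow the proof of Proposition~\ref{prop:point-picking-Ricci-flow} line by line; the only place the Ricci flow equation entered was the distance distortion estimate, so that is where the modification goes. The first part of the argument is purely about the maximality of $F$ at $(y_0,t_0)$. It gives $|\Rm|(x,t)\leq(\rho_0/\rho(x,t))^2Q_0$ on $\mathcal{D}$, hence $|\Rm|\leq\frac{25}{9}Q_0<4Q_0$ wherever $\rho\geq\frac35\rho_0$, and it does not use the flow equation at all. Likewise, the lower bound $Q_0t_0\geq F_0^2/(4K^2\beta^2)\geq 2/\beta^2$ and the consequences $t_0/t\leq 2$ and $B(x_0,t,Q_0^{-1/2})\subset B(x_0,t,\beta\sqrt t)$ on $[t_0-Q_0^{-1},t_0]$ carry over verbatim. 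So we argue by contradiction exactly as before. We take the first failure time $s_0\in[t_0-Q_0^{-1},t_0)$ of the inclusion, obtain $d_{s_0}(x_0,y_0)\geq 2K\beta\sqrt{s_0}-\frac45\rho_0$, and obtain the Ricci upper bound $\Ric\leq 16(n-1)K^2Q_0$ on $B(x_0,t,Q_0^{-1/2})\cup B(y_0,t,Q_0^{-1/2})$ for $t\in[s_0,t_0]$. The last inclusion $B(x_0,t,2K\beta\sqrt t-\frac35\rho_0)\subset\Omega_t$ is trivial.

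The modified step is the distance distortion. Since $\partial_t g=-2\Ric+E$ with $E\geq -Ag$, the first variation of length along a minimizing $g(t)$-geodesic $\gamma$ from $x_0$ to $y_0$ gives
\begin{equation*}
\frac{d}{dt}d_t(x_0,y_0)\geq\int_\gamma\big(-\Ric(\dot\gamma,\dot\gamma)\big)\,ds-\frac{A}{2}d_t(x_0,y_0).
\end{equation*}
Here the first term is handled by the standard second-variation argument of Lemma~\ref{lem:distance-distortion}(2), presumably already stated there in the $E\geq-Ag$ form, and contributes $-40nK^2Q_0^{1/2}$ as before. The new term is $-\frac A2 d_t(x_0,y_0)$. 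On $[s_0,t_0]$ the inclusion still holds, and $y_0\in\Omega_t$ up to the first failure time, so $d_t(x_0,y_0)\leq 2K\beta\sqrt t\leq 2K\sqrt{T}$. Using $AT\leq 100nK$, we get $A\,d_t\leq 200nK^2\sqrt{T}\cdot T^{-1}=200nK^2/\sqrt T$. Since $Q_0t\geq 1$, we have $1/\sqrt T\leq1/\sqrt{t_0}\leq Q_0^{1/2}$, so the extra term is at most $100nK^2Q_0^{1/2}$. Altogether $\frac{d}{dt}d_t(x_0,y_0)\geq -140nK^2Q_0^{1/2}$ on $[s_0,t_0]$.

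Integrating over an interval of length at most $Q_0^{-1}$ gives $d_{s_0}\leq d_{t_0}+140nK^2Q_0^{-1/2}$. Combining this with the lower bound exactly as before yields $F_0\leq 700nK^2+10K\leq 1000nK^2$. This contradicts $F_0\geq 5000n^2K^2$. The curvature conclusion then follows from $\rho\geq\frac35\rho_0$ on $B(y_0,t,\frac15\rho_0)$.

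The main obstacle is the distortion step: one must check that the constant in Lemma~\ref{lem:distance-distortion}(2) is unaffected by $E$, apart from the additive $-\frac A2 d$ term. One must also check that $d_t(x_0,y_0)$ is controlled on the whole interval $[s_0,t_0]$; this comes from $y_0\in B(x_0,t,2K\beta\sqrt t)$, which holds because the inclusion holds on $(s_0,t_0]$ and, by continuity, at $s_0$. If the lemma's bound were stated only for Ricci flow, we would redo its proof with $\partial_t g=-2\Ric+E$, noting that only the lower bound on $E$ is used. The budget $AT\leq100nK$ is exactly what keeps the extra term comparable to $K^2Q_0^{1/2}$.
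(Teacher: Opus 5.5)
Your proposal is correct and follows the paper's proof. Everything except the distance-distortion step carries over from the Ricci flow case, and the hypothesis $AT\leq 100nK$ is what controls the extra $-\frac{A}{2}d_t(x_0,y_0)$ term. The only difference is bookkeeping: the paper integrates this linear term exactly with the factor $e^{\frac{A}{2}t}$, so it only needs $d_{t_0}(x_0,y_0)\leq 2K\beta\sqrt{t_0}$ at the endpoint. You instead bound $d_t(x_0,y_0)\leq 2K\beta\sqrt{t}$ on all of $[s_0,t_0]$, which the choice of $s_0$ correctly justifies, and absorb the term into $100nK^2Q_0^{1/2}$, so the Ricci flow integration goes through verbatim with $40$ replaced by $140$.
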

\begin{proof}
	The argument is similar to \hyperref[prop:point-picking-Ricci-flow]{Proposition~\ref*{prop:point-picking-Ricci-flow}}.
	The only place where Ricci flow enters is the distance distortion estimate.
	In our case,
	\begin{align*}
		\frac{d}{dt}d_t(x_0,y_0)
		&\geq
		-2(n-1)\Big(\frac{2}{3}\cdot 16K^2Q_0\cdot Q_0^{-1/2}+Q_0^{1/2}\Big)-\frac{A}{2}d_t(x_0,y_0)\\
		&\geq
		-40nK^2Q_0^{1/2}-\frac{A}{2}d_t(x_0,y_0).
	\end{align*}
	Equivalently,
	\begin{equation*}
		\frac{d}{dt}\left(e^{\frac{A}{2}t}d_t(x_0,y_0)\right)
		\geq
		-40nK^2Q_0^{1/2}e^{\frac{A}{2}t}.
	\end{equation*}
	Integrating over $[s_0,t_0]$, we get
	\begin{align*}
		d_{s_0}(x_0,y_0)
		&\leq
		e^{\frac{A}{2}(t_0-s_0)}d_{t_0}(x_0,y_0)
		+40nK^2Q_0^{1/2}\cdot\sigma_{A/2}(Q_0^{-1})\\
		&\leq
		e^{\frac{A}{2}Q_0^{-1}}(2K\beta\sqrt{t_0}-F_0Q_0^{-1/2})
		+40nK^2Q_0^{1/2}\cdot\sigma_{A/2}(Q_0^{-1}).
	\end{align*}
	On the other hand,
	\begin{align*}
		d_{s_0}(x_0,y_0)
		&\geq
		2K\beta\sqrt{t_0-Q_0^{-1}}-\frac{4}{5}F_0Q_0^{-1/2}\\
		&\geq
		2K\beta\sqrt{t_0}\big(1-(Q_0t_0)^{-1}\big)-\frac{4}{5}F_0Q_0^{-1/2}.
	\end{align*}
	Combining the above two inequalities, we get
	\begin{align*}
		\frac{1}{5}F_0&\leq
		\Big(1-\frac{4}{5}e^{-\frac{A}{2}Q_0^{-1}}\Big)F_0\\
		&\leq
		\big(1-e^{-\frac{A}{2}Q_0^{-1}}\big)
		2K\beta\sqrt{Q_0t_0}
		+\frac{2K\beta}{\sqrt{Q_0t_0}}e^{-\frac{A}{2}Q_0^{-1}}
		+40nK^2Q_0\cdot\sigma_{A/2}(Q_0^{-1})\cdot e^{-\frac{A}{2}Q_0^{-1}}\\
		&\leq
		\frac{AK\beta t_0}{\sqrt{Q_0t_0}}
		+\frac{2K\beta}{\sqrt{Q_0t_0}}
		+40nK^2.
	\end{align*}
	Here we use $1-e^{-x}\leq x$ twice in the third inequality.
	Since $Q_0t_0>1$ and $AT\leq 100nK$, we have
	\begin{equation*}
		F_0\leq 1000 nK^2,
	\end{equation*}
	contradicting the fact that $F_0\geq 5000n^2K^2$.
\end{proof}

\subsection{Pseudolocality}
The above point-picking procedure allows to prove the following rough pseudolocality theorem, c.f. \cite[Theorem 4.4]{wang2020local}.
\begin{Theorem}[Rough pseudolocality]\label{thm:rough-pseudolocality}
	There exists a universal constant $A$ with the following holds. Let $(M^7,\varphi(t),g(t),x_0)_{t\in[0,T]}$ be a Laplacian flow over a compact manifold satisfying the uniform torsion bound $|T_\varphi|^2\leq\Lambda$ for some $\Lambda>0$. Suppose $T\leq A^{-1}\Lambda^{-1}$ and
	\begin{equation*}
		\inf_{0<t\leq T}
		\mu\big(B(x_0,0,A^2\sqrt{t}),g(0),t\big)
		\geq-A^{-1},
	\end{equation*}
	then
	\begin{equation*}
		|\Rm|(x,t)\leq\frac{\alpha_7A^2}{t},
		\quad\forall\,x\in B(x_0,t,A^{-1/2}\sqrt{t}),\ t\in(0,T].
	\end{equation*}
\end{Theorem}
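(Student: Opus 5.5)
We argue by contradiction, following the scheme of \cite[Theorem 4.4]{wang2020local} but replacing the monotonicity of local $\mu$-entropy by \hyperref[thm:mu-and-N]{Theorem~\ref*{thm:mu-and-N}} together with \hyperref[lem:pointed-entropy-rigidity]{Lemma~\ref*{lem:pointed-entropy-rigidity}}.

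\emph{Setting up the contradiction.} Suppose no universal $A$ works. Then there are $A_i\to\infty$ and Laplacian flows $(M_i,\varphi_i(t),g_i(t),x_i)_{t\in[0,T_i]}$ with $|T_{\varphi_i}|^2\leq\Lambda_i$, $T_i\leq A_i^{-1}\Lambda_i^{-1}$ and the $\mu$-lower bound $\geq -A_i^{-1}$, for which the curvature conclusion fails. We fix $K\geq1$ and $\beta\in(0,1]$ with $\beta\approx A_i^{-1/2}$ after relabelling, and set $A'=A_i^2$ up to normalisation. Since the conclusion fails, there is a first time $\bar t_i$ at which the Ricci hypothesis of \hyperref[prop:point-picking]{Proposition~\ref*{prop:point-picking}}, namely $\Ric\leq (n-1)\alpha_nA'/(\beta^2t)$ on $B(x_i,t,\beta\sqrt t)$, ceases to hold with a strict margin. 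We truncate the flow to $[0,\bar t_i]$. On this interval the Ricci hypothesis holds, and $|\Rm|\sqrt t$ attains the threshold $\sim\sqrt{\alpha_7}A_i$ at some point of $B(x_i,\bar t_i,A_i^{-1/2}\sqrt{\bar t_i})$. Consequently $F$, with $\rho\geq K\beta\sqrt t$ at that point, satisfies $F_0^2\geq\alpha_nA'/2$ once $K$ is chosen suitably. The torsion bound gives $|E|\leq C|T|^2+C|\nabla T|$, and more precisely $E\geq -C\Lambda g$ and $R=-|T|^2\geq-\Lambda$ for closed $G_2$-structures. Hence $A\bar t_i\leq C/A_i\leq100nK$, so \hyperref[prop:point-picking]{Proposition~\ref*{prop:point-picking}} applies.

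\emph{Blow-up at the picked point.} We obtain $(y_i,t_i)$ with $Q_i=|\Rm|(y_i,t_i)$, $Q_i t_i\gtrsim A_i^2/K^2\to\infty$, and $|\Rm|\leq4Q_i$ on $B(y_i,t,\tfrac15\rho_i)\times[t_i-Q_i^{-1},t_i]$. Here $\rho_iQ_i^{1/2}=F_i\to\infty$. We rescale by $Q_i$ around $(y_i,t_i)$, shifting time so that the interval becomes $[-1,0]$. The rescaled torsion satisfies $|T|^2\leq\Lambda_i/Q_i\to0$, and the rescaled curvature is bounded by $4$ on balls of radius $\to\infty$. We then want $\mathcal N_{y_i,t_i}(Q_i^{-1})\to0$, after which \hyperref[lem:pointed-entropy-rigidity]{Lemma~\ref*{lem:pointed-entropy-rigidity}} (applied after a time shift) yields a Euclidean limit. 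This contradicts $|\Rm|(y_i,t_i)Q_i^{-1}=1$.

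\emph{Entropy at the picked point.} This is the main obstacle. We apply \hyperref[thm:mu-and-N]{Theorem~\ref*{thm:mu-and-N}} to the flow restricted to $[0,t_i]$, based at $y_i$ rather than $x_i$. The required Ricci bound around $y_i$ on all of $(0,t_i]$, at scale $\beta'\sqrt{\bar\sigma(t)}$, follows from the Ricci hypothesis around $x_i$ combined with the fact that $y_i\in B(x_i,t_i,2K\beta\sqrt{t_i})$. We choose $\beta'$ comparable to $K\beta$, so that the ball $B(y_i,t,\beta'\sqrt t)$ is controlled, possibly after enlarging the constant to $A'$ via the distance distortion estimate \hyperref[lem:distance-distortion]{Lemma~\ref*{lem:distance-distortion}}. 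This yields $A''\sim A'/\beta'$, which is large. The ball $\Omega_0=B(y_i,0,20A''\sqrt{t_i})$ is then contained in $B(x_i,0,A_i^2\sqrt{t_i})$, provided $A''\ll A_i^2$; ensuring this is the delicate bookkeeping of constants. We therefore aim to choose $A'=A_i^{3/2}$, or else a power of $A_i$ intermediate between the Ricci threshold and $A_i^2$. By domain monotonicity, $\mu(\Omega_0,g(0),t_i)\geq-A_i^{-1}$, and \hyperref[thm:mu-and-N]{Theorem~\ref*{thm:mu-and-N}} gives
\[
\mathcal N_{y_i,t_i}(t_i)\geq -e^{2/5}A_i^{-1}-C(A'')^{-1}-C_7(|E|_{\max}t_i\mid 7)\to0 .
\]
Monotonicity of $\tau\mapsto\mathcal N(\tau)$ up to an error controlled by $|E|_{\max}\tau$ (\hyperref[prop:time-derivative-of-pointed-Nash-entropy]{Proposition~\ref*{prop:time-derivative-of-pointed-Nash-entropy}}), together with the upper bound (\ref{cor:upper-bound-on-N}), transfers this to $\mathcal N_{y_i,t_i}(Q_i^{-1})\to0$, since $Q_i^{-1}\leq t_i$. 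This completes the contradiction.
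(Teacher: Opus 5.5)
Your overall architecture (first failure time, Proposition~\ref{prop:point-picking}, blow-up at $(y_i,t_i)$, Lemma~\ref{lem:pointed-entropy-rigidity}) matches the paper's. However, the step you flag as the main obstacle fails as written.

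Applying Theorem~\ref{thm:mu-and-N} with base point $y_i$ requires $\Ric\le A'/(\beta'^2\bar\sigma_A(t))$ on $B(y_i,t,\beta'\sqrt{\bar\sigma_A(t)})$ for \emph{every} $t\in(0,t_i]$. This bound drives the barrier in Theorem~\ref{thm:concentration-under-Ricci-bound} over the whole interval, and you have no such bound.
\begin{itemize}
\item The hypothesis only controls $B(x_i,t,\beta\sqrt t)$, which shrinks to $x_i$ as $t\to0$. Since $y_i\neq x_i$ in general, for small $t$ no ball around $y_i$ lies in the controlled region.
\item The point-picking only bounds $|\Rm|$ near $y_i$ on the short interval $[t_i-Q_i^{-1},t_i]$.
\item The inclusion $y_i\in B(x_i,t_i,2K\beta\sqrt{t_i})$ is information at the single time $t_i$, and Lemma~\ref{lem:distance-distortion} cannot manufacture curvature bounds at earlier times.
\end{itemize}
For the same reason, the containment $B(y_i,0,20A''\sqrt{\sigma_A(t_i)})\subset B(x_i,0,A_i^2\sqrt{t_i})$ is unjustified. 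It needs an upper bound on $d_0(x_i,y_i)$, i.e.\ distance distortion over all of $[0,t_i]$, which again requires Ricci control near $y_i$ at earlier times.

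The missing idea is to keep the base point at $x_i$ and move to $y_i$ only within the time slice $t_i$. At $x_i$ the Ricci hypothesis does hold on all of $(0,t_i]$, because $t_i\le t_i^*$ (your $\bar t_i$). Take $A'=A_i$ and $\beta_i=A_i^{-1/2}$, so the curvature threshold $\alpha_7A_i/(\beta_i^2t)=\alpha_7A_i^2/t$ is exactly the one in the conclusion. Then $A_i''\sim A_i^{3/2}\ll A_i^2$, hence $B(x_i,0,40A_i''\sqrt{t_i})\subset B(x_i,0,A_i^2\sqrt{t_i})$. Theorem~\ref{thm:mu-and-N} then gives $\liminf_i\mathcal{N}_{x_i,t_i}(t_i)\ge0$. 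Next, the spatial gradient bound for pointed Nash entropy, Theorem~\ref{thm:gradient-estimate-Nash-entropy}, yields
\[
\mathcal{N}^*_0(y_i,t_i)\ge\mathcal{N}^*_0(x_i,t_i)-\frac{C(n)}{\sqrt{t_i}}\,d_{g_i(t_i)}(x_i,y_i)\ge\mathcal{N}^*_0(x_i,t_i)-2C(n)K\beta_i .
\]
The constant depends only on $n$ once $\Lambda_it_i\le1$, and the error tends to $0$ precisely because $\beta_i\to0$. From there your last paragraph goes through: almost-monotonicity in $\tau$, the upper bound (\ref{cor:upper-bound-on-N}), rescaling, and rigidity.

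Two smaller corrections:
\begin{itemize}
\item Define the first failure time by the $|\Rm|$ threshold $|\Rm|\le\alpha_nA_i/(\beta_i^2t)$ on $B(x_i,t,\beta_i\sqrt t)$, not by the Ricci condition. Failure of the $|\Rm|$ conclusion need not violate a Ricci bound, whereas the $|\Rm|$ bound implies the Ricci hypothesis of Proposition~\ref{prop:point-picking} and directly forces $F_i^*$ to be large.
\item For Laplacian flow, $E_{ij}=-\frac{2}{3}|T|^2g_{ij}-4{T_i}^kT_{kj}$ has no $\nabla T$ term. Hence $|E|_{\max}\le C\Lambda$, which is what you actually need for $C_7(|E|_{\max}t_i\mid7)\to0$.
\end{itemize}
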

\begin{proof}
	Assume there is not such $A_0$. Then there exist $A_i\to\infty$, and a sequence
	\begin{equation*}
		(M^7_i,\varphi_i(t),g_i(t),x_i)_{t\in[0,T_i]}
	\end{equation*}
	of Laplacian flows over compact manifolds satisfying $|T_{\varphi_i}|^2\leq\Lambda_i$, $T_i\leq A_i^{-1}\Lambda_i^{-1}$ and
	\begin{equation*}
		\inf_{0<t\leq T_i}
		\mu\big(B(x_i,0,A_i^2\sqrt{t}),g_i(0),t\big)
		\geq-A_i^{-1}.
	\end{equation*}
	Moreover, for each $i$, there exists $t_i^*\in(0,T_i]$ such that (set $\beta_i:=A_i^{-1/2}$)
	\begin{equation*}
		\begin{dcases}
			|\Rm|_{{g}_i}(x,t)\leq\frac{\alpha_nA_i}{\beta_i^2t},
			\quad\forall\, x\in B(x_i,t,\beta_i\sqrt{t}),\ t\in(0,t_i^*];\\
			\exists\, z_i\in B(x_i,t_i^*,\beta_i\sqrt{t_i^*})\subset M_i\text{ s.t. } |\Rm|_{{g}_i}(z_i,t_i^*)=\frac{\alpha_nA_i}{\beta_i^2t_i^*}.
		\end{dcases}
	\end{equation*}
	
	As in \hyperref[prop:point-picking]{Proposition~\ref*{prop:point-picking}} (we take $K=1$ below), we define $F_i(x,t)=\sqrt{|\Rm|_{{g}_i}(x,t)}\rho_i(x,t)$, where $x\in B(x_i,t,2K\beta_i\sqrt{t})$ and $t\in(0,t_i^*]$.
	Let $F_i^*$ be the maximum of $F_i$.
	Then
	\begin{enumerate}
		\item either $(F_i^*)^2\geq{\alpha_nA_i}/{2}$,
		\item or $(F_i^*)^2<{\alpha_nA_i}/{2}$.
	\end{enumerate}
	In the case (2), it holds by the definition of $F_i^*$ that
	\begin{equation*}
		|\Rm|_{{g}_i}(x,t)\big(2K\beta_i\sqrt{t}-d_{{g}_i(t)}(x,x_i)\big)^2
		<\frac{\alpha_nA_i}{2},
		\quad\forall\,x\in B_{{g}_i(t)}(x_i,2K\beta_i\sqrt{t}),\ t\in(0,t_i^*].
	\end{equation*}
	In particular,
	\begin{equation*}
		|\Rm|_{{g}_i}(x,t)<\frac{\alpha_nA_i}{2K^2\beta_i^2t},
		\quad\forall\,x\in B_{{g}_i(t)}(x_i,K\beta_i\sqrt{t}),\ t\in(0,t_i^*].
	\end{equation*}
	This contradicts the existence of $z_i$.
	Therefore (2) is impossible and (1) must hold. In particular, $F_i^*\to\infty$ as $i\to\infty$.
	Let $(y_i,t_i)$ be a point where $F_i$ achieves maximum and $Q_i=|\Rm|_{g_i}(y_i,t_i)$.
	Then \hyperref[prop:point-picking]{Proposition~\ref*{prop:point-picking}} implies
	\begin{equation*}
		|\Rm|_{g_i}(x,t)\leq 4Q_i,
		\quad\forall\,x\in B_{g_i(t)}\big(y_i,\tfrac{1}{5}F_i^*Q_i^{-1/2}\big),
		\ t\in[t_i-Q_i^{-1},t_i].
	\end{equation*}
	
	Set $\sigma_i=\sigma_{\frac{2}{3}\Lambda_i}$ and $\bar{\sigma}_i=\bar{\sigma}_{\frac{2}{3}\Lambda_i}$. Recall $\bar{\sigma}_i(t)\leq t$, and $\sigma_i(t)\leq 2t$ for sufficiently large $i$. Hence
	\begin{equation*}
		\Ric_{g_i}(x,t)
		\leq
		\frac{(n-1)\alpha_n A_i}{\beta_i^2\bar{\sigma}_i(t)},
		\quad\forall\, x\in B(x_i,t,\beta_i\sqrt{\bar{\sigma}_i(t)}),\ t\in(0,t_i].
	\end{equation*}
	Define $A_i'':=(n-1)(\frac{2}{3}\alpha_nA_i+1)\beta_i^{-1}$ and $\Omega_i:=B(x_i,0,A_i^2\sqrt{t_i})$.
	Then
	\begin{equation*}
		B(x_i,0,20A_i''\sqrt{\sigma_i(t_i)})
		\subset B(x_i,0,40A_i''\sqrt{t_i})\subset \Omega_i
	\end{equation*}
	for large $i$.
	By \hyperref[thm:mu-and-N]{Theorem~\ref*{thm:mu-and-N}},
	\begin{align*}
		\mathcal{N}_{x_i,t_i}(t_i)
		&\geq-e^{2/5}A_i^{-1}
		-c_1\Lambda_i^2T_i^2-c_2\Lambda_iT_i
		-C_6(A''_i)^{-1}.
	\end{align*}
	Therefore
	\begin{equation*}
		\liminf_{i\to\infty}\mathcal{N}_{x_i,t_i}(t_i)\geq 0.
	\end{equation*}
	By \hyperref[thm:gradient-estimate-Nash-entropy]{Theorem~\ref*{thm:gradient-estimate-Nash-entropy}} (more precisely, inequality (\ref{ineq:directional-derivative-estimate-Nash-entropy})) and the fact $y_i\in B(x_i,t_i,2K\beta_i\sqrt{t_i})$,
	\begin{equation*}
		\mathcal{N}^*_0(y_i,t_i)
		\geq
		\mathcal{N}^*_0(x_i,t_i)
		-
		\frac{C(n)}{\sqrt{t_i}}   \cdot   d_{g_i(t_i)}(x_i,y_i)
		\geq
		\mathcal{N}^*_0(x_i,t_i)
		-
		C(n)   \cdot   2K\beta_i.
	\end{equation*}
	Here we use $\Lambda_iT_i\to 0$ (in particular, $\Lambda_iT_i\leq 1$ for large $i$) to drop the dependence of the constant on $|E_i|_{\mathrm{max}}T_i$. This yields
	\begin{equation*}
		\liminf_{i\to\infty}\mathcal{N}_{y_i,t_i}(Q_i^{-1})
		\geq
		\liminf_{i\to\infty}\mathcal{N}_{y_i,t_i}(t_i)
		\geq 0.
	\end{equation*}

	By parabolic rescaling $\tilde{\varphi}_i(t):=Q_i^{3/2}\varphi_i\big(Q_i^{-1}t+(t_i-Q_i^{-1})\big)$, we obtain a sequence
	\begin{equation*}
		(M_i,\tilde{\varphi}_i(t),\tilde{g}_i(t),y_i)_{t\in[0,1]}
	\end{equation*}
	of Laplacian flows with $|T_{\tilde{\varphi}_i}|^2\leq \tilde{\Lambda}_i:=Q_i^{-1}\Lambda_i\leq T_i\Lambda_i\leq A_i^{-1}\to 0$. Moreover,
	\begin{equation}\label{ineq:liminf-of-pointed-Nash-entropy}
		\liminf_{i\to\infty}\mathcal{N}_{y_i,1}(1)\geq 0,
	\end{equation}
	\begin{equation*}
		|\Rm|_{\tilde{g}_i}(y_i,1)=1
		\qquad\text{and}\qquad
		|\Rm|_{\tilde{g}_i}(x,t)\leq 4,
		\quad\forall\,x\in B_{\tilde{g}_i(t)}(y_i,\tfrac{1}{5}F_i^*),
		\ t\in[0,1].
	\end{equation*}
	Combining (\ref{ineq:liminf-of-pointed-Nash-entropy}), (\ref{cor:upper-bound-on-N}) and the fact $\tilde\Lambda_i\to 0$, we have $\mathcal N_{y_i,1}(1)\to 0$.
	It follows from \hyperref[lem:pointed-entropy-rigidity]{Lemma~\ref*{lem:pointed-entropy-rigidity}} that the subsequence limit has to be Euclidean and therefore $|\Rm|_{\tilde g_i}(y_i,1)\to0$, contradicting $|\Rm|_{\tilde g_i}(y_i,1)=1$.
\end{proof}

Next we state our pseudolocality theorem.
It can be shown directly at this stage, but we prefer to reduce it to the $\epsilon$-regularity of pointed Nash entropy (\hyperref[thm:refined-epsilon-regularity]{Theorem~\ref*{thm:refined-epsilon-regularity}}), as this clearly illustrate our basic intuition and the relationship between pseudolocality and the $\epsilon$-regularity of pointed Nash entropy.
In what follows $n$ is understood to be $7$.
\begin{Theorem}[Refined pseudolocality, c.f. \cite{wang2020local} Theorem 4.5]\label{thm:refined-pseudolocality}
	For any $\alpha\in(0,1]$, there exists $\delta>0$ depending only on $\alpha$ such that the following holds. Let $(M^7,\varphi(t),g(t))_{t\in[0,T]}$ be a Laplacian flow over a compact manifold satisfying the uniform torsion bound $|T_\varphi|^2\leq\Lambda$ for some $\Lambda>0$. Let $x_0$ be a point on $M$. Suppose $T\leq\delta\Lambda^{-1}$ and
	\begin{equation*}
		\inf_{0<t\leq T}
		\mu\big(B(x_0,0,\delta^{-1}\sqrt{t}),g(0),t\big)
		\geq-\delta,
	\end{equation*}
	then
	\begin{equation}\label{ineq:pseudolocality}
		|\Rm|(x,t)\leq\frac{\alpha}{t},
		\quad\forall\,x\in B(x_0,t,\alpha^{-1}\sqrt{t}),\ t\in(0,T].
	\end{equation}
	Moreover, we may choose $\delta$ so that for all $t\in(0,T]$ and $x\in B(x_0,t,\alpha^{-1}\sqrt{t})$ we have
	\begin{equation}\label{ineq:volume-ratio-and-inj}
		\inf_{\rho\in(0,\alpha^{-1}\sqrt{t}]}
		\frac{|B(x,t,\rho)|}{\rho^n}
		\geq(1-\alpha)\omega_n,\qquad
		\mathrm{inj}(x,t)\geq\alpha^{-1}\sqrt{t}.
	\end{equation}
\end{Theorem}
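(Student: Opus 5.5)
We argue by contradiction and blow-up, reducing everything to the rough pseudolocality (Theorem~\ref{thm:rough-pseudolocality}) and to the rigidity of pointed Nash entropy (Lemma~\ref{lem:pointed-entropy-rigidity}, or equivalently the $\epsilon$-regularity Theorem~\ref{thm:refined-epsilon-regularity}).

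Suppose the statement fails for some $\alpha$. Then there are $\delta_i\to0$, Laplacian flows $(M_i,\varphi_i(t),g_i(t))_{t\in[0,T_i]}$ with $|T_{\varphi_i}|^2\leq\Lambda_i$ and $T_i\leq\delta_i\Lambda_i^{-1}$, points $x_i$ with
\[
\inf_{0<t\leq T_i}\mu\big(B(x_i,0,\delta_i^{-1}\sqrt t),g_i(0),t\big)\geq-\delta_i,
\]
and $t_i\in(0,T_i]$, $z_i\in B(x_i,t_i,\alpha^{-1}\sqrt{t_i})$ at which (\ref{ineq:pseudolocality}) or (\ref{ineq:volume-ratio-and-inj}) fails. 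Rescale parabolically by $t_i^{-1}$, so that $t_i=1$ and $\tilde\Lambda_i=t_i\Lambda_i\leq\delta_i\to0$. For $i$ large, $\delta_i^{-1}\geq A^2$ and $\delta_i\leq A^{-1}$, so Theorem~\ref{thm:rough-pseudolocality} applies. It gives $|\Rm|\leq\alpha_7A^2/t$ on $B(x_i,t,A^{-1/2}\sqrt t)$.

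The first task is to upgrade this to curvature control on large balls at times comparable to $1$, i.e.\ on $B(x_i,t,L)$ for every fixed $L$. I will apply the same hypothesis at basepoints $x\in B(x_i,0,L)$. Since $\delta_i^{-1}\to\infty$, domain monotonicity gives
\[
\mu\big(B(x,0,\tfrac12\delta_i^{-1}\sqrt t),g_i(0),t\big)\geq-\delta_i
\]
for all $t\in[c,1]$. For smaller $t$ the balls $B(x,0,\delta_i'^{-1}\sqrt t)$ may leave $B(x_i,0,\delta_i^{-1}\sqrt t)$, so that range needs care. My first attempt is to apply Theorem~\ref{thm:rough-pseudolocality} on the shifted interval $[0,1]$ with the hypothesis needed only along a sequence of scales. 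If that fails, I will use the Ricci bound near $x_i$ together with distance distortion (Lemma~\ref{lem:distance-distortion}) to move the center. The expected outcome is
\[
|\Rm|\leq C(L)\quad\text{on } B(x_i,t,L),\qquad t\in[\tfrac12,1].
\]
An alternative route avoids the recentering. Use the rough bound in Theorem~\ref{thm:mu-and-N} to get $\liminf\mathcal N_{x_i,1}(1)\geq0$. Then Theorem~\ref{thm:gradient-estimate-Nash-entropy} gives $\liminf\mathcal N_{y,1}(1)\geq0$ uniformly for $y\in B(x_i,1,\alpha^{-1})$. The $\epsilon$-regularity Theorem~\ref{thm:refined-epsilon-regularity} then supplies the curvature bound around every such point at scale $1$.

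With curvature bounds at scale $1$ on large balls, the argument finishes as follows. By Theorem~\ref{thm:mu-and-N} with $\beta=1$ and $A''\to\infty$, together with the upper bound (\ref{cor:upper-bound-on-N}), we get $\mathcal N_{x_i,1}(1)\to0$. Lemma~\ref{lem:pointed-entropy-rigidity} then says the smooth limit, which exists by Theorem~\ref{thm:no-local-collapsing} and Theorem~\ref{thm:compactness}, is static Euclidean space. Smooth convergence on $B(x_i,1,2\alpha^{-1})$ gives $|\Rm|(z_i,1)\to0<\alpha$. It also gives volume ratios tending to $\omega_n$ up to radius $\alpha^{-1}$, and injectivity radius tending to $\infty$, since the limit is $\mathbb R^n$ and we have smooth convergence with curvature bounds. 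Each of these contradicts the choice of $z_i$.

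The main obstacle is the curvature bound on balls of radius $\sim\alpha^{-1}$, rather than only $A^{-1/2}$, at the final time. I plan to get it from the Nash-entropy route, which handles arbitrary basepoints through the gradient estimate of Theorem~\ref{thm:gradient-estimate-Nash-entropy}, combined with Theorem~\ref{thm:refined-epsilon-regularity}.
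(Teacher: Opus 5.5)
Your core chain (a priori curvature bound near $x_0$, then Theorem~\ref{thm:mu-and-N}, then $\mathcal{N}_{x_i,1}(1)\to0$, then $\epsilon$-regularity) is right, but there is a genuine gap: the step you single out as the main obstacle is handled by an argument that fails. After rescaling, Theorem~\ref{thm:gradient-estimate-Nash-entropy} gives $|\nabla\mathcal{N}^*_0(\cdot,1)|\leq C(n)$, using $\tilde\Lambda_i\to0$. So for $y\in B(x_i,1,\alpha^{-1})$ you only obtain $\mathcal{N}_{y,1}(1)\geq\mathcal{N}_{x_i,1}(1)-C(n)\alpha^{-1}$. The loss is a fixed constant, not $o(1)$, and is nowhere near the smallness Theorem~\ref{thm:refined-epsilon-regularity} needs. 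In the proof of Theorem~\ref{thm:rough-pseudolocality} the same estimate is harmless only because the displacement there is $2K\beta_i\sqrt{t_i}$ with $\beta_i\to0$. The recentering alternative has no footing either, since the $\mu$-hypothesis is centered at $x_0$ and controls no small balls around other points. However, the obstacle is not real. Theorem~\ref{thm:refined-epsilon-regularity}, applied at the single point $(x_i,1)$ with $r=1$ and any $\epsilon<\alpha$, already gives $|\Rm|\leq\epsilon$ on $B(x_i,1,\epsilon^{-1})\supset B(x_i,1,\alpha^{-1})$ at time $1$. It also gives the volume-ratio and injectivity-radius bounds there. So once $\mathcal{N}_{x_i,1}(1)\to0$ is known, the contradiction is immediate and the final limit argument is unnecessary.

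Your derivation of $\mathcal{N}_{x_i,1}(1)\to0$ also needs repair. Theorem~\ref{thm:mu-and-N} requires $\Ric\leq A'/(\beta^2\bar\sigma_A(t))$ on $B(x_i,t,\beta\sqrt{\bar\sigma_A(t)})$ for \emph{all} $t\in(0,1]$. Curvature bounds at scale $1$ for $t\in[\frac12,1]$ do not supply this. Moreover, in your last paragraph those bounds were themselves obtained from $\mathcal{N}_{x_i,1}(1)\to0$, so that ordering is circular. The rough bound of Theorem~\ref{thm:rough-pseudolocality} does give a bound of the right form, but only with $\beta=A^{-1/2}$, not $\beta=1$, and with a fixed $A'$ of order $(n-1)\alpha_7A$. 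Then $A''$ is a fixed number, and the error $C_6(A'')^{-1}$ neither tends to zero nor need lie below the $\alpha$-dependent threshold. To get $A''\to\infty$ you must inflate $A'$, which is allowed since the hypothesis of Theorem~\ref{thm:mu-and-N} is monotone in $A'$. Choose $A'_i\to\infty$ slowly enough that $\Omega_0=B(x_i,0,20A''_i\sqrt{\sigma_A(1)})\subset B(x_i,0,\delta_i^{-1})$. Domain monotonicity of $\mu$ and (\ref{cor:upper-bound-on-N}) then give $\mathcal{N}_{x_i,1}(1)\to0$. With both repairs your argument is a valid blow-up variant of the paper's.

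The paper argues directly, without rough pseudolocality or limits at this stage. At the first time $t^*$ where $|\Rm|\leq\alpha/t$ on $B(x_0,t,\alpha^{-1}\sqrt{t})$ is saturated, that bound itself gives $\Ric\leq\delta^{-1}/\bar\sigma_A(t)$ on $B(x_0,t,\sqrt{\bar\sigma_A(t)})$ for $t\leq t^*$. So $\beta=1$, $A'=\delta^{-1}$ is inflated exactly as above, and $\Omega_0\subset B(x_0,0,40\delta^{-1}\sqrt{t^*})$. Theorem~\ref{thm:mu-and-N} then yields $\mathcal{N}_{x_0,t^*}(t^*)\geq-O(\delta)$. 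Theorem~\ref{thm:refined-epsilon-regularity} with $\epsilon=\alpha/2$ improves the bound to $\alpha/(2t^*)$ on the whole ball, contradicting saturation.
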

The proof is deferred to \hyperref[sec:pseudolocality-reduction]{Section~\ref*{sec:pseudolocality-reduction}}.

\section{Quantitative Rigidity of Pointed Nash Entropy}\label{sec:rigidity-of-pointed-Nash-entropy}
This section corresponds to $\S 10$ of \cite{bamler2020entropy}.
\begin{Definition}[Curvature scale radius]
	Let $I$ be an interval and $(g(t))_{t\in I}$ be a smooth family of Riemannian metrics over $M^n$.
	The curvature radius at $(x,t)\in M\times I$ is defined to be
	\begin{equation*}
		r_{\Rm}(x,t)
		=\sup\{r>0:|\Rm|\leq r^{-2}\text{ over }P(x,t;r)\}.
	\end{equation*}
	Recall our convention (\ref{eq:conventional-parabolic-neighborhood}) for the parabolic neighborhood $P(x,t;r)$.
	It depends on $I$ implicitly.
	We write $r_{\Rm}^I$ when the ambient time interval needs to be
	emphasized.
	Note that restricting the flow interval may increase the curvature radius: if $J\subset I$ and $t\in J$, then $r_{\Rm}^J(x,t)\geq r_{\Rm}^I(x,t)$.
\end{Definition}

We first prove a rough $\epsilon$-regularity theorem concerning the pointed Nash entropy.
For this, we need the following point-picking lemma.
\begin{Lemma}[point-picking]\label{lem:point-picking}
	Let $M^n$ be a compact manifold and $(g(t))_{t\in I}$ be a smooth family of Riemannian metrics over $M$ with $E(g)\geq-Ag$ for some $A\geq 0$.
	Suppose $L>0$ and $(x,t)\in M\times I$ satisfy $r_{\Rm}^I(x,t)<\infty$ and
	\begin{equation*}
		[t-(10Lr_{\Rm}^I(x,t))^2,t]\subset I.
	\end{equation*}
	Then there exists a point $(x',t')\in P^{*-}_A(x,t;10Lr_{\Rm}^I(x,t))$ such that
	\begin{equation*}
		r_{\Rm}^I(x',t')\leq r_{\Rm}^I(x,t),
		\qquad\text{and}\qquad
		r_{\Rm}^I\geq\tfrac{1}{10}r_{\Rm}^I(x',t')
		\quad\text{over }P^{*-}_A(x',t';Lr_{\Rm}^I(x',t')).
	\end{equation*}
\end{Lemma}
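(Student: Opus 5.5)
We argue by the standard iterative point-picking argument, as in \cite[Lemma 10.3]{bamler2020entropy}, adapted to the modified parabolic neighborhoods $P^{*-}_A$. Set $(x_0,t_0):=(x,t)$ and $r_0:=r_{\Rm}^I(x_0,t_0)$. Inductively, if $(x_k,t_k)$ has been chosen and the second conclusion fails for it, i.e. there is a point $(x_{k+1},t_{k+1})\in P^{*-}_A(x_k,t_k;Lr_k)$ with $r_{k+1}:=r_{\Rm}^I(x_{k+1},t_{k+1})<\tfrac{1}{10}r_k$, we take it as the next point. Then $r_k\leq 10^{-k}r_0$. Since $M$ is compact and the flow is smooth on the relevant compact time interval, $|\Rm|$ is bounded there, so $r_{\Rm}^I$ is bounded below by a positive constant on the region we visit. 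Hence the process stops after finitely many steps at some $(x_N,t_N)$, and we set $(x',t'):=(x_N,t_N)$. The first conclusion $r_{\Rm}^I(x',t')\leq r_0$ is immediate from $r_k\leq 10^{-k}r_0$. By construction the second conclusion holds at $(x',t')$: every point of $P^{*-}_A(x',t';Lr_{\Rm}^I(x',t'))$ has curvature radius at least $\tfrac1{10}r_{\Rm}^I(x',t')$.

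The remaining point, and the main obstacle, is to show that every $(x_k,t_k)$ lies in $P^{*-}_A(x_0,t_0;10Lr_0)$. We also need the time interval $[t_k-(Lr_k)^2,t_k]$ to stay inside $I$, so that the neighborhoods are defined; this follows from the hypothesis $[t-(10Lr_0)^2,t]\subset I$ once the time drift is controlled. For the times, $t_0-t_k\leq\sum_{j<k}(Lr_j)^2\leq (Lr_0)^2\sum 10^{-2j}<(10Lr_0)^2$.

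For the spatial part I need a triangle-type inequality for the neighborhoods $P^{*-}_A$. These are presumably defined, as in the appendix, via $W_1$-distances between conjugate heat kernel measures, with a factor $e^{\cdot A}$ correction coming from the monotonicity of $d_{W_1}$ under $E\geq -Ag$. The key facts to use are the following:
\begin{itemize}
\item Under $E\geq-Ag$, the quantity $d_{W_1}^{g(s)}(\nu_{x,t;s},\nu_{y,t';s})$ is nondecreasing in $s$ up to a factor $e^{A(\cdot)/2}$.
\item The $W_1$-distance satisfies the triangle inequality.
\end{itemize}
Using these, if $(x_{k+1},t_{k+1})\in P^{*-}_A(x_k,t_k;Lr_k)$, then the $W_1$-distance from $\nu_{x_0,t_0}$ to $\nu_{x_{k+1},t_{k+1}}$, measured at the relevant earlier time, is at most
\[
\sum_j e^{O(A\,(10Lr_0)^2)}Lr_j\leq e^{O(\cdot)}\,\tfrac{10}{9}\,Lr_0 .
\]
This is well within $10Lr_0$, provided the exponential factor is absorbed into the definition of $P^{*-}_A$ (which is exactly why the subscript $A$ appears there). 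Hence $(x_N,t_N)\in P^{*-}_A(x_0,t_0;10Lr_0)$.

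I expect this containment, i.e. verifying the precise triangle inequality for $P^{*-}_A$ with the $A$-dependent weights, to require the most care. Everything else is the same as in the Ricci flow case.
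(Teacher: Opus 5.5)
Your proposal is correct and follows the paper's proof. The paper uses the same maximal sequence with $r_{i+1}'<\tfrac{1}{10}r_i'$, gets finiteness from the positive lower bound of $r_{\Rm}^I$ on the compact time interval, and obtains containment in $P^{*-}_A(x,t;10Lr_{\Rm}^I(x,t))$ from the $W_1$ triangle inequality plus monotonicity; it packages that last step as the nested chain $P^{*-}_A(x_{i+1}',t_{i+1}';10Lr_{i+1}')\subset P^{*-}_A(x_i',t_i';10Lr_i')$ from Proposition~\ref{prop:containment-relationship}(5), whereas you sum the $W_1$-distances directly at the base time $t-(10Lr_0)^2$. Your version also closes: each step contributes at most $e^{\frac{A}{2}(10Lr_0)^2}e^{A(Lr_j)^2}Lr_j$, and $\tfrac{10}{9}e^{51AL^2r_0^2}Lr_0<10Lr_0\,e^{100AL^2r_0^2}$, which is exactly the radius in the definition of $P^{*-}_A$.
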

\begin{proof}
	We can iteratively pick a maximal sequence $(x_0',t_0'):=(x,t),(x_1',t_1'),\cdots$ such that for $r_i':=r_{\Rm}^I(x_i',t_i')$ we have
	\begin{equation*}
		(x_{i+1}',t_{i+1}')\in P^{*-}_A(x_i',t_i';Lr_i'),
		\qquad
		r_{i+1}'<\tfrac{1}{10}r_i'.
	\end{equation*}
	All these chosen points lie in the compact time interval $[t-(10Lr_{\Rm}^I(x,t))^2,t]$.
	The compactness of $M$ tells us that $r_{\Rm}^I$ has a positive lower bound, and therefore the sequence must be finite.
	We claim that its last element $(x_k',t_k')=:(x',t')$ has the desired properties.
	By the backward version of \hyperref[prop:containment-relationship]{Proposition~\ref*{prop:containment-relationship}}(5), we have for each $0\leq i\leq k-1$ that
	\begin{equation*}
		P_A^{*-}(x_{i+1}',t_{i+1}';10Lr_{i+1}')
		\subset
		P_A^{*-}(x_{i}',t_{i}';L(r_i'+10r_{i+1}'))
		\subset
		P_A^{*-}(x_{i}',t_{i}';10Lr_i').
	\end{equation*}
	This implies $(x',t')\in P^{*-}_A(x,t;10Lr_{\Rm}^I(x,t))$.
	By maximality of the sequence there is no point in $P_A^{*-}(x',t';Lr_k')$ whose curvature radius is less than $r_k'/10$.
	This is the asserted lower bound and completes the proof.
\end{proof}

\begin{Theorem}[Rough $\epsilon$-regularity]\label{thm:rough-epsilon-regularity}
	There exists a universal constant $\epsilon>0$ so that the following holds. Let $(M^7,\varphi(t),g(t))_{t\in I}$ be a Laplacian flow over a compact manifold satisfying the uniform torsion bound $|T_\varphi|^2\leq\Lambda$ for some $\Lambda>0$. Let $(x,t)\in M\times I$ and $r>0$ be such that $[t-r^2,t]\subset I$. Suppose $r\leq\epsilon\Lambda^{-1/2}$ and $\mathcal{N}_{x,t}(r^2)\geq-\epsilon$, then $r_{\Rm}(x,t)\geq\epsilon r$.
\end{Theorem}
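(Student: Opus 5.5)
I will argue by contradiction and blow-up, in the spirit of \cite[Theorem 10.2]{bamler2020entropy}, using \hyperref[lem:point-picking]{Lemma~\ref*{lem:point-picking}} to obtain a good basepoint and \hyperref[lem:pointed-entropy-rigidity]{Lemma~\ref*{lem:pointed-entropy-rigidity}} to conclude.

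Suppose the statement fails. Then there are $\epsilon_i\to0$, Laplacian flows $(M_i,\varphi_i(t),g_i(t))_{t\in I_i}$ with $|T_{\varphi_i}|^2\leq\Lambda_i$, and points $(x_i,t_i)$ with radii $r_i\leq\epsilon_i\Lambda_i^{-1/2}$ and $[t_i-r_i^2,t_i]\subset I_i$, such that $\mathcal N_{x_i,t_i}(r_i^2)\geq-\epsilon_i$ but $r_{\Rm}(x_i,t_i)<\epsilon_i r_i$. After parabolic rescaling I normalize $r_i=1$. Then $\Lambda_i\leq\epsilon_i^2\to0$, so $|E|_{\mathrm{max}}$ and $-R_{\min}$ are of order $\epsilon_i^2$ on the unit time scale. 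Since $\partial_t g+2\Ric=E$ with $|E|\leq C|T|^2$, we have $E\geq -A_ig$ with $A_i\to 0$. I restrict the flow to $I_i'=[t_i-1,t_i]$, which only increases $r_{\Rm}$, so we may still assume $r_i^{\Rm}:=r^{I_i'}_{\Rm}(x_i,t_i)<\epsilon_i$.

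Next I apply \hyperref[lem:point-picking]{Lemma~\ref*{lem:point-picking}} with $L_i:=(20\epsilon_i)^{-1/2}$. Since $(10L_ir^{\Rm}_i)^2\leq 100L_i^2\epsilon_i^2\to0$, the required interval lies in $I_i'$. The lemma gives $(x_i',t_i')\in P^{*-}_{A_i}(x_i,t_i;10L_ir_i^{\Rm})$ with $r_i':=r_{\Rm}(x_i',t_i')\leq r_i^{\Rm}$ and $r_{\Rm}\geq r_i'/10$ on $P^{*-}_{A_i}(x_i',t_i';L_ir_i')$. The point $(x_i',t_i')$ is within $o(1)$ of $(x_i,t_i)$ in the $P^*$ sense, and $t_i-t_i'=o(1)$. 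Using monotonicity of $\tau\mapsto\mathcal N$ (the almost-monotonicity of \hyperref[prop:time-derivative-of-pointed-Nash-entropy]{Proposition~\ref*{prop:time-derivative-of-pointed-Nash-entropy}}, with errors controlled by $\Lambda_i\to0$) together with the basepoint-dependence estimate \hyperref[thm:gradient-estimate-Nash-entropy]{Theorem~\ref*{thm:gradient-estimate-Nash-entropy}} in its $P^*$/$W_1$ form, I transfer the entropy bound and obtain $\mathcal N_{x_i',t_i'}(\tau)\geq-\epsilon_i-o(1)$ for every $\tau\leq\frac12$. This transfer is the step I expect to be the main obstacle. The spatial gradient estimate is stated for points in a common time slice, so I must combine it with the time-monotonicity comparison across slices, and I must check that the $P^*_A$ containment (rather than $P^*$) still yields $W_1$-closeness of the conjugate heat kernels with constants depending only on $A_i(t_i-t_i')\to0$.

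Now I rescale by $(r_i')^{-2}$ around $(x_i',t_i')$ and shift time so that $t_i'\mapsto 1$. The rescaled flows have $\tilde\Lambda_i=(r_i')^2\Lambda_i\to0$ and $r_{\Rm}(x_i',1)=1$. They satisfy $|\Rm|\leq100$ on $P^{*-}(x_i',1;L_i)$ with $L_i\to\infty$, and this contains a $g(t)$-ball of radius $\rho_i\to\infty$ for $t\in[0,1]$ by the containment results of \hyperref[prop:containment-relationship]{Proposition~\ref*{prop:containment-relationship}}. Moreover $\mathcal N_{x_i',1}(1)\geq -o(1)$, because the original scale $1/r_i'\to\infty$ is large, so $\tau=1$ in the new scale lies well below $\frac12$ in the old one. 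Combined with the upper bound (\ref{cor:upper-bound-on-N}) this gives $\mathcal N_{x_i',1}(1)\to0$.

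\hyperref[lem:pointed-entropy-rigidity]{Lemma~\ref*{lem:pointed-entropy-rigidity}}, applied with $\Lambda=100$, now shows that a subsequence converges smoothly to static Euclidean space on $[0,1]$. By smooth convergence, $r_{\Rm}(x_i',1)\to\infty$ on the restricted interval. This contradicts $r_{\Rm}(x_i',1)=1$.
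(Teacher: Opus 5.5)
There is a genuine gap, and it concerns the forward-in-time half of the curvature radius. Here $r_{\Rm}$ is defined through the two-sided neighborhood $P(x,t;r)=B(x,t,r)\times([t-r^2,t+r^2]\cap I)$, and shrinking the interval can only increase it: $r^J_{\Rm}\geq r^I_{\Rm}$ for $J\subset I$. Your step ``restrict to $I_i'=[t_i-1,t_i]$, which only increases $r_{\Rm}$, so we may still assume $r^{I_i'}_{\Rm}(x_i,t_i)<\epsilon_i$'' is therefore logically reversed. The hypothesis $r^{I_i}_{\Rm}(x_i,t_i)<\epsilon_i$ may be witnessed entirely by large curvature at times in $(t_i,t_i+\epsilon_i^2]$, which the restriction discards. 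On $I_i'$ you could at best prove the weaker backward statement $r^{[t-r^2,t]}_{\Rm}(x,t)\geq\epsilon r$.

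Even that weaker version does not close. The picked point satisfies $t_i'\leq t_i$, in general strictly, so the rescaled flow still extends beyond the basepoint time. But Lemma~\ref{lem:point-picking} only bounds $r_{\Rm}$ at points of the backward set $P^{*-}_{A_i}(x_i',t_i';L_ir_i')$, and Lemma~\ref{lem:pointed-entropy-rigidity} only gives flatness up to the basepoint time. A flat backward limit shows that the curvature radius at $(x_i',1)$, relative to the interval truncated at time $1$, tends to infinity. That is perfectly compatible with $r_{\Rm}(x_i',1)=1$ being caused by curvature at later times.

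The missing ingredient is a forward curvature bound, which the paper obtains from pseudolocality. Keep the full interval throughout; your point-picking, entropy transfer, and backward blow-up to static $\R^7$ are fine and follow the paper.

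Rescale by $r_i'$ with basepoint $(x_i',0)$ and metrics $g_i'$, and fix $\eta$ small and $\rho$ large depending on the constant $A_{\mathrm{ps}}$ of Theorem~\ref{thm:rough-pseudolocality}. The slices $(M_i,g_i'(-\rho^2),x_i')$ converge smoothly to Euclidean space, so Lemma~\ref{lem:local-mu-euclidean-stability} gives $\inf_{0<\theta\leq\rho^2+4}\mu\big(B_{g_i'(-\rho^2)}(x_i',A_{\mathrm{ps}}^2\sqrt{\theta}),g_i'(-\rho^2),\theta\big)\geq-A_{\mathrm{ps}}^{-1}$ for large $i$. Since the rescaled torsion bounds tend to $0$, Theorem~\ref{thm:rough-pseudolocality} started at time $-\rho^2$ yields $|\Rm|\leq\alpha_7A_{\mathrm{ps}}^2/(\rho^2+s)<\eta$ on $B_{g_i'(s)}(x_i',3)$. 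This holds for every $s\in[-4,4]$ at which the rescaled flow is defined. Distance distortion then gives $|\Rm|<\frac14$ on $B_{g_i'(0)}(x_i',2)$ over those times, i.e.\ $r_{\Rm}(x_i',0)\geq2$, which is the desired contradiction.

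This is not circular: Theorem~\ref{thm:rough-pseudolocality} is proved from Theorem~\ref{thm:mu-and-N} and point-picking, without any $\epsilon$-regularity. A minor point: the inclusion of large conventional balls in the $P^{*-}$-neighborhood comes from Corollary~\ref{cor:comparison-of-parabolic-neighborhoods}(2), not from Proposition~\ref{prop:containment-relationship}.
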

\begin{proof}
	Assume there is no such $\epsilon$. Then we may find $\epsilon_i\to 0$ and a sequence $(M^7_i,\varphi_i(t),g_i(t))_{t\in I_i}$ of Laplacian flows over compact manifolds satisfying $|T_{\varphi_i}|^2\leq\Lambda_i$, together with $(x_i,t_i)\in M_i\times I_i$ and $r_i>0$ such that $r_i\leq\epsilon_i\Lambda_i^{-1/2}$, $\mathcal{N}_{x_i,t_i}(r_i^2)\geq-\epsilon_i$ but $r_{\Rm}^{I_i}(x_i,t_i)<\epsilon_i r_i$.
	
	By parabolic rescaling $\tilde{\varphi}_i(s):=r_i^{-3}\varphi_i(r_i^2s+(t_i-r_i^2))$, we obtain a sequence $(M_i,\tilde{\varphi}_i(s),\tilde g_i(s))_{s\in\widetilde I_i}$ of Laplacian flows with $|T_{\tilde{\varphi}_i}|^2\leq\tilde{\Lambda}_i:=r_i^2\Lambda_i\leq\epsilon_i^2\to 0$.
	Moreover, there exists $x_i\in M_i$ such that $\mathcal N_{x_i,1}(1)\geq-\epsilon_i$ and $r_{\Rm}^{\widetilde I_i}(x_i,1)<\epsilon_i$.
	Here
	\begin{equation*}
		\widetilde I_i
		:=\{s\in\mathbb R:r_i^2s+(t_i-r_i^2)\in I_i\}.
	\end{equation*}
	Clearly, $[0,1]\subset\widetilde I_i$ for all $i$.
	
	For $A_i:=\frac{2}{3}\tilde{\Lambda}_i$, we have $E(\tilde{g}_i)\geq-A_i\tilde{g}_i$.
	Set $r_i^*=r_{\Rm}^{\widetilde I_i}(x_i,1)$.
	Choose $L_i\to\infty$ such that $10L_ir_i^*\to 0$.
	By \hyperref[lem:point-picking]{Lemma~\ref*{lem:point-picking}}, there exists $(x_i',t_i')\in P^{*-}_{A_i}(x_i,1;10L_ir_i^*)$ such that
	\begin{equation*}
		r_{\Rm}^{\widetilde I_i}(x_i',t_i')\leq r_i^*,
		\qquad\text{and}\qquad
		r_{\Rm}^{\widetilde I_i}\geq\tfrac{1}{10}r_{\Rm}^{\widetilde I_i}(x_i',t_i')
		\text{ over }
		P^{*-}_{A_i}(x_i',t_i';L_ir_{\Rm}^{\widetilde I_i}(x_i',t_i')).
	\end{equation*}
	Set $r_i'=r_{\Rm}^{\widetilde I_i}(x_i',t_i')$. Then $r_i'\leq r_i^*<\epsilon_i\to 0$. We claim that for all $\tau\in(0,t_i'-\frac{1}{2}]$,
	\begin{equation}\label{claim:smallness-of-N}
		\lim_{i\to\infty}\mathcal{N}_{x_i',t_i'}(\tau)=0.
	\end{equation}
	By (\ref{cor:upper-bound-on-N}), it suffices to prove $\liminf_{i\to\infty}\mathcal{N}_{x_i',t_i'}(\tau)\geq 0$.
	To see this, one first use \hyperref[prop:time-derivative-of-pointed-Nash-entropy]{Proposition~\ref*{prop:time-derivative-of-pointed-Nash-entropy}} and the fact $|T_{\tilde{\varphi}_i}|^2\leq\tilde{\Lambda}_i\to 0$ to conclude
	\begin{equation*}
		\mathcal{N}_{x_i',t_i'}(\tau)
		\geq
		\mathcal{N}_{x_i',t_i'}(t_i'-\tfrac{1}{2})
		-o(1).
	\end{equation*}
	Similarly, $\mathcal{N}_{x_i,1}(\frac{1}{2})   \geq   \mathcal{N}_{x_i,1}(1)-o(1)$.
	Also, \hyperref[cor:oscillation-of-pointed-Nash-entropy]{Corollary~\ref*{cor:oscillation-of-pointed-Nash-entropy}} implies (taking $t^*_i:=1-(10L_ir_i^*)^2$)
	\begin{equation*}
		\mathcal{N}^*_{\frac{1}{2}}(x_i',t_i')
		\geq
		\mathcal{N}^*_{\frac{1}{2}}(x_i,1)
		-Cd_{W_1}^{g(t_i^*)}(\nu_{x_i,1;t_i^*},\nu_{x_i',t_i';t_i^*})
		-\frac{nA_i}{2}(1-t_i^*)
		-\frac{7}{2}\log\frac{t_i'-\frac{1}{2}}{t_i^*-\frac{1}{2}},
	\end{equation*}
	where $C$ is some universal constant. Then it follows from $(x_i',t_i')\in P^{*-}_{A_i}(x_i,1;10L_ir_i^*)$ that
	\begin{equation*}
		\mathcal{N}^*_{\frac{1}{2}}(x_i',t_i')
		\geq
		\mathcal{N}^*_{\frac{1}{2}}(x_i,1)-o(1).
	\end{equation*}
	In summary,
	\begin{equation*}
		\mathcal{N}_{x_i',t_i'}(\tau)
		\geq
		\mathcal{N}_{x_i',t_i'}(t_i'-\tfrac{1}{2})-o(1)
		\geq
		\mathcal{N}_{x_i,1}(\tfrac{1}{2})-o(1)
		\geq
		\mathcal{N}_{x_i,1}(1)-o(1).
	\end{equation*}
	Thus our claim (\ref{claim:smallness-of-N}) follows from the fact $\mathcal{N}_{x_i,1}(1)\geq-\epsilon_i\to 0$.
	
	By \hyperref[cor:comparison-of-parabolic-neighborhoods]{Corollary~\ref*{cor:comparison-of-parabolic-neighborhoods}}(2), we can find a sequence $L_i'\leq L_i$ with $L_i'\to\infty$ such that
	\begin{equation*}
		P^-(x_i',t_i';L_i'r_i')
		\subset
		P^{*-}_{A_i}(x_i',t_i';L_ir_i'),
	\end{equation*}
	which implies $r_{\Rm}^{\widetilde I_i}\geq\frac{1}{10}r_i'$ on $P^-(x_i',t_i';L_i'r_i')$.
	
	Another application of parabolic rescaling $\varphi_i'(t):={r_i'}{}^{-3}\varphi({r_i'}{}^{2}t+t_i')$ now yields a sequence of Laplacian flows $(M_i,\varphi_i'(s),g_i'(s),x_i')_{s\in I_i'}$ satisfying
	\begin{itemize}
		\item $|T_{\varphi_i'}|^2\leq\Lambda_i':=r_i'{}^2\tilde{\Lambda}_i$;
		\item $r_{\Rm}^{I_i'}(x_i',0)=1$;
		\item $r_{\Rm}^{I_i'}\geq\frac{1}{10}$ over $P^-(x_i',0;L_i')$;
		\item $\lim_{i\to\infty}\mathcal{N}_{x_i',0}(T)=0$ for any fixed $T>0$.
	\end{itemize}
	Here $I_i':=\{s\in\mathbb R:{r_i'}{}^2s+t_i'\in\widetilde I_i\}$ contains $[-L_i'{}^2,0]$.
	Applying \hyperref[thm:no-local-collapsing]{Theorem~\ref*{thm:no-local-collapsing}} and \hyperref[thm:compactness]{Theorem~\ref*{thm:compactness}} to the restrictions of these flows to $[-L_i'{}^2,0]$, we get a subsequence that converges smoothly to some ancient Laplacian flow $(M^7_\infty,\varphi_\infty(t),g_\infty(t),x_\infty)_{t\in(-\infty,0]}$.
	As shown in \hyperref[lem:pointed-entropy-rigidity]{Lemma~\ref*{lem:pointed-entropy-rigidity}}, this limit is the static Euclidean flow.

	Note this flatness implies that the curvature radii at $(x_i',0)$ relative to $I_i'\cap(-\infty,0]$ tend to infinity.
	It does not yet contradict $r_{\Rm}^{I_i'}(x_i',0)=1$.
	The last step is to obtain curvature bounds at positive time by pseudolocality.

	Let $A_{\rm ps}$ be the universal constant in \hyperref[thm:rough-pseudolocality]{Theorem~\ref*{thm:rough-pseudolocality}} and $\eta\in(0,\frac{1}{4})$ be a fixed constant to be determined below.
	Choose $\rho>3$ so large that
	\begin{equation}\label{ineq:choose-R-for-curvature-escape}
		\frac{\alpha_7A_{\rm ps}^2}{\rho^2-4}<\eta,\qquad
		A_{\rm ps}^{-1/2}\sqrt{\rho^2-4}>3.
	\end{equation}
	For large $i$ the flow is defined on $[-\rho^2,0]$, and $\rho^2+4\leq A_{\rm ps}^{-1}(\Lambda_i')^{-1}$.
	Since $(M_i,g_i'(-\rho^2),x_i')$ converges smoothly to the Euclidean space, \hyperref[lem:local-mu-euclidean-stability]{Lemma~\ref*{lem:local-mu-euclidean-stability}} gives
	\begin{equation*}
		\inf_{0<\theta\leq\rho^2+4}
		\mu\big(B_{g_i'(-\rho^2)}(x_i',A_{\rm ps}^2\sqrt{\theta}),
		g_i'(-\rho^2),\theta\big)\geq-A_{\rm ps}^{-1}
	\end{equation*}
	for all sufficiently large $i$.
	For $s\in[-4,4]\cap I_i'$, \hyperref[thm:rough-pseudolocality]{Theorem~\ref*{thm:rough-pseudolocality}} and (\ref{ineq:choose-R-for-curvature-escape}) gives 
	\begin{equation*}
		|\Rm|_{g_i'(s)}
		\leq\frac{\alpha_7A_{\rm ps}^2}{\rho^2+s}<\eta\qquad
		\text{on }B_{g_i'(s)}(x_i',3)\subset B_{g_i'(s)}(x_i',A_{\rm ps}^{-1/2}\sqrt{\rho^2+s}).
	\end{equation*}
	For large $i$ we also have $\Lambda_i'<\eta$.
	When $\eta$ is sufficiently small, we have
	\begin{equation*}
		B_{g_i'(0)}(x_i',2)
		\subset B_{g_i'(s)}(x_i',3),\quad
		\forall s\in[-4,4]\cap I_i'
	\end{equation*}
	for all sufficiently large $i$.  Therefore
	\begin{equation*}
		|\Rm|\leq\eta<\frac14
		\quad\text{on }B_{g_i'(0)}(x_i',2)\times([-4,4]\cap I_i').
	\end{equation*}
	Consequently $r_{\Rm}^{I_i'}(x_i',0)\geq2$, contradicting $r_{\Rm}^{I_i'}(x_i',0)=1$.
\end{proof}

The above rough $\epsilon$-regularity may be used to prove the following refined $\epsilon$-regularity theorem.
\begin{Theorem}[Refined $\epsilon$-regularity]\label{thm:refined-epsilon-regularity}
	For any $\epsilon>0$, there exists $\delta>0$ depending only on $\epsilon$ such that the following holds. Let $(M^7,\varphi(t),g(t))_{t\in I}$ be a Laplacian flow over a compact manifold satisfying the uniform torsion bound $|T_\varphi|^2\leq\Lambda$ for some $\Lambda>0$. Let $(x,t)\in M\times I$ and $r>0$ be such that $[t-r^2,t]\subset I$. Suppose $r\leq\delta\Lambda^{-1/2}$ and $\mathcal{N}_{x,t}(r^2)\geq-\delta$, then
	\begin{equation*}
		|\Rm|\leq\epsilon r^{-2}
		\quad\text{over }P(x,t;\epsilon^{-1}r,-(1-\epsilon)r^2,\epsilon^{-1}r^2).
	\end{equation*}
	Moreover, we may choose $\delta$ so that over $P(x,t;\epsilon^{-1}r,-(1-\epsilon)r^2,\epsilon^{-1}r^2)$ we have
	\begin{equation*}
		\mathcal{N}^*_{t-r^2}\geq-\epsilon,
		\qquad
		\inf_{0<\rho\leq\epsilon^{-1}r}
		\frac{|B(\cdot,\cdot,\rho)|}{\rho^n}
		\geq(1-\epsilon)\omega_n,
		\qquad
		\mathrm{inj}(\cdot,\cdot)\geq\epsilon^{-1}r.
	\end{equation*}
\end{Theorem}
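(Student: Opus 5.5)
We argue by contradiction and compactness, with the rough $\epsilon$-regularity (Theorem~\ref{thm:rough-epsilon-regularity}) supplying the uniform curvature bounds the blow-up needs.

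Suppose the statement fails for some $\epsilon>0$. Then there are $\delta_i\to0$, Laplacian flows with $|T_{\varphi_i}|^2\leq\Lambda_i$, points $(x_i,t_i)$ and radii $r_i\leq\delta_i\Lambda_i^{-1/2}$ with $\mathcal N_{x_i,t_i}(r_i^2)\geq-\delta_i$, for which one of the conclusions fails somewhere in $P(x_i,t_i;\epsilon^{-1}r_i,-(1-\epsilon)r_i^2,\epsilon^{-1}r_i^2)$. We rescale parabolically so that $r_i=1$ and $t_i=1$. The rescaled torsion bounds satisfy $\tilde\Lambda_i\to0$, and $\mathcal N_{x_i,1}(1)\to0$ by (\ref{cor:upper-bound-on-N}).

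\emph{Step 1: uniform entropy on the parabolic region.} By Proposition~\ref{prop:time-derivative-of-pointed-Nash-entropy} with $\tilde\Lambda_i\to0$, together with Corollary~\ref{cor:oscillation-of-pointed-Nash-entropy} and inequality (\ref{ineq:directional-derivative-estimate-Nash-entropy}), every point $(y,s)$ of the backward part $P(x_i,1;\epsilon^{-1},-(1-\epsilon),0)$ satisfies $\mathcal N^*_{0}(y,s)\geq-C(\epsilon)\cdot o(1)$. This uses the $W_1$-distance between $\nu_{x_i,1;s'}$ and $\nu_{y,s;s'}$, which is bounded in terms of $d_s(x_i,y)$ and the time gap; these are $O(\epsilon^{-1})$ at scale $\sqrt{s}\geq\sqrt\epsilon$. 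The quantitative control from Corollary~\ref{cor:oscillation-of-pointed-Nash-entropy} is what makes the entropy loss small rather than merely bounded.

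\emph{Step 2: curvature bounds and compactness.} Applying Theorem~\ref{thm:rough-epsilon-regularity} at each such $(y,s)$ with radius $\sqrt{s}\geq\sqrt\epsilon$ gives $r_{\Rm}\geq c\sqrt\epsilon$ on this backward region. With Theorem~\ref{thm:no-local-collapsing} and Theorem~\ref{thm:compactness}, a subsequence converges smoothly on the backward region. The argument of Lemma~\ref{lem:pointed-entropy-rigidity} then identifies the limit as static Euclidean, so $|\Rm|\to0$ there.

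\emph{Step 3: forward times.} To handle times up to $1+\epsilon^{-1}$, we use the refined pseudolocality, but only in its proven rough form (Theorem~\ref{thm:rough-pseudolocality}), to avoid circularity since Theorem~\ref{thm:refined-pseudolocality} will be derived from this result. We start it at a time slice, say $s=\epsilon$, where the metric is smoothly close to Euclidean on huge balls. Lemma~\ref{lem:local-mu-euclidean-stability} gives the required local $\mu$ lower bound for all scales up to $\epsilon^{-1}+1$, as in the final step of the proof of Theorem~\ref{thm:rough-epsilon-regularity}. This yields curvature bounds $C/(s-\epsilon)$ on large balls for later times. Smooth convergence then extends to the whole region, and the limit is still static Euclidean because the torsion tends to $0$ and the initial data are flat.

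\emph{Step 4: the remaining conclusions.} Smooth convergence to Euclidean space gives the volume ratio and injectivity radius bounds. The bound $\mathcal N^*_{t-r^2}\geq-\epsilon$ at forward points follows from Step~1 together with monotonicity and the oscillation estimates applied at base time $0$. This contradicts the assumed failure.

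The main obstacle is Step~3, namely the forward-in-time control. Its difficulty is that the rough pseudolocality only gives curvature $\lesssim\alpha_7A^2/t$ within radius $A^{-1/2}\sqrt t$. The plan is to restart it at many slices arbitrarily close to the Euclidean limit and iterate over time steps, ensuring that balls of radius $\epsilon^{-1}$ are covered via distance distortion control.
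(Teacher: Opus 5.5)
\textbf{The gap is in Step 1, and everything after it rests on that step.} The tools you invoke cannot produce $\mathcal N^*_0(y,s)\geq -o(1)$ at points a definite distance from $x_i$. The gradient bound (\ref{ineq:directional-derivative-estimate-Nash-entropy}) is $|\nabla \mathcal N^*_0(\cdot,t^*)|\leq C(n)/\sqrt{t^*}$ with a \emph{fixed} constant, and this constant does not improve because $\mathcal N_{x_i,1}(1)\to 0$.

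Take $d_1(x_i,y)\sim\epsilon^{-1}$, so the $W_1$-distance between the two conjugate heat kernels is also of order $\epsilon^{-1}$. Corollary~\ref{cor:oscillation-of-pointed-Nash-entropy} then costs $C(n)\epsilon^{-1}/\sqrt{t^*}$ plus $\frac n2\log(s/t^*)$. That yields only $\mathcal N^*_0(y,s)\geq -C(\epsilon)$. Chaining many short steps does not help, because the losses add up to the same bounded total. The oscillation estimate gives genuine smallness only at $P^*$-distance $o(1)$ from $(x_i,1)$, which is exactly how it is used in the proof of Theorem~\ref{thm:rough-epsilon-regularity}.

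With merely bounded entropy you cannot apply Theorem~\ref{thm:rough-epsilon-regularity} at $(y,s)$, since it requires $\mathcal N\geq-\epsilon_0$ for a universal $\epsilon_0$. So Step 2 only gives curvature control in a tiny neighbourhood of $(x_i,1)$. The same flaw affects Step 4. The averaged inequality (\ref{ineq:averaged-Nash-entropy}) over a forward time gap of order $\epsilon^{-1}$ loses $\frac n2\log(1+\epsilon^{-1})$, which is not small.

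\textbf{The missing idea is the propagation of almost-vanishing entropy, claim (\ref{ineq:claim-10.7}).} It goes as follows.
\begin{enumerate}
\item Solve $\square u_i=0$ with $u_i(\cdot,t^*)=\mathcal N^*_0(\cdot,t^*)$.
\item By (\ref{cor:upper-bound-on-N}), $u_i\leq o(1)$. By (\ref{ineq:ui-estimates}), which comes from Theorem~\ref{ineq:square-of-N}, $u_i(x_i,1)\geq -o(1)$ and $\mathcal N^*_0\geq u_i-\frac n2\log(t/t^*)$.
\item On the region where rough $\epsilon$-regularity already gives curvature bounds, pass to a smooth limit. The strong maximum principle forces $u_\infty\equiv0$.
\item Letting $t^*\uparrow t$ gives $\mathcal N^*_0\geq-o(1)$ on a slightly larger region. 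This feeds back into Theorem~\ref{thm:rough-epsilon-regularity}, and a continuity argument in $\theta$ closes the loop.
\end{enumerate}
This is the only mechanism in the argument that spreads the information at the single point $(x_i,1)$ over a region of size $\epsilon^{-1}$. Lipschitz bounds cannot replace it.

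\textbf{On Step 3 and the forward entropy.} Using Theorem~\ref{thm:rough-pseudolocality} from a nearly Euclidean slice is a legitimate alternative to the paper's route, which iterates rough $\epsilon$-regularity forward in steps of a universal size. However, your Step 3 presupposes smooth Euclidean convergence on arbitrarily large balls at that slice, which is precisely what the missing step supplies. Once smooth convergence on large forward regions is available, the forward bound $\mathcal N^*_0\geq-\epsilon$ follows from two ingredients. One is the cut-off argument of Lemma~\ref{lem:pointed-entropy-rigidity}. The other is the almost-monotonicity in $\tau$ from Proposition~\ref{prop:time-derivative-of-pointed-Nash-entropy}. It does not follow from oscillation estimates, as your Step 4 claims.
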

\begin{proof}
	We first prove the backward curvature estimate, namely \begin{equation*}
		|\Rm|\leq\epsilon r^{-2}
		\quad\text{over }P(x,t;\epsilon^{-1}r,-(1-\epsilon)r^2)
	\end{equation*}
	for sufficiently small $\delta$. Assume there is no such $\delta$. Then we may find $\delta_i\to 0$ and a sequence $(M^7_i,\varphi_i(t),g_i(t))_{t\in I_i}$ of Laplacian flows over compact manifolds satisfying $|T_{\varphi_i}|^2\leq\Lambda_i$, together with $(x_i,t_i)\in M_i\times I_i$ and $r_i>0$ such that $r_i\leq\delta_i\Lambda_i^{-1/2}$, $\mathcal{N}_{x_i,t_i}(r_i^2)\geq-\delta_i$ but $|\Rm|(y_i,s_i)>\epsilon r_i^{-2}$ for some $(y_i,s_i)\in P(x_i,t_i;\epsilon^{-1}r_i,-(1-\epsilon)r_i^2)$.
	
	By parabolic rescaling $\tilde{\varphi}_i(t):=r_i^{-3}\varphi_i\big(r_i^2t+(t_i-r_i^2)\big)$, we obtain a sequence $(M_i,\tilde{\varphi}_i(t),\tilde{g}_i(t))_{t\in[0,1]}$ of Laplacian flows with $|T_{\tilde{\varphi}_i}|^2\leq \tilde{\Lambda}_i:=r_i^2\Lambda_i\leq\delta_i^2\to 0$. Moreover, there exist $x_i\in M_i$ and $(y_i,s_i')\in P(x_i,1;\epsilon^{-1},-(1-\epsilon))$ such that $\mathcal{N}_{x_i,1}(1)\geq-\delta_i$ and $|\Rm|(y_i,s_i')>\epsilon$. We claim for any $\theta\in(0,1]$, there exists a subsequence such that
	\begin{equation}\label{ineq:claim-10.7}
		\liminf_{i\to\infty}
		\Big(\inf_{P_{i,\theta}}\mathcal{N}^*_0\Big)
		\geq 0,
	\end{equation}
	where $P_{i,\theta}:=B(x_i,1,-\log\theta)\times[\theta,1]$.
	The proof of this claim will be deferred to \hyperref[rmk:claim-10.7]{Remark~\ref*{rmk:claim-10.7}}.
	Assuming the claim, \hyperref[thm:rough-epsilon-regularity]{Theorem~\ref*{thm:rough-epsilon-regularity}} then allows us to extract a subsequence $(M_i,\tilde{\varphi}_i(t),\tilde{g}_i(t),x_i)_{t\in[0,1]}$ that $C^\infty$-converges to some Laplacian flow $(M^7_\infty,\varphi_\infty(t),g_\infty(t),x_\infty)_{t\in(0,1]}$ with a curvature bound of the form $|\Rm|_{g_\infty}(\cdot,t)\leq C/t$.
	As in the proof of \hyperref[thm:rough-epsilon-regularity]{Theorem~\ref*{thm:rough-epsilon-regularity}}, the limit has to be Euclidean by (\ref{cor:upper-bound-on-N}) and \hyperref[lem:pointed-entropy-rigidity]{Lemma~\ref*{lem:pointed-entropy-rigidity}}.
	This contradicts $|\Rm|(y_i,s_i')>\epsilon$.
	
	Next we treat the forward part.
	By the curvature estimate in forward region of \hyperref[thm:rough-epsilon-regularity]{Theorem~\ref*{thm:rough-epsilon-regularity}}, we may in fact choose the subsequence in the above proof appropriately so that it smoothly converges over the time $(0,1+\rho]$ for some universal $\rho>0$.
	Similar to the argument in the proof of (\ref{ineq:claim-10.7}), one can show that the pointed Nash entropy $\mathcal{N}^*_0(\cdot,\cdot)$ converges locally uniformly to zero (up to taking subsequence).
	Thus for sufficiently large $i$, we may use \hyperref[thm:rough-epsilon-regularity]{Theorem~\ref*{thm:rough-epsilon-regularity}} to obtain curvature estimates on the larger time interval $(0,1+2\rho]$.
	Then by passing to a further subsequence, the sequence converges over $(0,1+2\rho]$.
	We will reach a contradiction as above after repeating this a finite number of times.
	
	The estimate $\mathcal{N}^*_{t-r^2}\geq-\epsilon$ follows from the locally uniform convergence of pointed Nash entropy.
	The bounds on volume ratio and injective radius follow as in \cite[Proposition 3.4]{wang2020local}.
\end{proof}

\begin{Remark}\label{rmk:claim-10.7}
	The proof of (\ref{ineq:claim-10.7}) is a minor modification of that for \cite[Claim 10.7]{bamler2020entropy}, which we repeat in the following.
\end{Remark}
\begin{proof}[Proof of (\ref{ineq:claim-10.7})]
	Assume the claim is false, then
	\begin{equation*}
		\theta_0:=\inf\{\theta\in(0,1]:\text{the claim holds true}\}
		>0.
	\end{equation*}
	For any $\theta\in(\theta_0,1)$, there exists (by \hyperref[thm:rough-epsilon-regularity]{Theorem~\ref*{thm:rough-epsilon-regularity}}) a universal constant $\sigma>0$ such that the curvature is uniformly bounded over $P_{i,\theta'}$ for $\theta':=(1-\sigma)\theta$. Passing to a subsequence, the flows restricted to $P_{i,\theta'}$ and pointed at $(x_i,1)$ converge to a smooth flow with bounded curvature on a parabolic neighborhood of the form $P_{\infty,\theta'}:=B(x_\infty,1,-\log\theta')\times(\theta',1]$.
	
	Fix $t^*\in(\theta',1]$ and consider the solutions $u_i\in C^\infty(M_i\times[t^*,1])$ to the heat equation $\square u_i=0$ with initial condition $u_i(\cdot,t^*)=\mathcal{N}^*_0(\cdot,t^*)$. By the maximum principle and (\ref{cor:upper-bound-on-N}),
	\begin{equation}\label{ineq:ui-to-zero}
		u_i\leq\sup_{M_i}\mathcal{N}_0^*(\cdot,t^*)\to 0
	\end{equation}
	By \hyperref[ineq:square-of-N]{Theorem~\ref*{ineq:square-of-N}} and the maximum principle,
	\begin{equation}\label{ineq:ui-estimates}
		\mathcal{N}^*_0(x,t)-C\tilde{\Lambda}_i(t-t^*)
		\leq
		u_i(x,t)
		\leq
		\mathcal{N}^*_0(x,t)+\frac{7}{2}\log\frac{t}{t^*}
		\quad\forall\,(x,t)\in M_i\times[t^*,1].
	\end{equation}
	So by \hyperref[cor:oscillation-of-pointed-Nash-entropy]{Corollary~\ref*{cor:oscillation-of-pointed-Nash-entropy}}, \hyperref[cor:comparison-of-parabolic-neighborhoods]{Corollary~\ref*{cor:comparison-of-parabolic-neighborhoods}}, we obtain uniform bounds on $u_i$ over $P_{i,\theta'}$.
	
	By passing to a further subsequence, $u_i\to u_\infty$ for some $u_\infty\in C^\infty(B(x_\infty,1,-\log\theta')\times(t^*,1])$ satisfying $\square u_\infty=0$. The inequality (\ref{ineq:ui-to-zero}) implies $u_\infty\leq 0$. Since $\mathcal{N}^*_0(x_i,1)\geq-\delta_i\to 0$, the first inequality in (\ref{ineq:ui-estimates}) tells us $u_\infty(x_\infty,1)\geq 0$. An application of the strong maximum principle then yields $u_\infty\equiv 0$. Now the second inequality in (\ref{ineq:ui-estimates}) implies that for any $\beta>0$ with $t^*+\beta\leq 1$ we have
	\begin{equation*}
		\mathcal{N}^*_0
		\geq
		u_i-\frac{7}{2}\log\frac{t^*+\beta}{t^*}
		\geq
		u_i-\frac{7}{2t^*}\beta
		\geq
		u_i-\frac{7}{2\theta'}\beta.
	\end{equation*}
	Since the choices of $t^*,\beta$ were arbitrary, our claim holds for any $\theta''\in(\theta',1]$. As we can choose appropriate $\theta$ to make $\theta'<\theta_0$, this contradicts the definition of $\theta_0$.
\end{proof}

The following theorem provides some sort of converse to Theorem \ref{thm:rough-epsilon-regularity}.
\begin{Theorem}
	For any $\epsilon>0$ and $Y<\infty$, there exists $\delta>0$ depending only on $\epsilon,Y$ such that the following holds. Let $(M^7,\varphi(t),g(t))_{t\in I}$ be a Laplacian flow over a compact manifold satisfying the uniform torsion bound $|T_\varphi|^2\leq\Lambda$ for some $\Lambda>0$. Let $(x,t)\in M\times I$ and $r>0$ be such that $[t-r^2,t]\subset I$. Suppose $r\leq\delta\Lambda^{-1/2}$, $|\Rm|\leq r^{-2}$ on $P(x,t;r,-r^2)$ and $\mathcal{N}_{x,t}(r^2)\geq -Y$. Then $\mathcal{N}_{x,t}(\delta r^2)\geq-\epsilon$.
\end{Theorem}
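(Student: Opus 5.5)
We argue by contradiction and compactness, following the proof of the analogous statement in \cite[\S 10]{bamler2020entropy}. Suppose the statement fails for some $\epsilon>0$ and $Y<\infty$. Then there are $\delta_i\to0$ and compact Laplacian flows $(M_i^7,\varphi_i(t),g_i(t))_{t\in I_i}$ with $|T_{\varphi_i}|^2\leq\Lambda_i$. There are also points $(x_i,t_i)$ and radii $r_i\leq\delta_i\Lambda_i^{-1/2}$ such that $|\Rm|\leq r_i^{-2}$ on $P(x_i,t_i;r_i,-r_i^2)$ and $\mathcal N_{x_i,t_i}(r_i^2)\geq-Y$, but $\mathcal N_{x_i,t_i}(\delta_ir_i^2)<-\epsilon$.

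We rescale parabolically by $r_i^{-2}$ so that $t_i=0$, $r_i=1$ and $\tilde\Lambda_i\leq\delta_i^2\to0$. In the rescaled flows we have $|\Rm|\leq1$ on $P(x_i,0;1,-1)$, $\mathcal N_{x_i,0}(1)\geq-Y$ and $\mathcal N_{x_i,0}(\delta_i)<-\epsilon$.

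Next we rescale once more, by the factor $\delta_i^{-1}$. The flows become $(M_i,\hat g_i(s),x_i)$, and the estimates transform as follows.
\begin{itemize}
\item The torsion bound becomes $\hat\Lambda_i=\delta_i\tilde\Lambda_i\to0$.
\item The curvature bound becomes $|\Rm|\leq\delta_i$ on $P(x_i,0;\delta_i^{-1/2},-\delta_i^{-1})$.
\item The entropy estimates become $\mathcal N_{x_i,0}(1)<-\epsilon$ and $\mathcal N_{x_i,0}(\delta_i^{-1})\geq-Y$.
\end{itemize}
By Proposition~\ref{prop:time-derivative-of-pointed-Nash-entropy}, $\tau\mapsto\mathcal N(\tau)$ is almost nonincreasing up to an error $O(\hat\Lambda_i\tau)$, which is bounded for $\tau\leq\delta_i^{-1}$. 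Hence $\mathcal N_{x_i,0}(\tau)\geq-Y-o(1)$ for all $\tau\in[1,\delta_i^{-1}]$. This lower bound feeds into Theorem~\ref{thm:no-local-collapsing}, which gives non-collapsing at $x_i$ on every fixed scale. Theorem~\ref{thm:compactness} then lets us pass to a smooth limit on $P(x_\infty,0;R,-R^2)$ for every $R$, that is, to an ancient flow defined for $s\leq0$. Since the curvature tends to zero and the torsion tends to zero, the limit is flat, static and non-collapsed. Being complete, it is a flat space form with Euclidean volume growth along a sequence of scales, and therefore Euclidean $\mathbb R^7$.

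It remains to show $\mathcal N_{x_i,0}(1)\to0$, which contradicts $\mathcal N_{x_i,0}(1)<-\epsilon$. The upper bound $\limsup\leq0$ comes from (\ref{cor:upper-bound-on-N}). For the lower bound we argue as in Lemma~\ref{lem:pointed-entropy-rigidity}, passing the conjugate heat kernels $u_i=K(x_i,0;\cdot,\cdot)$ to the limit. The heat kernel upper bound, Theorem~\ref{thm:heat-kernel-upper-bound}, and local parabolic estimates give smooth local subconvergence of $u_i$ to some $u_\infty$. Corollary~\ref{cor:concentration-under-Ricci-bound} applies because the Ricci curvature is bounded near $x_i$ for $\tau\leq1$. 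It gives $d_{W_1}(\nu_{i;-\tau},\delta_{x_i})\leq C\sqrt\tau$, so no mass escapes and $u_\infty$ is the Euclidean heat kernel based at $(0,0)$. We then apply the cut-off argument of Lemma~\ref{lem:pointed-entropy-rigidity}: the tail estimate, Proposition~\ref{prop:energy-estimate}, and the lower bound on $f_i$ from Theorem~\ref{thm:heat-kernel-upper-bound} together control $\int f_i\,d\nu_i$ outside large balls. This yields $\mathcal N_{x_i,0}(1)\to\mathcal N^{\mathbb R^7}(1)=0$.

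The main obstacle is the tail control in the integral $\int f\,d\nu$. The tail of $\nu_i$ outside $B(x_i,L)$ has to be small in a weighted sense against $f_i$, and the lower bound on $f_i$ from Theorem~\ref{thm:heat-kernel-upper-bound} only involves $\mathcal N_{x_i,0}(1)$, which is bounded below by $-Y$, so it suffices. The upper side, where $f_i$ is large in the tails, is harmless, since it only helps in the direction $\liminf\mathcal N\geq0$. The same reasoning applies to the $\tau|\nabla f|^2$ and $R$ terms, if $\mathcal N$ is written via $\mathcal W$ using Corollary~\ref{cor:N-and-W}.
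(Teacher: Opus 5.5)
Your proposal is correct and follows essentially the same route as the paper, which defers to Bamler's proof of Theorem 10.4. That route is: argue by contradiction, blow up by $\delta_i^{-1}$, use no local collapsing and compactness to get the static flow on $\mathbb{R}^7$, and pass the conjugate heat kernels to the Euclidean one so that $\mathcal{N}_{x_i,0}(1)\to 0$. Your closing step is sound: you only need lower semicontinuity of $\int f_i\,d\nu_i$, which follows from $f_i\geq -C(Y)$, the $W_1$ tail bound and local convergence, with the matching upper bound supplied by (\ref{cor:upper-bound-on-N}).
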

\begin{proof}
	The proof is identical to \cite{bamler2020entropy} Theorem 10.4.
\end{proof}

\subsection{Proof of \hyperref[thm:refined-pseudolocality]{Theorem~\ref*{thm:refined-pseudolocality}}}\label{sec:pseudolocality-reduction}
At last, we give the promised reduction of pseudolocality to $\epsilon$-regularity of pointed Nash entropy.
\begin{proof}[Proof of {\hyperref[thm:refined-pseudolocality]{Theorem~\ref*{thm:refined-pseudolocality}}}]
	Choose an auxiliary constant $\bar\delta$ so that the hypothesis implies
	\begin{equation*}
		\inf_{0<t\leq T}
		\mu\big(B(x_0,0,40\bar\delta^{-1}\sqrt{t}),g(0),t\big)
		\geq-\bar\delta.
	\end{equation*}
	We relabel $\bar\delta$ as $\delta$ below.
	For $A:=\frac{2}{3}\Lambda$, we have $E(g)\geq-Ag$.
	Assume (\ref{ineq:pseudolocality}) does not hold. Then there exists $t^*\in(0,T]$ such that
	\begin{equation*}
		\begin{dcases}
			|\Rm|(x,t)\leq\frac{\alpha}{t},
			\quad\forall\, x\in B(x_0,t,\alpha^{-1}\sqrt{t}),\ t\in(0,t^*];\\
			\exists\, z\in B(x_0,t^*,\alpha^{-1}\sqrt{t^*})\text{ s.t. } |\Rm|_{g}(z,t^*)=\frac{\alpha}{t^*}.
		\end{dcases}
	\end{equation*}
	Since $\alpha\leq 1$ and $\bar{\sigma}_A(t)\leq t$ for all $t\geq 0$, we have (for $\delta$ sufficiently small, say $\delta\leq\frac{1}{n-1}$)
	\begin{equation*}
		\Ric(x,t)\leq\frac{\delta^{-1}}{\bar{\sigma}_A(t)},
		\quad\forall\,x\in B(x_0,t,\sqrt{\bar{\sigma}_A(t)}),\ t\in(0,t^*].
	\end{equation*}
	Let $\gamma_0>0$ be such that $e^{\gamma_0}-1=2\gamma_0$. Then for $AT\leq\gamma_0$ we have $\sigma_A(t)\leq 2t$, $\forall\,t\in(0,T]$. Also, when $\delta\leq\frac{1}{3(n-1)}$, it holds $(\frac{2}{3}\delta^{-1}+n-1)\leq\delta^{-1}$. By \hyperref[thm:mu-and-N]{Theorem~\ref*{thm:mu-and-N}} (taking $\beta=1$),
	\begin{equation}\label{ineq:estimate-on-N}
		\mathcal{N}_{x_0,t^*}(t^*)\geq-O(\delta)
	\end{equation}
	Taking $\epsilon=\frac{1}{2}\alpha$ in \hyperref[thm:refined-epsilon-regularity]{Theorem~\ref*{thm:refined-epsilon-regularity}}, there exists $\delta_0(n,\alpha)$ such that $\mathcal{N}_{x_0,t^*}(t^*)\geq-\delta_0$ will imply
	\begin{equation*}
		|\Rm|\leq\frac{\alpha}{2t^*}
		\qquad\text{over }B(x_0,t^*,\alpha^{-1}\sqrt{t^*}).
	\end{equation*}
	So if at the beginning $\delta$ is chosen small enough to make (\ref{ineq:estimate-on-N}) no less than $-\delta_0$, we will get a contradiction against the existence of $z$, concluding the proof of (\ref{ineq:pseudolocality}).
	
	Using the curvature estimate (\ref{ineq:pseudolocality}), we may apply \hyperref[thm:mu-and-N]{Theorem~\ref*{thm:mu-and-N}} to show $\mathcal{N}_{x_0,t}(t)\geq-\delta_0$ for all $t\in(0,T]$ as above.
	Then (\ref{ineq:volume-ratio-and-inj}) follows from \hyperref[thm:refined-epsilon-regularity]{Theorem~\ref*{thm:refined-epsilon-regularity}}.
\end{proof}

\newpage
\appendix
In the appendices, we consider a compact manifold $M^n$ and a smooth family $(g(t))_{t\in I}$ of Riemannian metrics on it.
We always assume implicitly that the time in consideration is contained in the interval $I$.
Recall the definition (\ref{eq:definition-of-error-term}) of $E(g)$.

\section{Preliminaries}
\subsection{Heat operators and heat kernels}
Let us recall the definition and some basic properties of heat operator and conjugate heat operator.
Set $\square=\partial_t-\Delta$ and $\square^*=-\partial_t-\Delta+R-\frac{1}{2}\mathrm{tr}\,E$. Then for $u,v\in C^\infty(M\times I)$, we have
\begin{equation}\label{eq:conjugate-heat-operator}
	\frac{d}{dt}\int_M uv\,dV_g
	=\int_M(\square u)v\,dV_g
	-\int_Mu(\square^*v)\,dV_g.
\end{equation}
In particular, if $\square^*v=0$, then $\int_Mv\,dV_g$ is constant in time.
Let $\tau=\tau(t)>0$ evolve as $\frac{d\tau}{dt}=-1$.
Then $\square^*((4\pi\tau)^{-n/2}e^{-f})=0$ is equivalent to
\begin{equation}\label{eq:conjugate-heat-equation}
	\frac{\partial f}{\partial t}=-\Delta f+\abs{\nabla f}^2-R+\frac{1}{2}\mathrm{tr}\,E+\frac{n}{2\tau}.
\end{equation}

For any $x,y\in M$ and $s,t\in I$ with $s<t$, we denote by $K(x,t;y,s)$ the heat kernel of $\square$. That is, for fixed $(y,s)$,
\begin{equation*}
	\square K(\cdot,\cdot;y,s)=0,\qquad
	\lim_{t\downarrow s}K(\cdot,t;y,s)=\delta_y,
\end{equation*}
and for fixed $(x,t)$,
\begin{equation*}
	\square^* K(x,t;\cdot,\cdot)=0,\qquad
	\lim_{s\uparrow t}K(x,t;\cdot,s)=\delta_x.
\end{equation*}

Given $(x,t)\in M\times I$ and $s\in I$ with $s\leq t$, we denote by $\nu_{x,t;s}$ (or sometimes $\nu_{x,t}(s)$) the conjugate heat kernel measure based at $(x,t)$, i.e.,
\begin{equation*}
	d\nu_{x,t;s}:=K(x,t;\cdot,s)dV_{g(s)},\qquad
	\nu_{x,t;t}:=\delta_x.
\end{equation*}

\subsection{Two auxiliary functions}
For $A\geq 0$, we define
\begin{equation}\label{eq:definition-of-auxiliary-functions}
	\sigma_A(\tau):=\frac{e^{A\tau}-1}{A},
	\qquad
	\bar{\sigma}_A(t):=\frac{1-e^{-At}}{A},
\end{equation}
with the understanding that $\sigma_0(\tau):=\tau$ and $\bar{\sigma}_0(t):=t$.
Note that $\bar{\sigma}_A(t)=-\sigma_A(-t)$.

In this paper, the parameter $A$ appearing in (\ref{eq:definition-of-auxiliary-functions}) is always taken to be the constant from our assumption $E(g)\geq-Ag$.
For $\tilde{g}(t):=\lambda g(\lambda^{-1}t)$ with $\lambda>0$, we have $E(\tilde{g})\geq-(\lambda^{-1}A)\tilde{g}$.
In other words, the dimension of $A$ is $[\text{length}]^{-2}=[\text{time}]^{-1}$.
Consequently, both $\sigma_A(\tau)$ and $\bar{\sigma}_A(t)$ are of dimension $[\text{time}]$.

When $\tau$ is sufficiently small (compared to $A^{-1}$), these two functions will be comparable to the function $f(x)=x$.
For example, since $\frac{e^x-1}{x}\leq\frac{1}{1-x}$ for $x\in(0,1)$, we have $\sigma_A(\tau)\leq\frac{1}{1-A\tau}\tau$, which gives $\sigma_A(\tau)\leq 2\tau$ when $\tau\in[0,\frac{1}{2A}]$.
Note also $\sigma_A(\tau)\geq\tau$ for all $\tau\geq 0$.
When these two auxiliary functions appear in our results, such comparability justifies our philosophy that flows with $E(g)\geq-Ag$ behave like super Ricci flow over a small but definite spacetime scale.

Define $\partial_t^A:=e^{-At}\partial_t(e^{At}\cdot)=\partial_t+A$. Then $\bar{\sigma}_A$ solves the equation
\begin{equation*}
	2\partial_t^{A/2}\sqrt{\bar{\sigma}_A(t)}=\frac{1}{\sqrt{\bar{\sigma}_A(t)}}.
\end{equation*}

\subsection{Variance of conjugate heat kernels}
This subsection corresponds to $\S 3$ of \cite{bamler2020entropy}. Since the required modification is rather straightforward, we summarize the results in this preliminary section and only sketch the proofs.

The variance between two probability measures $\mu_1,\mu_2$ on a Riemannian manifold $(M^n,g)$ is defined as
\begin{equation*}
	\mathrm{Var}(\mu_1,\mu_2)
	:=
	\int_M\int_M   d^2(x_1,x_2)\,d\mu_1(x_1)d\mu_2(x_2).
\end{equation*}
In the case $\mu_1=\mu_2=\mu$, we write $\mathrm{Var}(\mu)$ for $\mathrm{Var}(\mu,\mu)$. In the context of flow, we write $\mathrm{Var}_t$ for the variance with respect to the metric $g(t)$.

\begin{Theorem}\label{thm:square-of-distance}
	Suppose $E(g)\geq-Ag$ for some $A\geq 0$. Then
	\begin{equation*}
		(\partial_t-\Delta_x-\Delta_y)d^2_{t}(x,y)\geq-\frac{(n-1)\pi^2}{2}-4-Ad^2_t(x,y)
	\end{equation*}
	in the barrier and viscosity sense.
\end{Theorem}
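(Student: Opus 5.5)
We follow the proof of the corresponding statement in \cite[\S3]{bamler2020entropy} (originally due to Perelman's reduced-distance-type computation). The only change comes from the metric evolution, which now contributes the term $-Ad_t^2$. The argument is a barrier argument at a fixed pair of points $(x,y)$ at time $t$. The main steps are the time derivative, the spatial Laplacians via second variation, and the Ricci cancellation.

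\textbf{Time derivative.} Let $\gamma:[0,d]\to M$ be a unit-speed minimizing $g(t)$-geodesic from $x$ to $y$, with $d=d_t(x,y)$. Since $\partial_tg=-2\Ric+E\geq-2\Ric-Ag$, we get
\begin{equation*}
\partial_t\,(\text{length}_t\gamma)\geq-\int_0^d\Ric(\dot\gamma,\dot\gamma)\,ds-\frac{A}{2}d .
\end{equation*}
Hence, in the barrier sense from below,
\begin{equation*}
\partial_t d^2\geq-2d\int_\gamma\Ric(\dot\gamma,\dot\gamma)-Ad^2 .
\end{equation*}
This is the only place where $E$ enters, and it is exactly what produces the $-Ad_t^2$ term.

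\textbf{Spatial part.} Extend a parallel orthonormal frame $\{e_i\}_{i\leq n-1}$ along $\gamma$, orthogonal to $\dot\gamma$. Use the variations $V_i=\sin(\pi s/d)e_i$, which fix both endpoints, together with the variations $e_i$ moved simultaneously at both ends, combined as in Bamler. The second variation formula gives an upper barrier for $(\Delta_x+\Delta_y)d$. With the choice $\phi(s)=\sin(\pi s/d)$ in the normal directions this yields
\begin{equation*}
(\Delta_x+\Delta_y)d^2\leq 2d\int_\gamma\phi^2\Ric\text{-type terms}+\frac{(n-1)\pi^2}{2}+C .
\end{equation*}

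\textbf{Ricci cancellation.} The step I expect to be the main obstacle is choosing the combination of variations so that the weighted Ricci integral coming from the Laplacians exactly cancels the unweighted $\int\Ric$ from the time derivative. Bamler's trick handles this: use the constant variations $e_i$ at both endpoints for the Laplacian parts, and add the term $\frac{\pi^2}{d^2}$-type corrections from the sine variations. This cancels the curvature terms and leaves the constant $-\frac{(n-1)\pi^2}{2}-4$, i.e.\ $-H_n$. Since that computation never uses the flow equation, the cancellation and the constant $-H_n$ are exactly as in \cite{bamler2020entropy}.

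\textbf{Conclusion.} Combining the three inequalities gives the barrier inequality. The viscosity-sense statement follows from the barrier one in the standard way: any smooth function touching $d_t^2$ from below at $(x,y,t)$ is compared with the constructed barrier. At the cut locus, the barrier is still defined by the fixed geodesic $\gamma$ and lies above $d^2$ near $(x,y)$, with equality there.
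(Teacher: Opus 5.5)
Your time-derivative step is correct, and it is the only modification the paper intends: the paper gives no separate proof and simply defers to Bamler's \S3. The estimate $\partial_t L_t(\gamma)\ge-\int_0^d\Ric(\dot\gamma,\dot\gamma)\,ds-\frac{A}{2}d$ produces the $-Ad_t^2$ term. Your passage from barrier to viscosity sense, via the upper barrier $\Phi(x',y',t')=L_{t'}(\gamma_{x',y'})^2\ge d_{t'}^2(x',y')$ for $t'\le t$, is also fine.

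The gap is in the spatial step. This is the heart of the argument, and you leave it to ``combined as in Bamler''; the variations you actually name do not produce the Ricci cancellation.

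\begin{itemize}
\item A field like $\sin(\pi s/d)e_i$ that vanishes at both ends corresponds to the zero vector of $T_xM\oplus T_yM$, so it contributes nothing to $\Delta_x+\Delta_y$.
\item The constant field $e_i$ only computes the Hessian in the diagonal unit direction $\frac{1}{\sqrt2}(e_i(0),e_i(d))$, which is half of the trace for that $i$.
\item The antidiagonal direction $\frac{1}{\sqrt2}(e_i(0),-e_i(d))$ needs a field $c(s)e_i$ with $c(0)=1$ and $c(d)=-1$. The resulting curvature weight $\frac12(1+c(s)^2)$ is never identically $1$.
\end{itemize}

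Since no curvature bound is assumed, the leftover terms $\int(w(s)-1)\Rm(e_i,\dot\gamma,\dot\gamma,e_i)$ cannot be absorbed. The ``$\phi^2\Ric$-type terms'' you write are what Perelman's Lemma~8.3 produces, but there a local Ricci upper bound controls them, and that is unavailable here. A quick sanity check shows constant variations cannot be the whole story: in flat $\R^n$ they would give $(\Delta_x+\Delta_y)d\le 0$, whereas $(\Delta_x+\Delta_y)|x-y|=2(n-1)/|x-y|>0$.

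The missing idea is to choose the linear assignment $(v,w)\mapsto V_{v,w}$ so that, in the orthonormal basis $\{(e_i(0),0),(0,e_i(d))\}$, the squared variation fields sum to the identity on $\dot\gamma^\perp$ at every $s$. For normal parts $v=\sum_i a_ie_i(0)$ and $w=\sum_i b_ie_i(d)$, take
\[
V_{v,w}(s)=\sum_{i=1}^{n-1}\Big(a_i\cos\frac{\pi s}{2d}+b_i\sin\frac{\pi s}{2d}\Big)e_i(s)+\big(\text{linear interpolation of the }\dot\gamma\text{-components}\big),
\]
and set $\gamma_{v,w}(s)=\exp_{\gamma(s)}(V_{v,w}(s))$. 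Because $\cos^2+\sin^2=1$, the curvature terms carry weight exactly $1$, and at $(x,y)$ one gets
\[
(\Delta_x+\Delta_y)L\le(n-1)\frac{\pi^2}{4d}-\int_0^d\Ric(\dot\gamma,\dot\gamma)\,ds,\qquad |\nabla L|^2=2.
\]
The tangential directions contribute nothing. Hence
\[
(\partial_t-\Delta_x-\Delta_y)\Phi=2d\,(\partial_t-\Delta_x-\Delta_y)L-2|\nabla L|^2\ge-Ad^2-\frac{(n-1)\pi^2}{2}-4 .
\]
So the constant $\frac{(n-1)\pi^2}{2}$ comes from the energy $\int_0^d(\frac{\pi}{2d})^2ds$ of this quarter-turn rotation from $e_i(0)$ to $e_i(d)$, not from $\sin(\pi s/d)$. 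The $4$ comes from the gradient term $-2|\nabla L|^2$, not from an unspecified constant $C$.
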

\begin{Corollary}
	Consider two non-negative solutions $v_1,v_2$ over $M$ to the conjugate heat equation $\square^*v_i=0$ with $\int_Mv_i\,dV_g=1$. Set $\mu_i=v_idV_g$ and $V(t)=\mathrm{Var}_t(\mu_1(t),\mu_2(t))$. Then for $t\leq t_0$,
	\begin{equation*}
		V(t)
		\leq
		e^{A(t_0-t)}V(t_0)
		+H_n\sigma_A(t_0-t).
	\end{equation*}
\end{Corollary}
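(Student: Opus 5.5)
The plan is to differentiate $V(t)$ in time, use Theorem~\ref{thm:square-of-distance} to get a linear differential inequality, and integrate it backward from $t_0$ with the factor $e^{At}$. Write
\begin{equation*}
	V(t)=\int_M\int_M d_t^2(x,y)\,v_1(x,t)v_2(y,t)\,dV_{g(t)}(x)dV_{g(t)}(y).
\end{equation*}
By the duality formula (\ref{eq:conjugate-heat-operator}), applied on the product $M\times M$ with the product measure $v_1\otimes v_2$, we have $\square^*_{M\times M}(v_1v_2)=0$. Here the conjugate operator on the product is $-\partial_t-\Delta_x-\Delta_y+(R(x)-\tfrac12\mathrm{tr}E(x))+(R(y)-\tfrac12\mathrm{tr}E(y))$, which is exactly the conjugate of $\partial_t-\Delta_x-\Delta_y$ with respect to the product volume. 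This yields
\begin{equation*}
	\frac{d}{dt}V(t)=\int\!\!\int (\partial_t-\Delta_x-\Delta_y)d_t^2(x,y)\,d\mu_1(x)\,d\mu_2(y),
\end{equation*}
at least in a weak (integrated) sense.

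Inserting the lower bound of Theorem~\ref{thm:square-of-distance} and using $\int d\mu_i=1$ together with $H_n=\frac{(n-1)\pi^2}{2}+4$, we obtain
\begin{equation*}
	\frac{d}{dt}V\geq -H_n-AV,
	\qquad\text{equivalently}\qquad
	\frac{d}{dt}\big(e^{At}V\big)\geq -H_ne^{At}.
\end{equation*}
Integrating over $[t,t_0]$ gives
\begin{equation*}
	e^{At_0}V(t_0)-e^{At}V(t)\geq -H_n\,\frac{e^{At_0}-e^{At}}{A}.
\end{equation*}
Multiplying by $e^{-At}$ yields $V(t)\leq e^{A(t_0-t)}V(t_0)+H_n\sigma_A(t_0-t)$, as claimed. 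The case $A=0$ follows by the same computation, or by taking the limit $A\to0$.

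The main obstacle is justifying the differentiation step, since $d_t^2$ is only Lipschitz and the inequality of Theorem~\ref{thm:square-of-distance} holds only in the barrier/viscosity sense. I would handle this as in \cite[\S3]{bamler2020entropy}. Fix $t_1<t_0$ and solve the forward heat equation $(\partial_t-\Delta_x-\Delta_y)w=0$ on $M\times M\times[t_1,t_0]$ with initial data $w(\cdot,t_1)=d_{t_1}^2$. The viscosity maximum principle on the compact manifold $M\times M$, applied to the function $e^{At}(w-d_t^2)$, gives $d^2_{t_0}\geq w(\cdot,t_0)-H_n\sigma_A(t_0-t_1)$ after the same exponential-factor bookkeeping. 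Then pair the smooth $w$ against $v_1v_2$: by (\ref{eq:conjugate-heat-operator}) the quantity $\int\!\!\int w\,d\mu_1d\mu_2$ is constant in time. At $t_1$ it equals $V(t_1)$, and at $t_0$ it is at most an appropriately weighted $V(t_0)$ plus the error term. This avoids differentiating $d_t^2$ entirely.

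When the $-Ad^2$ term is present, I would run the comparison directly on $e^{At}(w-d^2_t)$ rather than on $w-d_t^2$, so the exponential factors come out precisely as in the statement.
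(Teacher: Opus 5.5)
Your argument is correct and follows the paper's route. The paper derives $\frac{d}{dt}V\geq -AV-H_n$ formally from Theorem~\ref{thm:square-of-distance} and integrates it, deferring rigor to Bamler's heat-equation comparison on $M\times M$, which is exactly what you sketch. One bookkeeping fix there: with $w$ solving the undamped heat equation, compare the solution $e^{At_1}(w+H_n/A)$ with the viscosity supersolution $e^{At}(d_t^2+H_n/A)$ to get $w(\cdot,t_0)\leq e^{A(t_0-t_1)}d_{t_0}^2+H_n\sigma_A(t_0-t_1)$; the unweighted inequality you display fails in general for $A>0$, and $e^{At}(w-d_t^2)$ satisfies no clean differential inequality. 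Pairing this corrected bound with $\mu_1\otimes\mu_2$ at $t_0$ then gives the statement.
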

\begin{proof}
	If we pretend \hyperref[thm:square-of-distance]{Theorem~\ref*{thm:square-of-distance}} holds pointwisely, then
	\begin{equation*}
		\frac{d}{dt}V(t)\geq-AV(t)-H_n
	\end{equation*}
	and our corollary follows by integration.
	This argument can be turned rigorous as in \cite{bamler2020entropy}.
\end{proof}

\begin{Corollary}
	For conjugate heat kernel measures,
	\begin{align*}
		\mathrm{Var}_t(\nu_{x_1,t_0;t},\nu_{x_2,t_0;t})
		&\leq e^{A(t_0-t)}d^2_{t_0}(x_1,x_2)+H_n\sigma_A(t_0-t);\\
		\mathrm{Var}_t(\nu_{x_0,t_0;t})&\leq H_n\sigma_A(t_0-t).
	\end{align*}
\end{Corollary}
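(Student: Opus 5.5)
Both bounds will follow from the preceding corollary, applied to conjugate heat kernel measures. We then let the later time $t_0$ approach the basepoint time.

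\emph{First inequality.} Set $\mu_1(s)=\nu_{x_1,t_0;s}$ and $\mu_2(s)=\nu_{x_2,t_0;s}$ for $s<t_0$. These are nonnegative solutions of $\square^*v=0$ with unit mass, so the previous corollary applies. For $t\leq s<t_0$ it gives
\begin{equation*}
	\mathrm{Var}_t(\mu_1(t),\mu_2(t))\leq e^{A(s-t)}\mathrm{Var}_s(\mu_1(s),\mu_2(s))+H_n\sigma_A(s-t).
\end{equation*}
We then let $s\uparrow t_0$. Since $K(x_i,t_0;\cdot,s)\,dV_{g(s)}\to\delta_{x_i}$ weakly, and $g(s)\to g(t_0)$ smoothly on the compact manifold $M$, the functions $d_s$ converge uniformly to $d_{t_0}$. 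Hence $\mathrm{Var}_s(\mu_1(s),\mu_2(s))\to d_{t_0}^2(x_1,x_2)$. The coefficients $e^{A(s-t)}$ and $\sigma_A(s-t)$ are continuous in $s$, so the first inequality follows.

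\emph{Second inequality.} Take $x_1=x_2=x_0$ in the first inequality. The term $d^2_{t_0}(x_0,x_0)$ vanishes, leaving $\mathrm{Var}_t(\nu_{x_0,t_0;t})\leq H_n\sigma_A(t_0-t)$.

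\emph{Main obstacle.} The only delicate step is justifying the limit $s\uparrow t_0$. We use three facts:
\begin{itemize}
	\item $d_s^2$ is bounded and converges uniformly to $d_{t_0}^2$ on $M\times M$;
	\item the product measures $\mu_1(s)\otimes\mu_2(s)$ converge weakly to $\delta_{x_1}\otimes\delta_{x_2}$;
	\item a uniformly convergent family of continuous functions integrated against weakly convergent probability measures converges to the integral of the limit.
\end{itemize}
Together these give the convergence of $\mathrm{Var}_s$. The rigor of the preceding corollary itself, which uses barrier or viscosity sense, is taken as given from earlier in the paper.
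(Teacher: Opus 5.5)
Your proposal is correct and follows the paper's intended route: the paper states this corollary without proof as an immediate consequence of the preceding variance corollary, and your argument supplies the omitted limiting step. You apply that corollary on $[t,s]$ with $s<t_0$, then let $s\uparrow t_0$ using $\nu_{x_i,t_0;s}\to\delta_{x_i}$ and the uniform convergence $d_s\to d_{t_0}$; the case $t=t_0$ is trivial, since both sides then equal $d_{t_0}^2(x_1,x_2)$.
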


\begin{Definition}
	A point $(z,t)\in M\times I$ is called an $(H_n,A)$-center of the point $(x_0,t_0)\in M\times I$ if $\mathrm{Var}_t(\delta_z,\nu_{x_0,t_0;t})\leq H_n\sigma_A(t_0-t)$.
\end{Definition}

Here are some basic properties of $(H_n,A)$-center.
\begin{Proposition}\label{prop:basic-properties-of-center}
	Suppose $E(g)\geq-Ag$ for some $A\geq 0$.
	\begin{enumerate}
		\item Given $(x_0,t_0)$ and $t\leq t_0$, there exists $z\in M$ so that $(z,t)$ is an $(H_n,A)$-center of $(x_0,t_0)$.
		
		\item For any two $(H_n,A)$-center $(z_1,t)$ and $(z_2,t)$ at time $t$, $d_t(z_1,z_2)\leq 2\sqrt{H_n\sigma_A(t_0-t)}$.
		
		\item Let $(z,t)$ be an $(H_n,A)$-center of $(x_0,t_0)$. Then for $\delta>0$,
		\begin{equation*}
			\nu_{x_0,t_0;t}\big(B(z,t,\sqrt{\delta H_n\sigma_A(t_0-t)})\big)
			\geq
			1-\frac{1}{\delta}.
		\end{equation*}
		
		\item Suppose $(z,t)$ is an $(H_n,A)$-center of $(x_0,t_0)$, then for all $r\geq 0$ and $\epsilon>0$, we have the following stronger integral Gaussian bound
		\begin{align*}
			\nu_{x_0,t_0;t}\big(M\setminus B(z,t,r)\big)
			&\leq
			2\exp\bigg(
			-\frac{\big(r-\sqrt{2H_n\sigma_A(t_0-t)}\big)_+^2}{8\sigma_A(t_0-t)}
			\bigg)\\
			&\leq
			C(n,\epsilon)\exp\bigg(
			-\frac{r^2}{(8+\epsilon)\sigma_A(t_0-t)}
			\bigg).
		\end{align*}
	\end{enumerate}
\end{Proposition}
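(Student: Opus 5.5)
Items (1)--(3) follow from the variance bound $\mathrm{Var}_t(\nu_{x_0,t_0;t})\leq H_n\sigma_A(t_0-t)$ in the preceding corollary, as in \cite[Prop.~3.13]{bamler2020entropy}; item (4) comes from integrating a differential inequality in $t$, and that is the one real obstacle.

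For (1), note that $\mathrm{Var}_t(\nu,\nu)=\int_M \mathrm{Var}_t(\delta_z,\nu)\,d\nu(z)$. Since the left side is at most $H_n\sigma_A(t_0-t)$, the integrand cannot exceed this value everywhere, so some $z$ has $\mathrm{Var}_t(\delta_z,\nu)\leq H_n\sigma_A(t_0-t)$. For (2), apply the triangle inequality for the $L^2(\nu)$-norm of the functions $d_t(z_i,\cdot)$:
\[
d_t(z_1,z_2)\leq \|d_t(z_1,\cdot)\|_{L^2(\nu)}+\|d_t(z_2,\cdot)\|_{L^2(\nu)}\leq 2\sqrt{H_n\sigma_A(t_0-t)}.
\]
For (3), Chebyshev's inequality applied to $d_t^2(z,\cdot)$ gives $\nu\big(M\setminus B(z,t,\sqrt{\delta H_n\sigma_A})\big)\leq 1/\delta$.

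Item (4) is the Gaussian concentration. I would follow Bamler's proof (Proposition 3.13/Theorem 3.14 of \cite{bamler2020entropy}) via the following monotonicity. For a $1$-Lipschitz function $\phi$ on $(M,g(s))$, pick an increasing $\Phi$ and consider the heat-type flow $\square u=0$ whose initial data at time $s$ is $u(\cdot,s)=\Phi\big(\phi/\sqrt{\text{scale}}\big)$. Along the flow we have $\partial_t g\geq -2\Ric-Ag$, so the Bochner identity yields
\[
\square|\nabla u|^2\leq -2|\nabla^2u|^2+A|\nabla u|^2.
\]
Hence $e^{-A(t-s)}|\nabla u|^2$ is a subsolution, and the Lipschitz bound propagates as $|\nabla u|\leq e^{A(t-s)/2}$. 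Equivalently, after the substitution $\partial_t^{A/2}$ the effective time scale becomes $\sigma_A$ instead of $t-s$; this is exactly the role of the identity $2\partial_t^{A/2}\sqrt{\bar\sigma_A}=\bar\sigma_A^{-1/2}$ recorded above. Applying Bamler's argument with $\Phi$ the Gaussian CDF, together with reproduction of the conjugate heat kernel and duality, I would obtain the Gaussian concentration bound in the form
\[
\nu_{x_0,t_0;t}(X)\,\nu_{x_0,t_0;t}(Y)\leq \exp\Big(-\frac{d_t^2(X,Y)}{8\sigma_A(t_0-t)}\Big)
\]
for measurable $X,Y\subset M$; Bamler's version of this has $\tau$ in place of $\sigma_A(t_0-t)$. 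The hard step is controlling this evolution: time-dependent Lipschitz constants must be tracked carefully, and I expect that to be the main obstacle.

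Finally, to deduce (4), take $Y=B\big(z,t,\sqrt{2H_n\sigma_A}\big)$, whose measure is at least $1/2$ by (3) with $\delta=2$, and $X=M\setminus B(z,t,r)$. Then $d_t(X,Y)\geq \big(r-\sqrt{2H_n\sigma_A}\big)_+$, and the concentration bound divided by $\nu(Y)\geq 1/2$ gives the first inequality of (4) with the factor $2$. For the second inequality, use
\[
(r-a)_+^2\geq \frac{8}{8+\epsilon}\,r^2-C(\epsilon)a^2
\]
with $a=\sqrt{2H_n\sigma_A}$; the term $C(\epsilon)a^2/(8\sigma_A)$ is bounded by a constant depending only on $n,\epsilon$, and absorbing it into $C(n,\epsilon)$ completes the proof.
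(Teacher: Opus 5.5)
\textbf{Verdict: there is a genuine gap in (4).} Your arguments for (1)--(3) are correct. So is your reduction of (4) to the two-set bound
\[
\nu_{x_0,t_0;s}(X)\,\nu_{x_0,t_0;s}(Y)\leq\exp\Big(-\frac{d_{g(s)}(X,Y)^2}{8\sigma_A(t_0-s)}\Big),
\]
using $X=M\setminus B(z,t,r)$, $Y=B(z,t,\sqrt{2H_n\sigma_A(t_0-t)})$ and the elementary bound on $(r-a)_+^2$; this is exactly how the paper uses Corollary~\ref{cor:Gaussian-Concentration}.

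The problem is that this concentration inequality is the whole content of (4), and you do not prove it. You list ingredients and defer the step as ``the main obstacle''. The mechanism you sketch also does not yield it by itself. Propagating $|\nabla u|\leq e^{\frac{A}{2}(t-s)}$ for a forward heat flow, or using the $\Phi^{-1}$-gradient estimate of Theorem~\ref{thm:improved-gradient-estimate}, controls how $u(x,t_0)=\int_M u(\cdot,s)\,d\nu_{x,t_0;s}$ varies with the base point $x$. It says nothing directly about how the single measure $\nu_{x_0,t_0;s}$ is distributed at time $s$. To get that, you must feed the gradient bound into the time derivative of some integral against $\nu_t$, and you never say which quantity you would differentiate.

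\textbf{How the paper closes it.} The paper goes through the log-Sobolev inequality \eqref{ineq:log-Sobolev}. Let $\bar u$ be the heat flow from time $s$ and $e(t)=\int_M|\nabla\bar u|^2\bar u^{-1}\,d\nu_t$. Then $\int_M u\log u\,d\nu_s-(\int_M u\,d\nu_s)\log(\int_M u\,d\nu_s)=\int_s^{t_0}e(t)\,dt$. Since $\square(|\nabla\bar u|^2/\bar u)\leq A|\nabla\bar u|^2/\bar u$, we have $e'\leq Ae$, which gives the constant $\sigma_A(t_0-s)$. The Herbst argument then gives $\int_M e^{\lambda F}\,d\nu_s\leq e^{\sigma_A(t_0-s)\lambda^2}$ for every $1$-Lipschitz $F$ with $\int_M F\,d\nu_s=0$. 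Apply this to $F=d_{g(s)}(\cdot,Y)-\int_M d_{g(s)}(\cdot,Y)\,d\nu_s$ and to $-F$, multiply, and take $\lambda=d_{g(s)}(X,Y)/(4\sigma_A(t_0-s))$; this gives the bound.

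A route closer to your own ingredients also works and skips log-Sobolev. If $\bar F$ is the heat flow of $F$ from time $s$, your gradient propagation gives
\[
\frac{d}{dt}\int_M e^{\lambda\bar F}\,d\nu_t=-\lambda^2\int_M|\nabla\bar F|^2e^{\lambda\bar F}\,d\nu_t\geq-\lambda^2e^{A(t-s)}\int_M e^{\lambda\bar F}\,d\nu_t .
\]
Integrating over $[s,t_0]$ and using $\bar F(x_0,t_0)=\int_M F\,d\nu_s=0$ yields the same exponential-moment bound. Either argument fills the gap, and neither needs the Gaussian CDF.
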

\begin{proof}
	Assertion (4) follows from \hyperref[cor:Gaussian-Concentration]{Corollary~\ref*{cor:Gaussian-Concentration}}, which will be proved later.  The estimates in (4) are useful only for large $r$.
\end{proof}

\subsection{Local entropy}
In \cite{wang2018local,wang2020local}, the entropy functionals of Perelman are localized and used to generalize Perelman's no local collapsing theorem and pseudo-locality theorem.
Here we shall quickly review their definitions.

Let $(M^n,g)$ be a complete Riemannian manifold and $\Omega$ be a domain with smooth boundary. Set
\begin{equation*}
	\mathscr{S}(\Omega)
	:=
	\left\{\varphi\in W_0^{1,2}(\Omega):\varphi\geq 0, \int_\Omega\varphi^2dV_g=1\right\}.
\end{equation*}
For $\varphi\in\mathscr{S}(\Omega)$ and $\tau>0$, the local $\mathcal{W}$-functional is defined by
\begin{equation*}
	\mathcal{W}(\Omega,g,\varphi,\tau)
	:=
	-n-\frac{n}{2}\log(4\pi\tau)
	+\int_\Omega
	\big(\tau(4|\nabla\varphi|^2+R\varphi^2)-\varphi^2\log(\varphi^2)\big)dV_g.
\end{equation*}
If $\Omega$ is taken to be the whole manifold $M$ and $\varphi^2=(4\pi\tau)^{-n/2}e^{-f}$, then we recover Perelman's $\mathcal{W}$-functional
\begin{equation*}
	\mathcal{W}(g,f,\tau)
	=
	\int_M
	\big(
	\tau(|\nabla f|^2+R)+f-n
	\big)
	(4\pi\tau)^{-n/2}e^{-f}dV_g.
\end{equation*}
The local $\mu$-functional and $\nu$-functional are defined by
\begin{equation*}
	\mu(\Omega,g,\tau)
	:=
	\inf_{\varphi\in\mathscr{S}(\Omega)}\mathcal{W}(\Omega,g,\varphi,\tau),
	\qquad
	\nu(\Omega,g,\tau)
	:=
	\inf_{s\in(0,\tau]}\mu(\Omega,g,s).
\end{equation*}
Suppose $\Omega_1$ and $\Omega_2$ are two domains with $\Omega_1\subset\Omega_2$, then obviously
\begin{equation*}
	\mu(\Omega_1,g,\tau)
	\geq
	\mu(\Omega_2,g,\tau),
	\qquad
	\nu(\Omega_1,g,\tau)
	\geq
	\nu(\Omega_2,g,\tau).
\end{equation*}

\subsection{Laplacian flow}
Some basic definitions and facts concerning $G_2$-structure and Laplacian flow are given in the introduction \hyperref[sec:introduction-Laplacian-flow]{Section~\ref*{sec:introduction-Laplacian-flow}}.
Here is a brief supplement.
We refer the readers to \cite{lotay2017laplacian} for more information.

We will often write a Laplacian flow as $(M^7,\varphi(t),g(t))_{t\in I}$.
By this we mean $g(t)$ is the metric associated to the $G_2$-structure $\varphi(t)$.
The evolution equation of $g(t)$ is given by
\begin{equation*}
	\frac{\partial}{\partial t}g_{ij}
	=
	-2R_{ij}-\frac{2}{3}|T|^2g_{ij}-4{T_i}^kT_{kj}.
\end{equation*}
Since the torsion $T$ is a skew-symmetric $(0,2)$-tensor, the term $-4{T_i}^kT_{kj}$ is non-negative definite.
Suppose $|T|^2\leq\Lambda$ for some $\Lambda\geq 0$, then $E(g)\geq-\frac{2}{3}\Lambda g$.
Recall for closed $G_2$-structure we have $R=-|T|^2$. Hence the scalar curvature will be bounded if the torsion tensor is.

Next we recall the Shi-type estimates and compactness theorem established in \cite{lotay2017laplacian}.
\begin{Theorem}
	Let $(M^7,\varphi(t),g(t))_{t\in[0,\frac{1}{\Lambda}]}$ be a Laplacian flow over a compact manifold.
	\begin{enumerate}
		\item (Global version). Suppose $|\Rm|\leq\Lambda$ over $M\times[0,\frac{1}{\Lambda}]$, then
		\begin{equation*}
			|\nabla^k\Rm|\leq C(k)\Lambda t^{-k/2}
			\qquad\text{over }
			M\times[0,\tfrac{1}{\Lambda}].
		\end{equation*}

		\item (Local version). Suppose $x\in M$ and $|\Rm|\leq\Lambda$ over $B_{g(0)}(x,r)\times[0,\frac{1}{\Lambda}]$, then
		\begin{equation*}
			|\nabla^k\Rm|\leq C(\Lambda,r,k)t^{-k/2}
			\qquad\text{over }
			B_{g(0)}(x,\tfrac{r}{2})\times[0,\tfrac{1}{\Lambda}].
		\end{equation*}
	\end{enumerate}
\end{Theorem}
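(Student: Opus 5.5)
The plan is to follow Shi's Bernstein-type argument, as adapted by Lotay--Wei. The only new feature relative to Ricci flow is that the torsion enters the evolution equations, so it has to be controlled by the curvature.

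\emph{Step 1: control the torsion by the curvature.} For a closed $G_2$-structure we have $R=-|T|^2$. Hence $|T|^2\leq C|\Rm|\leq C\Lambda$. The $G_2$-Bianchi identity expresses $\nabla T$ algebraically as $\Rm + T*T$, so
\begin{equation*}
	|\nabla T|\leq C(|\Rm|+|T|^2)\leq C\Lambda.
\end{equation*}
Differentiating this identity gives, for every $k$,
\begin{equation*}
	|\nabla^{k+1}T|\leq C\sum_{j\leq k}|\nabla^j\Rm|\cdot(\text{powers of }|T|)+\cdots.
\end{equation*}
Thus all derivatives of $T$ are bounded by derivatives of $\Rm$ of the same or lower order, with scale-correct weights.

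\emph{Step 2: evolution equations.} Using the metric evolution
\begin{equation*}
	\partial_t g_{ij}=-2R_{ij}-\tfrac{2}{3}|T|^2g_{ij}-4T_i{}^kT_{kj},
\end{equation*}
compute
\begin{equation*}
	\square \Rm=\Rm*\Rm+\nabla^2(T*T)+\Rm*T*T.
\end{equation*}
By Step 1, the $\nabla^2(T*T)$ term is $\nabla T*\nabla T+T*\nabla^2T$, which is of the form $\Rm*\Rm+T*\nabla\Rm+\text{lower order}$. Commuting derivatives then gives
\begin{equation*}
	\square\nabla^k\Rm=\sum_{i+j=k}\nabla^i\Rm*\nabla^j\Rm+T*\nabla^{k+1}\Rm+\text{lower order terms},
\end{equation*}
where each lower-order term carries bounded factors of $T$ and its derivatives. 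From this,
\begin{equation*}
	\partial_t|\nabla^k\Rm|^2\leq\Delta|\nabla^k\Rm|^2-2|\nabla^{k+1}\Rm|^2+C|T||\nabla^{k+1}\Rm||\nabla^k\Rm|+\cdots.
\end{equation*}
The cross term is absorbed into the good term $-|\nabla^{k+1}\Rm|^2$ by Cauchy--Schwarz, at the cost of $C\Lambda|\nabla^k\Rm|^2$. The contribution from differentiating the volume form and the metric is also bounded by $C\Lambda$ times the quantity.

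\emph{Step 3: global estimate by induction on $k$.} For $k=1$, set
\begin{equation*}
	F=t|\nabla\Rm|^2+\beta|\Rm|^2,
\end{equation*}
with $\beta$ large. Using $|\Rm|\leq\Lambda$ and $t\leq 1/\Lambda$, this gives $\square F\leq C\Lambda^3$, and the maximum principle yields $F\leq C\Lambda^2$. Inductively, for higher $k$ use
\begin{equation*}
	F_k=t^k|\nabla^k\Rm|^2+\beta_k t^{k-1}|\nabla^{k-1}\Rm|^2+\cdots+\beta_1|\Rm|^2.
\end{equation*}
Choosing the $\beta_i$ large, the good terms $-|\nabla^{i+1}\Rm|^2$ coming from lower orders dominate the bad terms from higher orders.

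\emph{Step 4: local version.} Multiply by a cutoff $\phi$ supported in $B_{g(0)}(x,r)$. The main obstacle is that $\Delta\phi$ and $\partial_t\phi$ must be controlled along the flow, since the metric changes. Here I would use the bound
\begin{equation*}
	|\partial_t g|\leq C(|\Rm|+|T|^2)\leq C\Lambda.
\end{equation*}
On $[0,1/\Lambda]$ this makes all the $g(t)$ uniformly equivalent on the ball. Then take $\phi=\psi(d_{g(0)}(x,\cdot))$ smoothed, or built from a $g(0)$-distance function with Hessian bounds coming from Shi's local argument. Apply the maximum principle to $\phi F$, following Shi's local argument with a cutoff of the form $\psi(a+|\Rm|^2)$ to handle gradient terms. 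The constants then depend on $\Lambda$ and $r$.
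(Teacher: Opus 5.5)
The paper gives no proof of this statement; it quotes Lotay--Wei. Your Steps 1--3 are the standard Shi-type Bernstein argument used there for the global estimate, and they are fine. Lotay--Wei phrase their hypothesis through $(|\nabla T|^2+|\Rm|^2)^{1/2}$ and estimate $\nabla^{k+1}T$ alongside $\nabla^k\Rm$. Your elimination of $\nabla T$ is what justifies the paper's $|\Rm|$-only hypothesis, but you should say why it works. The $G_2$-Bianchi identity only gives the antisymmetrization $\nabla_iT_{jk}-\nabla_jT_{ik}$. It is the skew-symmetry of $T$ for closed structures that recovers the full $\nabla T$, because a $3$-tensor skew in its last two indices is determined by its antisymmetrization in the first two.

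The genuine gap is in Step 4, exactly at the obstacle you flagged.

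\begin{itemize}
\item For a time-independent cutoff $\phi=\psi(d_{g(0)}(x,\cdot))$, the maximum principle for $\phi F$ needs an upper bound on $-\Delta_{g(t)}\phi$ for all $t$. Uniform equivalence of the $g(t)$, which is all that $|\partial_tg|\le C\Lambda$ gives you, controls $|\nabla\phi|_{g(t)}$ and $g(t)^{-1}$ but not this Laplacian.
\item Indeed $\Delta_{g(t)}\phi=g(t)^{ij}(\partial_i\partial_j\phi-\Gamma(t)^k_{ij}\partial_k\phi)$ depends on $\Gamma(t)-\Gamma(0)=\int_0^t g^{-1}*\nabla(\partial_sg)\,ds$.
\item Here $\nabla(\partial_sg)=-2\nabla\Ric+T*\nabla T$ contains $\nabla\Rm$, the very quantity being estimated. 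So the local argument as written is circular, and ``Hessian bounds coming from Shi's local argument'' does not supply the missing input.
\end{itemize}

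A clean fix is a time-dependent cutoff.

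\begin{itemize}
\item For each $y\in B_{g(0)}(x,r/2)$, use $\phi=\psi(d_{g(t)}(y,\cdot)/\rho)$, with $\rho=c\,r$ chosen via the metric equivalence so that $B_{g(t)}(y,\rho)\subset B_{g(0)}(x,r)$ for all $t\le 1/\Lambda$.
\item Now $|\partial_tg|\le C\Lambda$ serves its proper purpose: it gives $|\partial_td_t|\le C\Lambda d_t$.
\item Laplacian comparison at the current time gives $\Delta_{g(t)}d_t\le C(\sqrt\Lambda+d_t^{-1})$ in the barrier sense. This holds because minimizing geodesics stay in the ball, where $\Ric\ge-C\Lambda$.
\item Take $\psi'\le0$, $\psi\equiv1$ near $0$, $\psi''\ge-C$ and $(\psi')^2\le C\psi$. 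This yields $\square\phi\le C(\Lambda,r)$ and $|\nabla\phi|^2\le C\phi$ with no curvature derivatives involved. It is the same device the paper uses in Theorem~\ref{thm:concentration-under-Ricci-bound}.
\end{itemize}

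Then run Shi's argument with the test quantity $(a+|\Rm|^2)|\nabla\Rm|^2$, $a\sim\Lambda^2$. This factor is a weight that produces the $-cF^2$ term, not a cutoff as your phrase ``$\psi(a+|\Rm|^2)$'' suggests. After absorbing the extra $T*\nabla^2\Rm*\nabla\Rm$ term, this gives the bound at $y$, where $\phi=1$. Higher $k$ follow by induction.
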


\begin{Theorem}\label{thm:compactness}
	Let $(M^7_i,\varphi_i(t),g_i(t),x_i)_{t\in(0,T]}$ be a sequence of pointed Laplacian flows over compact manifolds.
	Suppose $|B_{g_i(T)}(x_i,1)|_{g_i(T)}\geq v$ and
	\begin{equation}\label{eq:compactness-curvature-condition}
		|\Rm|(g_i)\leq\Lambda\quad\text{over }M_i\times(0,T]
	\end{equation}
	for some $\Lambda<\infty$ and $v>0$.
	Then there exists a subsequence converges smoothly to some Laplacian flow $(M^7_\infty,\varphi_\infty(t),g_\infty(t),x_\infty)_{t\in(0,T]}$ with complete time-slices.

	The conclusion remains valid when (\ref{eq:compactness-curvature-condition}) is replaced by the following local curvature bounds.
	There exists a sequence $\rho_i\to\infty$ such that, for each compact interval $J\Subset(0,T]$, we have
	\begin{equation*}
		|\Rm|\leq C_J,\quad
		\text{on }B_{g_i(t)}(x_i,\rho_i),\forall\,t\in J
	\end{equation*}
	for some constant $C_J<\infty$.
\end{Theorem}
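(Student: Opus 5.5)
The proof follows the Hamilton–Cheeger–Gromov scheme for Ricci flow compactness, adapted to Laplacian flow as in \cite{lotay2017laplacian}. The only new ingredient is that we must control the $3$-forms $\varphi_i$ as well as the metrics. Throughout I use that closed $G_2$-structures satisfy $R=-|T|^2$. Consequently $|T_{\varphi_i}|^2\leq C(n)|\Rm|\leq C\Lambda$. The evolution equation $\partial_tg_{ij}=-2R_{ij}-\frac{2}{3}|T|^2g_{ij}-4{T_i}^kT_{kj}$ then shows that $|\partial_tg_i|\leq C\Lambda$, so the metrics $g_i(t)$ on any compact subinterval $J\Subset(0,T]$ are uniformly bi-Lipschitz equivalent to $g_i(T)$.

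\emph{Step 1: higher derivative bounds.} I would apply the Shi-type estimates recalled just above. For the local version, I apply the local statement on balls $B_{g_i(t_0)}(y,r)$ with $t_0$ slightly smaller than $\min J$. This gives bounds $|\nabla^k\Rm|\leq C(k,J)$ on $B_{g_i(t)}(x_i,\rho_i/2)$ for $t\in J$. I use the uniform equivalence of the metrics to compare balls at different times. In the same way, the Shi-type estimates of \cite{lotay2017laplacian} for the torsion give $|\nabla^kT_{\varphi_i}|\leq C(k,J)$. This part of their derivative estimates is not stated above, but it comes from their proof, since they run Shi's argument on the coupled quantities $\Rm$ and $\nabla T$.

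\emph{Step 2: limit of the time slice at $T$.} The volume bound $|B_{g_i(T)}(x_i,1)|\geq v$ and the curvature bound give a uniform injectivity radius lower bound at $x_i$ by Cheeger--Gromov--Taylor. Bishop--Gromov propagates this bound to every point at bounded distance. Working in harmonic coordinates of uniform size, the bounds $|\nabla^k\Rm|\leq C$ give $C^{k,\alpha}$ bounds on $g_i(T)$. The identity $\nabla\varphi=T\lrcorner\psi$, together with the bounds on $\nabla^kT$, gives $C^k$ bounds on $\varphi_i(T)$, since $|\varphi|_g$ is constant. The Cheeger--Gromov argument then produces diffeomorphisms $F_i\colon U_i\to M_i$ from an exhaustion $U_i$ of a complete pointed manifold $(M_\infty,x_\infty)$. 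Along a subsequence, $F_i^*g_i(T)$ and $F_i^*\varphi_i(T)$ converge in $C^\infty_{\mathrm{loc}}$. In the local version, completeness of the limit uses $\rho_i\to\infty$: every fixed ball eventually lies inside $B(x_i,\rho_i/2)$.

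\emph{Step 3: extension in time.} Pull back the whole flows by the same $F_i$. Since $\partial_t\varphi=\Delta_\varphi\varphi$ is a universal expression in $\varphi$ and its derivatives, the spatial bounds give bounds on all mixed space-time derivatives of $F_i^*\varphi_i$ on $K\times J$ for compact $K$, measured against $g_\infty(T)$. Arzelà--Ascoli and a diagonal argument over $J\nearrow(0,T]$ give smooth convergence to a closed positive $3$-form $\varphi_\infty(t)$. This limit solves the Laplacian flow. Each slice $g_\infty(t)$ is complete because it is uniformly equivalent to $g_\infty(T)$ on each $J$.

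I expect the main obstacle to be Step 1 in the local version. There we need uniform derivative bounds for the torsion, not only for $\Rm$, on large balls whose size is measured at varying times. To handle this, I would rely on the local Shi estimate of \cite{lotay2017laplacian} applied on short time windows $[t_0,t_0+c\Lambda^{-1}]$ and chain these windows together. The ball containments between different times follow from the $C\Lambda$ bound on $|\partial_tg|$.
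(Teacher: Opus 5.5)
Your proposal is correct and is essentially the paper's argument. The paper simply invokes Lotay--Wei's compactness theorem \cite[Theorem 7.2]{lotay2017laplacian}, with the base-point injectivity radius bound supplied by Cheeger--Gromov--Taylor, and your Steps 1--3 unpack the proof of that theorem, including the local-curvature version that the paper does not treat separately. One simplification: you need not extract torsion derivative bounds from inside Lotay--Wei's proof. For a closed $G_2$-structure, the $G_2$-Bianchi identity expresses $\nabla_iT_{jk}-\nabla_jT_{ik}$ through $\Rm$, $T$ and $\varphi$, and since $T$ is skew this determines all of $\nabla T$. Hence $|\nabla^{k+1}T|$ is controlled algebraically by $|\nabla^j\Rm|$ for $j\leq k$ and by $|T|^2=-R$.
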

\begin{proof}
	This follows from \cite[Theorem 7.2]{lotay2017laplacian} and the injective radius estimate of Cheeger--Gromov--Taylor.
\end{proof}

\subsection{Difficulty towards pseudolocality}\label{sec:preliminaries-difficulty}
Here we briefly explain the difficulty in applying the proof for Ricci flow directly to our setting.

Let $\tau=\tau(t)>0$ evolve as $\frac{d\tau}{dt}=-1$ and $u=(4\pi\tau)^{-n/2}e^{-f}$ solve $\square^*u=0$. Recall Perelman's Harnack quantity is defined by
\begin{equation*}
	v:=\big\{\tau(2\Delta f-|\nabla f|^2+R)+f-n\big\}u.
\end{equation*}
Direct computation gives (one can see \cite{ishida2024logarithmic} for more details)
\begin{equation*}
	\square^*v
	=
	-\bigg\{
	2\tau\Big|\Ric+\nabla^2f-\frac{g}{2\tau}-\frac{E}{4}\Big|^2
	-\frac{\tau}{8}|E|^2
	+\tau\Big[-\langle E,\nabla^2f\rangle
	+2\langle\delta E,\nabla f\rangle
	+E(\nabla f,\nabla f)
	+\delta^2 E
	\Big]
	\bigg\}u.
\end{equation*}
To the author's knowledge, up to now the known proofs of pseudolocality for Ricci flow (e.g. \cite{perelman2002entropy,wang2020local}) all depend on the pointwise estimate $\square^*v\leq 0$, which is not available for Laplacian flow.
In fact, the proof in \cite{wang2020local} only require a local integral bound on $v$ to establish an effective monotonicity formula for the local $\mu$-functional.
But this still does not work for Laplacian flow.
Let $h:M\to[0,1]$ be compactly supported.
Integration by parts shows that
\begin{align*}
	\int_M(\delta^2E)hu\,dV_g
	&=
	\int_M
	\big\{E_{ij}(-f_{ij}+f_if_j)h-2E_{ij}f_ih_j+E_{ij}h_{ij}\big\}
	u\,dV_g,\\
	\int_M2\langle\delta E,\nabla f\rangle hu\,dV_g
	&=
	\int_M\big\{2E_{ij}f_{ij}h+2E_{ij}h_if_j-2E_{ij}f_if_jh\big\}u\,dV_g.
\end{align*}
Therefore
\begin{align*}
	\int_M(\square^*v)h\,dV_g
	&=-\int_M
	\bigg\{
	2\tau\Big|\Ric+\nabla^2f-\frac{g}{2\tau}-\frac{E}{4}\Big|^2
	-\frac{\tau}{8}|E|^2
	\bigg\}h\,d\nu
	-\tau\int_M\inn{E}{\nabla^2h}\,d\nu\\
	&\leq
	\frac{\tau}{8}|E|^2_{\mathrm{max}}\int_Mh\,d\nu
	+\tau|E|_{\mathrm{max}}\int_M|\nabla^2h|\,d\nu.
\end{align*}
The appearance of $\nabla^2h$ makes the above expression difficult to control.

Taking $h\equiv 1$, we recover the effective monotonicity formula of $\mathcal{W}$-functional in \cite{chen2018shi}.
\begin{Proposition}
	Let $\tau=\tau(t)>0$ evolve as $\frac{d\tau}{dt}=-1$ and $f$ evolve as (\ref{eq:conjugate-heat-equation}). Then
	\begin{equation}\label{eq:derivative-of-W}
		\frac{d}{dt}\mathcal{W}(g,f,\tau)
		=
		\int_M
		\Big(2\tau\Big|\Ric+\nabla^2f-\frac{g}{2\tau}-\frac{E}{4}\Big|^2   -\frac{\tau}{8}|E|^2\Big)   (4\pi\tau)^{-n/2}e^{-f}dV_g.
	\end{equation}
\end{Proposition}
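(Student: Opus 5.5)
The identity \eqref{eq:derivative-of-W} is a direct computation, and I would organize it as in Perelman's original derivation, only keeping track of the error tensor $E$. Write $u=(4\pi\tau)^{-n/2}e^{-f}$. The hypothesis that $f$ evolves by \eqref{eq:conjugate-heat-equation} is equivalent to $\square^*u=0$. Then
\[
\mathcal{W}(g,f,\tau)=\int_M \Big(\tau(2\Delta f-|\nabla f|^2+R)+f-n\Big)u\,dV_g=\int_M v\,dV_g .
\]
The first equality holds because integration by parts gives $\int_M\Delta f\,u\,dV_g=\int_M|\nabla f|^2u\,dV_g$.

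With $v$ as the integrand, \eqref{eq:conjugate-heat-operator} applied with the test function $1$ gives
\[
\frac{d}{dt}\int_M v\,dV_g=-\int_M\square^*v\,dV_g .
\]
I can therefore quote the pointwise formula for $\square^*v$ stated just above (from \cite{ishida2024logarithmic}) and integrate it with $h\equiv1$. In the paper's computation with a cutoff $h$, the only terms that did not combine into the square were the $\delta E$, $\delta^2E$ and $E(\nabla f,\nabla f)$ terms. After the two integrations by parts displayed there, all $E_{ij}f_{ij}$ and $E_{ij}f_if_j$ contributions cancel. What remains is $-\tau\int_M\langle E,\nabla^2h\rangle\,d\nu$ together with terms involving $\nabla h$. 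For $h\equiv1$ every one of these vanishes, so
\[
-\int_M\square^*v\,dV_g=\int_M\Big(2\tau\Big|\Ric+\nabla^2 f-\tfrac{g}{2\tau}-\tfrac E4\Big|^2-\tfrac\tau8|E|^2\Big)u\,dV_g ,
\]
which is exactly \eqref{eq:derivative-of-W}.

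If I want to avoid relying on the pointwise formula, I would redo it directly. Starting from $\partial_t g=-2\Ric+E$, the variations are $\partial_t dV=(-R+\tfrac12\mathrm{tr}E)dV$ and $\partial_t R=\Delta R+2|\Ric|^2-\langle \Ric,E\rangle+\delta^2E-\Delta\,\mathrm{tr}E$ (signs to be checked with the convention $\delta=-\mathrm{tr}_{12}\nabla$). I would also need $\partial_t|\nabla f|^2$ and the evolution of $\Delta f$, which picks up $\langle E,\nabla^2 f\rangle$ and $\langle\delta E-\tfrac12 d\,\mathrm{tr}E,\nabla f\rangle$. The integrated derivative then splits into two pieces. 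The first is the Ricci-flow part, which gives $2\tau|\Ric+\nabla^2f-\frac g{2\tau}|^2$. The second consists of terms linear in $E$, namely $\tau\langle E,\Ric+\nabla^2 f\rangle-\tfrac12\mathrm{tr}E$ plus divergence terms; the divergence terms integrate away against $u$ using $\nabla u=-u\nabla f$. Completing the square,
\[
2\tau\Big|S-\tfrac E4\Big|^2=2\tau|S|^2-\tau\langle S,E\rangle+\tfrac\tau8|E|^2,\qquad S:=\Ric+\nabla^2f-\tfrac g{2\tau},
\]
absorbs the linear part, leaving $-\frac\tau8|E|^2$.

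The main obstacle is the bookkeeping of the $E$-linear terms: checking that the $\delta E$, $\delta^2E$ and $\mathrm{tr}E$ contributions, after integration against $u$, combine exactly into $-\tau\langle S,E\rangle$. The $-\tfrac12\mathrm{tr}E$ coming from $\langle -\frac{g}{2\tau},E\rangle\cdot\tau$ has to match the $\tfrac12\mathrm{tr}E$ in \eqref{eq:conjugate-heat-equation}. I would verify this step by step using the two integration-by-parts identities already displayed in the paper.
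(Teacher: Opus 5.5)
Your main argument is exactly the paper's: write $\mathcal{W}=\int_M v\,dV_g$, use $\frac{d}{dt}\int_M v\,dV_g=-\int_M\square^*v\,dV_g$, and set $h\equiv 1$ in the paper's integrated computation of $\int_M(\square^*v)h\,dV_g$; after the two integrations by parts the $E$-bracket leaves only a $\langle E,\nabla^2h\rangle$ term, which vanishes when $h\equiv 1$. One correction to your optional direct route: the $E$-linear terms actually integrate to $-\tau\langle E,\Ric+\nabla^2f\rangle+\frac12\mathrm{tr}\,E=-\tau\langle S,E\rangle$, not $+\tau\langle S,E\rangle$ as you wrote, and this minus sign is exactly what the square $2\tau|S-\frac{E}{4}|^2$ absorbs (with your sign you would obtain $S+\frac{E}{4}$ instead).
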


\section{Gradient Estimates}\label{sec:gradient-estimates}
We first record several evolution equations for future reference.
\begin{Proposition}[Evolution equation]\label{prop:evolution-of-various-quantities}
	For any spacetime function $u$ and $\phi,\psi:\R\to\R$,
	\begin{align*}
		\tfrac{1}{2}\square|\nabla u|^2
		=&-|\nabla^2u|^2  -\tfrac{1}{2}E^{ij}u_iu_j  +\nabla\square u\cdot\nabla u,  \\
		\square\phi(u)
		=&-\phi''(u)|\nabla u|^2+\phi'(u)\square u,  \\
		\square(\psi(u)|\nabla u|^2)
		=&-2\psi(u)|\nabla^2u|^2-4\psi'(u)u_{ij}u_iu_j-\psi''(u)|\nabla u|^4-\psi(u)E^{ij}u_iu_j  \\
		&+\psi'(u)|\nabla u|^2\square u+2\psi(u)\nabla\square u\cdot\nabla u.
	\end{align*}
\end{Proposition}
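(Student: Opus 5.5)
The identities are pure pointwise computations at a fixed time. The plan is to use the commutation of $\partial_t$ with the metric-dependent quantities and a Bochner formula, with $\partial_t g=-2\Ric+E$ from (\ref{eq:definition-of-error-term}). I will prove them in the order stated, since the third follows from the first two by the product rule.

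\emph{First identity.} Write $|\nabla u|^2=g^{ij}u_iu_j$. Because $\partial_tg^{ij}=2R^{ij}-E^{ij}$, we get
\begin{equation*}
	\partial_t|\nabla u|^2=(2R^{ij}-E^{ij})u_iu_j+2\nabla u\cdot\nabla\partial_tu.
\end{equation*}
The Bochner formula gives
\begin{equation*}
	\Delta|\nabla u|^2=2|\nabla^2u|^2+2\nabla u\cdot\nabla\Delta u+2\Ric(\nabla u,\nabla u).
\end{equation*}
Subtracting, the two Ricci terms cancel exactly; this cancellation is the reason $E$ appears as the only deviation term. What remains is
\begin{equation*}
	\square|\nabla u|^2=-2|\nabla^2u|^2-E^{ij}u_iu_j+2\nabla u\cdot\nabla\square u,
\end{equation*}
and dividing by $2$ gives the first line. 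The one point requiring care is that $\partial_t$ commutes with $d$ acting on functions, so $\partial_t u_i=(\partial_tu)_i$. There is no Christoffel correction at first order, so the only metric variation entering is that of $g^{ij}$.

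\emph{Second identity.} This is the chain rule: $\partial_t\phi(u)=\phi'\partial_tu$ and $\Delta\phi(u)=\phi'\Delta u+\phi''|\nabla u|^2$.

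\emph{Third identity.} Apply the product rule
\begin{equation*}
	\square(ab)=a\,\square b+b\,\square a-2\nabla a\cdot\nabla b
\end{equation*}
with $a=\psi(u)$ and $b=|\nabla u|^2$. Substituting the first two identities gives
\begin{equation*}
	\psi\big(-2|\nabla^2u|^2-E^{ij}u_iu_j+2\nabla\square u\cdot\nabla u\big)
	+|\nabla u|^2\big(-\psi''|\nabla u|^2+\psi'\square u\big)
	-2\psi'u_k\nabla_k|\nabla u|^2.
\end{equation*}
Since $\nabla_k|\nabla u|^2=2u_{ik}u_i$, the last term equals $-4\psi'u_{ij}u_iu_j$. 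Collecting terms yields the stated formula.

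The only place I expect an obstacle is the bookkeeping of signs in the time derivative of $g^{ij}$ under the convention $E=\partial_tg+2\Ric$. Everything else is routine.
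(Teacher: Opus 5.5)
Your proposal is correct: the sign $\partial_t g^{ij}=2R^{ij}-E^{ij}$, the cancellation of the Ricci terms against the Bochner formula, and the product rule $\square(ab)=a\,\square b+b\,\square a-2\nabla a\cdot\nabla b$ with $\nabla_k|\nabla u|^2=2u_{ik}u_i$ together reproduce exactly the three stated identities. The paper records this proposition without proof, and your direct computation is the standard verification it implicitly relies on.
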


\begin{Corollary}\label{cor:evolution}
	Suppose $\partial_tu=\Delta u$, $u>0$. Then
	\begin{align}
		\square(u\log u)
		&=-\frac{|\nabla u|^2}{u},   \label{eq:ulogu}\\
		\square\frac{|\nabla u|^2}{u}
		&=-\frac{E^{ij}u_iu_j}{u}-\frac{2}{u}\Big|\nabla^2u-\frac{du\otimes du}{u}\Big|^2
		=-\frac{E^{ij}u_iu_j}{u}-2u|\nabla^2\log u|^2. \notag
	\end{align}
\end{Corollary}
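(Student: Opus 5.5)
Both identities in Corollary~\ref{cor:evolution} are direct specializations of Proposition~\ref{prop:evolution-of-various-quantities} with $\square u=0$. We use the convention $\square=\partial_t-\Delta$, where $\partial_t$ acts with the time-dependent metric. We also use $\partial_t g=E-2\Ric$, and the Bochner-type formula in the first line of the proposition already carries the $E$-term.

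\textbf{First identity.} Take $\phi(s)=s\log s$, so $\phi'(s)=\log s+1$ and $\phi''(s)=1/s$. The second line of the proposition with $\square u=0$ gives
\begin{equation*}
\square(u\log u)=-\phi''(u)|\nabla u|^2=-\frac{|\nabla u|^2}{u}.
\end{equation*}
This is (\ref{eq:ulogu}).

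\textbf{Second identity.} Take $\psi(s)=s^{-1}$, so $\psi'=-s^{-2}$ and $\psi''=2s^{-3}$. The third line of the proposition, with the two terms containing $\square u$ dropped, gives
\begin{equation*}
\square\frac{|\nabla u|^2}{u}=-\frac{2}{u}|\nabla^2u|^2+\frac{4}{u^2}u_{ij}u_iu_j-\frac{2}{u^3}|\nabla u|^4-\frac{E^{ij}u_iu_j}{u}.
\end{equation*}
Complete the square: expanding $\frac{2}{u}\big|\nabla^2u-\frac{du\otimes du}{u}\big|^2$ yields exactly $\frac{2}{u}|\nabla^2u|^2-\frac{4}{u^2}u_{ij}u_iu_j+\frac{2}{u^3}|\nabla u|^4$. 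This gives the middle expression. For the last equality, use
\begin{equation*}
\nabla^2\log u=\frac{\nabla^2u}{u}-\frac{du\otimes du}{u^2},
\end{equation*}
so that
\begin{equation*}
\frac{2}{u}\Big|\nabla^2u-\frac{du\otimes du}{u}\Big|^2=2u|\nabla^2\log u|^2.
\end{equation*}

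\textbf{Main check.} The only real check is the third line of the proposition itself, in particular the coefficient $-4\psi'u_{ij}u_iu_j$ and the $E$-term. Write $\psi(u)|\nabla u|^2$ as a product and apply the Leibniz rule for $\square$:
\begin{equation*}
\square(ab)=a\,\square b+b\,\square a-2\nabla a\cdot\nabla b.
\end{equation*}
Here $a=\psi(u)$ and $b=|\nabla u|^2$, and $\nabla|\nabla u|^2=2\nabla^2u(\nabla u,\cdot)$. Then:
\begin{itemize}
\item The cross term is $-2\psi'\nabla u\cdot 2\nabla^2u(\nabla u)=-4\psi'u_{ij}u_iu_j$.
\item The first line of the proposition, multiplied by $2\psi$, supplies $-2\psi|\nabla^2u|^2-\psi E^{ij}u_iu_j$.
\item The second line supplies $-\psi''|\nabla u|^4$.
\end{itemize}
This confirms the formula, so the corollary follows with no further obstacle.
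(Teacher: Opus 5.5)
Your proposal is correct and matches the paper's approach: the paper states this corollary without a separate proof, as the specialization of Proposition~\ref{prop:evolution-of-various-quantities} to $\square u=0$ with $\phi(s)=s\log s$ and $\psi(s)=s^{-1}$, followed by completing the square. Your coefficient computations and your extra check of the third line of that proposition via the Leibniz rule for $\square$ are accurate.
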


\begin{Proposition}\label{prop:gradient-estimate-I}
	Suppose $E(g)\geq -Ag$ for some $A\geq 0$. Then for any solution $u$ to the heat equation $\square u=0$, we have $\square|\nabla u|\leq\frac{A}{2}|\nabla u|$. In particular, if $u(\cdot,t_0)$ is $L$-Lipschitz, then $u(\cdot,t)$ is $(e^{\frac{A}{2}(t-t_0)}L)$-Lipschitz for $t>t_0$.
\end{Proposition}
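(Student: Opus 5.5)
The claim to prove is Proposition~\ref{prop:gradient-estimate-I}: under $E(g)\geq-Ag$ and $\square u=0$, we have $\square|\nabla u|\leq\frac{A}{2}|\nabla u|$. The plan is to derive this from the Bochner-type identity in Proposition~\ref{prop:evolution-of-various-quantities}, combined with Kato's inequality, and then obtain the Lipschitz propagation from the maximum principle.

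\textbf{Step 1: the squared gradient.} Since $\square u=0$, the first identity of Proposition~\ref{prop:evolution-of-various-quantities} gives
\begin{equation*}
	\tfrac12\square|\nabla u|^2=-|\nabla^2u|^2-\tfrac12E^{ij}u_iu_j\leq-|\nabla^2u|^2+\tfrac{A}{2}|\nabla u|^2,
\end{equation*}
where we used $E\geq-Ag$.

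\textbf{Step 2: passing to $|\nabla u|$.} To avoid the zero set of $\nabla u$, set $w_\epsilon:=\sqrt{|\nabla u|^2+\epsilon^2}$. A direct computation gives
\begin{equation*}
	\square w_\epsilon=\frac{\frac12\square|\nabla u|^2}{w_\epsilon}+\frac{|\nabla|\nabla u|^2|^2}{4w_\epsilon^3}.
\end{equation*}
By Kato's inequality, $|\nabla|\nabla u|^2|^2\leq4|\nabla u|^2|\nabla^2u|^2\leq4w_\epsilon^2|\nabla^2u|^2$, so the Hessian terms cancel in the right direction. We are left with
\begin{equation*}
	\square w_\epsilon\leq\frac{A}{2}\frac{|\nabla u|^2}{w_\epsilon}\leq\frac{A}{2}w_\epsilon.
\end{equation*}
Letting $\epsilon\to0$ yields $\square|\nabla u|\leq\frac A2|\nabla u|$ in the barrier/distributional sense, which is smooth away from the zero set of $\nabla u$.

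\textbf{Step 3: Lipschitz propagation.} Rewriting Step~2 as $\square\big(e^{-\frac A2(t-t_0)}w_\epsilon\big)\leq0$, the weak maximum principle on the compact manifold $M$ gives
\begin{equation*}
	w_\epsilon(\cdot,t)\leq e^{\frac A2(t-t_0)}\sup_M w_\epsilon(\cdot,t_0)\leq e^{\frac A2(t-t_0)}\sqrt{L^2+\epsilon^2}.
\end{equation*}
Letting $\epsilon\to0$ gives $|\nabla u(\cdot,t)|\leq e^{\frac A2(t-t_0)}L$, and a gradient bound on a Riemannian manifold is equivalent to the Lipschitz bound for $g(t)$.

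\textbf{Main obstacle.} The only subtle point is $t=t_0$ when $u(\cdot,t_0)$ is merely Lipschitz rather than smooth. I would handle it by approximating $u(\cdot,t_0)$ by smooth functions with Lipschitz constant at most $L+o(1)$, for instance by running the heat flow for a short time, or by comparing at times $t_0+\eta$ and letting $\eta\downarrow0$ using continuity of $u$.
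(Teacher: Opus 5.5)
Your argument is correct and is essentially the paper's proof: the Bochner-type identity $\square|\nabla u|^2=-2|\nabla^2u|^2-E^{ij}u_iu_j$, Kato's inequality to cancel the Hessian term, and then the maximum principle. Your regularization $w_\epsilon=\sqrt{|\nabla u|^2+\epsilon^2}$ makes rigorous what the paper does formally by dividing by $|\nabla u|$. Your final paragraph is not needed, because in the paper $u$ is smooth up to $t_0$; of your two suggestions there, only smooth approximation of the initial data would work, since continuity of $u$ at $t_0$ alone does not control $\sup|\nabla u(\cdot,t_0+\eta)|$.
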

\begin{proof}
	By \hyperref[prop:evolution-of-various-quantities]{Proposition~\ref*{prop:evolution-of-various-quantities}}, $\square |\nabla u|^2=-E^{ij}u_iu_j-2|\nabla^2u|^2$.
	By Kato's inequality,
	\begin{align*}
		\square|\nabla u|&=\frac{1}{4|\nabla u|^3}\big|\nabla|\nabla u|^2\big|^2+\frac{1}{2|\nabla u|}\square|\nabla u|^2\\
		&=\frac{|\nabla|\nabla u||^2}{|\nabla u|}-\frac{|\nabla^2u|^2}{|\nabla u|}-\frac{E^{ij}u_iu_j}{2|\nabla u|}
		\leq-\frac{E^{ij}u_iu_j}{2|\nabla u|}
		\leq
		\frac{A}{2}|\nabla u|.
	\end{align*}
	The second assertion follows from maximum principle.
\end{proof}

\begin{Definition}
	Let $(M^n,g)$ be a complete Riemannian manifold. The $W_1$-distance between two probability measures $\mu_1,\mu_2$ over $M$ is defined by
	\begin{equation}\label{eq:definition-of-W1}
		d^g_{W_1}(\mu_1,\mu_2)
		:=
		\sup_f\left(\int_M f \,d\mu_1-\int_M f \,d\mu_2\right),
	\end{equation}
	where the supremum is taken over all bounded $1$-Lipschitz functions $f:M\to\R$. Equivalently, we may take the supremum over all bounded smooth function with $|\nabla f|\leq 1$.
\end{Definition}

\begin{Corollary}\label{cor:monotonicity-of-W1}
	Let $(v_i)_{i=1,2}$ be two nonnegative solutions to the conjugate heat equation $\square^*v_i=0$ such that $\int_Mv_i\,dV_g=1$.
	Let $d\mu_i(t)=v_i(\cdot,t)dV_{g(t)}$ be the associated probability measures.
	Suppose $E(g)\geq -Ag$ for some $A\geq 0$, then $e^{\frac{A}{2}t} d_{W_1}^g(\mu_1,\mu_2)(t)$ is non-decreasing, i.e.,
	\begin{equation}\label{ineq:monotonicity-of-W1}
		d_{W_1}^{g(t_1)}(\mu_1(t_1),\mu_2(t_1))
		\leq
		e^{\frac{A}{2}(t_2-t_1)}   d_{W_1}^{g(t_2)}(\mu_1(t_2),\mu_2(t_2)),
		\quad\forall\,t_1\leq t_2.
	\end{equation}
	In particular, for any $x_1,x_2\in M$ and $t_0\in I$, it holds
	\begin{equation*}
		d_{W_1}^{g(t)}(\nu_{x_1,t_0;t},\nu_{x_2,t_0;t})
		\leq
		e^{\frac{A}{2}(t_0-t)}   d_{t_0}(x_1,x_2),
		\quad\forall\,t\leq t_0.
	\end{equation*}
\end{Corollary}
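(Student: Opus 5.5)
The plan is to obtain \eqref{ineq:monotonicity-of-W1} by duality from Proposition~\ref{prop:gradient-estimate-I}, using the defining formula \eqref{eq:definition-of-W1} for the $W_1$-distance together with the conjugacy identity \eqref{eq:conjugate-heat-operator}. Fix $t_1\leq t_2$ and a bounded smooth test function $f$ on $M$ with $|\nabla f|_{g(t_1)}\leq 1$.

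\textbf{Step 1 (propagate the test function forward).} Let $u$ solve the forward heat equation $\square u=0$ on $M\times[t_1,t_2]$ with $u(\cdot,t_1)=f$; this exists and is smooth because $M$ is compact. By Proposition~\ref{prop:gradient-estimate-I}, $u(\cdot,t_2)$ is $e^{\frac{A}{2}(t_2-t_1)}$-Lipschitz with respect to $g(t_2)$. It is also bounded by the maximum principle.

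\textbf{Step 2 (transfer the pairing across times).} Since $\square u=0$ and $\square^*v_i=0$, identity \eqref{eq:conjugate-heat-operator} shows that $t\mapsto\int_M u\,v_i\,dV_g$ is constant on $[t_1,t_2]$. Hence
\[
\int_M f\,d\mu_1(t_1)-\int_M f\,d\mu_2(t_1)=\int_M u(\cdot,t_2)\,d\mu_1(t_2)-\int_M u(\cdot,t_2)\,d\mu_2(t_2).
\]
The function $e^{-\frac{A}{2}(t_2-t_1)}u(\cdot,t_2)$ is bounded and $1$-Lipschitz for $g(t_2)$, so by \eqref{eq:definition-of-W1} the right-hand side is at most $e^{\frac{A}{2}(t_2-t_1)}d_{W_1}^{g(t_2)}(\mu_1(t_2),\mu_2(t_2))$. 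Taking the supremum over $f$ gives \eqref{ineq:monotonicity-of-W1}, which is equivalent to $e^{\frac{A}{2}t}d_{W_1}^g(\mu_1,\mu_2)(t)$ being non-decreasing.

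\textbf{Step 3 (the special case).} Apply the result with $v_i=K(x_i,t_0;\cdot,\cdot)$ and $t_2\uparrow t_0$. Since $\nu_{x_i,t_0;t}\to\delta_{x_i}$, we have $d_{W_1}^{g(t_0)}(\delta_{x_1},\delta_{x_2})=d_{t_0}(x_1,x_2)$.

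The only delicate point is the limit $t_2\to t_0$, where $v_i$ is singular. To avoid it, run Step 2 directly with $t_2=t_0$ and use the defining property of the heat kernel,
\[
\int_M u(\cdot,t_0)\,d\nu_{x_i,t_0;t}\Big|_{t=t_0}=u(x_i,t_0),
\]
obtained by letting $s\uparrow t_0$ using the continuity of $u$. Then the difference becomes $u(x_1,t_0)-u(x_2,t_0)\leq e^{\frac{A}{2}(t_0-t)}d_{t_0}(x_1,x_2)$ by the Lipschitz bound from Step 1. No real obstacle is expected beyond this routine justification.
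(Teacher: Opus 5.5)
Your proposal is correct and follows the paper's proof essentially verbatim. You evolve a smooth test function with $|\nabla f|_{g(t_1)}\leq 1$ forward by the heat equation, use Proposition~\ref{prop:gradient-estimate-I} for the $e^{\frac{A}{2}(t_2-t_1)}$-Lipschitz bound, transfer the pairing across times via \eqref{eq:conjugate-heat-operator}, and take the supremum over $f$. Your treatment of the endpoint $t_2=t_0$ through the reproducing identity $\int_M f\,d\nu_{x_i,t_0;t}=u(x_i,t_0)$ simply spells out what the paper leaves implicit via its convention $\nu_{x,t_0;t_0}=\delta_x$.
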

\begin{proof}
	Consider $f\in C^\infty(M)$ satisfying $|\nabla f|_{g(t_1)}\leq 1$. Let $u\in C^\infty(M\times[t_1,t_2])$ be the solution to the heat equation $\square u=0$ with initial condition $u(\cdot,t_1)=f$. By \hyperref[prop:gradient-estimate-I]{Proposition~\ref*{prop:gradient-estimate-I}}, we have $|\nabla u|_{g(t)}\leq e^{\frac{A}{2}(t-t_1)}$. Therefore
	\begin{align*}
		\int_M f \,d\mu_1(t_1)-\int_M f \,d\mu_2(t_1)
		&=
		\int_M u(\cdot,t_2) \,d\mu_1(t_2)-\int_M u(\cdot,t_2) \,d\mu_2(t_2)\\
		&\leq
		e^{\frac{A}{2}(t_2-t_1)}   d_{W_1}^{g(t_2)}(\mu_1(t_2),\mu_2(t_2)).
	\end{align*}
	Then (\ref{ineq:monotonicity-of-W1}) follows by taking supremum over all such $f$.
\end{proof}

We have the following straightforward generalization of \cite[Theorem 3.2]{zhang2006some}.
One can compare this with \hyperref[thm:heat-kernel-gradient-estimate]{Theorem~\ref*{thm:heat-kernel-gradient-estimate}}.
\begin{Proposition}\label{prop:gradient-estimate-Zhang}
	Suppose $E(g)\geq -Ag$ for some $A\geq 0$.
	Then for any positive solution $u$ to the heat equation $\square u=0$ over $C^\infty(M\times(s,t])$, we have
	\begin{equation*}
		\frac{|\nabla u|}{u}(\cdot,t)
		\leq
		\sqrt{\frac{1+A(t-s)}{t-s}}\sqrt{\log\frac{u_{\mathrm{max}}}{u(\cdot,t)}},
	\end{equation*}
	where $u_{\mathrm{max}}=\sup_{M\times(s,t]}u$.
\end{Proposition}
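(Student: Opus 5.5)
We follow Zhang's argument for \cite[Theorem 3.2]{zhang2006some}, adapted to the error term $E$. After replacing $u$ by $u/u_{\mathrm{max}}$ (the heat equation is linear and the claim is scale invariant in $u$), we may assume $0<u\leq 1$ on $M\times(s,t]$. The estimate to prove then becomes
\begin{equation*}
	\frac{|\nabla u|^2}{u^2}\leq\frac{1+A(t-s)}{t-s}\log\frac{1}{u}.
\end{equation*}
The natural test quantity is
\begin{equation*}
	\Phi:=\theta(t')\frac{|\nabla u|^2}{u}-u\log\frac{1}{u},
\end{equation*}
where $\theta$ is a function of time with $\theta(s)=0$, chosen below. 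The goal is to show $\Phi\leq 0$ by the maximum principle.

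The main step is computing $\square\Phi$ with Corollary~\ref{cor:evolution}. From (\ref{eq:ulogu}) we get $\square(-u\log(1/u))=\square(u\log u)=-|\nabla u|^2/u$. From the second identity, together with $E\geq-Ag$,
\begin{equation*}
	\square\frac{|\nabla u|^2}{u}=-\frac{E^{ij}u_iu_j}{u}-2u|\nabla^2\log u|^2\leq A\frac{|\nabla u|^2}{u}.
\end{equation*}
Combining these,
\begin{equation*}
	\square\Phi\leq(\theta'+A\theta-1)\frac{|\nabla u|^2}{u}.
\end{equation*}
So we need $\theta'+A\theta\leq 1$ with $\theta(s)=0$. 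The choice $\theta(t')=\bar\sigma_A(t'-s)=(1-e^{-A(t'-s)})/A$ gives exactly $\theta'+A\theta=1$.

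The maximum principle on compact $M$ then needs care at the initial time, because $u$ is only defined on $(s,t]$. We apply it on $[s+\epsilon,t]$: there $\Phi\leq\theta(s+\epsilon)\sup|\nabla u|^2/u$, and the term $-u\log(1/u)$ is nonpositive. We then let $\epsilon\to0$, using smoothness of $u$ up to times slightly after $s$ and compactness. Alternatively, one can shift $s$ slightly and use continuity in $s$ of the final bound. This yields $\Phi\leq0$ on $M\times(s,t]$.

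At time $t$ the conclusion reads $|\nabla u|^2/u^2\leq\bar\sigma_A(t-s)^{-1}\log(1/u)$. It remains to compare $\bar\sigma_A(\tau)^{-1}=A/(1-e^{-A\tau})$ with $(1+A\tau)/\tau$. This is the elementary inequality $A\tau\leq(1+A\tau)(1-e^{-A\tau})$, equivalently $e^{x}\geq1+x$ with $x=A\tau$. Taking square roots gives the stated bound.

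The only potentially delicate point is the endpoint behaviour near $t'=s$ in the maximum principle argument; everything else is a direct computation.
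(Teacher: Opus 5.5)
Your proof is correct and is the same Zhang-type maximum principle argument as the paper's. The only difference is the test quantity: the paper, after rescaling to $[s,t]=[0,1]$, uses $t\frac{|\nabla u|^2}{u}-(1+A)u\log\frac{u_{\mathrm{max}}}{u}$, whereas your weight $\bar\sigma_A(t'-s)$ with coefficient $1$ gives the slightly sharper constant $\bar\sigma_A(t-s)^{-1}\leq\frac{1+A(t-s)}{t-s}$.

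For the endpoint, keep only your second option, which is exactly how the paper passes to the general case. Re-anchor $\theta$ at $s+\epsilon$, so that $\Phi(\cdot,s+\epsilon)=-u\log(1/u)\leq 0$, and then let $\epsilon\to 0$ in the resulting bound. Your first option does not work as stated, because $\theta(s+\epsilon)\sup_M\frac{|\nabla u|^2}{u}(\cdot,s+\epsilon)$ need not tend to $0$ when $u$ is only smooth on $(s,t]$. For example, if $u$ is a positive constant plus the heat flow of an indicator function, this quantity stays bounded away from $0$.
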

\begin{proof}
	We first deal with the case $u\in C^\infty(M\times[s,t])$. By parabolic rescaling and time-shifting, we may assume $[s,t]=[0,1]$. By \hyperref[cor:evolution]{Corollary~\ref*{cor:evolution}},
	\begin{equation*}
		\square\left(
		t\frac{|\nabla u|^2}{u}-(1+A)u\log\frac{u_{\mathrm{max}}}{u}
		\right)
		\leq
		-(1-t)A  \frac{|\nabla u|^2}{u}
		-2t  u|\nabla^2\log u|^2
		\leq 0.
	\end{equation*}
	The result follows by maximum principle. The general case can be proved by applying what we have obtained over the time interval $[s+\epsilon,t]$ and taking $\epsilon\to 0$.
\end{proof}

Let $\Phi:\R\to(0,1)$ denote the function satisfying
\begin{equation*}
	\Phi'(x)=(4\pi)^{-1/2} e^{-x^2/4},
	\qquad \lim_{x \to -\infty}\Phi(x)=0,
	\qquad \lim_{x \to \infty} \Phi(x)=1.
\end{equation*}
Set $\Phi_t(x):=\Phi(t^{-1/2}x)$.
Then $\Phi_t(x)$ solves the 1-dimensional heat equation with initial condition $\mathds{1}_{[0,\infty)}$.
A direct computation gives $\Phi_t'(x)=(4\pi t)^{-1/2}e^{-x^2/4t}$.
\begin{Theorem}[Improved gradient estimate]\label{thm:improved-gradient-estimate}
	Suppose $E(g)\geq-Ag$ for some $A\geq 0$.
	Assume $u\in C^\infty(M\times(t_0,t_1])$ satisfies $\partial_tu=\Delta u$ and $0<u<1$.
	Then $|\nabla\Phi^{-1}_{t-t_0}(u(\cdot,t))|\leq\sqrt{1+A(t-t_0)}$ for all $t\in(t_0,t_1]$.
\end{Theorem}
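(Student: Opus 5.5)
We adapt Bamler's argument for the improved gradient estimate (itself a version of the Bakry--Ledoux inequality for the Gaussian isoperimetric profile) to flows with $E(g)\geq-Ag$. By parabolic rescaling and time-shifting we may take $t_0=0$. We would first reduce to $u\in C^\infty(M\times[\epsilon,t_1])$ with $0<u<1$ on the closed interval. Then we apply the result on $[\epsilon,t_1]$ and let $\epsilon\to0$, using continuity of $\Phi^{-1}_{t}$ in $t$, exactly as in the proof of Proposition~\ref{prop:gradient-estimate-Zhang}. After this reduction, the estimate at time $\epsilon$ is trivial in the limiting sense: the time parameter of $\Phi$ tends to $0$, so any function with values in $(0,1)$ is admissible.

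Set $v:=\Phi^{-1}_t(u)$, so that $u=\Phi_t(v)=\Phi(t^{-1/2}v)$. A direct computation with the chain rule, using $\Phi''(x)=-\frac{x}{2}\Phi'(x)$ and the evolution equation for $|\nabla\cdot|^2$ from Proposition~\ref{prop:evolution-of-various-quantities}, turns $\square u=0$ into the nonlinear equation
\begin{equation*}
	\square v=-\frac{v}{2t}\big(|\nabla v|^2-1\big).
\end{equation*}
This is the standard identity: $\Phi_t$ solves the one-dimensional heat equation, and the terms in $\partial_t\Phi_t$ combine with $\Phi_t''$. Next we compute $\square|\nabla v|^2$ with the same proposition:
\begin{equation*}
	\square|\nabla v|^2=-2|\nabla^2v|^2-E(\nabla v,\nabla v)+2\nabla\square v\cdot\nabla v
	\leq A|\nabla v|^2-\frac{|\nabla v|^2}{t}\big(|\nabla v|^2-1\big)-\frac{v}{t}\nabla v\cdot\nabla|\nabla v|^2 .
\end{equation*}
The last term is a drift term and is harmless at an interior maximum of $|\nabla v|^2$. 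Dropping $-2|\nabla^2v|^2$ costs nothing.

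The key step is to compare $w:=|\nabla v|^2$ with the solution of the ODE
\begin{equation*}
	\phi'=A\phi-\frac{\phi(\phi-1)}{t}.
\end{equation*}
We check that $\phi(t)=1+At$ is a supersolution: $\phi'=A$, while the right-hand side equals $A(1+At)-\frac{(1+At)At}{t}=0\leq A$. With the drift, the maximum principle then gives $w\leq 1+At$, provided the comparison can be started. Near $t=\epsilon\to0$, the term $-w(w-1)/t$ dominates and forces $\sup w$ to drop quickly to about $1$.

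To make this rigorous, we would apply the maximum principle to $w-(1+\eta)(1+At)$ for small $\eta>0$. At a first touching time $t_*$ we have $w=(1+\eta)(1+At_*)$. There $\partial_t w\geq A(1+\eta)$, while the inequality above gives
\begin{equation*}
	\square w\leq Aw-\frac{w(w-1)}{t_*}=w\Big(A-\frac{\eta+(1+\eta)At_*}{t_*}\Big)<0,
\end{equation*}
which is a contradiction. The remaining difficulty is the initial time and non-compactness of the initial data. We would handle it in one of two ways. The first option is to apply everything to $v$ built from $\Phi_{t+\epsilon'}$ with a shifted time. The second is to note that, for $t$ near $\epsilon$, the blow-up of the coefficient $1/(t-\epsilon)$ forces any bounded $w$ below $(1+\eta)(1+A(t-\epsilon))$ shortly after $\epsilon$; we would quantify this via the Riccati ODE bound $w\leq \frac{t}{t-\epsilon}\cdot C$-type comparison. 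We expect this boundary-time argument, together with the fact that $v$ is unbounded where $u\to0$ or $u\to1$, to be the main technical obstacle. Compactness of $M$ and $0<u<1$ on $[\epsilon,t_1]$ give boundedness of $v$ and $w$, which resolves the latter.
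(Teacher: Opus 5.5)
Your proposal is correct and is essentially the paper's proof: the same substitution $u=\Phi_{T+t}(v)$, the same evolution inequality for $|\nabla v|^2$ (the drift term vanishes at a spatial maximum), the barrier $1+A(T+t)$ via the maximum principle, and the same reduction from $(t_0,t_1]$ to $[t_0+\epsilon,t_1]$. The initial-time issue you flag is not a real obstacle, and your first option is exactly what the paper does: for $u$ smooth on the closed interval, compactness gives $u(\cdot,t_0)\in[\epsilon',1-\epsilon']$ with bounded gradient, so $|\nabla\Phi_\delta^{-1}(u(\cdot,t_0))|=\sqrt{\delta}\,(\Phi^{-1})'(u)\,|\nabla u|\to0$ uniformly as $\delta\to0$; the comparison then starts trivially at shift $\delta$, one lets $\delta\to0$, and no Riccati-type argument is needed.
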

\begin{proof}
	We first prove the case $u\in C^\infty(M\times[t_0,t_1])$. Choose $T$ so that $T+t>0$ for $t\in[t_0,t_1]$. Define $h$ by $u(x,t)=\Phi_{T+t}(h(x,t))$. Note here the definition of $h$ depends on the choice of $T$. So it is more accurate to write $h_T(x,t)$. Then
	\begin{equation*}
		\frac{1}{2}\partial_t|\nabla h|^2
		=
		-\frac{1}{2}E^{ij}h_ih_j+\frac{1}{2}\Delta|\nabla h|^2-|\nabla^2 h|^2-\frac{1}{2(T+t)}\nabla h\cdot h\nabla|\nabla h|^2+\frac{1}{2(T+t)}(1-|\nabla h|^2)|\nabla h|^2.
	\end{equation*}
	When combined with the assumption $E\geq-Ag$, this yields
	\begin{equation*}
		\frac{1}{2}\partial_t|\nabla h|^2
		\leq
		\frac{1}{2}\Delta|\nabla h|^2-\frac{1}{2(T+t)}\nabla h\cdot h\nabla|\nabla h|^2
		+\frac{1}{2}|\nabla h|^2\Big(A+\frac{1}{T+t}(1-|\nabla h|^2)\Big).
	\end{equation*}
	By the maximum principle, if $|\nabla h|(t_0)\leq\sqrt{1+A(T+t_0)}$, then $|\nabla h|(t)\leq\sqrt{1+A(T+t)}$ for all $t\in[t_0,t_1]$. That is, whenever $T+t_0>0$,
	\begin{equation*}
		|\nabla\Phi_{T+t_0}^{-1}(u(\cdot,t_0))|\leq\sqrt{1+A(T+t_0)}
		\implies
		|\nabla\Phi_{T+t}^{-1}(u(\cdot,t))|\leq \sqrt{1+A(T+t)},
		\quad\forall\,t\in[t_0,t_1].
	\end{equation*}
	Equivalently, if $|\nabla\Phi_{\delta}^{-1}(u(\cdot,t_0))|\leq\sqrt{1+A\delta}$ for some $\delta>0$, then
	\begin{equation}\label{ineq:improved-gradient-estimate}
		|\nabla\Phi_{\delta+t-t_0}^{-1}(u(\cdot,t))|\leq \sqrt{1+A(\delta+t-t_0)},
		\quad\forall\,t\in[t_0,t_1].
	\end{equation}
	
	As $\delta\to 0$, $\Phi_\delta'(x)=(4\pi\delta)^{-1/2}e^{-x^2/4\delta}$ tends to the point mass at $0$. Therefore the corresponding distribution function $\Phi_\delta$ tends to $\mathds{1}_{[0,\infty)}$. In $\R^2$, the graph of $\Phi_\delta^{-1}$ is simply the reflection of that of $\Phi_\delta$ along the line $y=x$. Thus $\Phi_\delta^{-1}\to 0$ uniformly over any compact interval $[\epsilon,1-\epsilon]$. By the compactness of $M$, there exists $\epsilon>0$ and $C<\infty$ such that $u(\cdot,t_0)\in[\epsilon,1-\epsilon]$ and $|\nabla u(\cdot,t_0)|\leq C$. Therefore $|\nabla\Phi_{\delta}^{-1}(u(\cdot,t_0))|=(\Phi_\delta^{-1})'(u(\cdot,t_0))|\nabla u(\cdot,t_0)|\to 0$ uniformly as $\delta\to 0$. In sum, the condition $|\nabla\Phi_{\delta}^{-1}(u(\cdot,t_0))|\leq\sqrt{1+A\delta}$ is always satisfied for sufficiently small $\delta$.
	
	The result (when $u$ extends smoothly to time $t=t_0$) follows by taking $\delta\to 0$ in (\ref{ineq:improved-gradient-estimate}). For the general case, apply what we have obtained to $u$ over the time interval $[t_0+\epsilon,t_1]$. Then we get for any $t\in(t_0+\epsilon,t_1]$ that
	\begin{equation*}
		|\nabla\Phi^{-1}_{t-t_0-\epsilon}(u(\cdot,t))|
		\leq
		\sqrt{1+A(t-t_0-\epsilon)}.
	\end{equation*}
	For each fixed $t\in(t_0,t_1]$, we can obtain our desired result by taking $\epsilon\to 0$.
\end{proof}
\begin{Remark}
	Set $f(x,t)=\Phi^{-1}_{t-t_0}(u(x,t))$. Then
	\begin{equation*}
		|\nabla u|(x,t)
		\leq
		\sqrt{1+A(t-t_0)}\Phi'_{t-t_0}(f(x,t)).
	\end{equation*}
	In particular,
	\begin{equation*}
		|\nabla u|(x,t)
		\leq
		\sqrt{1+A(t-t_0)}
		\frac{1}{\sqrt{4\pi(t-t_0)}}
		\exp\left(-\frac{f^2(x,t)}{4(t-t_0)}\right)
		\leq
		\frac{1}{\sqrt{4\pi}}
		\sqrt{\frac{1}{t-t_0}+A}.
	\end{equation*}
\end{Remark}

\begin{Proposition}[Integral gradient estimates on heat kernel]\label{prop:integral-gradient-bound}
	Suppose $E(g)\geq-Ag$ for some $A\geq 0$.
	Then for any $1\leq p<\infty$ and measurable subset $X\subset M$, we have
	\begin{align}
		&(t-s)^{p/2}\int_X\Big(\frac{|\nabla_xK(x,t;y,s)|}{K(x,t;y,s)}\Big)^pd\nu_{x,t;s}(y)  \notag  \\
		\leq\,&
		C(n,p)\big(1+A(t-s)\big)^{p/2}\nu_{x,t;s}(X)\big(-\log(\nu_{x,t;s}(X)/2)\big)^{p/2}. \label{ineq:integral-bound-on-gradient}
	\end{align}
	In particular,
	\begin{equation}\label{ineq:integral-bound-gradient-heat-kernel}
		(t-s)^{p/2}\int_M\Big(\frac{|\nabla_xK(x,t;y,s)|}{K(x,t;y,s)}\Big)^pd\nu_{x,t;s}(y)
		\leq
		C(n,p)\big(1+A(t-s)\big)^{p/2}.
	\end{equation}
	Moreover, for $p=2$, we may take $C(n,p)=\frac{n}{2}$ in (\ref{ineq:integral-bound-gradient-heat-kernel}) and we even have
	\begin{equation*}
		(t-s)\int_M\Big(\frac{\partial_vK(x,t;y,s)}{K(x,t;y,s)}\Big)^2d\nu_{x,t;s}(y)
		\leq
		\frac{1}{2}\big(1+A(t-s)\big)
	\end{equation*}
	for any $v\in T_xM$ with $|v|_{g(t)}=1$.
\end{Proposition}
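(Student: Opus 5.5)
Fix $x,t,s$, write $\nu=\nu_{x,t;s}$, $\tau=t-s$, and view $w(x'):=K(x',t;y,s)$ for fixed $y$ as a positive solution of $\square w=0$ in $x'$. I will prove the integral bound by duality against solutions of the forward heat equation: integral bounds on $\partial_v\log K$ weighted by $\nu$ are equivalent to pointwise gradient bounds at $(x,t)$ of heat solutions with bounded initial data at time $s$.

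\textbf{Step 1 (directional $L^2$ bound, $p=2$).} Fix a unit $v\in T_xM$. For a bounded function $\phi$ on $M$, set $u(x',t'):=\int_M K(x',t';y,s)\phi(y)\,dV_{g(s)}(y)$, so $\square u=0$ and
\begin{equation*}
\partial_v u(x,t)=\int_M \frac{\partial_vK}{K}\,\phi\,d\nu .
\end{equation*}
Take $\phi$ with $\int\phi\,d\nu=0$. Consider $u_\pm:=(1\pm\eta\phi)$ for small $\eta$, or more cleanly apply Theorem~\ref{thm:improved-gradient-estimate} to $(1+\eta\phi)/2$, and linearize at $\eta=0$. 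Since $(\Phi^{-1}_\tau)'(1/2)=\sqrt{4\pi\tau}$, this gives $|\partial_v u(x,t)|\le \sqrt{(1+A\tau)/(4\pi\tau)}\cdot\sup|\phi|$ only for $L^\infty$ data. For the sharp constant $\frac12$ I would instead use the Bochner-type identity of Corollary~\ref{cor:evolution}. With $\square u=0$, the function $\tau'\,|\nabla u|^2+u^2$ is, up to the $E$ term, a supersolution. Including $E\ge -Ag$, one gets
\begin{equation*}
2(t-s)|\nabla u|^2(x,t)\le (1+A(t-s))\Big(\int\phi^2d\nu-\big(\textstyle\int\phi\,d\nu\big)^2\Big).
\end{equation*}
This follows because $\square\big((t'-s)|\nabla u|^2\big)\le (1+A(t'-s))|\nabla u|^2$ while $\square u^2=-2|\nabla u|^2$, integrated against $\nu_{x,t;t'}$ using \eqref{eq:conjugate-heat-operator}. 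Choosing $\phi=\partial_vK/K$ (truncated, then passing to the limit) yields the stated bound with constant $\frac12(1+A\tau)$. Summing over an orthonormal basis gives $C(n,2)=n/2$.

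\textbf{Step 2 (general $p$ and the localized bound on $X$).} Use the $L^\infty$ duality from Step 1 with $\phi=\mathds 1_{X'}$ for sublevel/superlevel sets. For a unit direction $v$, Theorem~\ref{thm:improved-gradient-estimate} applied to $u=\int K\mathds 1_{Y}$ gives
\begin{equation*}
\Big|\int_Y\frac{\partial_vK}{K}d\nu\Big|\le \sqrt{1+A\tau}\,\Phi'_\tau\big(\Phi_\tau^{-1}(\nu(Y))\big)\le C\sqrt{\tfrac{1+A\tau}{\tau}}\;\nu(Y)\sqrt{-\log(\nu(Y)/2)} .
\end{equation*}
Here I use the Gaussian tail asymptotics $\Phi'(\Phi^{-1}(a))\lesssim a\sqrt{-\log a}$ for $a\le 1/2$, and symmetry for $a\ge1/2$. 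Taking $Y=\{y\in X:\partial_vK/K>\lambda\}$ gives the distributional estimate $\lambda\,\nu(Y)\le C\sqrt{(1+A\tau)/\tau}\,\nu(Y)\sqrt{-\log(\nu(Y)/2)}$. This yields $\nu(Y_\lambda)\le 2\exp(-c\lambda^2\tau/(1+A\tau))$, and also $\nu(Y_\lambda)\le\nu(X)$. Integrating $p\lambda^{p-1}\min\{\nu(X),2e^{-c\lambda^2\tau/(1+A\tau)}\}$ over $\lambda$ produces $\nu(X)(-\log(\nu(X)/2))^{p/2}$, up to $C(n,p)$. The same holds for negative parts. To pass from directional derivatives to $|\nabla_xK|$, I cover the unit sphere in $T_xM$ by a finite net, which costs only a dimensional constant.

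\textbf{Main obstacle.} The hard part is Step 1's sharp constant with the $A$ correction: checking that the $E$ term in $\square|\nabla u|^2=-E(\nabla u,\nabla u)-2|\nabla^2u|^2$ produces exactly the factor $1+A\tau$ after the time-weighted integration. I would handle this with the weight $(t'-s)$ and the bound $-E\le Ag$, as in Proposition~\ref{prop:gradient-estimate-Zhang}. The integration against $\nu_{x,t;t'}$ requires smoothness at $t'=s$, which is handled by the usual $\epsilon$-shift.
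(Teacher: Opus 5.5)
Your proof is correct. For the general-$p$ and localized bounds it is essentially the paper's argument; for the sharp $p=2$ constant it takes a genuinely different route.

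\textbf{General $p$ and localization (Step~2).} The paper, following Bamler's Proposition~4.2, derives everything from one inequality. Write $q=\partial_vK/K$ and $F=\Phi'\circ\Phi^{-1}$. After rescaling to $t-s=1$, Theorem~\ref{thm:improved-gradient-estimate} gives $\int_Y q\,d\nu\le\sqrt{1+A}\,F(\nu(Y))$ for every measurable $Y$. The paper then passes to the decreasing rearrangement $h$ of $q$ (via Bamler's Claim~4.6) and deduces $H(a)=\int_0^a h\le\sqrt{1+A}\,F(a)$. From this it obtains two things:
\begin{itemize}
\item the pointwise bound $h(a)\lesssim\sqrt{1+A}\,(-\log a)^{1/2}$, which gives the localized $L^p$ bounds on $X$;
\item the sharp constant $\tfrac12(1+A)$, by Bamler's comparison of $H$ with the Gaussian profile $F$.
\end{itemize}
Your Step~2 is the same idea written with the distribution function instead of the rearrangement. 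The tail bound $\nu(\{q>\lambda\})\le 2e^{-c\lambda^2\tau/(1+A\tau)}$ is equivalent to the bound on $h$, and your layer-cake integral replaces $\int_0^{\nu(X)}h_+^p$. This buys you a little: you avoid Claim~4.6 and the reduction to $\nu(X)\le\min\{\tfrac14,e^{-p}\}$.

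\textbf{Sharp $p=2$ constant (Step~1).} Here your route differs. You use the reverse Poincar\'e inequality $2\tau|\nabla u|^2(x,t)\le(1+A\tau)\,\mathrm{Var}_\nu(\phi)$. It follows from $\square\big((t'-s)|\nabla u|^2+\tfrac{1+A\tau}{2}u^2\big)\le0$ integrated against $\nu_{x,t;t'}$. You then dualize with $\phi=q$, so that $\partial_vu(x,t)=\int q^2\,d\nu$, and hence $\big(\int q^2\,d\nu\big)^2\le\frac{1+A\tau}{2\tau}\int q^2\,d\nu$. This gives $\tfrac12(1+A\tau)$ in a few lines and makes the origin of the factor $1+A\tau$ transparent. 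It is the same computation the paper itself uses in Step~1 of Theorem~\ref{thm:heat-kernel-upper-bound}. What it does not give is any localization to $X$. That is why the paper's single-mechanism approach, one inequality yielding all the bounds, is more economical overall.

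\textbf{Minor corrections.}
\begin{itemize}
\item A function with $\square w\le 0$ is a subsolution, not a supersolution.
\item The coefficient of $u^2$ must be $\tfrac{1+A\tau}{2}$, not $1$. Your next sentence already uses the right mechanism.
\item On compact $M$ with $s<t$, $q$ is smooth and bounded, so no truncation is needed.
\item The ``main obstacle'' you flag is in fact just the one-line computation above.
\end{itemize}
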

\begin{proof}
	We write $\nu$ for $\nu_{x,t;s}$ for simplicity. By parabolic rescaling and time-shifting, we may assume $[s,t]=[0,1]$. Fix $v\in T_xM$ with $|v|_{g(1)}=1$ and write
	\begin{equation*}
		q:=\frac{\partial_vK(x,1;\cdot,0)}{K(x,1;\cdot,0)},
	\end{equation*}
	where $\partial_v$ is taken with respect to the first entry.
	
	We first prove (\ref{ineq:integral-bound-on-gradient}). For this, it is enough to establish
	\begin{equation*}
		\int_X |q|^p \,d\nu
		\leq
		C(n,p)(1+A)^{p/2}
		\nu(X)\big(-\log(\nu(X)/2)\big)^{p/2}
	\end{equation*}
	and let $v$ range through an orthonormal basis of $T_xM$. Note $|q|^p=q_+^p+q_-^p$, so by replacing $v$ with $-v$, it suffices to prove
	\begin{equation*}
		\int_X q_+^p \,d\nu
		\leq
		C(n,p)(1+A)^{p/2}
		\nu(X)\big(-\log(\nu(X)/2)\big)^{p/2}.
	\end{equation*}
	Moreover, we may assume $\nu(X)\leq\min\{\frac{1}{4},e^{-p}\}$. Otherwise $\nu(X)\big(-\log(\nu(X)/2)\big)^{p/2}\geq c(p)$ and we can divide $X$ into at most $N(p)$ parts, each of which has measure $\leq\min\{\frac{1}{4},e^{-p}\}$.
	
	Let $u\in C^\infty(M\times(0,1])$ solve the heat equation $\square u=0$ with initial condition $u(\cdot,0)=\mathds{1}_X$. Then
	\begin{equation*}
		u(x,1)
		=
		\int_M  K(x,1;\cdot,0)\mathds{1}_X  \,dV_{g(0)}
		=
		\nu(X),
		\quad
		\partial_v  u(x,1)
		=
		\int_M  \partial_vK(x,1;\cdot,0)\mathds{1}_X  \,dV_{g(0)}
		=
		\int_X  q  \,d\nu.
	\end{equation*}
	Applying \hyperref[thm:improved-gradient-estimate]{Theorem~\ref*{thm:improved-gradient-estimate}} to $(1-2\epsilon)u+\epsilon$ and taking $\epsilon\to 0$, we get
	\begin{equation}\label{ineq:integral-of-q}
		\int_X  q  \,d\nu
		\leq
		\sqrt{1+A}\,\Phi'(\Phi^{-1}(\nu(X))).
	\end{equation}
	Set $F:=\Phi'\circ\Phi^{-1}$. Define $h:(0,1)\to\R$ by
	\begin{equation*}
		h(a):=\sup\{h'\in\R:\nu(\{q\geq h'\})\geq a\}.
	\end{equation*}
	In \cite[Claim 4.6]{bamler2020entropy} , it's proved that for any $b\in\R$, measurable subset $Y\subset M$ with
	\begin{equation*}
		\{q>b\}\subset Y\subset\{q\geq b\}
	\end{equation*}
	and continuous function $f:\R\to\R$, we have
	\begin{equation}\label{eq:compute-integral}
		\int_Y  f(q)  \,d\nu
		=
		\int_0^{\nu(Y)}  f(h(\tilde{a}))  \,d\tilde{a}.
	\end{equation}
	Based on this, \cite{bamler2020entropy} then shows
	\begin{equation}\label{ineq:temp-integral-gradient}
		\int_X  q_+^p  \,d\nu
		\leq
		\int_0^{\nu(X)}  h_+^p(\tilde{a})  \,d\tilde{a},
	\end{equation}
	For any fixed $a\in(0,1)$, choose $b\in\R$ and $\{q>b\}\subset Y\subset\{q\geq b\}$ such that $\nu(Y)=a$. Then by the fact that $h$ is non-increasing, (\ref{eq:compute-integral}) and (\ref{ineq:integral-of-q}) (in which the subset is arbitrary), we get
	\begin{equation*}
		ah(a)
		\leq
		\int_0^a  h(\tilde{a})  \,d\tilde{a}
		=
		\int_Y  q  \,d\nu
		\leq
		\sqrt{1+A}\,F(a).
	\end{equation*}
	Moreover, \cite[(4.15)]{bamler2020entropy}, for $a\leq\frac{1}{4}$ we have $F(a)\leq C(-\log a)^{1/2}a$. Hence
	\begin{equation*}
		h(a)\leq C\sqrt{1+A}\,(-\log a)^{1/2}.
	\end{equation*}
	Then for $a:=\nu(X)\leq\min\{\frac{1}{4},e^{-p}\}$, we have by (\ref{ineq:temp-integral-gradient}) and the above inequality that
	\begin{align*}
		\int_X  q_+^p  \,d\nu
		&\leq
		\int_0^{a}  h_+^p(\tilde{a})  \,d\tilde{a}
		\leq
		C(1+A)^{p/2}  \int_0^{a}  \,(-\log\tilde{a})^{p/2}
		d\tilde{a}\\
		&\leq
		C(1+A)^{p/2}\cdot
		2\int_0^a\Big(
		(-\log\tilde{a})^{p/2}
		-\frac{p}{2}(-\log\tilde{a})^{p/2-1}
		\Big)
		d\tilde{a}\\
		&=
		C(1+A)^{p/2}\cdot 2a(-\log a)^{p/2}.
	\end{align*}
	This completes the proof of (\ref{ineq:integral-bound-on-gradient}). Taking $X$ to be the whole manifold, we obtain (\ref{ineq:integral-bound-gradient-heat-kernel}).
	
	It remains to prove the statement concerning $p=2$. By (\ref{eq:compute-integral}), it suffices to prove
	\begin{equation*}
		\int_0^1  h^2(a)  \,da
		\leq\frac{1}{2}(1+A).
	\end{equation*}
	Define $H:[0,1]\to\R$ by $H(a):=\int_0^a h(\tilde{a})\,d\tilde{a}$. The rest of proof is almost identical to the one in \cite[Proposition 4.2]{bamler2020entropy} , with the only difference being $H(a)\leq\sqrt{1+A}\,F(a)$ now.
\end{proof}

\section{Logarithmic Sobolev Inequality and \texorpdfstring{Poincar\'{e}}{Poincare} Inequality}
As a consequence of the above gradient estimates, we have the following Poincar\'{e} inequality and log-Sobolev inequality (as in \cite{hein2014new}) along any flow with $E(g)\geq-Ag$ for some $A\geq 0$.
\begin{Theorem}[Poincar\'{e} inequality]\label{thm:Poincare-inequality}
	Assume $E(g)\geq-Ag$ for some $A\geq 0$. Fix $(x_0,t_0)$ and let $d\nu(t)=K(x_0,t_0;\cdot,t)dV_{g(t)}$ be the conjugate heat kernel measure based at $(x_0,t_0)$. Then for any $s\leq t_0$ and $u\in C_c^\infty(M)$, it holds
	\begin{equation}\label{ineq:Poincare}
		\int_Mu^2d\nu(s)-\left(\int_Mu\,d\nu(s)\right)^2
		\leq
		2\sigma_A(t_0-s)\int_M|\nabla u|^2d\nu(s).
	\end{equation}
\end{Theorem}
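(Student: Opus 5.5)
The plan is the standard heat-semigroup interpolation from \cite{hein2014new} and \cite[Theorem 6.1]{bamler2020entropy}, using Proposition~\ref{prop:gradient-estimate-I} in place of the Ricci flow gradient bound. By density and continuity in $u$, it suffices to treat smooth $u$. Fix $s<t_0$ and let $v\in C^\infty(M\times[s,t_0])$ solve $\square v=0$ with $v(\cdot,s)=u$. For $t\in[s,t_0]$ set
\begin{equation*}
	\Psi(t):=\int_M v^2(\cdot,t)\,d\nu(t).
\end{equation*}
At the endpoints we have $\Psi(s)=\int_M u^2\,d\nu(s)$ and, since $\nu(t_0)=\delta_{x_0}$,
\begin{equation*}
	\Psi(t_0)=v(x_0,t_0)^2=\Big(\int_M u\,d\nu(s)\Big)^2 .
\end{equation*}
The second identity holds because $\int_M v\,d\nu(t)$ is constant in $t$ by (\ref{eq:conjugate-heat-operator}).

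Next I differentiate $\Psi$ using (\ref{eq:conjugate-heat-operator}) and $\square^*K=0$. Since $\square(v^2)=-2|\nabla v|^2$, this gives
\begin{equation*}
	\Psi'(t)=-2\int_M|\nabla v|^2(\cdot,t)\,d\nu(t),
\end{equation*}
and integrating from $s$ to $t_0$ yields
\begin{equation*}
	\int_M u^2\,d\nu(s)-\Big(\int_M u\,d\nu(s)\Big)^2=2\int_s^{t_0}\int_M|\nabla v|^2\,d\nu(t)\,dt .
\end{equation*}

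It remains to bound the right-hand side by $\int_M|\nabla u|^2\,d\nu(s)$ with the correct weight. Proposition~\ref{prop:gradient-estimate-I} gives $\square|\nabla v|\leq\frac{A}{2}|\nabla v|$, hence $\square\big(e^{-\frac A2(t-s)}|\nabla v|\big)\leq 0$, and this is the step that needs care. I would apply the same subsolution argument to $w:=|\nabla v|^2$. Using Kato, $\square|\nabla v|^2=-E(\nabla v,\nabla v)-2|\nabla^2v|^2\leq A|\nabla v|^2$, so $\square\big(e^{-A(t-s)}|\nabla v|^2\big)\leq0$. Pairing this with the conjugate heat kernel through (\ref{eq:conjugate-heat-operator}) shows that
\begin{equation*}
	t\longmapsto e^{-A(t-s)}\int_M|\nabla v|^2\,d\nu(t)
\end{equation*}
is non-increasing on $[s,t_0]$. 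Consequently
\begin{equation*}
	\int_M|\nabla v|^2\,d\nu(t)\leq e^{A(t-s)}\int_M|\nabla u|^2\,d\nu(s).
\end{equation*}
Integrating in $t$ over $[s,t_0]$ gives
\begin{equation*}
	2\int_s^{t_0}e^{A(t-s)}\,dt\int_M|\nabla u|^2\,d\nu(s)=2\sigma_A(t_0-s)\int_M|\nabla u|^2\,d\nu(s),
\end{equation*}
which is exactly (\ref{ineq:Poincare}).

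The main thing to check is the regularity near $t=t_0$, where $\nu(t)$ degenerates to $\delta_{x_0}$. Since $v$ is smooth up to $t_0$ on a compact $M$, $\Psi$ is continuous on $[s,t_0]$, so I would integrate over $[s,t_0-\epsilon]$ and let $\epsilon\to0$. The monotonicity argument is the same computation, with the sign of $E\geq-Ag$ entering only through $-E(\nabla v,\nabla v)\leq A|\nabla v|^2$.
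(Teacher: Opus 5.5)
Your proposal is correct and matches the paper's proof essentially line by line. The paper also evolves $u$ by the heat equation from time $s$, writes the variance as $2\int_s^{t_0}\int_M|\nabla\bar u|^2\,d\nu(t)\,dt$, and bounds the integrand using $\frac{d}{dt}\int_M|\nabla\bar u|^2\,d\nu\le A\int_M|\nabla\bar u|^2\,d\nu$. One small correction: $\square|\nabla v|^2\leq A|\nabla v|^2$ follows directly from the Bochner identity in Proposition~\ref{prop:evolution-of-various-quantities}, so Kato's inequality is not needed.
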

\begin{proof}
	Define $\bar{u}(x,t)=\int_MK(x,t;y,s)u(y,s)\,dV_{g(s)}(y)$ for $(x,t)\in M\times[s,t_0]$. Then
	\begin{equation*}
		\int_Mu^2d\nu(s)-\left(\int_Mu\,d\nu(s)\right)^2
		=
		-\int_s^{t_0}\left(\frac{d}{dt}\int_M\bar{u}(x,t)^2K(x_0,t_0;x,t)\,dV_{g(t)}(x)\right)dt.
	\end{equation*}
	Since $K(x_0,t_0;x,t)$ solves the conjugate heat equation,
	\begin{align*}
		\frac{d}{dt}\int_M\bar{u}(x,t)^2K(x_0,t_0;x,t)\,dV_{g(t)}(x)
		&=
		\int_M\square(\bar{u}(x,t)^2)K(x_0,t_0;x,t)\,dV_{g(t)}(x)\\
		&=\int_M\square(\bar{u}^2)d\nu(t)
		=-2\int_M|\nabla\bar{u}|^2d\nu(t).
	\end{align*}
	Therefore
	\begin{equation}\label{eq:homotopy}
		\int_Mu^2d\nu(s)-\left(\int_Mu\,d\nu(s)\right)^2
		=
		2\int_s^{t_0}\left(\int_M|\nabla\bar{u}|^2d\nu(t)\right)dt.
	\end{equation}
	The integrand may be estimated as follows. Since
	\begin{align*}
		\frac{d}{dt}\int_M|\nabla\bar{u}|^2d\nu(t)
		&=\int_M\square(|\nabla\bar{u}|^2)d\nu(t)\\
		&=\int_M(-2|\nabla^2\bar{u}|^2-E^{ij}\bar{u}_i\bar{u}_j)d\nu(t)\\
		&\leq A\int_M|\nabla\bar{u}|^2d\nu(t),
	\end{align*}
	we have
	\begin{equation*}
		\int_M|\nabla\bar{u}|^2d\nu(t)
		\leq
		e^{A(t-s)}\int_M|\nabla\bar{u}|^2d\nu(s).
	\end{equation*}
	The result follows by substituting this into (\ref{eq:homotopy}).
\end{proof}

A similar modification of the proof in \cite{hein2014new} yields the following.
\begin{Theorem}[Log-Sobolev inequality]
	The setting is the same as in \hyperref[thm:Poincare-inequality]{Theorem~\ref*{thm:Poincare-inequality}}.
	For any $s\leq t_0$ and $u\in C_c^\infty(M)$ with $u>0$, we have
	\begin{equation}\label{ineq:log-Sobolev}
		\int_Mu\log u\,d\nu(s)
		-\left(\int_Mu\,d\nu(s)\right)\log\left(\int_Mu\,d\nu(s)\right)
		\leq
		\sigma_A(t_0-s)
		\int_M\frac{|\nabla u|^2}{u}d\nu(s).
	\end{equation}
\end{Theorem}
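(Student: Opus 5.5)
We follow the semigroup (interpolation) argument used for the Poincaré inequality in Theorem~\ref{thm:Poincare-inequality}, but replace $\bar u^2$ by $\bar u\log\bar u$. By approximation we may assume $u\geq\epsilon>0$ and smooth. Define $\bar u(x,t):=\int_M K(x,t;y,s)u(y)\,dV_{g(s)}(y)$ for $t\in[s,t_0]$. Then $\square\bar u=0$, $\bar u>0$, $\bar u(\cdot,s)=u$, and $\bar u(x_0,t_0)=\int_M u\,d\nu(s)$. Hence, writing $\Psi(t):=\int_M\bar u\log\bar u\,d\nu(t)$, the left-hand side of (\ref{ineq:log-Sobolev}) equals $\Psi(s)-\Psi(t_0)=-\int_s^{t_0}\Psi'(t)\,dt$, since $\nu(t_0)=\delta_{x_0}$.

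The first step is to compute $\Psi'$. By (\ref{eq:conjugate-heat-operator}) and $\square^*K(x_0,t_0;\cdot,\cdot)=0$, we have $\Psi'(t)=\int_M\square(\bar u\log\bar u)\,d\nu(t)$. By (\ref{eq:ulogu}) in Corollary~\ref{cor:evolution}, this equals $-\int_M|\nabla\bar u|^2/\bar u\,d\nu(t)$. Therefore
\begin{equation*}
	\int_Mu\log u\,d\nu(s)-\Big(\int_Mu\,d\nu(s)\Big)\log\Big(\int_Mu\,d\nu(s)\Big)
	=\int_s^{t_0}\int_M\frac{|\nabla\bar u|^2}{\bar u}\,d\nu(t)\,dt.
\end{equation*}

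The second step is to control the Fisher information forward in time. By the second identity in Corollary~\ref{cor:evolution} and the assumption $E\geq-Ag$,
\begin{equation*}
	\frac{d}{dt}\int_M\frac{|\nabla\bar u|^2}{\bar u}\,d\nu(t)
	=\int_M\Big(-\frac{E^{ij}\bar u_i\bar u_j}{\bar u}-2\bar u|\nabla^2\log\bar u|^2\Big)d\nu(t)
	\leq A\int_M\frac{|\nabla\bar u|^2}{\bar u}\,d\nu(t).
\end{equation*}
Gronwall then gives $\int_M|\nabla\bar u|^2/\bar u\,d\nu(t)\leq e^{A(t-s)}\int_M|\nabla u|^2/u\,d\nu(s)$. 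Integrating, $\int_s^{t_0}e^{A(t-s)}\,dt=\sigma_A(t_0-s)$, and substituting into the identity above yields (\ref{ineq:log-Sobolev}).

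The main obstacle is justifying the differentiations under the integral near $t=t_0$, where $\nu(t)$ degenerates to a Dirac mass. We expect this to be harmless: $\bar u$ is smooth on $M\times[s,t_0]$ with $\epsilon\leq\bar u\leq\max u$ by the maximum principle, $M$ is compact, and the integrands are bounded continuous functions tested against a weakly continuous family of probability measures. So we apply the computation on $[s,t_0-\eta]$ and let $\eta\to0$, then let $\epsilon\to0$ using monotone or dominated convergence for $u+\epsilon$. No Harnack-type or curvature input is needed beyond $E\geq-Ag$.
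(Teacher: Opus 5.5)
Your proposal is correct and matches the paper's own (sketched) proof: the paper likewise sets $\phi(x)=x\log x$ and $e(t)=\int_M|\nabla\bar u|^2/\bar u\,d\nu(t)$, derives $e'(t)\leq Ae(t)$ from Corollary~\ref{cor:evolution} and $E\geq-Ag$, uses the identity $\int_M\phi(u)\,d\nu(s)-\phi\big(\int_Mu\,d\nu(s)\big)=\int_s^{t_0}e(t)\,dt$, and integrates $e^{A(t-s)}$ to obtain $\sigma_A(t_0-s)$. Your remarks on justifying the computation near $t=t_0$ are fine, and since $M$ is compact, the lower bound $u\geq\epsilon>0$ holds automatically and no approximation is needed.
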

\begin{proof}[Sketch of Proof]
	Set $\phi(x)=x\log x$ and $e(t)=\int_M\frac{|\nabla\bar{u}|^2}{\bar{u}}d\nu(t)$. Then $e'(t)\leq Ae(t)$ and
	\begin{equation*}
		\int_M\phi(u)\,d\nu(s)-\phi\left(\int_Mu\,d\nu(s)\right)
		=\int_s^{t_0}e(t)dt.
	\end{equation*}
\end{proof}

\begin{Corollary}[Gaussian concentration]\label{cor:Gaussian-Concentration}
	For any $s<t_0$ and measurable subsets $X,Y\subset M$,
	\begin{equation*}
		\nu_s(X)\nu_s(Y)\leq\exp\Big(-\frac{d_{g(s)}(X,Y)^2}{8\sigma_A(t_0-s)}\Big).
	\end{equation*}
\end{Corollary}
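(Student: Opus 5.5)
This is the standard Gaussian concentration argument (as in Bamler's Theorem 3.? / Hein--Naber), derived from the log-Sobolev inequality (\ref{ineq:log-Sobolev}) via the Herbst argument, followed by a two-set trick. Write $\sigma:=\sigma_A(t_0-s)$ and $\nu:=\nu_s$. We may assume $d:=d_{g(s)}(X,Y)>0$.

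\emph{Step 1: sub-Gaussian moment bound.} Let $f$ be a bounded $1$-Lipschitz function with $\int f\,d\nu=0$. After a standard smoothing (approximate $f$ by smooth functions with $|\nabla f|\leq 1+\epsilon$), set $u=e^{\lambda f}$ and $Z(\lambda)=\int e^{\lambda f}\,d\nu$. Plugging $u$ into (\ref{ineq:log-Sobolev}) gives
\begin{equation*}
	\lambda Z'(\lambda)-Z(\lambda)\log Z(\lambda)\leq\sigma\lambda^2\int|\nabla f|^2e^{\lambda f}\,d\nu\leq\sigma\lambda^2Z(\lambda).
\end{equation*}
Equivalently, $\frac{d}{d\lambda}\big(\lambda^{-1}\log Z(\lambda)\big)\leq\sigma$. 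Since $\lambda^{-1}\log Z\to\int f\,d\nu=0$ as $\lambda\to0$, integrating yields
\begin{equation*}
	\int e^{\lambda(f-\bar f)}\,d\nu\leq e^{\sigma\lambda^2}\qquad\text{for all }\lambda\in\mathbb R.
\end{equation*}
This holds for general bounded $1$-Lipschitz $f$, where $\bar f$ is its mean.

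\emph{Step 2: the two-set estimate.} Take $f(x)=\min\{d_{g(s)}(x,X),d\}$. This function is bounded and $1$-Lipschitz, equals $0$ on $X$, and equals $d$ on $Y$. Apply Step 1 with $\lambda$ and with $-\lambda$ and multiply:
\begin{equation*}
	\int e^{\lambda f}\,d\nu\cdot\int e^{-\lambda f}\,d\nu\leq e^{2\sigma\lambda^2}.
\end{equation*}
The left side is at least $\nu(Y)e^{\lambda d}\cdot\nu(X)$. Hence $\nu(X)\nu(Y)\leq\exp(2\sigma\lambda^2-\lambda d)$. Optimizing at $\lambda=d/(4\sigma)$ gives exactly $\exp(-d^2/(8\sigma))$, which is the claimed bound.

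\emph{Main technical point.} The log-Sobolev inequality is stated for $u\in C^\infty_c(M)$, so it must be justified for $u=e^{\lambda f}$ with $f$ only Lipschitz. Since $M$ is compact, $C_c^\infty=C^\infty$, and I would mollify $f$ to smooth $f_\epsilon$ with $|\nabla f_\epsilon|\leq1+\epsilon$ and $f_\epsilon\to f$ uniformly. I would then apply Steps 1 and 2 to $f_\epsilon$ and pass $\epsilon\to0$ in the resulting exponential-moment inequalities using uniform convergence. No other difficulty is expected: the flow enters only through the constant $\sigma_A(t_0-s)$ in (\ref{ineq:log-Sobolev}).
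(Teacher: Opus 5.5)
Your proposal is correct and follows essentially the same route as the paper: the Herbst argument applied to the log-Sobolev inequality (\ref{ineq:log-Sobolev}) gives $\int e^{\lambda(F-\bar F)}\,d\nu_s\leq e^{\sigma_A(t_0-s)\lambda^2}$ for $1$-Lipschitz $F$. This is then applied to $F$ and $-F$, the product is bounded below on $X\times Y$, and one optimizes at $\lambda=d/(4\sigma_A(t_0-s))$. The only cosmetic difference is that you use the truncated distance $\min\{d_{g(s)}(\cdot,X),d\}$, while the paper uses the centered distance $d_{g(s)}(\cdot,Y)-\int d_{g(s)}(x,Y)\,d\nu_s(x)$.
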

\begin{proof}
	We follow the proof of \cite[Theorem 1.13]{hein2014new}.
	For $F\in C^\infty(M)$, define $\psi:(0,\infty)\to\R$ by $\psi(\lambda)=\frac{1}{\lambda}\log(\int_Me^{\lambda F}d\nu_s)$.
	Set $\mu=\int_Me^{\lambda F}d\nu_s$.
	By (\ref{ineq:log-Sobolev}),
	\begin{align*}
		\frac{d\psi}{d\lambda}
		&=
		\frac{1}{\lambda^2\mu}\left(
		\int_M\lambda Fe^{\lambda F}d\nu_s-\mu\log\mu
		\right)  \\
		&\leq
		\frac{1}{\lambda^2\mu}
		\sigma_A(t_0-s)
		\int_M  \frac{|\nabla e^{\lambda F}|^2}{e^{\lambda F}}  d\nu_s  \\
		&=
		\sigma_A(t_0-s)
		\frac{1}{\mu}
		\int_M  |\nabla F|^2e^{\lambda F}  d\nu_s
	\end{align*}
	When $|\nabla F|\leq 1$, it follows $\frac{d\psi}{d\lambda}\leq\sigma_A(t_0-s)$, which in fact holds for all $1$-Lipschitz function $F$ by approximation.
	If moreover we have $\int_MFd\nu_s=0$, then $\lim_{\lambda\to 0}\psi(\lambda)=0$ and we may integrate the bound on $\frac{d\psi}{d\lambda}$ to obtain
	\begin{equation*}
		\int_Me^{\lambda F}d\nu_s
		\leq
		\exp(\sigma_A(t_0-s)\lambda^2).
	\end{equation*}
	Applying the preceding inequality to
	\begin{equation*}
		F=d_{g(s)}(\cdot,Y)-\int_Md_{g(s)}(x,Y)d\nu_s(x)
	\end{equation*}
	and its negative $-F$, we get
	\begin{equation*}
		\exp\big(\lambda d_{g(s)}(X,Y)\big)
		\nu_s(X)\nu_s(Y)
		\leq
		\int_X\int_Ye^{\lambda(F(x_1)-F(x_2))}d\nu_s(x_1)d\nu_s(x_2)
		\leq
		\exp\big(2\sigma_A(t_0-s)\lambda^2\big).
	\end{equation*}
	The result follows by optimizing in $\lambda$.
\end{proof}
\begin{Corollary}[Averaged heat kernel upper bound]
	For any $s\in[t_0-\frac{1}{2A},t_0)$, set $r^2=t_0-s$. Then
	\begin{equation*}
		\frac{1}{|B_r(x_1,s)|}\int_{B_r(x_1,s)}K(x_0,t_0;y,s)\,dV_{g(s)}(y)
		\leq
		\frac{C(t_0-s)^{-n/2}}{\nu_{x_0,t_0;s}(B_{r}(x_0,s))}\exp\Big(-\frac{d_{g(s)}(x_0,x_1)^2}{C(t_0-s)}\Big).
	\end{equation*}
\end{Corollary}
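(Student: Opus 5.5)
We follow the proof of the averaged heat kernel upper bound in \cite{bamler2020entropy}. Write $\nu_s:=\nu_{x_0,t_0;s}$, $r^2=t_0-s$, and abbreviate $B(x,s,\rho)$ by $B_\rho(x)$. Since $t_0-s\leq\frac{1}{2A}$, we have $\sigma_A(t_0-s)\leq 2(t_0-s)=2r^2$, so every $\sigma_A$ appearing in the Gaussian estimates can be replaced by a constant multiple of $r^2$.

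\textbf{Gaussian factor.} Apply Corollary~\ref{cor:Gaussian-Concentration} with $X=B_r(x_0)$ and $Y=B_r(x_1)$. This gives
\begin{equation*}
	\nu_s(B_r(x_0))\,\nu_s(B_r(x_1))
	\leq\exp\Big(-\frac{d_{g(s)}(X,Y)^2}{16r^2}\Big).
\end{equation*}
We have $d_{g(s)}(X,Y)\geq(d_{g(s)}(x_0,x_1)-2r)_+$. We then use the elementary bound $(d-2r)_+^2\geq d^2/2-4r^2$, and the resulting factor $e^{1/4}$ is absorbed into the constant. Dividing by $\nu_s(B_r(x_0))$, we obtain
\begin{equation*}
	\int_{B_r(x_1)}K(x_0,t_0;y,s)\,dV_{g(s)}(y)=\nu_s(B_r(x_1))
	\leq\frac{C}{\nu_s(B_r(x_0))}\exp\Big(-\frac{d_{g(s)}(x_0,x_1)^2}{32 r^2}\Big).
\end{equation*}
This is the exponential factor in the claim, with $C=32$.

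\textbf{Volume factor.} It remains to divide by $|B_r(x_1,s)|$ and produce $(t_0-s)^{-n/2}=r^{-n}$. We need $\nu_s(B_r(x_1))\leq C r^{-n}|B_r(x_1)|$ times the Gaussian factor, and this is the main obstacle: no Ricci lower bound is assumed, so volume comparison is unavailable. Bamler's route is to control $K$ pointwise in terms of volumes via the heat equation forward in time. Let $u$ solve $\square u=0$ on $[s,t_0]$ with $u(\cdot,s)=\mathds{1}_{B_r(x_1)}$. Then $u(x_0,t_0)=\nu_s(B_r(x_1))$. On the other hand, since $\int u\,dV_g$ has derivative $-\int u(R-\frac12\mathrm{tr}E)$, we get $\int_M u(\cdot,t)\,dV\leq C|B_r(x_1)|$. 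For this we need a lower bound on $R-\frac12\mathrm{tr}E$; if none is available, the argument has to go through the gradient estimate instead.

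To pass from an $L^1$ bound to a pointwise bound at $(x_0,t_0)$, we use Proposition~\ref{prop:gradient-estimate-Zhang} or Theorem~\ref{thm:improved-gradient-estimate}. These show $u(\cdot,t_0)\geq u(x_0,t_0)/2$ on a ball $B(x_0,t_0,cr)$ whenever $u(x_0,t_0)$ is not too small. We then combine this with the $\mathcal N$-free lower volume bound implicit in the statement through $\nu_s(B_r(x_0))$, comparing instead with the conjugate measure.

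\textbf{Cleaner alternative.} I would actually try the following simpler route, which uses only the Gaussian concentration estimate and the heat kernel upper bound (Theorem~\ref{thm:heat-kernel-upper-bound}). One splits $B_r(x_1)$ and bounds $K$ by $Cr^{-n}$ up to a Nash-entropy factor. The Nash-entropy factor is then traded for $\nu_s(B_r(x_0))^{-1}$ using Proposition~\ref{prop:basic-properties-of-center}(3) and the fact that $\nu_s(B_r(x_0))$ is large when the conjugate measure concentrates near $x_0$. This trade is the delicate point, and I expect to reproduce the bookkeeping of \cite{bamler2020entropy} with $\sigma_A\leq 2r^2$ inserted throughout.
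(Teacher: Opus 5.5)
The paper states this corollary without proof, directly after Corollary~\ref{cor:Gaussian-Concentration}. The restriction $s\geq t_0-\frac{1}{2A}$ is there only to give $\sigma_A(t_0-s)\leq 2(t_0-s)$, so the intended argument is your first step, and that step is correct. What it proves, however, is
\begin{equation*}
\frac{1}{|B_r(x_1,s)|}\int_{B_r(x_1,s)}K(x_0,t_0;y,s)\,dV_{g(s)}(y)
\leq
\frac{e^{1/4}}{|B_r(x_1,s)|\,\nu_{x_0,t_0;s}(B_r(x_0,s))}\exp\Big(-\frac{d_{g(s)}(x_0,x_1)^2}{32(t_0-s)}\Big).
\end{equation*}
The remaining step, replacing $|B_r(x_1,s)|^{-1}$ by $C(t_0-s)^{-n/2}$, is a genuine gap that you leave open. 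It cannot be closed, because with $C=C(n)$ the inequality as stated is false without a non-collapsing hypothesis.

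To see this, take a static flat torus $T^n$ of unit volume. Then $E\equiv 0\geq -Ag$ for every $A\geq 0$; take $A=\frac{1}{2r^2}$ if you want $A>0$. Choose $r^2=t_0-s$ with $r>\mathrm{diam}(T^n)$. Then $B_r(x_0,s)=B_r(x_1,s)=T^n$ and $\nu_{x_0,t_0;s}(B_r(x_0,s))=1$. The left-hand side equals $1$, while the right-hand side is at most $Cr^{-n}\to 0$. The same failure occurs at unit scale on $T^{n-1}\times S^1_\epsilon$ with $x_1=x_0$, $r=1$ and $\epsilon\to 0$. There $\nu_{x_0,t_0;s}(B_1(x_0,s))$ stays bounded below by a dimensional constant, but $|B_1(x_0,s)|\to 0$.

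This example also shows concretely why your suggested routes fail.

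\textbf{The ``cleaner alternative''.} It needs $e^{-\mathcal N_{x_0,t_0}(r^2)}\leq C\,\nu_{x_0,t_0;s}(B_r(x_0,s))^{-1}$. On the torus above the right side is $C$, while $e^{-\mathcal N_{x_0,t_0}(r^2)}$ is comparable to $(4\pi e r^2)^{n/2}$. The factor $e^{-\mathcal N}$ in Theorem~\ref{thm:heat-kernel-upper-bound} measures exactly the collapsing the statement ignores, and Proposition~\ref{prop:basic-properties-of-center}(3) gives no control on it.

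\textbf{The forward heat-equation route.} Even granting a lower bound on $R-\frac12\mathrm{tr}\,E$, it gives at best
\begin{equation*}
\nu_{x_0,t_0;s}(B_r(x_1,s))\leq C\,|B_r(x_1,s)|/|B(x_0,t_0,\delta r)|_{g(t_0)}.
\end{equation*}
This trades one missing volume lower bound for another. Moreover, Theorem~\ref{thm:improved-gradient-estimate} only yields $u(\cdot,t_0)\geq\frac12 u(x_0,t_0)$ on a ball of radius $\delta r$ with $\delta\approx(\log(1/u(x_0,t_0)))^{-1/2}$. That degenerates precisely in the Gaussian regime you need.

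You should stop after your first step and record the result it actually gives. That is either the displayed bound above, or the stated bound under the extra assumption $|B_r(x_1,s)|_{g(s)}\geq c\,r^n$, with $C$ then depending on $c$.
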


\begin{Corollary}[Hyper-contractivity]
	Suppose $0<\tau_1<\tau_2$ and $1<q\leq p<\infty$ satisfy
	\begin{equation*}
		\frac{\bar\sigma_A(\tau_2)}{\bar\sigma_A(\tau_1)}\geq\frac{p-1}{q-1}.
	\end{equation*}
	Then for $u\in C^2(M\times[t_0-\tau_2,t_0-\tau_1])$ satisfying $\square u=0$ (or $\square u\leq 0$ and $u\geq 0$), we have
	\begin{equation*}
		\left(\int_Mu^{p}d\nu_{t_0-\tau_1}\right)^{1/p}\leq
		\left(\int_Mu^{q}d\nu_{t_0-\tau_2}\right)^{1/q}.
	\end{equation*}
	In particular, $\normm{u}_{L^p(d\nu)}({t_0-\tau})$ is non-decreasing in $\tau$ for any $1<p<\infty$.
\end{Corollary}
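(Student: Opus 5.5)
The plan is to run the classical Bakry--Emery/Gross argument: show that the $L^p$ norms under $\nu$ evolve in a controlled way in $\tau$, and then integrate with a time-varying exponent $p(\tau)$. Write $t=t_0-\tau$ and $\nu_t=\nu_{x_0,t_0;t}$. For a solution $u\ge 0$ of $\square u\le 0$ and a smooth exponent $p(t)>1$, set
\[
N(t):=\int_M u^{p(t)}\,d\nu_t .
\]
Since $\nu_t$ solves the conjugate heat equation, identity (\ref{eq:conjugate-heat-operator}) gives
\[
\frac{d}{dt}\int_M \phi\,d\nu_t=\int_M\square\phi\,d\nu_t .
\]
Applying this to $\phi=u^{p}$ with $p=p(t)$, and using $\square u^p=pu^{p-1}\square u-p(p-1)u^{p-2}|\nabla u|^2$ together with the $\partial_t p$ term, one obtains
\[
\frac{d}{dt}\log\|u\|_{L^{p}(\nu_t)}
\le
\frac{p'}{p^2N}\int_M u^p\log\frac{u^p}{N}\,d\nu_t
-\frac{4(p-1)}{p^2N}\int_M|\nabla u^{p/2}|^2\,d\nu_t .
\]
Here $p'=dp/dt$, and the inequality uses $\square u\le0$ and $u\ge0$.

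Next I apply the log-Sobolev inequality (\ref{ineq:log-Sobolev}) to the function $u^p$ at time $t$. Its right-hand side is $\sigma_A(t_0-t)\int |\nabla u^p|^2/u^p\,d\nu_t$, which equals $4\sigma_A(\tau)\int|\nabla u^{p/2}|^2\,d\nu_t$. If $p$ increases with $t$ (so $p$ decreases as $\tau$ grows, matching the statement where the larger exponent $p$ sits at the smaller $\tau_1$), then whenever
\[
p'\,\sigma_A(\tau)\le p-1
\]
the entropy term is absorbed by the Dirichlet term, and $\log\|u\|_{L^{p(t)}(\nu_t)}$ is non-increasing in $t$. Hence the norm at the later time $t_0-\tau_1$ (exponent $p$) is bounded by the norm at the earlier time $t_0-\tau_2$ (exponent $q$).

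It remains to pick the exponent path. In terms of $\tau$ the condition reads $-\frac{d}{d\tau}\log(p-1)\le 1/\sigma_A(\tau)$. I take equality, so $\log(p-1)$ changes between $\tau_1$ and $\tau_2$ by $\int_{\tau_1}^{\tau_2}d\tau/\sigma_A(\tau)$. A direct computation gives
\[
\frac{d}{d\tau}\log\bigl(1-e^{-A\tau}\bigr)=\frac{A}{e^{A\tau}-1}=\frac{1}{\sigma_A(\tau)},
\]
so this integral equals $\log\bigl(\bar\sigma_A(\tau_2)/\bar\sigma_A(\tau_1)\bigr)$. The hypothesis $\bar\sigma_A(\tau_2)/\bar\sigma_A(\tau_1)\ge (p-1)/(q-1)$ therefore allows a path running from $q$ at $\tau_2$ to at least $p$ at $\tau_1$. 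If the path overshoots $p$, I stop at the larger exponent and use Hölder's inequality on the probability measure $\nu$ to lower it to $p$. The ``in particular'' statement is the case $p=q$, which is trivially admissible.

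The main obstacle is rigor rather than the algebra. I need to justify differentiating $N(t)$ when $u$ may vanish and is only $C^2$. I will handle this by replacing $u$ with $u+\epsilon$, which is allowed since $\square(u+\epsilon)\le0$, and letting $\epsilon\to0$. I also need the log-Sobolev inequality for the non-compactly-supported functions $u^p$; on compact $M$ this is immediate. Finally, the endpoint $\tau_1$ may be $0$ or the measure degenerate, and I will treat that by a limiting argument.
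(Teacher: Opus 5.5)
Your proposal is correct and is essentially the paper's proof. The paper uses the same family of exponents $p_*(t)=1+c/\bar\sigma_A(t_0-t)$, which satisfy $\dot p_*\,\sigma_A(t_0-t)=p_*-1$, and runs the Gross-type computation of \cite[Theorem 12.1]{bamler2020entropy} with the log-Sobolev constant $\sigma_A$ from (\ref{ineq:log-Sobolev}). It differs from you only in anchoring the path at $p$ at time $t_0-\tau_1$, so that $p_*(t_0-\tau_2)\le q$ and H\"older is applied at the earlier time instead of the later one. Your write-up also supplies the derivative computation and the $u+\epsilon$ regularization that the paper leaves to the reference. For a sign-changing solution of $\square u=0$, where the integrand should be read as $|u|^p$, the same argument applies to the positive subsolution $\sqrt{u^2+\epsilon}$.
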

\begin{proof}
	Set $c=(p-1)\bar\sigma_A(\tau_1)$ and
	\begin{equation*}
		p_*(t)=1+\frac{c}{\bar\sigma_A(t_0-t)}.
	\end{equation*}
	Since $\sigma_A(\tau)=e^{A\tau}\bar\sigma_A(\tau)$, a direct computation gives $\dot p_*(t)\sigma_A(t_0-t)=p_*(t)-1$.
	The proof of \cite[Theorem 12.1]{bamler2020entropy}, using the log-Sobolev constant
	from (\ref{ineq:log-Sobolev}), therefore gives
	\begin{equation*}
		\frac{d}{dt}\left(\int_Mu^{p_*(t)}d\nu_t\right)^{1/p_*(t)}\leq0.
	\end{equation*}
	Here $p_*(t_0-\tau_1)=p$, while the hypothesis gives
	$p_*(t_0-\tau_2)\leq q$.  Hence
	\begin{align*}
		\left(\int_Mu^{p}d\nu_{t_0-\tau_1}\right)^{1/p}
		&\leq
		\left(\int_Mu^{p_*(t_0-\tau_2)}d\nu_{t_0-\tau_2}\right)^{1/p_*(t_0-\tau_2)}
		\leq
		\left(\int_Mu^{q}d\nu_{t_0-\tau_2}\right)^{1/q}.
	\end{align*}
\end{proof}

\begin{Theorem}[$L^p$-Poincar\'{e} inequality]
	The setting is the same as \hyperref[thm:Poincare-inequality]{Theorem~\ref*{thm:Poincare-inequality}}.
	Then for any $p\geq 1$, $s\leq t_0$ and $h\in C^1(M)$ with $\int_Mh\,d\nu(s)=0$, it holds
	\begin{equation}\label{ineq:Lp-Poincare-inequality}
		\int_M  |h|^p  d\nu(s)
		\leq
		C(p)e^{pA(t_0-s)}(t_0-s)^{p/2}\int_M|\nabla h|^pd\nu(s).
	\end{equation}
	Equivalently, for any $h\in C^1(M)$, it holds
	\begin{equation}\label{ineq:Lp-Poincare-inequality-second-version}
		\int_M  |h|^p  d\nu(s)
		\leq
		C(p)e^{pA(t_0-s)}(t_0-s)^{p/2}\int_M|\nabla h|^pd\nu(s)
		+C(p)\left(\int_M   h   \,d\nu(s)\right)^p.
	\end{equation}
\end{Theorem}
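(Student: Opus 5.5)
The plan is to argue by duality against the polarized form of identity (\ref{eq:homotopy}). This handles every $p\geq1$ at once and only uses the gradient estimates of \hyperref[sec:gradient-estimates]{Section~\ref*{sec:gradient-estimates}}.

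Write $\tau_0:=t_0-s$. For a bounded smooth test function $\phi$, let $\bar h,\bar\phi$ be the solutions of $\square\bar h=\square\bar\phi=0$ on $M\times[s,t_0]$ with initial data $h,\phi$ at time $s$. Polarizing the argument in the proof of \hyperref[thm:Poincare-inequality]{Theorem~\ref*{thm:Poincare-inequality}}, with $\square(\bar h\bar\phi)=-2\nabla\bar h\cdot\nabla\bar\phi$, gives
\begin{equation*}
	\int_M h\phi\,d\nu(s)-\int_M h\,d\nu(s)\int_M\phi\,d\nu(s)
	=2\int_s^{t_0}\int_M\nabla\bar h\cdot\nabla\bar\phi\,d\nu(t)\,dt .
\end{equation*}
Since $\int h\,d\nu(s)=0$, the left side is just $\int h\phi\,d\nu(s)$. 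Taking the supremum over $\phi$ with $\|\phi\|_{L^{p'}(\nu(s))}\leq1$ will then give $\|h\|_{L^p(\nu(s))}$.

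The two gradient factors are controlled as follows.
\begin{itemize}
	\item For $\bar h$: by \hyperref[prop:gradient-estimate-I]{Proposition~\ref*{prop:gradient-estimate-I}}, $\square|\nabla\bar h|\leq\frac A2|\nabla\bar h|$. Comparing with the heat-kernel representation gives $|\nabla\bar h|(x,t)\leq e^{\frac A2(t-s)}\int_M|\nabla h|\,d\nu_{x,t;s}$. Jensen then gives $|\nabla\bar h|^p(x,t)\leq e^{\frac{pA}{2}(t-s)}\int_M|\nabla h|^p\,d\nu_{x,t;s}$. Integrating against $\nu(t)$ and using the reproduction property $\int_M\nu_{x,t;s}\,d\nu(t)(x)=\nu(s)$ yields
	\begin{equation*}
		\int_M|\nabla\bar h|^p\,d\nu(t)\leq e^{\frac{pA}{2}\tau_0}\int_M|\nabla h|^p\,d\nu(s).
	\end{equation*}
	\item For $\bar\phi$: differentiate under the integral, $\nabla\bar\phi(x,t)=\int_M\phi\,\nabla_xK(x,t;\cdot,s)\,dV_{g(s)}$. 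Hölder and \hyperref[prop:integral-gradient-bound]{Proposition~\ref*{prop:integral-gradient-bound}} (exponent $p$) give
	\begin{equation*}
		|\nabla\bar\phi|(x,t)\leq C(p)\Big(\frac{1+A(t-s)}{t-s}\Big)^{1/2}\Big(\int_M|\phi|^{p'}d\nu_{x,t;s}\Big)^{1/p'}.
	\end{equation*}
	Integrating over $\nu(t)$ as before, $\|\nabla\bar\phi\|_{L^{p'}(\nu(t))}\leq C(p)(1+A\tau_0)^{1/2}(t-s)^{-1/2}\|\phi\|_{L^{p'}(\nu(s))}$. For $p=1$, $p'=\infty$, use instead the pointwise bound from the remark after \hyperref[thm:improved-gradient-estimate]{Theorem~\ref*{thm:improved-gradient-estimate}}, applied to a rescaled $\phi$.
\end{itemize}

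Combining these by Hölder in the time-$t$ integral, and using $\int_s^{t_0}(t-s)^{-1/2}dt=2\tau_0^{1/2}$, gives
\begin{equation*}
	\int_M h\phi\,d\nu(s)\leq C(p)\,e^{\frac A2\tau_0}(1+A\tau_0)^{1/2}\,\tau_0^{1/2}\,\|\nabla h\|_{L^p(\nu(s))}\,\|\phi\|_{L^{p'}(\nu(s))}.
\end{equation*}
Since $(1+A\tau_0)^{1/2}\leq e^{\frac A2\tau_0}$, raising to the $p$-th power after taking the supremum over $\phi$ gives (\ref{ineq:Lp-Poincare-inequality}), with the better factor $e^{pA\tau_0}$. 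The smooth bounded $\phi$ are dense in the dual unit ball because $\nu(s)$ has a smooth density on compact $M$. The second version (\ref{ineq:Lp-Poincare-inequality-second-version}) follows by applying the first to $h-\int h\,d\nu(s)$ and using $|a+b|^p\leq2^{p-1}(|a|^p+|b|^p)$.

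The main obstacle is the $\bar\phi$ bound. It depends on the $L^p$ heat-kernel gradient estimate in its form for an arbitrary $\phi$, with the constant $(1+A(t-s))^{1/2}$, and on justifying the semigroup identity $\int\nu_{x,t;s}\,d\nu(t)=\nu(s)$ and the differentiation under the integral. These are routine on a compact manifold. For $p\geq2$ there is also a simpler fallback: apply Hölder directly to $p(p-1)\int\!\int|\bar h|^{p-2}|\nabla\bar h|^2\,d\nu\,dt$ and absorb the resulting $\|h\|_p^{p-2}$ factor.
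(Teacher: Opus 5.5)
Your argument is correct, but it is organized differently from the paper's proof.

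\textbf{The paper's route.} The paper uses your polarized identity only with bounded test functions $0<u<1$, which is effectively just your $p'=\infty$ case. The pointwise bound $|\nabla\bar u|\leq\sqrt{1+A(t-s)}\,(4\pi(t-s))^{-1/2}$ from Theorem~\ref{thm:improved-gradient-estimate}, together with the $L^1$ form of your $\bar h$ estimate, gives the $p=1$ inequality with the explicit constant $4/\sqrt{\pi}$ (after letting $u_1-u_2\to\mathrm{sign}(h)$). The paper then derives every $p>1$ from $p=1$ by a median/truncation trick:
\begin{itemize}
\item pick $a$ with $\nu_s(\{h\geq a\}),\nu_s(\{h\leq a\})\geq\frac12$;
\item bound $|a|$ by the $L^1$ norm of $h$;
\item apply the $L^1$ inequality to $(h-a)_\pm^p$ minus its mean, and absorb the factor $\|(h-a)_\pm\|_{L^p}^{p-1}$ coming from H\"older.
\end{itemize}

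\textbf{What each route buys.} Your route handles all $p$ at once by pairing against $L^{p'}$, so it avoids the truncation step entirely. The cost is that it needs the full $L^p$ heat-kernel gradient bound of Proposition~\ref{prop:integral-gradient-bound}. That proposition is heavier machinery: its proof itself rests on the improved gradient estimate plus Bamler's rearrangement argument. The paper needs only the pointwise smoothing estimate and an elementary truncation.

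\textbf{A bookkeeping point on constants.} (\ref{ineq:integral-bound-gradient-heat-kernel}) carries a constant $C(n,p)$. So applying H\"older against the full $|\nabla_xK|/K$ gives a dimension-dependent constant, while the statement has $C(p)$. You can recover $C(p)$ by estimating $\partial_v\bar\phi$ separately for each unit $v\in T_xM$. The proof of that proposition establishes the directional bound $\int|\partial_vK/K|^p\,d\nu\leq C(p)(1+A(t-s))^{p/2}(t-s)^{-p/2}$ with a dimension-free constant, before summing over an orthonormal basis.

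\textbf{Your $p\geq 2$ fallback.} This is also valid, and it is the sharpest of the three arguments. It yields $\|h\|_{L^p(\nu(s))}^2\leq p(p-1)\sigma_A(t_0-s)\|\nabla h\|_{L^p(\nu(s))}^2$. For $p=2$ this is exactly Theorem~\ref{thm:Poincare-inequality}.
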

\begin{proof}
	We first prove the inequality for $p=1$. Let $u\in C^\infty(M)$ with $0<u<1$. Let $\bar{h}$ (resp. $\bar{u}$) solves the heat equation $\square\bar{h}=0$ (resp. $\square\bar{u}=0$), starting from $h$ (resp. $u$) at time $s$. Then $\bar{h}(x_0,t_0)=\int_Mh\,d\nu(s)=0$. The bound $0<u<1$ is preserved by the heat equation and therefore \hyperref[thm:improved-gradient-estimate]{Theorem~\ref*{thm:improved-gradient-estimate}} implies
	\begin{equation*}
		|\nabla\bar{u}|(\cdot,t)
		\leq
		\sqrt{1+A(t-s)}\big(4\pi(t-s)\big)^{-1/2}.
	\end{equation*}
	By \hyperref[prop:gradient-estimate-I]{Proposition~\ref*{prop:gradient-estimate-I}},
	\begin{equation*}
		\frac{d}{dt}\int_M   |\nabla \bar{h}|   \,d\nu(t)
		=
		\int_M   \square|\nabla \bar{h}|   \,d\nu(t)
		\leq
		\frac{A}{2}   \int_M   |\nabla \bar{h}|   \,d\nu(t).
	\end{equation*}
	Hence
	\begin{equation*}
		\int_M   |\nabla \bar{h}|   \,d\nu(t)
		\leq
		e^{\frac{A}{2}(t-s)}\int_M   |\nabla \bar{h}|   \,d\nu(s).
	\end{equation*}
	Therefore
	\begin{align*}
		\pm\frac{d}{dt}\int_M   \bar{h}\bar{u}   \,d\nu(t)
		&=
		\pm\int_M   \square(\bar{h}\bar{u})   \,d\nu(t)
		=
		\mp2\int_M   \nabla\bar{h}\cdot\nabla\bar{u}   \,d\nu(t)
		\geq
		-2\sup_M|\nabla\bar{u}|(\cdot,t)
		\int_M   |\nabla \bar{h}|   \,d\nu(t)\\
		&\geq
		-2\sqrt{1+A(t-s)}\big(4\pi(t-s)\big)^{-1/2}
		e^{\frac{A}{2}(t-s)}\int_M   |\nabla \bar{h}|   \,d\nu(s)\\
		&\geq
		-2\big(4\pi(t-s)\big)^{-1/2}
		e^{A(t_0-s)}\int_M   |\nabla \bar{h}|   \,d\nu(s).
	\end{align*}
	Integrate this over $t\in[s,t_0]$ yields
	\begin{equation*}
		\pm\int_M   {h}{u}   \,d\nu(s)
		\leq
		\frac{2}{\sqrt{\pi}}(t_0-s)^{1/2}e^{A(t_0-s)}
		\int_M   |\nabla{h}|   \,d\nu(s).
	\end{equation*}
	Thus for any $u_1,u_2\in C^\infty(M)$ with $0<u_1,u_2<1$, it holds
	\begin{equation*}
		\int_Mh(u_1-u_2)d\nu(s)
		\leq
		\frac{4}{\sqrt{\pi}}(t_0-s)^{1/2}e^{A(t_0-s)}
		\int_M   |\nabla{h}|   \,d\nu(s).
	\end{equation*}
	Then (\ref{ineq:Lp-Poincare-inequality}) follows by taking $(u_1-u_2)\to(\mathds{1}_{\{h>0\}}-\mathds{1}_{\{h<0\}})=\mathrm{sign}(h)$.
	
	Next we prove the inequality for $p>1$. Choose $a\in\R$ such that $\nu_s(\{h\geq a\})$, $\nu_s(\{h\leq a\})\geq 1/2$. Then
	\begin{align*}
		\frac{|a|}{2}
		\leq   \int_M   |h|   \,d\nu(s)
		&\leq   C(1)e^{A(t_0-s)}(t_0-s)^{1/2}   \int_M|\nabla h|\,d\nu(s)\\
		&\leq   C(1)e^{A(t_0-s)}(t_0-s)^{1/2}
		\left(   \int_M   |\nabla h|^p   d\nu(s)   \right)^{1/p}.
	\end{align*}
	Set $b=\int_M(h-a)_+^pd\nu(s)$. Since $\nu(\{(h-a)_+=0\})=\nu(\{h\leq a\})\geq1/2$, we have
	\begin{align*}
		\frac{b}{2}
		&\leq
		\int_M   |(h-a)_+^p-b|   \,d\nu(s)\\
		&\leq
		C(1)e^{A(t_0-s)}(t_0-s)^{1/2}
		\int_M   p|\nabla(h-a)_+|(h-a)_+^{p-1}   d\nu(s)\\
		&\leq
		C(1)pe^{A(t_0-s)}(t_0-s)^{1/2}
		\left(\int_M|\nabla(h-a)_+|^pd\nu(s)\right)^{\frac{1}{p}}
		\left(\int_M(h-a)_+^pd\nu(s)\right)^{\frac{p-1}{p}}.
	\end{align*}
	Therefore
	\begin{align*}
		\int_M(h-a)_+^pd\nu(s)
		&\leq
		\left(2C(1)pe^{A(t_0-s)}(t_0-s)^{1/2}\right)^p
		\int_M|\nabla(h-a)_+|^pd\nu(s)\\
		&=
		C(p)e^{pA(t_0-s)}(t_0-s)^{p/2}
		\int_{\{h\geq a\}}|\nabla h|^pd\nu(s).
	\end{align*}
	Similarly,
	\begin{equation*}
		\int_M(h-a)_-^pd\nu(s)
		\leq
		C(p)e^{pA(t_0-s)}(t_0-s)^{p/2}
		\int_{\{h\leq a\}}|\nabla h|^pd\nu(s).
	\end{equation*}
	Summing the above two inequalities, we get
	\begin{equation*}
		\int_M   |h-a|^p   d\nu(s)
		\leq
		C(p)e^{pA(t_0-s)}(t_0-s)^{p/2}
		\int_{M}|\nabla h|^pd\nu(s).
	\end{equation*}
	When combined with the estimate on $|a|$, this yields (\ref{ineq:Lp-Poincare-inequality}). Finally, (\ref{ineq:Lp-Poincare-inequality-second-version}) follows by applying (\ref{ineq:Lp-Poincare-inequality}) to $h-a$ with $a:=\int_M h\,d\nu(s)$.
\end{proof}

\section{Pointed Nash Entropy}
We first recall the definition of (pointed) Nash entropy.
Let $(X^n,g)$ be a Riemannian manifold.
For $\tau>0$ and a probability measure $d\nu=(4\pi\tau)^{-n/2}e^{-f}dV_g$, the Nash entropy is defined by
\begin{equation*}
	\mathcal{N}(g,f,\tau)=\int_X f \,d\nu-\frac{n}{2}.
\end{equation*}
Return to our flow case.
Let $d\nu_{x_0,t_0}=K(x_0,t_0;\cdot,\cdot)dV_g$ be the conjugate heat kernel measure based at $(x_0,t_0)\in M\times I$. Set $\tau(t)=t_0-t$ and define $f$ by $(4\pi\tau)^{-n/2}e^{-f}=K(x_0,t_0;\cdot,\cdot)$. The pointed Nash entropy at $(x_0,t_0)$ is defined as
\begin{equation*}
	\mathcal{N}_{x_0,t_0}(\tau)
	:=\mathcal{N}(g(t_0-\tau),f(t_0-\tau),\tau).
\end{equation*}
For $s<t_0$, $s\in I$, we also write
\begin{equation*}
	\mathcal{N}^*_s(x_0,t_0)
	:=\mathcal{N}_{x_0,t_0}(t_0-s).
\end{equation*}
One can show that $\mathcal{N}_{x_0,t_0}(0)=0$.

Next we turn to some basic properties of (pointed) Nash entropy.
\begin{Proposition}\label{prop:time-derivative}
	Let $\tau=\tau(t)>0$ evolve by $\frac{d\tau}{dt}=-1$ and let $(4\pi\tau)^{-n/2}e^{-f}$ solve the conjugate heat equation so that $d\nu=(4\pi\tau)^{-n/2}e^{-f}dV_g$ is a probability measure. Then
	\begin{align}
		&\frac{d}{d\tau}(\tau\mathcal{N}(g,f,\tau))
		=
		\mathcal{W}(g,f,\tau)-\frac{\tau}{2}\int_M\mathrm{tr}\,E\,   d\nu,\label{eq:derivative-of-pointed-Nash-entropy}\\
		&\frac{d^2}{d\tau^2}(\tau\mathcal{N}(g,f,\tau))
		=
		\frac{d}{d\tau}\mathcal{W}(g,f,\tau)-
		\frac{d}{d\tau}\left(\frac{\tau}{2}\int_M\mathrm{tr}\,E\,   d\nu\right).\label{eq:second-derivative-of-pointed-Nash-entropy}
	\end{align}
\end{Proposition}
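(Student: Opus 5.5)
The plan is to compute $\frac{d}{d\tau}$ of $\tau\mathcal{N}=\tau\int_M f\,d\nu-\frac{n}{2}\tau$ directly, using the conjugate heat equation (\ref{eq:conjugate-heat-equation}) for $f$ and the fact that $\nu$ stays a probability measure. Since $\frac{d}{d\tau}=-\frac{d}{dt}$, I would first express $\frac{d}{dt}\int_M f\,d\nu$ through the duality identity (\ref{eq:conjugate-heat-operator}). Applied with $u=f$ and $v=(4\pi\tau)^{-n/2}e^{-f}$, which satisfies $\square^*v=0$, it gives $\frac{d}{dt}\int_M f\,d\nu=\int_M \square f\,d\nu$.

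Next I compute $\square f=\partial_t f-\Delta f$ from (\ref{eq:conjugate-heat-equation}):
\begin{equation*}
\square f=-2\Delta f+|\nabla f|^2-R+\tfrac12\mathrm{tr}\,E+\tfrac{n}{2\tau}.
\end{equation*}
Integrating by parts against $d\nu$, using $\nabla(e^{-f})=-e^{-f}\nabla f$, gives $\int_M\Delta f\,d\nu=\int_M|\nabla f|^2d\nu$. Hence
\begin{equation*}
\frac{d}{dt}\int_M f\,d\nu=\int_M\big(-|\nabla f|^2-R+\tfrac12\mathrm{tr}\,E\big)d\nu+\frac{n}{2\tau}.
\end{equation*}

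Then
\begin{equation*}
\frac{d}{d\tau}(\tau\mathcal{N})=\int_M f\,d\nu-\frac n2+\tau\Big(\int_M(|\nabla f|^2+R-\tfrac12\mathrm{tr}\,E)\,d\nu-\frac{n}{2\tau}\Big),
\end{equation*}
which equals $\int_M\big(\tau(|\nabla f|^2+R)+f-n\big)d\nu-\frac{\tau}{2}\int_M\mathrm{tr}\,E\,d\nu$. The first integral is exactly $\mathcal{W}(g,f,\tau)$, which proves (\ref{eq:derivative-of-pointed-Nash-entropy}). The second identity (\ref{eq:second-derivative-of-pointed-Nash-entropy}) then follows by differentiating the first once more in $\tau$, since all quantities are smooth in $\tau>0$ on a compact manifold.

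The only delicate point is sign bookkeeping. The volume form evolves by $\partial_t dV=(-R+\frac12\mathrm{tr}\,E)dV$, and this is already built into $\square^*$. Using (\ref{eq:conjugate-heat-operator}) avoids handling it separately, so I expect no real obstacle.
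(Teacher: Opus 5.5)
Your proposal is correct and follows the paper's proof essentially line by line. Both use the duality identity (\ref{eq:conjugate-heat-operator}) to write $\frac{d}{dt}\int_M f\,d\nu=\int_M\square f\,d\nu$, substitute (\ref{eq:conjugate-heat-equation}) for $\square f$, integrate $\Delta f$ by parts against $d\nu$, and then obtain the second identity by differentiating the first.
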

\begin{proof}
	The second identity follows directly from the first. By (\ref{eq:conjugate-heat-operator}) and (\ref{eq:conjugate-heat-equation}),
	\begin{align*}
		\frac{d}{d\tau}(\tau\mathcal{N}(g,f,\tau))
		&=\mathcal{N}(g,f,\tau)
		+\tau\frac{d}{d\tau}\mathcal{N}(g,f,\tau)\\
		&=\int_M   f   \,d\nu-\frac{n}{2}
		-\tau\int_M\square f\,d\nu\\
		&=\int_M   f   \,d\nu-\frac{n}{2}
		-\tau\int_M\left(-2\Delta f+\abs{\nabla f}^2-R+\frac{1}{2}\mathrm{tr}\,E+\frac{n}{2\tau}\right)d\nu\\
		&=\int_M  \big(\tau(|\nabla f|^2+R)+f-n\big)  d\nu
		-\frac{\tau}{2}\int_M\mathrm{tr}\,E\,   d\nu,
	\end{align*}
	where the last equality follows from an integration by parts.
\end{proof}

\begin{Corollary}\label{cor:difference-of-first-derivative}
	Let $g,f,\tau$ evolve as in \hyperref[prop:time-derivative]{Proposition~\ref*{prop:time-derivative}}. For any $0<\tau_1<\tau_2$, it holds
	\begin{equation}\label{ineq:difference-of-first-derivative}
		\frac{d}{d\tau}(\tau\mathcal{N}(g,f,\tau))\Big|^{\tau_2}_{\tau_1}
		\leq
		\frac{1}{16}|E|^2_{\mathrm{max}}(\tau_2^2-\tau_1^2)
		-\frac{1}{2}(\mathrm{tr}\,E)_{\mathrm{min}}\tau_2
		+\frac{1}{2}(\mathrm{tr}\,E)_{\mathrm{max}}\tau_1.
	\end{equation}
\end{Corollary}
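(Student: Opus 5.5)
We prove \eqref{ineq:difference-of-first-derivative} by integrating the second-derivative identity \eqref{eq:second-derivative-of-pointed-Nash-entropy} over $[\tau_1,\tau_2]$. This splits the left-hand side into a $\mathcal{W}$-part and a trace part:
\begin{equation*}
	\frac{d}{d\tau}(\tau\mathcal{N})\Big|^{\tau_2}_{\tau_1}
	=\mathcal{W}\Big|^{\tau_2}_{\tau_1}
	-\frac{\tau}{2}\int_M\mathrm{tr}\,E\,d\nu\Big|^{\tau_2}_{\tau_1}.
\end{equation*}
Strictly speaking, this follows directly from \eqref{eq:derivative-of-pointed-Nash-entropy} evaluated at both endpoints, without any integration. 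We bound the two differences separately.

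\emph{The $\mathcal{W}$-part.} Since $\frac{d\tau}{dt}=-1$, formula \eqref{eq:derivative-of-W} gives
\begin{equation*}
	\frac{d}{d\tau}\mathcal{W}
	=-\int_M\Big(2\tau\Big|\Ric+\nabla^2f-\frac{g}{2\tau}-\frac{E}{4}\Big|^2-\frac{\tau}{8}|E|^2\Big)d\nu
	\leq\frac{\tau}{8}\int_M|E|^2d\nu
	\leq\frac{\tau}{8}|E|^2_{\mathrm{max}}.
\end{equation*}
Here we drop the nonnegative square and use that $\nu$ is a probability measure. Integrating from $\tau_1$ to $\tau_2$ yields
\begin{equation*}
	\mathcal{W}(\tau_2)-\mathcal{W}(\tau_1)\leq\frac{1}{16}|E|^2_{\mathrm{max}}(\tau_2^2-\tau_1^2),
\end{equation*}
which is exactly the first term on the right of \eqref{ineq:difference-of-first-derivative}.

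\emph{The trace part.} We do not need to differentiate this term at all; we only bound the endpoint values. Since $\nu$ is a probability measure at each time, the integral $\int_M\mathrm{tr}\,E\,d\nu$ lies between $(\mathrm{tr}\,E)_{\mathrm{min}}$ and $(\mathrm{tr}\,E)_{\mathrm{max}}$. Hence
\begin{equation*}
	-\frac{\tau_2}{2}\int_M\mathrm{tr}\,E\,d\nu(\tau_2)\leq-\frac{1}{2}(\mathrm{tr}\,E)_{\mathrm{min}}\tau_2,
	\qquad
	\frac{\tau_1}{2}\int_M\mathrm{tr}\,E\,d\nu(\tau_1)\leq\frac{1}{2}(\mathrm{tr}\,E)_{\mathrm{max}}\tau_1.
\end{equation*}
Adding these to the $\mathcal{W}$ bound gives \eqref{ineq:difference-of-first-derivative}.

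The only point requiring care is the sign bookkeeping when converting $\frac{d}{dt}$ into $\frac{d}{d\tau}$ in \eqref{eq:derivative-of-W}. One must also note that $\mathcal{W}$ is differentiable on $[\tau_1,\tau_2]$, which holds because $\tau_1>0$ and the flow is smooth on a compact $M$. Beyond these checks there is no real obstacle.
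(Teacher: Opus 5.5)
Your proposal is correct and follows the paper's proof: you split $\frac{d}{d\tau}(\tau\mathcal{N})\big|^{\tau_2}_{\tau_1}$ into $\mathcal{W}\big|^{\tau_2}_{\tau_1}$ and the trace boundary term. You then bound the first by integrating $\frac{d}{d\tau}\mathcal{W}\leq\frac{\tau}{8}|E|^2_{\mathrm{max}}$ from (\ref{eq:derivative-of-W}), and the second by the pointwise extremes of $\mathrm{tr}\,E$ against the probability measure $\nu$.
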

\begin{proof}
	Integrating (\ref{eq:second-derivative-of-pointed-Nash-entropy}) and using (\ref{eq:derivative-of-W}), we get
	\begin{align*}
		\frac{d}{d\tau}(\tau\mathcal{N}(g,f,\tau))\Big|^{\tau_2}_{\tau_1}
		&=
		\int_{\tau_1}^{\tau_2}\Big(\frac{d}{d\tau}\mathcal{W}(g,f,\tau)\Big)d\tau
		-
		\left(\frac{\tau}{2}\int_M\mathrm{tr}\,E\,d\nu\right)\Big|^{\tau_2}_{\tau_1}\\
		&=
		-\int_{\tau_1}^{\tau_2}\int_M   \Big(   2\tau\Big|\Ric+\nabla^2f-\frac{g}{2\tau}-\frac{E}{4}\Big|^2   -\frac{\tau}{8}|E|^2   \Big)   d\nu d\tau
		-
		\left(\frac{\tau}{2}\int_M\mathrm{tr}\,E\,d\nu\right)\Big|^{\tau_2}_{\tau_1}\\
		&\leq
		\int_{\tau_1}^{\tau_2}\int_M\frac{\tau}{8}|E|^2d\nu d\tau-\left(\frac{\tau}{2}\int_M\mathrm{tr}\,E\,d\nu\right)\Big|^{\tau_2}_{\tau_1}\\
		&\leq
		\frac{1}{16}|E|^2_{\mathrm{max}}(\tau_2^2-\tau_1^2)
		-\frac{\tau_2}{2}(\mathrm{tr}\,E)_{\mathrm{min}}
		+\frac{\tau_1}{2}(\mathrm{tr}\,E)_{\mathrm{max}}.
	\end{align*}
\end{proof}

\begin{Corollary}\label{cor:N-and-W}
	Let $(x_0,t_0)\in M\times I$, $\tau(t)=t_0-t$, $d\nu_{x_0,t_0}=(4\pi\tau)^{-n/2}e^{-f}dV_g$. Then
	\begin{equation*}
		\mathcal{W}(g,f,\tau)
		\leq\mathcal{N}_{x_0,t_0}(\tau)
		+C(|E|_{\mathrm{max}}\tau\mid n).
	\end{equation*}
\end{Corollary}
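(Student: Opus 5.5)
The plan is to derive Corollary~\ref{cor:N-and-W} from Proposition~\ref{prop:time-derivative} and Corollary~\ref{cor:difference-of-first-derivative} by integrating the almost-monotonicity of $\frac{d}{d\tau}(\tau\mathcal{N})$. Fix the conjugate heat kernel based at $(x_0,t_0)$ and write $\Psi(\tau):=\tau\mathcal{N}_{x_0,t_0}(\tau)$. By (\ref{eq:derivative-of-pointed-Nash-entropy}),
\begin{equation*}
	\Psi'(\tau)=\mathcal{W}(g,f,\tau)-\frac{\tau}{2}\int_M\mathrm{tr}\,E\,d\nu,
\end{equation*}
so it suffices to bound $\Psi'(\tau)$ from above by $\mathcal{N}_{x_0,t_0}(\tau)$ plus an error of size $C(|E|_{\mathrm{max}}\tau\mid n)$. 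The last term is harmless, since $|\frac{\tau}{2}\int\mathrm{tr}\,E\,d\nu|\leq\frac{\sqrt n}{2}|E|_{\mathrm{max}}\tau$.

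For $0<s<\tau$, apply (\ref{ineq:difference-of-first-derivative}) with $\tau_1=s$ and $\tau_2=\tau$. Using $|\mathrm{tr}\,E|\leq\sqrt n|E|$, this gives
\begin{equation*}
	\Psi'(\tau)\leq\Psi'(s)+\frac{1}{16}|E|_{\mathrm{max}}^2\tau^2+\sqrt n|E|_{\mathrm{max}}\tau=:\Psi'(s)+\varepsilon(\tau).
\end{equation*}
Integrating in $s$ over $(0,\tau)$ yields $\tau\Psi'(\tau)\leq\Psi(\tau)-\lim_{s\to0}\Psi(s)+\tau\varepsilon(\tau)$. Since $\mathcal{N}_{x_0,t_0}(0)=0$ (stated in the paper, and bounded near $0$ because the flow is smooth and $M$ compact), the limit $\Psi(s)=s\mathcal{N}(s)$ tends to $0$. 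Dividing by $\tau$ gives
\begin{equation*}
	\Psi'(\tau)\leq\mathcal{N}_{x_0,t_0}(\tau)+\varepsilon(\tau).
\end{equation*}
Hence
\begin{equation*}
	\mathcal{W}(g,f,\tau)\leq\mathcal{N}_{x_0,t_0}(\tau)+\frac{1}{16}(|E|_{\mathrm{max}}\tau)^2+\frac{3\sqrt n}{2}|E|_{\mathrm{max}}\tau,
\end{equation*}
and the error is a function $C(|E|_{\mathrm{max}}\tau\mid n)$ tending to $0$ as $|E|_{\mathrm{max}}\tau\to0$, as claimed.

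The main point needing care is the justification at $s\to0$: we need $\Psi'$ integrable near $0$ and $s\mathcal{N}(s)\to0$. One route is to integrate only over $[\eta,\tau]$, which gives $(\tau-\eta)\Psi'(\tau)\leq\Psi(\tau)-\Psi(\eta)+(\tau-\eta)\varepsilon(\tau)$, and then let $\eta\to0$ using $\mathcal{N}(\eta)\to0$. This limit follows from the smooth short-time asymptotics of the heat kernel on a compact manifold, where $f\approx d^2/4\eta$ and $\int f\,d\nu\to n/2$.
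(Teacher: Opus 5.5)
Your proposal is correct and is essentially the paper's own proof. You fix $\tau$, integrate the almost-monotonicity (\ref{ineq:difference-of-first-derivative}) of $\frac{d}{d\tau}(\tau\mathcal{N})$ over $(0,\tau)$ using $\tau\mathcal{N}_{x_0,t_0}(\tau)\to 0$, and then substitute (\ref{eq:derivative-of-pointed-Nash-entropy}). The only differences are cosmetic: you bound the trace terms crudely by $\sqrt{n}\,|E|_{\mathrm{max}}$ where the paper keeps exact coefficients in (\ref{ineq:temp-time-derivative}), and you make the limit at $\tau=0$ explicit by integrating over $[\eta,\tau]$, which the paper leaves implicit.
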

\begin{proof}
	Fix $\tau_2>0$. By (\ref{ineq:difference-of-first-derivative}), for any $\tau\in(0,\tau_2)$ we have
	\begin{equation*}
		\frac{d}{d\tau}(\tau\mathcal{N}(g,f,\tau))\Big|_{\tau_2}
		-\frac{d}{d\tau}(\tau\mathcal{N}(g,f,\tau))
		\leq
		\frac{1}{16}|E|^2_{\mathrm{max}}(\tau_2^2-\tau^2)
		-\frac{1}{2}(\mathrm{tr}\,E)_{\mathrm{min}}\tau_2
		+\frac{1}{2}(\mathrm{tr}\,E)_{\mathrm{max}}\tau.
	\end{equation*}
	Integrating this over $\tau\in(0,\tau_2)$, we get
	\begin{equation*}
		\tau_2\frac{d}{d\tau}(\tau\mathcal{N}(g,f,\tau))\Big|_{\tau_2}
		-(\tau\mathcal{N}(g,f,\tau))\big|_{\tau_2}
		\leq
		\frac{1}{24}|E|^2_{\mathrm{max}}\tau_2^3
		-\frac{1}{2}(\mathrm{tr}\,E)_{\mathrm{min}}\tau_2^2
		+\frac{1}{4}(\mathrm{tr}\,E)_{\mathrm{max}}\tau_2^2,
	\end{equation*}
	which is equivalent to
	\begin{equation}\label{ineq:temp-time-derivative}
		\frac{d}{d\tau}(\tau\mathcal{N}(g,f,\tau))\Big|_{\tau_2}
		\leq
		\mathcal{N}(g,f,\tau)\big|_{\tau_2}
		+\frac{1}{24}|E|^2_{\mathrm{max}}\tau_2^2
		-\frac{1}{2}(\mathrm{tr}\,E)_{\mathrm{min}}\tau_2
		+\frac{1}{4}(\mathrm{tr}\,E)_{\mathrm{max}}\tau_2.
	\end{equation}
	Our result follows by substituting  (\ref{eq:derivative-of-pointed-Nash-entropy}) into the left hand side.
\end{proof}

\begin{Proposition}\label{prop:time-derivative-of-pointed-Nash-entropy}
	Let $(x_0,t_0)\in M\times I$, $\tau(t)=t_0-t$, $d\nu_{x_0,t_0}=(4\pi\tau)^{-n/2}e^{-f}dV_g$. Then
	\begin{equation*}
		-\frac{n}{2}
		+\Big(R-\frac{\mathrm{tr}\,E}{2}\Big)_{\mathrm{min}}\tau
		\leq\tau\frac{d}{d\tau}\mathcal{N}_{x_0,t_0}(\tau)
		\leq C(|E|_{\mathrm{max}}\tau\mid n).
	\end{equation*}
	In particular,
	\begin{equation}\label{cor:upper-bound-on-N}
		\mathcal{N}_{x_0,t_0}(\tau)
		\leq C(|E|_{\mathrm{max}}\tau\mid n).
	\end{equation}
\end{Proposition}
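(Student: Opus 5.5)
The lower bound comes from differentiating the Nash entropy directly along the conjugate heat flow. The upper bound comes from the identity \eqref{eq:derivative-of-pointed-Nash-entropy} combined with Corollary~\ref{cor:N-and-W}.

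\emph{Formula for the derivative.} Write $\mathcal{N}=\mathcal{N}_{x_0,t_0}(\tau)$. Expanding the left side of \eqref{eq:derivative-of-pointed-Nash-entropy} gives
\begin{equation*}
	\tau\frac{d}{d\tau}\mathcal{N}
	=\mathcal{W}(g,f,\tau)-\frac{\tau}{2}\int_M\mathrm{tr}\,E\,d\nu-\mathcal{N}.
\end{equation*}
Substituting $\mathcal{W}=\int_M(\tau(|\nabla f|^2+R)+f-n)\,d\nu$ and $\mathcal{N}=\int_M f\,d\nu-\frac n2$, the terms in $f$ cancel. This leaves
\begin{equation*}
	\tau\frac{d}{d\tau}\mathcal{N}
	=\tau\int_M\Big(|\nabla f|^2+R-\frac{\mathrm{tr}\,E}{2}\Big)\,d\nu-\frac n2.
\end{equation*}

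\emph{Lower bound.} Drop the nonnegative term $|\nabla f|^2$ and use $\int_M d\nu=1$. This gives
\begin{equation*}
	\tau\frac{d}{d\tau}\mathcal{N}\geq -\frac n2+\Big(R-\frac{\mathrm{tr}\,E}{2}\Big)_{\min}\tau.
\end{equation*}
No further estimates are needed.

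\emph{Upper bound.} Corollary~\ref{cor:N-and-W} gives $\mathcal{W}\leq\mathcal{N}+C(|E|_{\max}\tau\mid n)$. Combining this with the formula for $\tau\frac{d}{d\tau}\mathcal{N}$ above yields
\begin{equation*}
	\tau\frac{d}{d\tau}\mathcal{N}\leq C(|E|_{\max}\tau\mid n)-\frac{\tau}{2}\int_M\mathrm{tr}\,E\,d\nu.
\end{equation*}
The last term is at most $\frac{\sqrt n}{2}|E|_{\max}\tau$, since $|\mathrm{tr}\,E|\leq\sqrt n|E|$. This is again a function of $|E|_{\max}\tau$ vanishing as $|E|_{\max}\tau\to 0$, so it can be absorbed into the constant.

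Regarding the origin of $C$: it comes from \eqref{ineq:temp-time-derivative}, where the error is $\frac1{24}|E|_{\max}^2\tau^2+O(|E|_{\max}\tau)$. This vanishes as $|E|_{\max}\tau\to0$, consistent with the convention for $C(a\mid b)$.

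\emph{Consequence \eqref{cor:upper-bound-on-N}.} Integrate the upper bound $\frac{d}{d\tau}\mathcal{N}\leq C(|E|_{\max}\tau\mid n)/\tau$ from $0$, using $\mathcal{N}_{x_0,t_0}(0)=0$. The integral $\int_0^{\tau}C(|E|_{\max}s\mid n)\,\frac{ds}{s}$ converges because $C$ is $O(|E|_{\max}s)$ near $s=0$. Its value is again a function of $|E|_{\max}\tau$ tending to $0$, since after substituting $a=|E|_{\max}s$ it equals $\int_0^{|E|_{\max}\tau}C(a)\,\frac{da}{a}$.

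\emph{Main obstacle.} The main difficulty is justifying $\mathcal{N}_{x_0,t_0}(0)=0$, i.e.\ the behavior of $\mathcal{N}$ as $\tau\to0$. I would take this from the standard short-time asymptotics of the heat kernel on a compact manifold, as the paper already asserts. Everything else is bookkeeping.
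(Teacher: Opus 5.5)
Your proof is correct and essentially the paper's. The lower bound is the paper's computation from \eqref{eq:derivative-of-pointed-Nash-entropy}. For the upper bound the paper cites \eqref{ineq:temp-time-derivative} directly, while you route through Corollary~\ref{cor:N-and-W} (which is \eqref{ineq:temp-time-derivative} with the trace term added back) and then remove that term again, a harmless detour costing only a slightly larger constant; your integration from $\tau=0$, using $\mathcal{N}_{x_0,t_0}(0)=0$ and the $O(|E|_{\max}\tau)$ form of the error, is a valid way to obtain \eqref{cor:upper-bound-on-N}, which the paper leaves implicit.
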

\begin{proof}
	For simplicity, we write $\mathcal{N}(\tau)$ for $\mathcal{N}_{x_0,t_0}(\tau)$. The first inequality follows from (\ref{eq:derivative-of-pointed-Nash-entropy}) as
	\begin{align*}
		\mathcal{N}(\tau)+\tau\frac{d\mathcal{N}}{d\tau}(\tau)
		&=
		\int_M\tau(|\nabla f|^2+R)\,d\nu+\mathcal{N}(\tau)-\frac{n}{2}-\frac{\tau}{2}\int_M\mathrm{tr}\,E\,d\nu\\
		&\geq
		\mathcal{N}(\tau)+\tau\Big(R-\frac{\mathrm{tr}\,E}{2}\Big)_{\mathrm{min}}-\frac{n}{2}.
	\end{align*}
	The second inequality is precisely (\ref{ineq:temp-time-derivative}).
\end{proof}

\begin{Proposition}\label{prop:energy-estimate}
	Let $d\nu=(4\pi\tau)^{-n/2}e^{-f}dV_g$ be the conjugate heat kernel measure based at $(x_0,t_0)\in M\times I$, where $\tau(t)=t_0-t$. Then
	\begin{equation*}%\label{ineq:energy-estimate}
		\int_M\tau(|\nabla f|^2+R)\,d\nu
		\leq\frac{n}{2}+C(|E|_{\mathrm{max}}\tau\mid n),
	\end{equation*}
	\begin{equation*}
		\int_M   |\nabla f|^2   d\nu
		\leq\frac{n}{2\tau}-R_{\mathrm{min}}
		+\frac{C(|E|_{\mathrm{max}}\tau\mid n)}{\tau},
	\end{equation*}
	\begin{equation*}
		\int_M  \Big(f-\mathcal{N}_{x_0,t_0}(\tau)-\frac{n}{2}\Big)^2
		d\nu_{t_0-\tau}
		\leq
		2\sigma_A(\tau)
		\left(  \frac{n}{2\tau}  -R_{\mathrm{min}}  +\frac{C(|E|_{\mathrm{max}}\tau\mid n)}{\tau}  \right).
	\end{equation*}
\end{Proposition}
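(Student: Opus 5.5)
The first bound comes from Proposition~\ref{prop:time-derivative}. By (\ref{eq:derivative-of-pointed-Nash-entropy}) we have
\begin{equation*}
\mathcal{N}(\tau)+\tau\tfrac{d}{d\tau}\mathcal{N}(\tau)=\int_M\tau(|\nabla f|^2+R)\,d\nu+\mathcal{N}(\tau)-\tfrac{n}{2}-\tfrac{\tau}{2}\int_M\mathrm{tr}\,E\,d\nu .
\end{equation*}
Cancelling $\mathcal{N}(\tau)$ gives
\begin{equation*}
\int_M\tau(|\nabla f|^2+R)\,d\nu=\tfrac{n}{2}+\tau\tfrac{d}{d\tau}\mathcal{N}(\tau)+\tfrac{\tau}{2}\int_M\mathrm{tr}\,E\,d\nu .
\end{equation*}
Proposition~\ref{prop:time-derivative-of-pointed-Nash-entropy} bounds $\tau\frac{d}{d\tau}\mathcal{N}$ above by $C(|E|_{\mathrm{max}}\tau\mid n)$. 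The trace term satisfies $\frac{\tau}{2}|\mathrm{tr}\,E|\leq\frac{\sqrt n}{2}|E|_{\mathrm{max}}\tau$, which also tends to zero with $|E|_{\mathrm{max}}\tau$. Absorbing it into the constant yields the first inequality.

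The second bound follows by subtracting $\tau R\geq\tau R_{\mathrm{min}}$ from the integrand, since $\nu$ is a probability measure, and then dividing by $\tau$.

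For the third bound I apply the Poincar\'e inequality (Theorem~\ref{thm:Poincare-inequality}) at time $s=t_0-\tau$ to $u=f(\cdot,t_0-\tau)$. Its mean is $\int_M f\,d\nu=\mathcal{N}_{x_0,t_0}(\tau)+\frac n2$ by the definition of the Nash entropy, so the left-hand side is exactly the variance of $f$. Poincar\'e then gives
\begin{equation*}
\mathrm{Var}_\nu(f)\leq 2\sigma_A(\tau)\int_M|\nabla f|^2\,d\nu ,
\end{equation*}
and inserting the second bound finishes the proof.

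The one technical point is that Theorem~\ref{thm:Poincare-inequality} is stated for $u\in C^\infty_c(M)$. Since $M$ is compact and $f$ is smooth on each time slice with $t_0-\tau<t_0$, this hypothesis is satisfied. The remaining check is that $A$ is the constant with $E\geq -Ag$, which holds with $A=|E|_{\mathrm{max}}$ under our standing convention. I do not expect a real obstacle here; the main care is in confirming that the trace term is absorbed into a function $C(a\mid n)$ with $C\to0$ as $a\to0$.
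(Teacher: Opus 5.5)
Your proposal is correct and follows essentially the paper's argument. The paper writes $\int_M\tau(|\nabla f|^2+R)\,d\nu=\frac{n}{2}+\mathcal{W}-\mathcal{N}$ and applies Corollary~\ref{cor:N-and-W}, whereas you use the bound $\tau\frac{d}{d\tau}\mathcal{N}\le C$ and absorb the trace term separately; both are rearrangements of (\ref{ineq:temp-time-derivative}), and your second and third bounds (via $R\ge R_{\mathrm{min}}$ and the Poincar\'e inequality with mean $\mathcal{N}+\frac{n}{2}$) are exactly as in the paper.
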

\begin{proof}
	For the first inequality, we use \hyperref[cor:N-and-W]{Corollary~\ref*{cor:N-and-W}} to conclude
	\begin{equation*}
		\int_M\tau(|\nabla f|^2+R)\,d\nu
		=\frac{n}{2}+\mathcal{W}(g,f,\tau)-\mathcal{N}(g,f,\tau)
		\leq\frac{n}{2}+C(|E|_{\mathrm{max}}\tau\mid n).
	\end{equation*}
	The second inequality follows from the first one directly. By \hyperref[thm:Poincare-inequality]{Theorem~\ref*{thm:Poincare-inequality}},
	\begin{equation*}
		\int_M  \Big(f-\mathcal{N}_{x_0,t_0}(\tau)-\frac{n}{2}\Big)^2
		d\nu_{t_0-\tau}
		\leq
		2\sigma_A(\tau)
		\int_M   |\nabla f|^2   d\nu.
	\end{equation*}
	Then our last inequality follows from the second one.
\end{proof}

\begin{Theorem}[Gradient estimate]\label{thm:gradient-estimate-Nash-entropy}
	There exists a constant $C$ depending only on $n$, $R_{\mathrm{min}}(t-s)$ and $|E|_{\mathrm{max}}(t-s)$ such that
	\begin{equation*}
		|\nabla\mathcal{N}^*_s(\cdot,t)|
		\leq
		\frac{C}{(t-s)^{1/2}}.
	\end{equation*}
\end{Theorem}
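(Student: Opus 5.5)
We follow Bamler's proof of the corresponding gradient bound. Fix $s<t$ and a unit vector $v\in T_xM$ with respect to $g(t)$, and differentiate $\mathcal{N}^*_s(x,t)$ in the direction $v$. Write $\tau=t-s$, $K=K(x,t;\cdot,s)=(4\pi\tau)^{-n/2}e^{-f}$ and $d\nu=d\nu_{x,t;s}$. Then
\begin{equation*}
	\mathcal{N}^*_s(x,t)=\int_M f\,d\nu-\frac{n}{2}
	=-\int_M K\log K\,dV_{g(s)}-\frac{n}{2}\log(4\pi\tau)-\frac{n}{2}.
\end{equation*}
Differentiating in $x$, and using $\int_M\partial_vK\,dV_{g(s)}=\partial_v 1=0$, we get
\begin{equation*}
	\partial_v\mathcal{N}^*_s(x,t)=-\int_M \partial_vK\,(\log K+1)\,dV_{g(s)}
	=\int_M q\,\Big(f-\mathcal{N}^*_s(x,t)-\frac{n}{2}\Big)\,d\nu,
\end{equation*}
where $q=\partial_vK/K$. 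Subtracting the constant $\mathcal{N}^*_s+\frac n2$ is allowed precisely because $\int q\,d\nu=0$.

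We then apply Cauchy--Schwarz and bound the two factors separately. The first factor is controlled by Proposition~\ref{prop:integral-gradient-bound} with $p=2$:
\begin{equation*}
	\int_M q^2\,d\nu\leq\frac{1+A\tau}{2\tau}.
\end{equation*}
The second factor is controlled by the third inequality of Proposition~\ref{prop:energy-estimate}, i.e.\ the Poincar\'e inequality combined with the energy bound:
\begin{equation*}
	\int_M\Big(f-\mathcal{N}^*_s-\frac n2\Big)^2d\nu\leq 2\sigma_A(\tau)\Big(\frac{n}{2\tau}-R_{\min}+\frac{C(|E|_{\max}\tau\mid n)}{\tau}\Big).
\end{equation*}
Here we take $A=|E|_{\max}$, which is legitimate since $E\geq-|E|_{\max}g$. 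With this choice $\sigma_A(\tau)/\tau\leq e^{A\tau}$ depends only on $|E|_{\max}\tau$. Multiplying the two factors gives
\begin{equation*}
	|\partial_v\mathcal{N}^*_s|^2\leq \frac{C\big(n,R_{\min}\tau,|E|_{\max}\tau\big)}{\tau},
\end{equation*}
which is the claimed bound.

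The main obstacle is justifying the differentiation under the integral sign and the identity $\int_M\partial_vK\,dV=0$; on a compact manifold with $s<t$ the kernel is smooth, so both are routine. We must also check that the constant depends only on the scale-invariant quantities $R_{\min}(t-s)$ and $|E|_{\max}(t-s)$, which follows by tracking the explicit factors above. We could also record the directional form: the same Cauchy--Schwarz, using the sharper $p=2$ directional bound $\tau\int_M q^2\,d\nu\leq\frac12(1+A\tau)$, yields the refined inequality (\ref{ineq:directional-derivative-estimate-Nash-entropy}) used earlier.
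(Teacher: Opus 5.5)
Your proposal is correct and follows essentially the same route as the paper. The paper cites Bamler's Theorem~5.9 for the representation $\partial_v\mathcal{N}^*_s=\int_M q\,(f-\mathcal{N}^*_s-\frac n2)\,d\nu$, which you derive explicitly, and then applies Cauchy--Schwarz, bounding the two factors by Proposition~\ref{prop:integral-gradient-bound} with $p=2$ and by the third inequality of Proposition~\ref{prop:energy-estimate}, to arrive at (\ref{ineq:directional-derivative-estimate-Nash-entropy}).
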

\begin{proof}
The proof of \cite[Theorem 5.9]{bamler2020entropy} gives
\begin{equation*}
	|\partial_v\mathcal{N}^*_s(x,t)|_{g(t)}
	\leq
	\left(\int_{M}\Big(\frac{\partial_{v}K(x,t;y,s)}{K(x,t;y,s)}\Big)^{2}d\nu_{s}(y)\right)^{1/2}
	\left(\int_{M}\left(f-\mathcal{N}_{s}^{*}(x,t)-\frac{n}{2}\right)^{2}d\nu_{s}\right)^{1/2}.
\end{equation*}
Set $\tau:=t-s$.
Then it follows from \hyperref[prop:energy-estimate]{Proposition~\ref*{prop:energy-estimate}} and (\ref{ineq:integral-bound-gradient-heat-kernel}) that
\begin{equation}\label{ineq:directional-derivative-estimate-Nash-entropy}
	|\partial_v\mathcal{N}^*_s(x,t)|^2
	\leq
	C(n,|E|_{\mathrm{max}}\tau,R_{\mathrm{min}}\tau)\frac{1+A\tau}{\tau}  \frac{\sigma_A(\tau)}{\tau}.
\end{equation}
\end{proof}

\begin{Theorem}\label{ineq:square-of-N}
	Suppose $E(g)\geq-Ag$ for some $A\geq 0$. Then
	\begin{equation*}
		-\frac{n}{2(t-s)}
		\leq
		\square\mathcal{N}^*_s(\cdot,t)
		\leq
		\frac{nA}{2}.
	\end{equation*}
\end{Theorem}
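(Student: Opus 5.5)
The plan is to differentiate the explicit integral formula for $\mathcal{N}^*_s(x,t)$ in the base point $(x,t)$, with $s$ fixed. Because the heat kernel solves the forward heat equation in its first pair of variables, the operator $\square$ passes inside the integral. The lower bound then comes from a sign, and the upper bound from the sharp $p=2$ case of \hyperref[prop:integral-gradient-bound]{Proposition~\ref*{prop:integral-gradient-bound}}.

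\emph{Rewriting $\mathcal{N}^*_s$.} Fix $s$ and let $t>s$. From $K(x,t;\cdot,s)=(4\pi(t-s))^{-n/2}e^{-f}$ we get $f=-\log K-\frac{n}{2}\log(4\pi(t-s))$. Since $\int_M K(x,t;y,s)\,dV_{g(s)}(y)=1$, this gives
\begin{equation*}
	\mathcal{N}^*_s(x,t)
	=-\int_M K(x,t;y,s)\log K(x,t;y,s)\,dV_{g(s)}(y)
	-\frac{n}{2}\log\big(4\pi(t-s)\big)-\frac{n}{2}.
\end{equation*}
The measure $dV_{g(s)}(y)$ does not depend on $(x,t)$. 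On the compact manifold $M$, and for $t>s$, the kernel $K$ is smooth and positive, so we may differentiate under the integral in $(x,t)$.

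\emph{Applying $\square$.} For each fixed $(y,s)$, the function $u=K(\cdot,\cdot;y,s)$ is positive and satisfies $\square u=0$. Identity (\ref{eq:ulogu}) of \hyperref[cor:evolution]{Corollary~\ref*{cor:evolution}} then gives $\square_{x,t}(K\log K)=-|\nabla_xK|^2/K$. The only other term depending on $(x,t)$ is $-\frac{n}{2}\log(4\pi(t-s))$, whose time derivative is $-\frac{n}{2(t-s)}$. Hence
\begin{equation*}
	\square\mathcal{N}^*_s(x,t)
	=\int_M\Big(\frac{|\nabla_xK(x,t;y,s)|}{K(x,t;y,s)}\Big)^2d\nu_{x,t;s}(y)-\frac{n}{2(t-s)}.
\end{equation*}

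\emph{The two bounds.} The integral is non-negative, which gives the lower bound $\square\mathcal{N}^*_s\geq-\frac{n}{2(t-s)}$ at once. For the upper bound, use (\ref{ineq:integral-bound-gradient-heat-kernel}) with $p=2$ and the sharp constant $C(n,2)=\frac n2$. This bounds the integral by $\frac{n}{2(t-s)}(1+A(t-s))$, so
\begin{equation*}
	\square\mathcal{N}^*_s(x,t)\leq\frac{n}{2(t-s)}\big(1+A(t-s)\big)-\frac{n}{2(t-s)}=\frac{nA}{2}.
\end{equation*}

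\emph{Main point to check.} The computation itself is routine. The step that needs care is the upper bound, which holds only because the constant in (\ref{ineq:integral-bound-gradient-heat-kernel}) for $p=2$ is exactly $\frac n2$. With any larger constant, the $-\frac{n}{2(t-s)}$ term would not cancel and the bound would blow up like $(t-s)^{-1}$ instead of staying at $\frac{nA}{2}$. That sharp constant is already part of \hyperref[prop:integral-gradient-bound]{Proposition~\ref*{prop:integral-gradient-bound}}. The remaining justification, differentiating under the integral sign, is immediate because $M$ is compact and $K$ is smooth and positive for $t>s$.
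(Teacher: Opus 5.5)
Your proposal is correct and follows the paper's proof essentially verbatim. You apply $\square$ under the integral using (\ref{eq:ulogu}) to get $\square\mathcal{N}^*_s=\int_M(|\nabla_xK|/K)^2\,d\nu_{x,t;s}-\frac{n}{2(t-s)}$. The lower bound then comes from non-negativity of the integral, and the upper bound from the sharp $p=2$ constant $\frac n2$ in (\ref{ineq:integral-bound-gradient-heat-kernel}).
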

\begin{proof}
	By the definition of pointed Nash entropy and (\ref{eq:ulogu}),
	\begin{align*}
		\square\mathcal{N}^*_s(x,t)
		&=-\int_M\square_{x,t}\big(K(x,t;y,s)\log K(x,t;y,s)\big)dV_{g(s)}(y)-\frac{n}{2(t-s)}\\
		&=\int_M\Big(\frac{|\nabla_xK(x,t;y,s)|}{K(x,t;y,s)}\Big)^2d\nu_{x,t;s}(y)-\frac{n}{2(t-s)}.
	\end{align*}
	The first inequality is obvious and in fact does not need $E\geq-Ag$. The second inequality follows from (\ref{ineq:integral-bound-gradient-heat-kernel}).
\end{proof}

\begin{Corollary}
	Suppose $E(g)\geq-Ag$ for some $A\geq 0$. Then for any $s<t^*\leq t$ and $x\in M$,
	\begin{equation}\label{ineq:averaged-Nash-entropy}
		\mathcal{N}_s^*(x,t)-\frac{nA}{2}(t-t^*)
		\leq
		\int_M\mathcal{N}_s^*(\cdot,t^*)d\nu_{x,t;t^*}
		\leq
		\mathcal{N}_s^*(x,t)+\frac{n}{2}\log\frac{t-s}{t^*-s}.
	\end{equation}
\end{Corollary}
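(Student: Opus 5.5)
The plan is to apply the two-sided bound on $\square\mathcal{N}^*_s$ from Theorem~\ref{ineq:square-of-N} and integrate it against the conjugate heat kernel measure $\nu_{x,t;\cdot}$ over the time interval $[t^*,t]$. For $t'\in[t^*,t]$ set
\begin{equation*}
	\Psi(t'):=\int_M\mathcal{N}^*_s(\cdot,t')\,d\nu_{x,t;t'} .
\end{equation*}
Since $K(x,t;\cdot,\cdot)$ solves $\square^*K=0$, identity (\ref{eq:conjugate-heat-operator}) gives
\begin{equation*}
	\frac{d}{dt'}\Psi(t')=\int_M\square\mathcal{N}^*_s(\cdot,t')\,d\nu_{x,t;t'} .
\end{equation*}
At the endpoints, $\Psi(t)=\mathcal{N}^*_s(x,t)$ because $\nu_{x,t;t}=\delta_x$, and $\Psi(t^*)$ is exactly the middle term of (\ref{ineq:averaged-Nash-entropy}).

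For the lower bound on the average, I will use $\square\mathcal{N}^*_s\leq\frac{nA}{2}$. This gives $\Psi'\leq\frac{nA}{2}$. Integrating from $t^*$ to $t$ yields $\mathcal{N}^*_s(x,t)-\Psi(t^*)\leq\frac{nA}{2}(t-t^*)$, which is the first inequality.

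For the upper bound on the average, I will use $\square\mathcal{N}^*_s(\cdot,t')\geq-\frac{n}{2(t'-s)}$. This gives $\Psi'(t')\geq-\frac{n}{2(t'-s)}$. Integrating,
\begin{equation*}
	\Psi(t)-\Psi(t^*)\geq-\frac{n}{2}\log\frac{t-s}{t^*-s},
\end{equation*}
which rearranges to the second inequality.

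The main obstacle is justifying the differentiation at the endpoint $t'=t$, where $\nu_{x,t;t'}$ degenerates to a Dirac mass. I will handle this by integrating only over $[t^*,t-\epsilon]$, which is rigorous because on $M\times[t^*,t-\epsilon]$ all quantities are smooth and $M$ is compact. I will then let $\epsilon\to0$, using the continuity of $\mathcal{N}^*_s(\cdot,\cdot)$ for $t'>s$ together with the weak convergence $\nu_{x,t;t-\epsilon}\to\delta_x$. The smoothness of $\mathcal{N}^*_s$ in $(x,t')$ for $t'>s$ follows from the smoothness of the heat kernel; the case $t^*=t$ is trivial.
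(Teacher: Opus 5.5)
Your proposal is correct. It is essentially the paper's argument in dual form. The paper applies the maximum principle to compare the subsolution $\mathcal{N}^*_s-\frac{nA}{2}t$ (resp.\ the supersolution $\mathcal{N}^*_s+\frac{n}{2}\log(t-s)$) with the heat-equation solution $(x,t)\mapsto\int_M\mathcal{N}^*_s(\cdot,t^*)\,d\nu_{x,t;t^*}$. You instead integrate the same two bounds from Theorem~\ref{ineq:square-of-N} against $\nu_{x,t;t'}$ via (\ref{eq:conjugate-heat-operator}), which is the Duhamel form of the same comparison, and your $\epsilon\to0$ handling of the endpoint $t'=t$ is sound.
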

\begin{proof}
	As a function of $(x,t)$, $\int_M(\mathcal{N}_s^*(\cdot,t^*)-\frac{nA}{2}t^*)d\nu_{x,t;t^*}$ solves the heat equation staring from $\mathcal{N}_s^*(\cdot,t^*)-\frac{nA}{2}t^*$ at time $t^*$, i.e.,
	\begin{equation*}
		(\partial_t-\Delta_x)\int_M\Big(\mathcal{N}_s^*(\cdot,t^*)-\frac{nA}{2}t^*\Big)d\nu_{x,t;t^*}=0,
	\end{equation*}
	Combined with
	\begin{equation*}
		(\partial_t-\Delta_x)\Big(\mathcal{N}_s^*(x,t)-\frac{nA}{2}t\Big)\leq 0,
	\end{equation*}
	we get for all $(x,t)$ with $t>t^*$ that
	\begin{equation*}
		\mathcal{N}_s^*(x,t)-\frac{nA}{2}t
		\leq
		\int_M\Big(\mathcal{N}_s^*(\cdot,t^*)-\frac{nA}{2}t^*\Big)d\nu_{x,t;t^*}.
	\end{equation*}
	The other inequality is similar, by using $\square_{x,t}(\mathcal{N}_s^*(x,t)+\frac{n}{2}\log(t-s))\geq 0$.
\end{proof}

\begin{Corollary}\label{cor:oscillation-of-pointed-Nash-entropy}
	For any $s<t^*\leq t_1,t_2$ and $x_1,x_2\in M$,
	\begin{equation*}
		\mathcal{N}_s^*(x_1,t_1)-\mathcal{N}_s^*(x_2,t_2)
		\leq
		\frac{C}{(t^*-s)^{1/2}}d_{W_1}^{g(t^*)}(\nu_{x_1,t_1;t^*},\nu_{x_2,t_2;t^*})
		+\frac{nA}{2}(t_1-t^*)+\frac{n}{2}\log\frac{t_2-s}{t^*-s},
	\end{equation*}
	where $C(n,R_{\mathrm{min}}(t^*-s),|E|_{\mathrm{max}}(t^*-s))$ is the constant obtained in \hyperref[thm:gradient-estimate-Nash-entropy]{Theorem~\ref*{thm:gradient-estimate-Nash-entropy}}.
\end{Corollary}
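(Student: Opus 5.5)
The plan is to follow Bamler's Corollary 5.11, using the averaged Nash entropy inequality (\ref{ineq:averaged-Nash-entropy}) to move both basepoints down to the common time $t^*$, and then Theorem~\ref{thm:gradient-estimate-Nash-entropy} together with the definition (\ref{eq:definition-of-W1}) of the $W_1$-distance to compare them there.

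\textbf{Step 1: reduce both entropies to integrals at time $t^*$.} By the left inequality in (\ref{ineq:averaged-Nash-entropy}) applied at $(x_1,t_1)$,
\begin{equation*}
	\mathcal{N}_s^*(x_1,t_1)\leq\int_M\mathcal{N}_s^*(\cdot,t^*)\,d\nu_{x_1,t_1;t^*}+\frac{nA}{2}(t_1-t^*).
\end{equation*}
By the right inequality applied at $(x_2,t_2)$,
\begin{equation*}
	\mathcal{N}_s^*(x_2,t_2)\geq\int_M\mathcal{N}_s^*(\cdot,t^*)\,d\nu_{x_2,t_2;t^*}-\frac{n}{2}\log\frac{t_2-s}{t^*-s}.
\end{equation*}
Subtracting the second from the first gives
\begin{equation*}
	\mathcal{N}_s^*(x_1,t_1)-\mathcal{N}_s^*(x_2,t_2)\leq\int_M\mathcal{N}_s^*(\cdot,t^*)\,d\nu_{x_1,t_1;t^*}-\int_M\mathcal{N}_s^*(\cdot,t^*)\,d\nu_{x_2,t_2;t^*}+\frac{nA}{2}(t_1-t^*)+\frac{n}{2}\log\frac{t_2-s}{t^*-s}.
\end{equation*}
The last two terms are exactly the error terms in the statement.

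\textbf{Step 2: bound the difference of integrals.} Theorem~\ref{thm:gradient-estimate-Nash-entropy}, applied at time $t^*$, shows that $\mathcal{N}_s^*(\cdot,t^*)$ is $C(t^*-s)^{-1/2}$-Lipschitz with respect to $g(t^*)$. Here $C$ depends only on $n$, $R_{\min}(t^*-s)$ and $|E|_{\max}(t^*-s)$. The function is smooth and bounded because $M$ is compact. Dividing it by its Lipschitz constant gives an admissible test function in (\ref{eq:definition-of-W1}), so the difference of integrals is at most $C(t^*-s)^{-1/2}\,d_{W_1}^{g(t^*)}(\nu_{x_1,t_1;t^*},\nu_{x_2,t_2;t^*})$.

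\textbf{Remaining checks.} When $t^*=t_1$ or $t^*=t_2$, the measure $\nu$ is a Dirac mass and (\ref{ineq:averaged-Nash-entropy}) holds trivially with equality, so Step 1 still applies. The main point needing care is that the constant in Theorem~\ref{thm:gradient-estimate-Nash-entropy} is evaluated at time $t^*$, not at $t_1$ or $t_2$. This is why $C$ depends only on $(t^*-s)$-scaled quantities, as the statement asserts. Beyond that, the proof is routine bookkeeping.
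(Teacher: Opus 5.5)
Your proposal is correct and matches the paper's proof essentially step for step. You apply the two sides of (\ref{ineq:averaged-Nash-entropy}) at $(x_1,t_1)$ and $(x_2,t_2)$, subtract, and then bound the difference of integrals at time $t^*$ by $\sup_M|\nabla\mathcal{N}^*_s(\cdot,t^*)|$ times the $W_1$-distance, using (\ref{eq:definition-of-W1}) together with Theorem~\ref{thm:gradient-estimate-Nash-entropy}.
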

\begin{proof}
	Applying (\ref{ineq:averaged-Nash-entropy}) to $(x_1,t_1)$ and $(x_2,t_2)$, we get
	\begin{align*}
		&\mathcal{N}_s^*(x_1,t_1)-\mathcal{N}_s^*(x_2,t_2)\\
		\leq&
		\left\{\int_M\mathcal{N}_s^*(\cdot,t^*)d\nu_{x_1,t_1;t^*}+\frac{nA}{2}(t_1-t^*)\right\}
		-
		\left\{\int_M\mathcal{N}_s^*(\cdot,t^*)d\nu_{x_2,t_2;t^*}-\frac{n}{2}\log\frac{t_2-s}{t^*-s}\right\}\\
		=&\left\{\int_M\mathcal{N}_s^*(\cdot,t^*)d\nu_{x_1,t_1;t^*}-\int_M\mathcal{N}_s^*(\cdot,t^*)d\nu_{x_2,t_2;t^*}\right\}+\frac{nA}{2}(t_1-t^*)+\frac{n}{2}\log\frac{t_2-s}{t^*-s}\\
		\leq\,&\Big(\sup_M|\nabla\mathcal{N}_s^*(\cdot,t^*)|\Big)
		d_{W_1}^{g(t^*)}(\nu_{x_1,t_1;t^*},\nu_{x_2,t_2;t^*})
		+\frac{nA}{2}(t_1-t^*)+\frac{n}{2}\log\frac{t_2-s}{t^*-s}.
	\end{align*}
	Here we use (\ref{eq:definition-of-W1}) in the last inequality. The result then follows from \hyperref[thm:gradient-estimate-Nash-entropy]{Theorem~\ref*{thm:gradient-estimate-Nash-entropy}}.
\end{proof}

As in \cite{bamler2020entropy}, we have the following no local collapsing estimate.
\begin{Theorem}[No local collapsing]\label{thm:no-local-collapsing}
	If $R\leq r^{-2}$ on $B(x,t,r)$ and $[t-r^2,t]\subset I$, then
	\begin{equation}\label{ineq:no-local-collapsing}
		|B(x,t,r)|_{g(t)}\geq c(n,R_\mathrm{min}r^2,|E|_{\mathrm{max}}r^2)\exp(\mathcal{N}_{x,t}(r^2))r^n.
	\end{equation}
\end{Theorem}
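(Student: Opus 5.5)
We follow Bamler's argument for \cite[Theorem 6.1]{bamler2020entropy}, with the extra terms controlled by $R_{\mathrm{min}}r^2$ and $|E|_{\mathrm{max}}r^2$. By parabolic rescaling we may take $r=1$ and $t=1$, so the flow lives on $[0,1]$. Write $d\nu=d\nu_{x,1;s}=(4\pi\tau)^{-n/2}e^{-f}dV_{g(s)}$ with $\tau=1-s$. The goal is a time $s$ close to $1$ at which two things hold at once: a definite fraction of $\nu_s$ sits in a ball comparable to $B(x,1,1)$, and the density $K(x,1;\cdot,s)$ is at most $C\exp(-\mathcal N_{x,1}(1))$ on that ball. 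Comparing mass and density then gives the volume lower bound.

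\emph{Step 1 (pointwise upper bound on the heat kernel).} Here we would invoke the heat kernel upper bound, Theorem~\ref{thm:heat-kernel-upper-bound}, stated later in the appendix and already used in the proof of Lemma~\ref{lem:mu-and-N}. It gives $f(\cdot,s)\geq\mathcal N_{x,1}(\tau)-C$ with $C=C(n,R_{\mathrm{min}},|E|_{\mathrm{max}})$. We combine it with the monotonicity-type estimate of Proposition~\ref{prop:time-derivative-of-pointed-Nash-entropy}, namely $\tau\frac{d}{d\tau}\mathcal N\geq -\frac n2+(R-\frac12\mathrm{tr}E)_{\mathrm{min}}\tau$. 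This lets us replace $\mathcal N_{x,1}(\tau)$ by $\mathcal N_{x,1}(1)$ for $\tau\in[\frac12\theta,\theta]$, at the cost of an additive constant depending on $\theta$. The result is $K(x,1;\cdot,1-\tau)\leq C(\theta)\exp(-\mathcal N_{x,1}(1))$ for such $\tau$.

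\emph{Step 2 (mass concentration near $x$).} By the variance estimate $\mathrm{Var}_s(\nu_{x,1;s})\leq H_n\sigma_A(\tau)$ and Proposition~\ref{prop:basic-properties-of-center}(3), at least half of $\nu_s$ lies in $B(z,s,\sqrt{2H_n\sigma_A(\tau)})$, where $(z,s)$ is an $(H_n,A)$-center. We then need to relate this ball to $B(x,1,1)$ at time $1$. Since $R\leq1$ on $B(x,1,1)$ only at the final time, we cannot use Ricci-bound distance distortion as in Theorem~\ref{thm:concentration-under-Ricci-bound}. Instead, following Bamler, we work with the final time slice directly. Integrate $\int_{B(x,1,1)}K(x,1;\cdot,1-\tau)\,dV_{g(1-\tau)}$ and transport the mass forward using $\frac{d}{dt}dV=(-R+\frac12\mathrm{tr}E)dV$. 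Equivalently, we use the conjugate heat equation for $u$ paired against a cutoff $\phi$ on $B(x,1,1)$ with respect to $g(1)$. Only the upper bound $R\leq1$ at time $1$ and the bound on $E$ enter; $E$ controls how $g(s)$ compares with $g(1)$ as a bilinear form up to $e^{|E|_{\mathrm{max}}\tau}$, modulo Ricci. This is the delicate point. Bamler's actual route is to show $\int_{B(x,1,1)}K(x,1;\cdot,1-\tau)dV_{g(1)}\geq c$ for small $\tau$, via the short-time behaviour $\nu_{x,1;1-\tau}\to\delta_x$ quantified by the gradient estimate of Theorem~\ref{thm:improved-gradient-estimate}. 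Applied to the solution of the heat equation with initial datum a smoothed indicator, it shows that $u(\cdot,1)=\nu_{\cdot,1;1-\tau}(B)$ has gradient at most $C/\sqrt\tau$. So if $\nu_{x,1;1-\tau}(B(x,1-\tau,\cdot))$ were small, the same would be true on a set we can control.

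\emph{Step 3 (conclusion and main obstacle).} Choose $\theta$ small depending on $n,R_{\mathrm{min}},|E|_{\mathrm{max}}$. Then mass at least $c$ lies in a set $S$ whose $g(1-\tau)$-volume is comparable, up to $e^{C\tau}$, to $|B(x,1,1)|_{g(1)}$. The time-$1$ comparability comes from the volume form evolution, where $R\leq1$ would be needed along the way. Integrating the density bound of Step 1 over $S$ gives $c\leq C\exp(-\mathcal N_{x,1}(1))\,|B(x,1,1)|$, which is the claim. The main obstacle is Step 2: linking the mass of $\nu_s$ near the center to the \emph{time-$1$} ball with only a scalar curvature bound at time $1$. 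I would first try Bamler's trick. Consider $\int_{B}\!K\,dV$ at slightly earlier times and use $\partial_t dV=(-R+\frac12\mathrm{tr}E)dV$, together with the heat kernel evolution $\partial_s\int K\phi\,dV_{g(1)}$ for a fixed time-$1$ cutoff $\phi$. The Laplacian term is controlled by $|\nabla\phi|\leq 2$ and the $L^2$ gradient bound of Proposition~\ref{prop:energy-estimate}, and the potential term by $R\leq1$. This yields $\frac{d}{ds}$ bounds of order $\tau^{-1/2}$, which are integrable, so the mass stays above $\frac12$ for $\tau\leq\theta$.
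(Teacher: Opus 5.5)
Your Steps 2--3 have a genuine gap, and it is structural. The hypothesis $R\le r^{-2}$ holds on $B(x,t,r)$ at the single time $t$. A mass-versus-density argument for $\nu_{x,1;1-\tau}$, however, must compare the time-$(1-\tau)$ slice with the time-$1$ slice, and nothing in the hypotheses allows that.

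\begin{itemize}
\item \emph{Volume comparison.} Since $dV_{g(1-\tau)}=\exp\big(\int_{1-\tau}^1(R-\tfrac12\mathrm{tr}\,E)\,dt\big)\,dV_{g(1)}$, bounding $|S|_{g(1-\tau)}$ by $C|B(x,1,1)|_{g(1)}$ needs an upper bound on $R$ on $S$ for all times in $[1-\tau,1]$, which you do not have.
\item \emph{Cutoff computation.} In $\frac{d}{ds}\int\phi\,d\nu_s=-\int\langle\nabla\phi,\nabla f\rangle_{g(s)}\,d\nu_s$ the relevant quantity is $|\nabla\phi|_{g(s)}$, not $|\nabla\phi|_{g(1)}\le2$. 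Passing between them requires a lower Ricci bound on $[s,1]$: because $\partial_tg=-2\Ric+E$, a bound on $E$ alone does not compare $g(s)$ with $g(1)$. Your phrase ``modulo Ricci'' is exactly the missing information.
\item \emph{Locating the mass.} For the same reason you cannot place the $(H_n,A)$-center $(z,s)$ relative to $B(x,1,1)$. Theorem~\ref{thm:improved-gradient-estimate} only bounds the oscillation of $y\mapsto\nu_{y,1;1-\tau}(B)$ on the time-$1$ slice; it gives no lower bound at $y=x$.
\end{itemize}

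Two smaller points. In Step 1 you need $\mathcal N_{x,1}(\tau)\ge\mathcal N_{x,1}(1)-C$, which comes from the upper bound $\tau\frac{d}{d\tau}\mathcal N\le C$, not from the lower bound you cite. Also, this is not Bamler's route for his Theorem 6.1.

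The missing idea is Perelman's $\mathcal W$-argument, which the paper uses (following Bamler) and which never compares different time slices.
\begin{enumerate}
\item Choose $r_0\in[0,r]$ minimizing $\rho\mapsto|B(x,t,\rho)|/\rho^n$. This gives doubling, $|B(x,t,r_0)|\le2^n|B(x,t,r_0/2)|$, and by Proposition~\ref{prop:time-derivative-of-pointed-Nash-entropy} also $\mathcal N_{x,t}(r_0^2)\ge\mathcal N_{x,t}(r^2)-C$. The case $r_0=0$ is trivial by (\ref{cor:upper-bound-on-N}).
\item After rescaling to $r_0=1$, $t=0$, place $\varphi^2$ with $\varphi=L^{-1/2}\eta(d_0(x,\cdot))$ at the \emph{final} time and evolve it by $\square^*$ to time $-1$.
\item Jensen's inequality together with Theorem~\ref{thm:gradient-estimate-Nash-entropy} gives $\mathcal N(g(-1),f(-1),2)\ge\mathcal N_{x,0}(1)-C$.
\item The almost-monotonicity (\ref{eq:derivative-of-W}) of $\mathcal W$, with error $\frac\tau8|E|^2$, combined with (\ref{eq:derivative-of-pointed-Nash-entropy}), transfers this back to time $0$ as $\int(4|\nabla\varphi|^2+R\varphi^2-2\varphi^2\log\varphi^2)\,dV_{g(0)}\ge2\mathcal N_{x,0}(1)-C$.
\item The curvature hypothesis enters only through $\int R\varphi^2\le1$ at the single time $0$. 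Doubling controls $\int|\nabla\varphi|^2$, and the entropy term is at most $C+2\log|B(x,0,1)|$, which yields the volume bound.
\end{enumerate}
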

\begin{proof}
	Suppose the function $\rho\mapsto|B(x,t,\rho)|_{g(t)}/\rho^n$ over $[0,r]$ takes its minimum at $r_0$. If $r_0=0$, then $|B(x,t,r)|_{g(t)}\geq\omega_nr^n$ and (\ref{ineq:no-local-collapsing}) follows from (\ref{cor:upper-bound-on-N}). Here $\omega_n$ denotes the volume of the unit ball in $\R^n$. So we may assume $r_0>0$. By \hyperref[prop:time-derivative-of-pointed-Nash-entropy]{Proposition~\ref*{prop:time-derivative-of-pointed-Nash-entropy}},
	\begin{equation*}
		\mathcal{N}_{x,t}(r_0^2)\geq\mathcal{N}_{x,t}(r^2)-C(|E|_{\mathrm{max}}r^2).
	\end{equation*}
	Therefore it suffices to prove
	\begin{equation}\label{ineq:temp-reduced-version}
		|B(x,t,r_0)|_{g(t)}/r_0^n
		\geq
		c(n,R_\mathrm{min}r^2,|E|_{\mathrm{max}}r^2)
		\exp(\mathcal{N}_{x,t}(r_0^2)).
	\end{equation}
	Note by our choice of $r_0$, we have $|B(x,t,r_0)|_{g(t)}\leq 2^n|B(x,t,\frac{1}{2}r_0)|_{g(t)}$.
	
	By parabolic rescaling and time-shifting, we may assume $r_0=1$ and $t=0$, which then implies $r\geq 1$. Take a cut-off function $\eta$ over $\R$ with $\eta|_{(-\infty,\frac{1}{2}]}=1$, $\eta|_{[1,\infty)}=0$ and $|\eta'|\leq 10$. Define
	\begin{equation*}
		\varphi:=\frac{1}{\sqrt{L}}\eta(d_0(x,\cdot)),
	\end{equation*}
	where $L$ is a normalization constant such that $\int_M\varphi^2dV_{g(0)}=1$. Note
	\begin{equation}\label{ineq:estimate-on-L}
		2^{-n}
		\leq
		\frac{|B(x,0,\frac{1}{2})|_{g(0)}}{|B(x,0,1)|_{g(0)}}
		\leq
		\frac{L}{|B(x,0,1)|_{g(0)}}
		\leq 1.
	\end{equation}
	
	Set $\tau(t):=1-t$. Let $u=(4\pi\tau)^{-n/2}e^{-f}\in C^\infty(M\times[-1,0))$ solve the conjugate heat equation $\square^*u=0$ with initial condition $u(\cdot,0)=\varphi^2$. Denote $d\nu:=(4\pi\tau)^{-n/2}e^{-f}dV_g$. Then $d\nu$ is a probability measure at each time and
	\begin{equation*}
		u(\cdot,-1)
		=
		\int_M   K(y,0;\cdot,-1)   \,d\nu_0(y).
	\end{equation*}
	By Jensen's inequality,
	\begin{equation*}
		-u(\cdot,-1)\log u(\cdot,-1)
		\geq
		-\int_M   K(y,0;\cdot,-1)\log K(y,0;\cdot,-1)   \,d\nu_0(y).
	\end{equation*}
	Integrating this over $M$ with respect to $dV_{g(-1)}$, using the definition of (pointed) Nash entropy and \hyperref[thm:gradient-estimate-Nash-entropy]{Theorem~\ref*{thm:gradient-estimate-Nash-entropy}}, we get
	\begin{align}
		\mathcal{N}(g(-1),f(-1),2)
		&\geq
		\int_M   \mathcal{N}_{y,0}(1)   \,d\nu_0(y)+\frac{n}{2}\log(4\pi)-\frac{n}{2}\log(8\pi)  \notag  \\
		&\geq
		\mathcal{N}_{x,0}(1)-C(n,R_\mathrm{min}r^2,|E|_{\mathrm{max}}r^2).\label{ineq:lower-bound-by-pointed-Nash-entropy}
	\end{align}
	
	By (\ref{eq:derivative-of-W}), for $t\in[-1,0]$ we have
	\begin{equation*}
		\mathcal{W}(g(0),f(0),\tau(0))
		-
		\mathcal{W}(g(t),f(t),\tau(t))
		\geq
		-\int_t^0\int_M\frac{\tau}{8}|E|^2d\nu d\tilde{t}
		\geq -C(|E|_{\mathrm{max}}r^2).
	\end{equation*}
	Integrating this over $t\in[-1,0]$, we get
	\begin{equation*}
		\mathcal{W}(g(0),f(0),\tau(0))
		\geq
		\int_{-1}^0\mathcal{W}(g(t),f(t),\tau(t))\,dt
		-C(|E|_{\mathrm{max}}r^2).
	\end{equation*}
	By (\ref{eq:derivative-of-pointed-Nash-entropy}),
	\begin{align*}
		\int_{-1}^0\mathcal{W}(g(t),f(t),\tau(t))\,dt
		&=
		\int_{-1}^0
		\left(-\frac{d}{dt}(\tau\mathcal{N}(g,f,\tau))
		+\frac{\tau}{2}\int_M \mathrm{tr}\,E\,d\nu\right)dt\\
		&\geq
		2\mathcal{N}(g(-1),f(-1),2)
		-\mathcal{N}(g(0),f(0),1)
		-C(|E|_{\mathrm{max}}r^2).
	\end{align*}
	Combining the above two inequalities and (\ref{ineq:lower-bound-by-pointed-Nash-entropy}), we obtain
	\begin{align*}
		\mathcal{W}(g(0),f(0),1)
		&\geq
		2\mathcal{N}(g(-1),f(-1),2)
		-\mathcal{N}(g(0),f(0),1)
		-C(|E|_{\mathrm{max}}r^2)\\
		&\geq
		2\mathcal{N}_{x,0}(1)
		-\mathcal{N}(g(0),f(0),1)
		-C(n,R_\mathrm{min}r^2,|E|_{\mathrm{max}}r^2),
	\end{align*}
	which can be written equivalently as
	\begin{equation*}
		\int_M
		\big(
		4|\nabla\varphi|^2+R\varphi^2
		-2\varphi^2\log\varphi^2
		\big)
		dV_g
		\geq
		2\mathcal{N}_{x,0}(1)
		-C(n,R_\mathrm{min}r^2,|E|_{\mathrm{max}}r^2).
	\end{equation*}
	Then (\ref{ineq:temp-reduced-version}) follows by substituting the followings to the left hand side.
	\begin{itemize}
		\item Using $|\nabla\varphi|^2\leq CL^{-1}$ and (\ref{ineq:estimate-on-L}), we get $\int_M  4|\nabla\varphi|^2 \leq  C|B(x,0,1)|/L  \leq  C(n)$.
		
		\item Since $R\leq r^{-2}\leq 1$, we have $\int_M R\varphi^2 dV_g \leq 1$.
		
		\item Since $-\varphi^2\log\varphi^2
		=-\frac{1}{L}\eta^2
		\log(\eta^2)
		+\varphi^2\log L
		\leq e^{-1}L^{-1}\mathds{1}_{B(x,0,1)}+\varphi^2\log L$, we obtain by (\ref{ineq:estimate-on-L}) that
		\begin{equation*}
			\int_M  -2\varphi^2\log\varphi^2  dV_g
			\leq
			2e^{-1}L^{-1}|B(x,0,1)|+2\log L
			\leq C(n)+2\log|B(x,0,1)|.
		\end{equation*}
	\end{itemize}
\end{proof}

In the absence of a scalar curvature upper bound, one can still find a similar volume bound around an $(H_n,A)$-center of a conjugate heat kernel.
\begin{Theorem}\label{thm:no-local-collapsing-around-center}
	Suppose $E(g)\geq-Ag$ for some $A\geq 0$ (one may take $A=|E|_{\mathrm{max}}$). Suppose $[t-r^2,t]\subset I$ and $(z,t-r^2)\in M\times I$ is an $(H_n,A)$-center of some point $(x,t)\in M\times I$. Then
	\begin{equation}
		|B(z,t-r^2,\sqrt{2H_n\sigma_A(r^2)})|_{g(t-r^2)}
		\geq
		c(n,R_\mathrm{min}r^2,|E|_{\mathrm{max}}r^2)\exp(\mathcal{N}_{x,t}(r^2))r^n.
	\end{equation} 
\end{Theorem}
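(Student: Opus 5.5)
Following the strategy of Bamler's analogous result, I will combine the Gaussian concentration of $\nu:=\nu_{x,t;t-r^2}$ around its $(H_n,A)$-center with an upper bound on the heat kernel. By parabolic rescaling and time-shifting we may assume $r=1$ and $t=1$, so $s:=0=t-r^2$. Write $\sigma:=\sigma_A(1)$ and $B:=B(z,0,\sqrt{2H_n\sigma})$. By Proposition~\ref{prop:basic-properties-of-center}(3) with $\delta=2$, we get $\nu(B)\geq\frac12$. Hence it suffices to have a pointwise upper bound
\begin{equation*}
	K(x,1;\cdot,0)\leq C(n,R_{\mathrm{min}},|E|_{\mathrm{max}})\exp\big(-\mathcal{N}_{x,1}(1)\big)
	\qquad\text{on } B,
\end{equation*}
because then $\frac12\leq\nu(B)\leq C\exp(-\mathcal{N}_{x,1}(1))|B|_{g(0)}$, which is the claim (after undoing the scaling, using that $\sigma_A(r^2)/r^2$ is controlled by $|E|_{\mathrm{max}}r^2$ since $A\leq|E|_{\mathrm{max}}$).

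For the pointwise bound, the plan is to invoke the heat kernel upper bound Theorem~\ref{thm:heat-kernel-upper-bound}, already used in the proof of Lemma~\ref{lem:mu-and-N}. In the form used there, it states $f(\cdot,s)\geq\mathcal{N}_{x,t}(t-s)-C_2(n,R_{\mathrm{min}}(t-s),|E|_{\mathrm{max}}(t-s))$, i.e.
\begin{equation*}
	K(x,t;\cdot,s)\leq (4\pi(t-s))^{-n/2}e^{C_2}\exp\big(-\mathcal{N}_{x,t}(t-s)\big).
\end{equation*}
This bound is global, so it holds on $B$. Combining it with $\nu(B)\geq\frac12$ gives $|B|\geq\frac12(4\pi)^{n/2}e^{-C_2}e^{\mathcal{N}_{x,1}(1)}$, as desired.

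The main obstacle is that this pointwise upper bound must be available without a scalar curvature upper bound and without circularity. If Theorem~\ref{thm:heat-kernel-upper-bound} in the appendix is itself proved using the present volume estimate, which is how Bamler orders things, then I would instead argue directly along the lines of Bamler's Theorem 7.1. First, use Theorem~\ref{ineq:square-of-N} and the averaging inequality (\ref{ineq:averaged-Nash-entropy}) to compare $\mathcal{N}^*_{-1}$ at nearby points. Second, use the $L^\infty$ bound obtained from the improved gradient estimate (Proposition~\ref{prop:gradient-estimate-Zhang}) applied to $K(\cdot,\cdot;y,0)$ on $[\frac12,1]$. Third, bound $\sup K(\cdot,1;\cdot,0)$ via the semigroup property $K(x,1;y,0)=\int K(x,1;w,\tfrac12)K(w,\tfrac12;y,0)\,dV(w)$, together with Jensen's inequality and the Nash entropy lower bound at $(w,\frac12)$. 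In that argument the error terms from $E$ and $R_{\mathrm{min}}$ are absorbed into constants depending on $n,R_{\mathrm{min}}r^2,|E|_{\mathrm{max}}r^2$, exactly as in the proof of Theorem~\ref{thm:no-local-collapsing}.
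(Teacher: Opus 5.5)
Your main argument is circular in this paper's logical order. The proof of Theorem~\ref{thm:heat-kernel-upper-bound} invokes the very statement you are proving in Step~2: the volume lower bound (\ref{ineq:volume-lower-bound}) for the ball around the $(H_n,A)$-center $(z_1,t_1)$ is obtained from Theorem~\ref{thm:no-local-collapsing-around-center}. This mirrors Bamler, whose Theorem 7.1 relies on his Theorem 6.2.

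Your fallback does not repair this. Following Bamler's Theorem 7.1 brings you back to the same volume bound at the same step. Proposition~\ref{prop:gradient-estimate-Zhang} needs $u_{\mathrm{max}}$ as input, which is exactly the quantity you are trying to control. The semigroup identity only gives $K(x,1;y,0)\leq C\sup_w K(x,1;w,\tfrac12)$, i.e.\ the same problem at a smaller scale, and Jensen's inequality combined with Nash entropy lower bounds produces lower bounds on $K$, not upper ones. Pointed Nash entropy controls only a $\nu$-average of $\log K$. Converting it into a pointwise upper bound is precisely where a volume lower bound is required, so your proposal never obtains the pointwise estimate non-circularly.

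The missing idea is that no pointwise bound on $K$ is needed. An averaged bound on $f$ over $B$ suffices, and it comes from ingredients proved without any volume information. Applying the Poincar\'e inequality (Theorem~\ref{thm:Poincare-inequality}) to $f$, together with the energy bound coming from Corollary~\ref{cor:N-and-W}, gives the third estimate of Proposition~\ref{prop:energy-estimate}. From it, $\int_M|f-\mathcal{N}_{x,1}(1)-\tfrac n2|\,d\nu_0\leq C(n,R_{\mathrm{min}},|E|_{\mathrm{max}})$. Since $\nu_0(B)\geq\frac12$, this yields $\nu_0(B)^{-1}\int_B f\,d\nu_0\geq\mathcal{N}_{x,1}(1)-C$.

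Now set $u:=(4\pi)^{-n/2}e^{-f}/\nu_0(B)$, a probability density on $B$ with respect to $dV_{g(0)}$. A direct computation gives $\int_B u\log u\,dV_{g(0)}\leq-\mathcal{N}_{x,1}(1)+C$. Jensen's inequality for $s\mapsto s\log s$ gives $\int_B u\log u\,dV_{g(0)}\geq-\log|B|_{g(0)}$, and combining the two is the claim. This is the paper's proof, and also Bamler's proof of his Theorem 6.2. The ordering matters because the heat kernel upper bound is then derived from this volume estimate, not the other way around.
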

\begin{proof}
	Note $\sigma_A(r^2)=\sigma_{Ar^2}(1)r^2$. Thus the result is invariant under parabolic rescaling, and we may assume $r=1$ and $t=1$. Let $\tau(t)=1-t$, $d\nu=(4\pi\tau)^{-n/2}e^{-f}dV_g$ be the conjugate heat kernel measure based at $(x,1)$ and $B=B(z,0,\sqrt{2H_n\sigma_A(1)})$. By \hyperref[prop:basic-properties-of-center]{Proposition~\ref*{prop:basic-properties-of-center}}(3), $\nu_0(B)\geq\frac{1}{2}$.
	
	By \hyperref[prop:energy-estimate]{Proposition~\ref*{prop:energy-estimate}},
	\begin{equation*}
		\int_M  \Big|f-\mathcal{N}_{x,1}(1)-\frac{n}{2}\Big|
		\,d\nu_{t_0-\tau}
		\leq
		\bigg(
		\int_M  \Big(f-\mathcal{N}_{x,1}(1)-\frac{n}{2}\Big)^2
		d\nu_{t_0-\tau}
		\bigg)^{1/2}
		\leq
		C(n,R_\mathrm{min}r^2,|E|_{\mathrm{max}}r^2).
	\end{equation*}
	Combining this and the fact $\nu_0(B)\geq\frac{1}{2}$, we obtain
	\begin{align*}
		\frac{1}{\nu_0(B)}\int_B  f  \,d\nu_0
		&\geq
		\mathcal{N}_{x,1}(1)+\frac{n}{2}
		-\frac{1}{\nu_0(B)}
		\int_B  \Big|f-\mathcal{N}_{x,1}(1)-\frac{n}{2}\Big|  \,d\nu_0\\
		&\geq
		\mathcal{N}_{x,1}(1)-C(n,R_\mathrm{min}r^2,|E|_{\mathrm{max}}r^2).
	\end{align*}
	Let $u=(4\pi)^{-n/2}e^{-f}/\nu_0(B)$. The above inequality together with $\nu_0(B)\geq\frac{1}{2}$ implies
	\begin{align*}
		\int_B u\log u \,dV_{g(0)}
		&=
		-\frac{1}{\nu_0(B)}\int_B f \,d\nu_0
		-\log(\nu_0(B))-\frac{n}{2}\log(4\pi)\\
		&\leq
		-\mathcal{N}_{x,1}(1)+C(n,R_\mathrm{min}r^2,|E|_{\mathrm{max}}r^2).
	\end{align*}
	By Jensen's inequality and the fact $\int_B  u  \,dV_{g(0)}=1$,
	\begin{equation*}
		\frac{1}{|B|_{0}}\int_B  u\log u  \,dV_{g(0)}
		\geq
		\left(\frac{1}{|B|_{0}}\int_B  u  \,dV_{g(0)}\right)
		\log\left(\frac{1}{|B|_{0}}\int_B  u  \,dV_{g(0)}\right)
		=
		\frac{1}{|B|_{0}}\log\left(\frac{1}{|B|_{0}}\right),
	\end{equation*}
	which implies
	\begin{equation*}
		-\log|B|_0
		\leq
		\int_B  u\log u  \,dV_{g(0)}
		\leq
		-\mathcal{N}_{x,1}(1)+C(n,R_\mathrm{min}r^2,|E|_{\mathrm{max}}r^2).
	\end{equation*}
\end{proof}

\section{Heat Kernel Estimates}
This section corresponds to $\S 7$ of \cite{bamler2020entropy}.

Let's start from the following simple lemma bounding the integral of heat kernel.
\begin{Lemma}
	Let $u$ be a solution to the heat equation $\square u=0$. Then for any $s<t$,
	\begin{equation*}
		\int_M u\,dV_g(t)
		\leq
		\exp\big(-(R-\tfrac{1}{2}\mathrm{tr}\,E)_{\mathrm{min}}(t-s)\big)
		\int_M u\,dV_g(s).
	\end{equation*}
	In particular, for any $s<t$ and $y\in M$,
	\begin{equation}\label{ineq:heat-kernel-integral-bound}
		\int_M   K(\cdot,t;y,s)   \,dV_{g(t)}
		\leq
		\exp\big(-(R-\tfrac{1}{2}\mathrm{tr}\,E)_{\mathrm{min}}(t-s)\big).
	\end{equation}
\end{Lemma}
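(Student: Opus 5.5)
The plan is to differentiate the total integral of $u$ in time and close with Gronwall. I will use the duality identity (\ref{eq:conjugate-heat-operator}) with $v\equiv 1$. Since $\square u=0$ and
\begin{equation*}
	\square^*1=R-\tfrac{1}{2}\mathrm{tr}\,E,
\end{equation*}
the identity gives
\begin{equation*}
	\frac{d}{dt}\int_M u\,dV_g
	=-\int_M\Big(R-\tfrac12\mathrm{tr}\,E\Big)u\,dV_g .
\end{equation*}
The same formula follows directly from $\partial_t dV_g=\tfrac12\mathrm{tr}(\partial_tg)\,dV_g=(-R+\tfrac12\mathrm{tr}\,E)\,dV_g$ together with $\int_M\Delta u\,dV_g=0$ on the compact manifold $M$.

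The first step is to note that we may reduce to $u\geq 0$. The maximum principle preserves nonnegativity, and the kernel statement, which is the case actually used, concerns a nonnegative solution. For $u\geq0$ we then have
\begin{equation*}
	\frac{d}{dt}\int_M u\,dV_g\leq-\Big(R-\tfrac12\mathrm{tr}\,E\Big)_{\min}\int_M u\,dV_g .
\end{equation*}
Integrating this differential inequality from $s$ to $t$ (Gronwall) gives the first claim.

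For the kernel bound (\ref{ineq:heat-kernel-integral-bound}), I apply the first claim to $u=K(\cdot,\cdot;y,s)$ on $[s+\epsilon,t]$. The kernel is positive and solves $\square u=0$, and $\int_M K(\cdot,s+\epsilon;y,s)\,dV_{g(s+\epsilon)}\to1$ as $\epsilon\to0$ because $K(\cdot,s+\epsilon;y,s)\to\delta_y$. Letting $\epsilon\to0$, so that the exponent's factor $(t-s-\epsilon)$ tends to $(t-s)$, yields the stated bound.

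The only mildly delicate point is the sign requirement. If the lemma is meant for a general, possibly sign-changing $u$, the stated inequality can fail, so I would read it as a statement about nonnegative solutions; that is the case needed for the kernel. The limit $\epsilon\to0$ is routine from the defining property of the heat kernel.
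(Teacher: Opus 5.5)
Your proof is correct and is essentially the paper's: compute $\frac{d}{dt}\int_M u\,dV_g=-\int_M u\,(R-\tfrac12\mathrm{tr}\,E)\,dV_g$ and integrate the resulting differential inequality. Your remark that this step tacitly needs $u\geq 0$ is right; for instance $u\equiv-1$ violates the inequality whenever $R-\tfrac12\mathrm{tr}\,E$ is nonconstant on $M\times[s,t]$. Your $\epsilon\to0$ argument on $[s+\epsilon,t]$ for the kernel case is also correct, and both points are valid refinements of the paper's one-line proof.
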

\begin{proof}
	This follows from
	\begin{equation*}
		\frac{d}{dt}   \int_M   u   \,dV_g
		=
		-\int_M   u(R-\tfrac{1}{2}\mathrm{tr}\,E)   \,dV_g
		\leq
		-(R-\tfrac{1}{2}\mathrm{tr}\,E)_{\mathrm{min}}
		\int_M u\,dV_g.
	\end{equation*}
\end{proof}

Next we prove an $L^\infty$-bound on $K(x,t;y,s)$ in terms of the pointed Nash entropy $\mathcal{N}_{x,t}(t-s)$.
\begin{Theorem}[Heat kernel upper bound]\label{thm:heat-kernel-upper-bound}
	There exists a constant $C$ depending only on $n$, $R_{\mathrm{min}}(t-s)$ and $|E|_{\mathrm{max}}(t-s)$ such that
	\begin{equation}\label{ineq:heat-kernel-upper-bound}
		K(x,t;y,s)
		\leq
		\frac{C}{(t-s)^{n/2}}\exp(-\mathcal{N}_{x,t}(t-s)).
	\end{equation}
\end{Theorem}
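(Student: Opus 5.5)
We follow the strategy of \cite[Theorem 7.1]{bamler2020entropy}. By parabolic rescaling and time-shifting we may assume $s=0$ and $t=1$; the constants then depend only on $n$, $R_{\mathrm{min}}$ and $|E|_{\mathrm{max}}$ on $[0,1]$. Fix $y$ and put $u(x',t'):=K(x',t';y,0)$, a positive solution of $\square u=0$ on $M\times(0,1]$. The plan has three steps. First, use the semigroup property at time $1/2$ to get a crude sup-bound. Second, use Zhang's gradient estimate (\hyperref[prop:gradient-estimate-Zhang]{Proposition~\ref*{prop:gradient-estimate-Zhang}}) to show that a large value of $u$ persists on a ball around $x$. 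Third, pair this ball with the Nash entropy through the no-local-collapsing-type bound around an $(H_n,A)$-center (\hyperref[thm:no-local-collapsing-around-center]{Theorem~\ref*{thm:no-local-collapsing-around-center}}).

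\textbf{Step 1.} Fix $t'\in[1/2,1]$ and write $M:=\sup_{M\times[1/2,1]}u$. Since
\begin{equation*}
K(x',t';y,0)=\int_M K(x',t';z,1/4)K(z,1/4;y,0)\,dV_{g(1/4)}(z),
\end{equation*}
bounding the first factor by its sup and using (\ref{ineq:heat-kernel-integral-bound}) for the second would require an $L^\infty$ bound on $K$ itself, which is circular. Instead we treat $M$ as unknown and argue in the style of Bamler. Choose $(x^*,t^*)$ with $t^*\in[1/2,1]$ and $u(x^*,t^*)\geq \tfrac12 M$. Applying \hyperref[prop:gradient-estimate-Zhang]{Proposition~\ref*{prop:gradient-estimate-Zhang}} on $(1/4,t^*]$ gives $|\nabla\log u|\leq C\sqrt{\log(M'/u)}$, where $M'=\sup_{M\times(1/4,1]}u$. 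To close this we run a point-picking (doubling) argument on $\sup u\cdot(t'-1/4)^{n/2}$, as in Bamler, so that $M'\leq CM$. Consequently $u\geq cM$ on $B(x^*,t^*,c)$.

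\textbf{Step 2.} Combining Step 1 with (\ref{ineq:heat-kernel-integral-bound}) gives
\begin{equation*}
cM\,|B(x^*,t^*,c)|\leq \int_M u\,dV_{g(t^*)}\leq C,
\end{equation*}
so it suffices to show $|B(x^*,t^*,c)|\geq c\exp(\mathcal{N}_{x,1}(1))$. This is a lower volume bound at a point which need not be an $(H_n,A)$-center. Here we follow Bamler and use the heat kernel based at $(x,1)$ in the reverse direction. Let $(z,1/2)$ be an $(H_n,A)$-center of $(x,1)$. \hyperref[thm:no-local-collapsing-around-center]{Theorem~\ref*{thm:no-local-collapsing-around-center}} gives
\begin{equation*}
|B(z,1/2,C)|\geq c\exp(\mathcal{N}_{x,1}(1/2))\geq c\exp(\mathcal{N}_{x,1}(1))
\end{equation*}
by monotonicity from \hyperref[prop:time-derivative-of-pointed-Nash-entropy]{Proposition~\ref*{prop:time-derivative-of-pointed-Nash-entropy}}, which costs a constant. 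Next use
\begin{equation*}
K(x,1;y,0)=\int K(x,1;w,1/2)\,u(w,1/2)\,dV,
\end{equation*}
together with the fact that $\nu_{x,1;1/2}$ has mass at least $1/2$ on $B(z,1/2,C)$ (\hyperref[prop:basic-properties-of-center]{Proposition~\ref*{prop:basic-properties-of-center}}(3)). Jensen's inequality with $f\geq$ its average, plus the $L^2$-Poincar\'e control on $f-\mathcal{N}-n/2$ (\hyperref[prop:energy-estimate]{Proposition~\ref*{prop:energy-estimate}}), then shows that $K(x,1;\cdot,1/2)\leq C\exp(-\mathcal{N}_{x,1}(1))$ on a set of definite $\nu$-measure. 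Since the quantity to bound is $K(x,1;y,0)$ in terms of $\mathcal{N}_{x,1}$, the cleanest route is the following. Write
\begin{equation*}
K(x,1;y,0)\leq \Big(\sup_w u(w,1/2)\Big)\cdot 1,
\end{equation*}
and bound $\sup_w u(w,1/2)$ via Step 1 by $C/|B(x^*,1/2,c)|$. Then relate the small ball at $x^*$ to the entropy at $(x,1)$ by \hyperref[cor:oscillation-of-pointed-Nash-entropy]{Corollary~\ref*{cor:oscillation-of-pointed-Nash-entropy}} and \hyperref[thm:no-local-collapsing-around-center]{Theorem~\ref*{thm:no-local-collapsing-around-center}} applied at centers near $x^*$. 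That is, control $\mathcal{N}_{x',t'}$ for any $(x',t')$ at time $t'\approx 1/2$ from below by $\mathcal{N}_{x,1}(1)-C$. This is exactly what \hyperref[cor:oscillation-of-pointed-Nash-entropy]{Corollary~\ref*{cor:oscillation-of-pointed-Nash-entropy}} provides, since the $W_1$-distance between conjugate kernels at time $0$ is bounded by the variance bounds, $\sqrt{H_n\sigma_A}$ plus distances.

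\textbf{Main obstacle.} The difficulty is that the $W_1$ term involves $d_{1/2}$ between $x^*$ and a center of $(x,1)$, which is not a priori bounded. I expect to resolve this as Bamler does, by first proving the weaker bound $u\leq C\exp(-\inf_{x'}\mathcal{N}^*_0(x',1/2))$, obtained by running Step 2 with $(x^*,t^*)$ itself as base point. The $\mathcal{N}^*_0$ at $(x^*,t^*)$ then controls the volume of a ball around its own center. One then uses the semigroup identity with the $\nu_{x,1;1/2}$-average, namely the lower half of (\ref{ineq:averaged-Nash-entropy}), to pass from $\int\mathcal{N}^*_0(\cdot,1/2)\,d\nu_{x,1;1/2}$ to $\mathcal{N}_{x,1}(1)$.
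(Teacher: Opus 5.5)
There is a genuine gap, and it sits exactly at what you call the main obstacle; the repair you sketch does not close it.

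\textbf{Why the sup-based argument cannot localize.} Steps 1--2 are built on $\sup u$. Proposition~\ref{prop:gradient-estimate-Zhang} is a global estimate in terms of $u_{\mathrm{max}}$, and the inequality $cM\,|B(x^*,t^*,c)|\le\int_M u\,dV$ controls $\sup u$ by the geometry near a maximum point $x^*$. That point may lie in a collapsed region far from $x$, where the entropy is much more negative than $\mathcal{N}_{x,1}(1)$. No argument of this kind can produce (\ref{ineq:heat-kernel-upper-bound}) at a prescribed $x$.

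Even the global bound is not available as you state it. Theorem~\ref{thm:no-local-collapsing} needs $R\le r^{-2}$. Theorem~\ref{thm:no-local-collapsing-around-center} bounds the volume of a ball around an $(H_n,A)$-center at an \emph{earlier} time, not of $B(x^*,t^*,c)$.

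\textbf{Why your proposed repair fails.} Inserting $u(\cdot,1/2)\le C\exp(-\inf_{x'}\mathcal{N}^*_0(x',1/2))$ into $u(x,1)=\int u(\cdot,1/2)\,d\nu_{x,1;1/2}$ only returns the same global constant. Suppose you even had a pointwise bound $u(w,1/2)\le Ce^{-\mathcal{N}^*_0(w,1/2)}$, which is the theorem itself at base time $1/2$. The lower half of (\ref{ineq:averaged-Nash-entropy}) still would not close the argument. That inequality bounds $\int\mathcal{N}^*_0(\cdot,1/2)\,d\nu_{x,1;1/2}$ from below, and Jensen gives $\int e^{-\mathcal{N}^*_0}\,d\nu\ge e^{-\int\mathcal{N}^*_0\,d\nu}$. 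You need an upper bound on $\int e^{-\mathcal{N}^*_0}\,d\nu$, so this is the wrong direction.

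\textbf{The missing idea.} What is missing is a self-improvement (bootstrap) of the sharp pointwise bound itself; this is how the paper, following Bamler, proceeds.
\begin{itemize}
\item Assume $u(x',t')\le Z\,(t')^{-n/2}\exp(-\mathcal{N}^*_0(x',t'))$ on all of $M\times(0,1]$. This holds for some flow-dependent $Z$, by short-time asymptotics on the compact manifold together with (\ref{cor:upper-bound-on-N}).
\item By rescaling, it suffices to prove the bound at $t=1$ with $Z/2$ in place of $Z$, whenever $Z\ge\underline{Z}(n,R_{\mathrm{min}},|E|_{\mathrm{max}})$.
\item \emph{First use of the a priori bound.} The subsolution $(t-\frac12)|\nabla u|^2+(1+A)u^2$ turns it into $|\nabla u|\le CZe^{-\mathcal{N}^*_0}$ on $[3/4,1]$. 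Passing from entropies at time $1/2$ to $\mathcal{N}^*_0(x,t)$ is done with the Lipschitz bound of Theorem~\ref{thm:gradient-estimate-Nash-entropy} and Corollary~\ref{cor:oscillation-of-pointed-Nash-entropy} at an $(H_n,A)$-center $z$. The Gaussian tail of Proposition~\ref{prop:basic-properties-of-center}(4) then gives $\int e^{Cd(z,\cdot)}\,d\nu\le C$. This replaces your use of (\ref{ineq:averaged-Nash-entropy}).
\item \emph{Second use.} Split $u(x,1)=\int u(\cdot,t_1)\,d\nu_{x,1;t_1}$, with $t_1=1-\rho^2$, over a small ball $B$ around an $(H_n,A)$-center of $(x,1)$.
\item On $B$, the gradient bound and Theorem~\ref{thm:no-local-collapsing-around-center} give $(C\rho^{-n}+CZ\rho)e^{-\mathcal{N}^*_0(x,1)}$.
\item Off $B$, the a priori bound and $\nu(M\setminus B)\le\frac{1}{100}$ give $(\frac{4}{100}+C\rho)Ze^{-\mathcal{N}^*_0(x,1)}$.
\item Choosing $\rho$ small and then $Z$ large yields the constant $Z/2$.
\end{itemize}
It is this bootstrap, not any bound on $\sup u$, that localizes the estimate at $x$.
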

\begin{proof}
	After parabolic rescaling and time-shifting, we may assume $s=0$, $t=1$.
	Set $A=|E|_{\mathrm{max}}$.
	Then $E(g)\geq -Ag$. Fix $y\in M$ and set $u=K(\cdot,\cdot;y,0)$. Then we need to prove
	\begin{equation}\label{ineq:rough-upper-bound}
		u(x,t)\leq\frac{Z}{t^{n/2}}\exp(-\mathcal{N}_0^*(x,t)),
		\qquad
		\forall\,(x,t)\in M\times(0,1]
	\end{equation}
	for some $Z$ depending only on $n,R_{\mathrm{min}},|E|_{\mathrm{max}}$. By \hyperref[prop:time-derivative-of-pointed-Nash-entropy]{Proposition~\ref*{prop:time-derivative-of-pointed-Nash-entropy}}, $\mathcal{N}_0^*(x,t)$ is bounded above over $(x,t)\in M\times(0,1]$. Thus (\ref{ineq:rough-upper-bound}) holds for some large $Z$, possibly depending on the underlying flow; see for instance \cite[Theorem 26.25]{chow2010ricci}. To prove our result, it suffices to show that if (\ref{ineq:rough-upper-bound}) holds for some $Z$, then it also holds for $Z$ replaced with $Z/2$ if $Z\geq\underline{Z}(n,R_{\mathrm{min}},|E|_{\mathrm{max}})$. By parabolic rescaling, it is enough to show (\ref{ineq:rough-upper-bound}) holds at time $t=1$ for $Z$ replaced with $Z/2$ if $Z\geq\underline{Z}$.
	
	\emph{Step 1.} We first derive a bound on $|\nabla u|$.
	Set $v:=(t-\frac{1}{2})|\nabla u|^2+(1+A)u^2$. By \hyperref[prop:evolution-of-various-quantities]{Proposition~\ref*{prop:evolution-of-various-quantities}}, we have for $t\in[\frac{1}{2},1]$ that
	\begin{equation*}
		\square v
		=
		|\nabla u|^2
		-(t-\tfrac{1}{2})(2|\nabla^2u|^2+E_{ij}u_iu_j)
		-2(1+A)|\nabla u|^2
		\leq 0.
	\end{equation*}
	Therefore for any $(x,t)\in M\times(\tfrac{1}{2},1]$ we have by (\ref{ineq:rough-upper-bound}) that
	\begin{align}
		(t-\tfrac{1}{2})|\nabla u|^2(x,t)
		\leq
		v(x,t)
		\leq
		\int_M   K(x,t;\cdot,\tfrac{1}{2})v(\cdot,\tfrac{1}{2})
		\,dV_g(\tfrac{1}{2})
		=
		(1+A)\int_M   u^2(\cdot,\tfrac{1}{2})
		\,d\nu_{x,t;\frac{1}{2}}  \notag \\
		\leq
		(1+A)2^nZ^2
		\int_M   \exp(-2\mathcal{N}^*_0(\cdot,\tfrac{1}{2}))
		\,d\nu_{x,t;\frac{1}{2}}.\label{ineq:integral-gradient-bound}
	\end{align}
	Let $(z,\frac{1}{2})$ be an $(H_n,A)$-center of $(x,t)$. By \hyperref[thm:gradient-estimate-Nash-entropy]{Theorem~\ref*{thm:gradient-estimate-Nash-entropy}} and \hyperref[cor:oscillation-of-pointed-Nash-entropy]{Corollary~\ref*{cor:oscillation-of-pointed-Nash-entropy}},
	\begin{equation}\label{ineq:change-base-point}
		-\mathcal{N}^*_0(y',\tfrac{1}{2})
		\leq
		-\mathcal{N}^*_0(z,\tfrac{1}{2})
		+Cd_{\frac{1}{2}}(z,y')
		\leq
		-\mathcal{N}^*_0(x,t)
		+C+Cd_{\frac{1}{2}}(z,y'),
		\qquad\forall\,y'\in M
	\end{equation}
	for some $C(n,R_{\mathrm{min}},|E|_{\mathrm{max}})$. Set $f(r)=\nu_{x,t;\frac{1}{2}}(M\setminus B(z,\frac{1}{2},r))$. Then
	\begin{equation}\label{eq:integration-by-parts}
		\int_M   \exp(Cd_{\frac{1}{2}}(z,\cdot))   \,d\nu_{x,t;\frac{1}{2}}
		=
		-\int_0^\infty   e^{Cr}   f'(r)\,dr
		=
		-e^{Cr}f(r)\big|_0^\infty
		+\int_0^\infty Ce^{Cr}f(r)\,dr.
	\end{equation}
	Note \hyperref[prop:basic-properties-of-center]{Proposition~\ref*{prop:basic-properties-of-center}}(4) guarantees that $-e^{Cr}f(r)\big|_0^\infty=1$ and
	\begin{align*}
		\int_0^\infty   e^{Cr}f(r)\,dr
		&\leq
		\int_0^\infty   e^{Cr}\cdot
		2\exp   \bigg(
		-\frac{\big(r-\sqrt{2H_n\sigma_A(t-\frac{1}{2})}\big)_+^2}{8\sigma_A(t-\frac{1}{2})}   \bigg)dr\\
		&=
		\sqrt{\sigma_A(t-\tfrac{1}{2})}
		\int_0^\infty   e^{C\sqrt{\sigma_A(t-\frac{1}{2})}\,r}\cdot
		2\exp\bigg(
		-\frac{(r-\sqrt{2H_n})_+^2}{8}   \bigg)dr
		\leq C.
	\end{align*}
	Combining (\ref{ineq:change-base-point}), (\ref{eq:integration-by-parts}) and the above estimate, we get
	\begin{align*}
		\int_M   \exp(-2\mathcal{N}^*_0(\cdot,\tfrac{1}{2}))
		\,d\nu_{x,t;\frac{1}{2}}
		&\leq
		C\exp(-2\mathcal{N}^*_0(x,t))
		\int_M   \exp(Cd_{\frac{1}{2}}(z,\cdot))   \,d\nu_{x,t;\frac{1}{2}}\\
		&\leq
		C\exp(-2\mathcal{N}^*_0(x,t)).
	\end{align*}
	Hence (\ref{ineq:integral-gradient-bound}) implies that for all $(x,t)\in M\times [\frac{3}{4},1]$,
	\begin{equation}\label{ineq:gradient-bound-by-N}
		|\nabla u|(x,t)
		\leq CZ\exp(-\mathcal{N}^*_0(x,t)).
	\end{equation}
	
	\emph{Step 2.} We next show that (\ref{ineq:rough-upper-bound}) holds at time $t=1$ for $Z$ replaced with $Z/2$ if $Z\geq\underline{Z}$. Fix $(x,1)\in M\times I$. Let $\rho\in(0,\frac{1}{2}]$ be a small constant whose value will be determined later. Set $t_1:=1-\rho^2\in[\frac{3}{4},1]$ and let $(z_1,t_1)$ be an $(H_n,A)$-center of $(x,1)$. Then \hyperref[cor:oscillation-of-pointed-Nash-entropy]{Corollary~\ref*{cor:oscillation-of-pointed-Nash-entropy}} gives
	\begin{equation*}
		-\mathcal{N}^*_0(z_1,t_1)
		\leq
		-\mathcal{N}^*_0(x,1)
		+C\sqrt{H_n\sigma_A(\rho^2)}
		+\frac{nA}{2}\rho^2
		\leq
		-\mathcal{N}^*_0(x,1)+\log 2,
	\end{equation*}
	provided $\rho\leq\overline{\rho}(n,R_{\mathrm{min}},|E|_{\mathrm{max}})$. Combining this with \hyperref[thm:gradient-estimate-Nash-entropy]{Theorem~\ref*{thm:gradient-estimate-Nash-entropy}}, we get
	\begin{equation}\label{ineq:oscillation-of-N}
		-\mathcal{N}^*_0(\cdot,t_1)
		\leq
		-\mathcal{N}^*_0(x,1)+\log 2
		+Cd_{t_1}(z_1,\cdot).
	\end{equation}
	Set $B:=B(z_1,t_1,\sqrt{100H_n\sigma_A(\rho^2)})$. To estimate $u(x,1)=\int_M u(\cdot,t_1)\,d\nu_{x,1;t_1}$, we split the integral into two parts as
	\begin{equation}\label{eq:split-into-two-parts}
		u(x,1)=\int_B u\,d\nu_{x,1;t_1}+\int_{M\setminus B} u\,d\nu_{x,1;t_1}.
	\end{equation}
	
	The first integral can be estimated as follows. By \hyperref[thm:no-local-collapsing-around-center]{Theorem~\ref*{thm:no-local-collapsing-around-center}} and \hyperref[prop:time-derivative-of-pointed-Nash-entropy]{Proposition~\ref*{prop:time-derivative-of-pointed-Nash-entropy}},
	\begin{equation}\label{ineq:volume-lower-bound}
		|B|_{t_1}
		\geq
		c\exp(\mathcal{N}_{x,1}(\rho^2))\rho^n
		\geq
		c\exp(\mathcal{N}^*_0(x,1))\rho^n.
	\end{equation}
	Our gradient bound (\ref{ineq:gradient-bound-by-N}) together with (\ref{ineq:oscillation-of-N}) implies
	\begin{equation*}
		|\nabla u|(\cdot,t_1)
		\leq CZ\exp(-\mathcal{N}^*_0(x,1))
		\quad\text{over }B.
	\end{equation*}
	Thus for any $x',x''\in B$, (note $\sigma_A(\rho^2)\leq C\rho^2$)
	\begin{equation*}
		u(x',t_1)
		\leq
		u(x'',t_1)+CZ\exp(-\mathcal{N}^*_0(x,1))\rho.
	\end{equation*}
	Integrating this over $B$ with respect to $x''$, we get by (\ref{ineq:heat-kernel-integral-bound}) and (\ref{ineq:volume-lower-bound}) that
	\begin{equation*}
		u(x',t_1)
		\leq
		\frac{1}{|B|_{t_1}}\int_B  u\,dV_{g}(t_1)
		+CZ\exp(-\mathcal{N}^*_0(x,1))\rho
		\leq
		(C\rho^{-n}+CZ\rho)\exp(-\mathcal{N}^*_0(x,1)).
	\end{equation*}
	In particular, the first integral in (\ref{eq:split-into-two-parts}) satisfies
	\begin{equation}\label{ineq:integral-bound-I}
		\int_B u\,d\nu_{x,1;t_1}
		\leq
		(C\rho^{-n}+CZ\rho)\exp(-\mathcal{N}^*_0(x,1)).
	\end{equation}
	
	Next we estimate the second integral in (\ref{eq:split-into-two-parts}). By \hyperref[prop:basic-properties-of-center]{Proposition~\ref*{prop:basic-properties-of-center}}(3) and our choice of $B$, we have $\nu_{x,1;t_1}(M\setminus B)\leq\frac{1}{100}$. By (\ref{ineq:rough-upper-bound}), (\ref{ineq:oscillation-of-N}) and the fact $e^s\leq 1+\rho e^{s/\rho}$, we get
	\begin{align*}
		\int_{M\setminus B} u\,d\nu_{x,1;t_1}
		&\leq
		2Z\int_{M\setminus B}   \exp(-\mathcal{N}^*_0(\cdot,t_1))   \,d\nu_{x,1;t_1}\\
		&\leq
		4Z\exp(-\mathcal{N}^*_0(x,1))
		\int_{M\setminus B}   \exp(Cd_{t_1}(z_1,\cdot))
		\,d\nu_{x,1;t_1}\\
		&\leq
		4Z\exp(-\mathcal{N}^*_0(x,1))
		\int_{M\setminus B}
		\big(1+\rho\exp(C\rho^{-1}d_{t_1}(z_1,\cdot))\big)
		d\nu_{x,1;t_1}\\
		&\leq
		\bigg(\frac{4}{100}+4\rho\int_{M}
		\exp(C\rho^{-1}d_{t_1}(z_1,\cdot))\,d\nu_{x,1;t_1}\bigg)
		Z\exp(-\mathcal{N}^*_0(x,1)).
	\end{align*}
	Repeating the argument in Step 1 (namely (\ref{eq:integration-by-parts}) and what follows it), we have
	\begin{align*}
		\int_{M}
		\exp(C\rho^{-1}d_{t_1}(z_1,\cdot))\,d\nu_{x,1;t_1}
		&=
		1+C\rho^{-1}\int_0^\infty e^{C\rho^{-1}r}
		\nu_{x,1;t_1}(M\setminus B(z_1,t_1,r))\,dr\\
		&\leq
		1+C\rho^{-1}\int_0^\infty e^{C\rho^{-1}r}
		\cdot 2\exp\bigg(
		-\frac{\big(r-\sqrt{2H_n\sigma_A(\rho^2)}\big)_+^2}{8\sigma_A(\rho^2)}
		\bigg)dr\\
		&=
		1+C\frac{\sqrt{\sigma_A(\rho^2)}}{\rho}
		\int_0^\infty e^{C\rho^{-1}\sqrt{\sigma_A(\rho^2)}r}
		\cdot 2\exp\big(
		-\tfrac{1}{8}{\big(r-\sqrt{2H_n}\big)_+^2}
		\big)dr\\
		&\leq C.
	\end{align*}
	Therefore
	\begin{equation}\label{ineq:integral-bound-II}
		\int_{M\setminus B} u\,d\nu_{x,1;t_1}
		\leq
		\Big(\frac{4}{100}+C\rho\Big)
		Z\exp(-\mathcal{N}^*_0(x,1)).
	\end{equation}
	
	Combining (\ref{eq:split-into-two-parts}), (\ref{ineq:integral-bound-I}) and (\ref{ineq:integral-bound-II}), we know
	\begin{equation*}
		u(x,1)
		\leq
		\Big(C\rho^{-n}+\frac{4}{100}Z+C\rho Z\Big)\exp(-\mathcal{N}^*_0(x,1)),
	\end{equation*}
	provided $\rho\leq\overline{\rho}$. So if we fix some small $\rho$ and assume $Z\geq\underline{Z}$, then $C\rho^{-n}+\frac{4}{100}Z+C\rho Z\leq Z/2$.
\end{proof}

The above theorem will imply the following stronger pointwise Gaussian bounds.
\begin{Theorem}[Stronger upper Gaussian bounds]
	Assume $[s,t]\subset I$ and $E(g)\geq-Ag$ for some $A\geq 0$. Let $(z,s)\in M\times I$ be an $(H_n,A)$-center of a point $(x,t)\in M\times I$. Then for any $\epsilon>0$ and $y,y'\in M$, there exists a constant $C$ depending only on $n$, $R_{\mathrm{min}}(t-s)$, $|E|_{\mathrm{max}}(t-s)$ and $\epsilon$ such that
	\begin{align*}
		K(x,t;y,s)
		&\leq
		\frac{C\exp(-\mathcal{N}_{x,t}(t-s))}{(t-s)^{n/2}}
		\exp\left(-\frac{d_s^2(z,y)}{(8+\epsilon)(t-s)}\right),\\
		K(x,t;y,s)K(x,t;y',s)
		&\leq
		\frac{C\exp(-2\mathcal{N}_{x,t}(t-s))}{(t-s)^{n}}
		\exp\left(-\frac{d_s^2(y,y')}{(8+\epsilon)(t-s)}\right).
	\end{align*}
	Moreover, if $[s-r^2,t]\subset I$ for some $r^2\geq\epsilon(t-s)$, then we may replace $\mathcal{N}_{x,t}(t-s)$ by $\mathcal{N}_{y,s}(r^2)$ in the above two inequalities.
\end{Theorem}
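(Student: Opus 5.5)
We follow the strategy of Bamler's Theorem 7.2: the pointwise Gaussian decay will come from combining the already-proved $L^\infty$ bound (Theorem~\ref{thm:heat-kernel-upper-bound}) with the integral Gaussian concentration of Proposition~\ref{prop:basic-properties-of-center}(4), transported through a semigroup step. After parabolic rescaling we assume $s=0$, $t=1$, so that $A\leq|E|_{\mathrm{max}}$ is controlled and $\sigma_A(\cdot)$ is comparable to its argument. We split at an intermediate time $t_1:=\theta$, with $\theta\in(0,1)$ close to $0$ and chosen in terms of $\epsilon$.

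\emph{First inequality.} The reproduction formula gives
\begin{equation*}
K(x,1;y,0)=\int_M K(x,1;w,\theta)K(w,\theta;y,0)\,dV_{g(\theta)}(w).
\end{equation*}
We split the $w$-integral into the region $d_\theta(w,z_\theta)\geq(1-\eta)d_0(z,y)$ and its complement, where $(z_\theta,\theta)$ is an $(H_n,A)$-center of $(x,1)$. On the first region we bound $K(w,\theta;y,0)$ by Theorem~\ref{thm:heat-kernel-upper-bound}. We convert $\mathcal N_{w,\theta}(\theta)$ into $\mathcal N_{x,1}(1)$ using Corollary~\ref{cor:oscillation-of-pointed-Nash-entropy} together with the gradient estimate Theorem~\ref{thm:gradient-estimate-Nash-entropy}; this costs a factor $e^{Cd(w,z_\theta)}$. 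The remaining $\nu_{x,1;\theta}$-mass of that region is then Gaussian-small by Proposition~\ref{prop:basic-properties-of-center}(4). On the complementary region, $w$ is far from $y$ in the sense that $y$ lies in the tail of $\nu_{w,\theta;0}$. Here I would prefer a cleaner route: use the dual form of the Gaussian concentration (Corollary~\ref{cor:Gaussian-Concentration}) applied to $\nu_{x,1;0}$ with $X$ a small ball around $y$ and $Y$ a ball around $z$ of $\nu$-mass $\geq 1/2$. Then $\nu_{x,1;0}(B(y,0,r))\leq 2\exp(-d^2/(8+\epsilon'))$. Combining this with the pointwise gradient bound (\ref{ineq:gradient-bound-by-N}) from Step 1 of the previous proof, $|\nabla u|\leq C e^{-\mathcal N}$ with $u=K(x,1;\cdot,0)$ (conjugate version), shows that $K$ cannot be large at $y$ without being large on a ball of volume $\geq c\,r^n$. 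That would force $\nu(B(y,r))$ large, contradicting the Gaussian bound. The volume lower bound needed here comes from no local collapsing around centers (Theorem~\ref{thm:no-local-collapsing-around-center}). This mean-value argument is where I expect the main difficulty, since the volume lower bound is only available near centers and not at arbitrary $y$. The fix is to run the argument at time $\theta$ and use Bishop-type control via the reproduction formula, or else to use the $L^1$ heat kernel bound (\ref{ineq:heat-kernel-integral-bound}) instead of volume.

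\emph{Second inequality.} This follows from the first by the triangle inequality $d(y,y')\leq d(z,y)+d(z,y')$ and $a^2+b^2\geq\frac12(a+b)^2$. The constant is adjusted so that the $8+\epsilon$ with $\epsilon$ doubling yields $\exp(-d^2(y,y')/(16+\ldots))$. To obtain the sharp $(8+\epsilon)$, I would instead apply Corollary~\ref{cor:Gaussian-Concentration} directly with $X,Y$ small balls around $y,y'$, upgrading $\nu(X)\nu(Y)$ to pointwise values by the same local gradient/mean-value step as above.

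\emph{Replacing $\mathcal N_{x,t}(t-s)$ by $\mathcal N_{y,s}(r^2)$.} The plan is to use the semigroup property backward in time from $(y,s)$, in the spirit of Bamler's proof. By Proposition~\ref{prop:time-derivative-of-pointed-Nash-entropy} and Corollary~\ref{cor:oscillation-of-pointed-Nash-entropy} (base time $s-r^2$), one compares $\mathcal N^*_{s-r^2}(x,t)$ with $\mathcal N^*_{s-r^2}(y,s)$ at the cost of a term $C\,d_{W_1}$. In the exponent this term is absorbed by the Gaussian factor $d_s^2(z,y)/(t-s)$, since $r^2\geq\epsilon(t-s)$ makes the time ratio bounded. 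The monotonicity in $\tau$ (Proposition~\ref{prop:time-derivative-of-pointed-Nash-entropy}) then passes between $\mathcal N_{y,s}(r^2)$ and the needed scales.
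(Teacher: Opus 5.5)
The paper does not write this proof out; it only defers to \cite[Theorem 7.2]{bamler2020entropy}. So the question is whether your outline closes, and it does not. The step that carries the theorem is missing: converting integral concentration into \emph{pointwise} decay of $y\mapsto K(x,t;y,s)$.

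Your far region $d_\theta(w,z_\theta)\geq(1-\eta)D$, with $D:=d_0(z,y)$, is handled correctly: Theorem~\ref{thm:heat-kernel-upper-bound}, entropy conversion at cost $e^{C(1+d_\theta(w,z_\theta))/\sqrt{\theta}}$, and the Gaussian tail of $\nu_{x,1;\theta}$. On the near region, however, ``$y$ lies in the tail of $\nu_{w,\theta;0}$'' bounds the $\nu_{w,\theta;0}$-mass near $y$, not the density $K(w,\theta;y,0)$. Pointwise, that region is exactly the statement you are proving, at scale $\theta$.

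Your mean-value substitute relies on estimates that are not available:
(\ref{ineq:gradient-bound-by-N}) bounds the forward kernel in its \emph{first} variable, and it has no conjugate version. Step~1 of the proof of Theorem~\ref{thm:heat-kernel-upper-bound} rests on $\square|\nabla u|^2=-2|\nabla^2u|^2-E(\nabla u,\nabla u)$. For $\square^*v=0$ the same computation gives $(-\partial_t-\Delta)|\nabla v|^2=E(\nabla v,\nabla v)-4\Ric(\nabla v,\nabla v)-2|\nabla^2v|^2-2q|\nabla v|^2-2v\nabla q\cdot\nabla v$, with $q=R-\frac12\tr E$. Neither a lower Ricci bound nor $\nabla q$ is controlled by the hypotheses.
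There is also no volume lower bound at an arbitrary $y$. Theorem~\ref{thm:no-local-collapsing} needs $R\leq r^{-2}$ near the point, Theorem~\ref{thm:no-local-collapsing-around-center} applies only at centers, and Bishop comparison needs a lower Ricci bound.
Finally, (\ref{ineq:heat-kernel-integral-bound}) integrates the first variable. Every pointwise or regularity estimate in the paper lives in the $x$-slot, which is also why your ``sharp'' route to the second inequality fails.

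One way to close the gap is to keep your split and feed the inequality back in at the smaller scale, in the $Z\mapsto Z/2$ manner of the proof of Theorem~\ref{thm:heat-kernel-upper-bound}. Let $Z$ be the best constant in the first bound over all configurations with time gap at most $t-s$. This is finite a priori for a fixed compact flow, by short-time Gaussian bounds. For $w$ in the near region, Corollary~\ref{cor:monotonicity-of-W1} and the center bounds give $d_0(z_w,y)\geq(1-e^{A\theta/2}(1-\eta))D-C\geq\frac{\eta}{2}D-C$ for every center $z_w$ of $(w,\theta)$. That region therefore contributes at most $Z\theta^{-n/2}e^{C(1+D)/\sqrt{\theta}}e^{-\mathcal{N}_{x,1}(1)}\exp\big(-(\frac{\eta}{2}D-C)_+^2/((8+\epsilon)\sigma_A(\theta))\big)$. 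For $\theta\ll\eta^2$ and $D\geq D_0(\theta,\eta)$ this is at most half the target, while $D<D_0$ is covered by Theorem~\ref{thm:heat-kernel-upper-bound}. Hence $Z\leq\max\{C,C+Z/2\}$.

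With the first inequality available at all scales, the second follows from the same split applied to $\nu_{x,1;\theta}\otimes\nu_{x,1;\theta}$. There you use the two-point tail $\nu\otimes\nu\{d_\theta(w_1,w_2)\geq r\}\leq\exp\big(-(r-C)_+^2/(8\sigma_A(1-\theta))\big)$, which comes from the exponential integrability in the proof of Corollary~\ref{cor:Gaussian-Concentration}. The triangle inequality gives only $16+2\epsilon$.

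Two caveats are inherited from the statement, and your outline glosses over both.
First, concentration yields $8\sigma_A(t-s)$, not $8(t-s)$, so your ``$2\exp(-d^2/(8+\epsilon'))$'' holds only when $|E|_{\max}(t-s)$ is small. Take $g(t)=e^{-At}g_{\mathrm{flat}}$ on a large flat torus, where $E=-Ag$, $R\equiv0$, $x$ is its own center and $\mathcal{N}$ is bounded. Then $K(x,t;y,s)\geq(4\pi\sigma_A(t-s))^{-n/2}\exp\big(-d_s^2(x,y)/(4\sigma_A(t-s))\big)$, which violates the printed bound once $4\sigma_A(t-s)>(8+\epsilon)(t-s)$. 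What is provable is the version with $(8+\epsilon)\sigma_A(t-s)$.
Second, Proposition~\ref{prop:time-derivative-of-pointed-Nash-entropy} gives only almost-monotonicity of $\mathcal{N}$ in $\tau$, with error of order $|E|_{\max}r^2$. On the same example $\mathcal{N}_{y,s}(r^2)\to\infty$ as $r\to\infty$. So in the ``moreover'' part you need $r^2\lesssim t-s$, or a constant depending on $|E|_{\max}r^2$.
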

This can be proved by slightly modifying the proof of \cite[Theorem 7.2]{bamler2020entropy}.
Since it will not be used in this paper, we omit the proof.

Then we turn to a gradient bound on the heat kernel.
\begin{Theorem}[Gradient estimate]\label{thm:heat-kernel-gradient-estimate}
	Assume $[s,t]\subset I$. There are constants $C_0,C$ depending only on $n$, $R_{\mathrm{min}}(t-s)$ and $|E|_{\mathrm{max}}(t-s)$ such that
	\begin{equation}\label{ineq:heat-kernel-gradient-estimate}
		\frac{|\nabla_xK|(x,t;y,s)}{K(x,t;y,s)}
		\leq
		\frac{C}{(t-s)^{1/2}}\sqrt{\log\left(
		\frac{C_0\exp(-\mathcal{N}_{x,t}(t-s))}{(t-s)^{n/2}K(x,t;y,s)}\right)}.
	\end{equation}
\end{Theorem}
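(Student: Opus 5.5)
The plan is to combine Proposition~\ref{prop:gradient-estimate-Zhang} (Zhang-type gradient estimate) with the heat kernel upper bound, Theorem~\ref{thm:heat-kernel-upper-bound}, as in \cite[Theorem 7.5]{bamler2020entropy}. After parabolic rescaling and time-shifting we may assume $s=0$ and $t=1$. Set $A:=|E|_{\mathrm{max}}$, so $E(g)\geq -Ag$. Fix $y$ and put $u:=K(\cdot,\cdot;y,0)$, which is a positive solution of $\square u=0$ on $M\times(0,1]$. Zhang's estimate is not applied on all of $(0,1]$, since $\sup u$ is infinite near $t=0$. Instead we apply it on the interval $(\tfrac12,1]$, which gives
\begin{equation*}
	\frac{|\nabla u|}{u}(x,1)\leq\sqrt{2(1+A/2)}\,\sqrt{\log\frac{u_{\max}}{u(x,1)}},
	\qquad u_{\max}:=\sup_{M\times[\frac12,1]}u .
\end{equation*}
This yields the claim with the prefactor $C$, provided we can show $u_{\max}\leq C_0\exp(-\mathcal{N}_{x,1}(1))$.

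The naive bound from Theorem~\ref{thm:heat-kernel-upper-bound} is $u(x',t')\leq C(t')^{-n/2}\exp(-\mathcal{N}^*_0(x',t'))$ for every $(x',t')$. It involves $\mathcal{N}^*_0(x',t')$ at \emph{arbitrary} points $x'$, which may be far from $x$, so it does not directly give a bound in terms of $\mathcal{N}_{x,1}(1)$. The step I expect to be the main obstacle is localizing $u_{\max}$ to $x$. My first attempt is the following localization. Let $v:=u\,\mathds{1}$ restricted to a ball around an $(H_n,A)$-center of $(x,1)$. More precisely, use $\log$-convexity: the function $w:=\min\{u,\,C_0e^{-\mathcal{N}^*_0(x,1)}\}$ is not caloric, so instead apply the Zhang estimate to the solution $\bar u$ of the heat equation on $[\tfrac12,1]$ with initial data $u(\cdot,\tfrac12)$ truncated. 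Cleaner still is to use Bamler's trick. On $[\tfrac12,1]$ the quantity $\mathcal{N}^*_0(x',t')$ is controlled by $\mathcal{N}^*_0(x,1)$ plus a multiple of $d_{W_1}$, by Corollary~\ref{cor:oscillation-of-pointed-Nash-entropy} and Theorem~\ref{thm:gradient-estimate-Nash-entropy}, but only for $(x',t')$ near $x$.

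So I would argue locally. Replace the global Zhang estimate by its derivation via the maximum principle applied to
\begin{equation*}
	Q:=(t-\tfrac12)\frac{|\nabla u|^2}{u}-(1+A)\,u\log\frac{\Lambda}{u},
	\qquad \Lambda:=C_0e^{-\mathcal{N}^*_0(x,1)} .
\end{equation*}
We have $\square Q\leq 0$ wherever $u\leq\Lambda$. In the region where $u>\Lambda$, the term $u\log(\Lambda/u)$ is negative, and that part is handled by representing $Q(x,1)\leq\int Q(\cdot,\tfrac12)\,d\nu_{x,1;\frac12}$ plus an error from the set $\{u>\Lambda\}$. That set has small $\nu_{x,1;t'}$-measure for $t'\in[\tfrac12,1]$. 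The reason is that on it $\mathcal{N}^*_0(\cdot,t')\leq \mathcal{N}^*_0(x,1)-\log(C_0/C)$, and by the Lipschitz bound on $\mathcal{N}^*_0$ these points lie at distance $\gtrsim\log C_0$ from an $(H_n,A)$-center. The Gaussian concentration of Proposition~\ref{prop:basic-properties-of-center}(4) then makes the error term tiny.

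Alternatively, and this is what I would try first since it is simpler, first show the gradient estimate with $\mathcal{N}^*_0(x',t')$ replaced by a local sup. Then bound $u(x',t')$ at points $x'$ near $x$ using (\ref{ineq:change-base-point}) and Step~1 of the proof of Theorem~\ref{thm:heat-kernel-upper-bound}, namely $|\nabla u|\leq CZe^{-\mathcal{N}^*_0}$. This gives $u\leq C e^{-\mathcal{N}^*_0(x,1)+Cd}$ in terms of the distance $d$ to the center. Finally, run Zhang's maximum principle on a Gaussian-weighted quantity
\begin{equation*}
	Q-\bigl(\text{Gaussian barrier in }d_t(z,\cdot)\bigr),
\end{equation*}
whose growth dominates the linear term $Cd$. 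Both routes end in (\ref{ineq:heat-kernel-gradient-estimate}), with $C_0$ and $C$ depending only on $n$, $R_{\mathrm{min}}$ and $|E|_{\mathrm{max}}$ through the constants of Theorems~\ref{thm:heat-kernel-upper-bound} and~\ref{thm:gradient-estimate-Nash-entropy}.
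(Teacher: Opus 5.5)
You correctly identify that everything hinges on localizing $u_{\max}$, which is genuinely not bounded by $C_0e^{-\mathcal{N}_{x,1}(1)}$. Neither localization you sketch closes, however.

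In the $Q$-route, $\square Q\le 0$ actually holds everywhere, since $\square\big(u\log(\Lambda/u)\big)=|\nabla u|^2/u$ for constant $\Lambda$; the only issue is the initial-time term. Write $\nu:=\nu_{x,1;\frac12}$ and $u=u(\cdot,\tfrac12)$ inside integrals. The maximum principle then gives
\[
\tfrac12\frac{|\nabla u|^2}{u^2}(x,1)\le(1+A)\log\frac{\Lambda}{u(x,1)}+(1+A)\frac{\mathrm{Err}}{u(x,1)},\qquad\mathrm{Err}:=\int_{\{u>\Lambda\}}u\log\frac{u}{\Lambda}\,d\nu .
\]
Your Gaussian-concentration argument yields at best an \emph{absolute} bound $\mathrm{Err}\le\epsilon(C_0)\,e^{-\mathcal{N}^*_0(x,1)}$. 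But the quantity to control is $\mathrm{Err}/u(x,1)$, and $u(x,1)=K(x,1;y,0)$ can be arbitrarily small compared with $e^{-\mathcal{N}^*_0(x,1)}$ (take $y$ far from $x$). That is exactly the regime the $\sqrt{\log}$ is about. You would get $\mathrm{Err}/u(x,1)\le(\epsilon/C_0)\,\Lambda/u(x,1)$, which is linear rather than logarithmic in $\Lambda/u(x,1)$. Hence you only obtain $|\nabla u|/u\lesssim(\Lambda/u(x,1))^{1/2}$.

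The barrier route fails for a different reason. A barrier in $d_t(z,\cdot)$ requires control of $\square d_t(z,\cdot)$. In this setting that is only available under local Ricci upper bounds (Lemma~\ref{lem:distance-distortion}), not under $R\ge R_{\mathrm{min}}$ and $|E|\le|E|_{\mathrm{max}}$ alone.

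The missing idea is to compare with $u(x,1)$ through the identity $u(x,1)=\int_M u(\cdot,\tfrac12)\,d\nu$. The cleanest way is not to split at all. Take $Q:=(t-\frac12)|\nabla u|^2/u+(1+A)u\log u$; one still has $\square Q\le0$ on $[\frac12,1]$ by Corollary~\ref{cor:evolution}. The constant $\Lambda$ drops out, and
\[
\tfrac12\frac{|\nabla u|^2}{u}(x,1)\le(1+A)\Big(\int_M u\log u\,d\nu-u(x,1)\log u(x,1)\Big).
\]
Divide by $u(x,1)$ and apply Jensen's inequality to $\log$ with respect to the probability measure $u\,d\nu/u(x,1)$. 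This gives $\frac{|\nabla u|^2}{u^2}(x,1)\le2(1+A)\log\big(\int_M u^2\,d\nu\big/u(x,1)^2\big)$.

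It remains to use the $L^2$ bound $\int_M u^2\,d\nu\le Ce^{-2\mathcal{N}^*_0(x,1)}$. This is proved exactly as (\ref{ineq:integral-of-u2}), from Theorem~\ref{thm:heat-kernel-upper-bound}, Theorem~\ref{thm:gradient-estimate-Nash-entropy} and Corollary~\ref{cor:oscillation-of-pointed-Nash-entropy} around an $(H_n,A)$-center, and Proposition~\ref{prop:basic-properties-of-center}(4). It yields the theorem with $C_0=C^{1/2}$.

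That would be a legitimate and somewhat shorter alternative to the paper's proof. The paper feeds the same $L^2$ bound into the integral gradient estimate of Proposition~\ref{prop:integral-gradient-bound} via a level-set decomposition. But this comparison step is absent from your proposal, and without it neither of your routes gives the estimate.
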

\begin{proof}
	After parabolic rescaling and time-shifting, we may assume $s=0$, $t=2$. Set $A=|E|_{\mathrm{max}}$. Then $E\geq -Ag$. In the following, all generic constants may depend on $n,R_{\mathrm{min}},|E|_{\mathrm{max}}$, without further specification.
	
	Set $u:=K(\cdot,1;y,0)$ and $d\nu:=d\nu_{x,2;1}=K(x,2;\cdot,1)dV_{g(1)}$. Then
	\begin{align}
		K(x,2;y,0)
		&=
		\int_M K(x,2;\cdot,1)K(\cdot,1;y,0)\,dV_{g(1)}
		=\int_M u\,d\nu,  \notag  \\
		|\nabla_xK|(x,2;y,0)
		&\leq
		\int_M\frac{|\nabla_xK|(x,2;\cdot,1)}{K(x,2;\cdot,1)}u\,d\nu.\label{ineq:norm-of-gradient}
	\end{align}
	Let $(z,1)$ be an $(H_n,A)$-center of $(x,2)$. Then by \hyperref[thm:gradient-estimate-Nash-entropy]{Theorem~\ref*{thm:gradient-estimate-Nash-entropy}} and \hyperref[cor:oscillation-of-pointed-Nash-entropy]{Corollary~\ref*{cor:oscillation-of-pointed-Nash-entropy}},
	\begin{equation*}
		-\mathcal{N}^*_0(\cdot,1)
		\leq
		-\mathcal{N}^*_0(z,1)
		+Cd_{1}(z,\cdot)
		\leq
		-\mathcal{N}^*_0(x,2)
		+C+Cd_{1}(z,\cdot).
	\end{equation*}
	Combining this with \hyperref[thm:heat-kernel-upper-bound]{Theorem~\ref*{thm:heat-kernel-upper-bound}}, we get
	\begin{equation*}
		u\leq C\exp(-\mathcal{N}^*_0(\cdot,1))
		\leq C\exp(-\mathcal{N}^*_0(x,2))\exp(Cd_{1}(z,\cdot)).
	\end{equation*}
	Using \hyperref[prop:basic-properties-of-center]{Proposition~\ref*{prop:basic-properties-of-center}}(4),
	\begin{align}
		\int_M u^2\,d\nu
		&\leq
		C\exp(-2\mathcal{N}^*_0(x,2))
		\int_M   \exp(Cd_{1}(z,\cdot))   \,d\nu  \notag  \\
		&\leq
		C\exp(-2\mathcal{N}^*_0(x,2))
		\left(1+C\int_0^\infty e^{Cr}\nu(M\setminus B(z,1,r))\,dr\right)  \notag  \\
		&\leq
		C\exp(-2\mathcal{N}^*_0(x,2))
		\left(1+C\int_0^\infty e^{Cr}\cdot 2\exp\bigg(
		-\frac{(r-\sqrt{2H_n\sigma_A(1)})_+^2}{8\sigma_A(1)}   \bigg)   dr\right)  \notag  \\
		&\leq
		C\exp(-2\mathcal{N}^*_0(x,2)) \label{ineq:integral-of-u2}.
	\end{align}
	
	By \hyperref[thm:heat-kernel-upper-bound]{Theorem~\ref*{thm:heat-kernel-upper-bound}} we can fix a constant $C_0$ such that
	\begin{equation*}
		K(x,2;y,0)
		\leq
		\frac{1}{2}C_0\exp(-\mathcal{N}^*_0(x,2)).
	\end{equation*}
	Set
	\begin{equation*}
		a:=
		\left(\frac{K(x,2;y,0)}{C_0\exp(-\mathcal{N}^*_0(x,2))}\right)^2
		\leq\frac{1}{4}.
	\end{equation*}
	Choose $h\geq 0$ such that we can find a subset $X_h\subset M$ with
	\begin{equation*}
		\left\{\frac{|\nabla_xK|(x,2;\cdot,1)}{K(x,2;\cdot,1)}>h\right\}
		\subset
		X_h
		\subset
		\left\{\frac{|\nabla_xK|(x,2;\cdot,1)}{K(x,2;\cdot,1)}\geq h\right\}
	\end{equation*}
	and $\nu(X_h)=a$. By \hyperref[prop:integral-gradient-bound]{Proposition~\ref*{prop:integral-gradient-bound}} and the fact $-\log(a/2)\leq 2(-\log a)$, we have
	\begin{equation*}
		ah
		\leq
		\int_{X_h}   \frac{|\nabla_xK|(x,2;\cdot,1)}{K(x,2;\cdot,1)}\,d\nu
		\leq
		C\nu(X_h)(-\log\nu(X_h))^{1/2}
		=Ca(-\log a)^{1/2},
	\end{equation*}
	i.e., $h\leq C(-\log a)^{1/2}$. By (\ref{ineq:norm-of-gradient}), \hyperref[prop:integral-gradient-bound]{Proposition~\ref*{prop:integral-gradient-bound}}, (\ref{ineq:integral-of-u2}) and the fact $h\leq C(-\log a)^{1/2}$,
	\begin{align*}
		|\nabla_xK|(x,2;y,0)
		&\leq
		\int_M\frac{|\nabla_xK|(x,2;\cdot,1)}{K(x,2;\cdot,1)}u\,d\nu\\
		&=
		\int_{X_h}\frac{|\nabla_xK|(x,2;\cdot,1)}{K(x,2;\cdot,1)}u\,d\nu
		+\int_{M\setminus X_h}\frac{|\nabla_xK|(x,2;\cdot,1)}{K(x,2;\cdot,1)}u\,d\nu\\
		&\leq
		\bigg(   \int_{X_h}\left(\frac{|\nabla_xK|(x,2;\cdot,1)}{K(x,2;\cdot,1)}\right)^2   d\nu\bigg)^{1/2}
		\bigg(\int_{X_h}   u^2   d\nu\bigg)^{1/2}
		+h\int_M u\,d\nu\\
		&\leq
		C(-a\log a)^{1/2}\exp(-\mathcal{N}^*_0(x,2))
		+C(-\log a)^{1/2}K(x,2;y,0)\\
		&=
		C(-\log a)^{1/2}K(x,2;y,0).
	\end{align*}
	This is precisely (\ref{ineq:heat-kernel-gradient-estimate}).
\end{proof}

In the presence of a global bound on Nash entropy, (\ref{ineq:heat-kernel-gradient-estimate}) will imply
\begin{equation*}
	\frac{|\nabla_xK|(x,t;y,s)}{K(x,t;y,s)}
	\leq
	\frac{C}{(t-s)^{1/2}}\sqrt{\log\left(
		\frac{C}{(t-s)^{n/2}K(x,t;y,s)}\right)},
\end{equation*}
which is equivalent to
\begin{equation*}
	\left|\nabla_x\sqrt{\log\left(
		\frac{C}{(t-s)^{n/2}K(x,t;y,s)}\right)}\right|_{g(t)}
	\leq
	\frac{C}{(t-s)^{1/2}}.
\end{equation*}
Similarly, we can obtain the following corollary.

\begin{Corollary}\label{cor:heat-kernel-gradient-estiamte}
	Assume $[s,t]\subset I$. There are constants $C_0,C$ depending only on $n$, $R_{\mathrm{min}}(t-s)$ and $|E|_{\mathrm{max}}(t-s)$ such that
	\begin{equation*}
		\left|\nabla_x\sqrt{\log\left(
		\frac{C_0\exp(-\mathcal{N}_{x,t}(t-s))}{(t-s)^{n/2}K(x,t;y,s)}\right)}\right|_{g(t)}
		\leq
		\frac{C}{(t-s)^{1/2}}.
	\end{equation*}
\end{Corollary}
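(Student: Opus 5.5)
We derive the corollary from Theorem~\ref{thm:heat-kernel-gradient-estimate} through the chain rule. Write $\tau:=t-s$ and
\begin{equation*}
	G(x):=\log\left(\frac{C_0\exp(-\mathcal{N}_{x,t}(\tau))}{\tau^{n/2}K(x,t;y,s)}\right),
\end{equation*}
and choose $C_0$ (by Theorem~\ref{thm:heat-kernel-upper-bound}) with $K(x,t;y,s)\leq\frac{1}{2}C_0\tau^{-n/2}\exp(-\mathcal{N}_{x,t}(\tau))$. Then $G\geq\log 2>0$, so $\sqrt{G}$ is smooth in $x$ and
\begin{equation*}
	\nabla_x\sqrt{G}=\frac{1}{2\sqrt{G}}\left(-\nabla_x\mathcal{N}^*_s(x,t)-\frac{\nabla_xK(x,t;y,s)}{K(x,t;y,s)}\right).
\end{equation*}
It therefore suffices to bound each of the two terms in the bracket by $C\tau^{-1/2}\sqrt{G}$.

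For the heat kernel term, (\ref{ineq:heat-kernel-gradient-estimate}) is stated with its own constant $C_0'$. We take our $C_0\geq C_0'$, increasing it if needed. Since $\log$ is monotone, this gives $|\nabla_xK|/K\leq C\tau^{-1/2}\sqrt{G}$, and the factor $\frac{1}{2\sqrt{G}}$ cancels $\sqrt{G}$. Increasing $C_0$ only increases $G$ and keeps $G\geq\log 2$, so both requirements on $C_0$ can hold at once.

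For the Nash entropy term, Theorem~\ref{thm:gradient-estimate-Nash-entropy} gives $|\nabla\mathcal{N}^*_s(\cdot,t)|\leq C\tau^{-1/2}$. Because $G\geq\log 2$, we have $\frac{1}{2\sqrt{G}}\leq\frac{1}{2\sqrt{\log 2}}$, so this contribution is bounded by $C\tau^{-1/2}$. Adding the two bounds gives the corollary, with constants depending only on $n$, $R_{\mathrm{min}}\tau$ and $|E|_{\mathrm{max}}\tau$.

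The only delicate step is the simultaneous choice of $C_0$. It must be large enough both to keep $G$ bounded away from $0$ and to dominate the constant in Theorem~\ref{thm:heat-kernel-gradient-estimate}. As checked above, a larger $C_0$ serves both purposes, so there is no real obstruction.
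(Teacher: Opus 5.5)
Your proof is correct and follows the paper's argument. You differentiate the square of the quantity, control the $\nabla K/K$ part by Theorem~\ref{thm:heat-kernel-gradient-estimate}, and control the $\nabla\mathcal{N}^*_s$ part using the lower bound on the logarithm (from Theorem~\ref{thm:heat-kernel-upper-bound}) together with Theorem~\ref{thm:gradient-estimate-Nash-entropy}; enlarging $C_0$ is harmless but unnecessary, since the $C_0$ fixed in the proof of Theorem~\ref{thm:heat-kernel-gradient-estimate} already satisfies $K\leq\frac{1}{2}C_0(t-s)^{-n/2}\exp(-\mathcal{N}_{x,t}(t-s))$.
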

\begin{proof}
	Fix $t$ and $(y,s)$. Let $C_0$ be the constant obtained in \hyperref[thm:heat-kernel-gradient-estimate]{Theorem~\ref*{thm:heat-kernel-gradient-estimate}}. Set
	\begin{equation}\label{eq:auxiliary-function-v}
		v(x)
		:=\sqrt{\log\left(
		\frac{C_0\exp(-\mathcal{N}_{x,t}(t-s))}{(t-s)^{n/2}K(x,t;y,s)}\right)}.
	\end{equation}
	Then (\ref{ineq:heat-kernel-gradient-estimate}) is equivalent to
	\begin{equation*}
		|2v\nabla v+\nabla\mathcal{N}^*_{s}(\cdot,t)|
		\leq
		\frac{C}{(t-s)^{1/2}}v.
	\end{equation*}
	This implies
	\begin{align*}
		|\nabla v|
		&\leq
		\frac{C}{(t-s)^{1/2}}
		+\bigg|\frac{\nabla\mathcal{N}^*_{s}(\cdot,t)}{2v}\bigg|.
	\end{align*}
	Since $v$ is bounded below by \hyperref[thm:heat-kernel-upper-bound]{Theorem~\ref*{thm:heat-kernel-upper-bound}} and $|\nabla\mathcal{N}^*_{s}(\cdot,t)|$ is bounded above by \hyperref[thm:gradient-estimate-Nash-entropy]{Theorem~\ref*{thm:gradient-estimate-Nash-entropy}}, the result follows.
\end{proof}

\begin{Corollary}\label{cor:heat-kernel-lower-bound}
	Let $[s,t]\subset I$, $x_0\in M$, $\alpha>0$. Assume
	\begin{equation*}
		K(x_0,t;y,s)
		\geq
		\alpha\cdot\frac{\exp(-\mathcal{N}_{x_0,t}(t-s))}{(t-s)^{n/2}}.
	\end{equation*}
	\begin{enumerate}
		\item For any $\beta\in(0,\alpha)$, there exists $\delta>0$ depending only on $n$, $R_{\mathrm{min}}(t-s)$, $|E|_{\mathrm{max}}(t-s)$, $\alpha$ and $\beta$ such that
		\begin{equation*}
			K(x,t;y,s)
			\geq
			\beta\cdot\frac{\exp(-\mathcal{N}_{x,t}(t-s))}{(t-s)^{n/2}},
			\qquad\forall\,x\in B(x_0,t,\delta\sqrt{t-s}).
		\end{equation*}
		
		\item For any $\lambda>0$, there exists $c>0$ depending only on $n$, $R_{\mathrm{min}}(t-s)$, $|E|_{\mathrm{max}}(t-s)$, $\alpha$ and $\lambda$ such that
		\begin{equation*}
			K(x,t;y,s)
			\geq
			c\cdot\frac{\exp(-\mathcal{N}_{x,t}(t-s))}{(t-s)^{n/2}},
			\qquad\forall\,x\in B(x_0,t,\lambda\sqrt{t-s}).
		\end{equation*}
	\end{enumerate}
\end{Corollary}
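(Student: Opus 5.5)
The plan is to integrate the gradient bound of Corollary~\ref{cor:heat-kernel-gradient-estiamte} along minimizing $g(t)$-geodesics emanating from $x_0$. Fix $t$ and $(y,s)$, set $\tau:=t-s$, and let $v$ be the auxiliary function (\ref{eq:auxiliary-function-v}), with $C_0,C$ the constants of Corollary~\ref{cor:heat-kernel-gradient-estiamte}.

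First I normalize the hypothesis. By enlarging $C_0$ if necessary (the corollary remains true for larger $C_0$, since $v$ only increases while $v$ is bounded below and the gradient term $\nabla\mathcal{N}^*_s$ stays controlled), I may assume $C_0>\alpha$. Then the hypothesis reads
\begin{equation*}
v(x_0)\leq\sqrt{\log(C_0/\alpha)}.
\end{equation*}
The corollary gives $|\nabla v|\leq C\tau^{-1/2}$. Hence for every $x\in B(x_0,t,\lambda\sqrt{\tau})$ we get
\begin{equation*}
v(x)\leq \sqrt{\log(C_0/\alpha)}+C\lambda.
\end{equation*}
Exponentiating, this yields
\begin{equation*}
K(x,t;y,s)\geq C_0\exp\Big(-\big(\sqrt{\log(C_0/\alpha)}+C\lambda\big)^2\Big)\frac{\exp(-\mathcal{N}_{x,t}(\tau))}{\tau^{n/2}},
\end{equation*}
which is exactly (2) with $c:=C_0\exp(-(\sqrt{\log(C_0/\alpha)}+C\lambda)^2)$.

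For (1), I argue the same way with $\lambda=\delta$. On $B(x_0,t,\delta\sqrt{\tau})$ this gives the lower constant $C_0\exp(-(\sqrt{\log(C_0/\alpha)}+C\delta)^2)$. As $\delta\to0$, this tends to $C_0\exp(-\log(C_0/\alpha))=\alpha$, so choosing $\delta$ small depending on $\alpha,\beta,C_0,C$ makes it at least $\beta$. All constants depend only on $n$, $R_{\mathrm{min}}\tau$ and $|E|_{\mathrm{max}}\tau$, as required.

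The main subtlety is the normalization $C_0>\alpha$, which is needed so that the logarithm is positive and $v$ is well defined. If enlarging $C_0$ is not clean, I would instead handle the case $\alpha\geq C_0$ separately. Since the heat kernel upper bound (Theorem~\ref{thm:heat-kernel-upper-bound}, applied with the $C_0$ of the proof of Theorem~\ref{thm:heat-kernel-gradient-estimate}, chosen to be twice the upper-bound constant) forces $\alpha\leq C_0/2$, this case is vacuous.

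The other delicate point is that $\mathcal{N}_{x,t}$ varies with $x$. This is already absorbed in the corollary, because $v$ contains $\mathcal{N}_{x,t}$. Finally, Lipschitz integration is legitimate along the geodesic because $v$ is smooth wherever it is positive, and it stays positive there by the heat kernel upper bound.
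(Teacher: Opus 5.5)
Your proposal is correct and is essentially the paper's argument: integrate the Lipschitz bound of Corollary~\ref{cor:heat-kernel-gradient-estiamte} on $v$ outward from $x_0$. The only difference is in (1). The paper argues by contrapositive, bounding $d_t(x,x_0)$ from below at points where $K$ drops below $\beta$. You argue directly. Both routes reduce to $\sqrt{\log(C_0/\alpha)}+C\delta\leq\sqrt{\log(C_0/\beta)}$. Your remark that $\alpha\leq C_0/2$ is forced by the choice of $C_0$ in the proof of Theorem~\ref{thm:heat-kernel-gradient-estimate} correctly handles the well-definedness of $v$, which the paper leaves implicit.
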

\begin{proof}
	We prove (1) first. Define $v$ as in (\ref{eq:auxiliary-function-v}). Let $x$ be any point such that
	\begin{equation*}
		K(x,t;y,s)
		\leq
		\beta\cdot\frac{\exp(-\mathcal{N}_{x,t}(t-s))}{(t-s)^{n/2}}.
	\end{equation*}
	Then the distance between $x$ and $x_0$ at time $t$ is bounded from below by \hyperref[cor:heat-kernel-gradient-estiamte]{Corollary~\ref*{cor:heat-kernel-gradient-estiamte}} as
	\begin{equation*}
		\frac{Cd_t(x,x_0)}{(t-s)^{1/2}}
		\geq
		{v(x)}-{v(x_0)}
		\geq
		\sqrt{\log(\beta^{-1}C_0)}-\sqrt{\log(\alpha^{-1}C_0)}.
	\end{equation*}
	Assertion (2) is proved similarly. For any $x\in B(x_0,t,\lambda\sqrt{t-s})$, \hyperref[cor:heat-kernel-gradient-estiamte]{Corollary~\ref*{cor:heat-kernel-gradient-estiamte}} gives
	\begin{equation*}
		v(x)
		\leq
		v(x_0)+\frac{Cd_t(x,x_0)}{(t-s)^{1/2}}
		\leq
		v(x_0)+\lambda C
		\leq
		\sqrt{\log(\alpha^{-1}C_0)}+\lambda C.
	\end{equation*}
\end{proof}

The above \hyperref[cor:heat-kernel-lower-bound]{Corollary~\ref*{cor:heat-kernel-lower-bound}} will imply the following partial lower bound on heat kernel.
\begin{Theorem}[Partial lower bound]\label{thm:partial-lower-bound}
	Given $(x_0,t_0)\in M\times I$ and $\lambda>0$. For any $s<t_0$ with $[s,t_0]\subset I$, there exists a point $y\in M$ such that
	\begin{equation*}
		K(x,t_0;y,s)
		\geq
		c\cdot
		\frac{\exp(-\mathcal{N}_{x,t_0}(t_0-s))}{(t_0-s)^{n/2}},
		\qquad\forall\,x\in B(x_0,t_0,\lambda\sqrt{t_0-s})
	\end{equation*}
	for some $c>0$ depending only on $n$, $R_{\mathrm{min}}(t_0-s)$, $|E|_{\mathrm{max}}(t_0-s)$, $\lambda$.
\end{Theorem}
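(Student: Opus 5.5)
The plan is to follow Bamler's argument for the partial lower bound. First find a single point $y$ at which the heat kernel based at $(x_0,t_0)$ is at least a definite multiple of its ``typical size'' $(t_0-s)^{-n/2}\exp(-\mathcal N_{x_0,t_0}(t_0-s))$. Then spread this lower bound to the ball $B(x_0,t_0,\lambda\sqrt{t_0-s})$ using Corollary~\ref{cor:heat-kernel-lower-bound}(2).

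\emph{Step 1: choosing $y$.} Set $\tau=t_0-s$ and write $d\nu=d\nu_{x_0,t_0;s}=(4\pi\tau)^{-n/2}e^{-f}dV_{g(s)}$. By definition of the pointed Nash entropy,
\[
\int_M f\,d\nu=\mathcal N_{x_0,t_0}(\tau)+\tfrac n2 .
\]
Since $\nu$ is a probability measure, there is a point $y$ with $f(y)\le \int_M f\,d\nu$. At this point
\[
K(x_0,t_0;y,s)=(4\pi\tau)^{-n/2}e^{-f(y)}\ge (4\pi)^{-n/2}e^{-n/2}\,\tau^{-n/2}\exp(-\mathcal N_{x_0,t_0}(\tau)).
\]
This gives the hypothesis of Corollary~\ref{cor:heat-kernel-lower-bound} with $\alpha=(4\pi e)^{-n/2}$, which depends only on $n$. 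No curvature or $E$ assumptions enter this step. If a more robust choice turns out to be needed, the alternative is to use the $L^2$ bound of Proposition~\ref{prop:energy-estimate} on $f-\mathcal N-\frac n2$, via Chebyshev, to find a set of definite $\nu$-measure on which $f\le \mathcal N+\frac n2+C$. The averaging argument above already suffices, though.

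\emph{Step 2: propagation.} Apply Corollary~\ref{cor:heat-kernel-lower-bound}(2) with this $\alpha$ and the given $\lambda$. It yields $c=c(n,R_{\min}\tau,|E|_{\max}\tau,\lambda)>0$ such that
\[
K(x,t_0;y,s)\ge c\,\tau^{-n/2}\exp(-\mathcal N_{x,t_0}(\tau))
\]
for all $x\in B(x_0,t_0,\lambda\sqrt\tau)$, which is exactly the claim. Underneath, this is the integration of the gradient bound of Corollary~\ref{cor:heat-kernel-gradient-estiamte} along a minimal $g(t_0)$-geodesic from $x_0$ to $x$. That bound controls $v=\sqrt{\log(C_0e^{-\mathcal N}\tau^{-n/2}K^{-1})}$ with a Lipschitz constant $C\tau^{-1/2}$. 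Hence $v(x)\le\sqrt{\log(C_0/\alpha)}+C\lambda$, which exponentiates to the stated lower bound.

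\emph{Main obstacle.} The delicate point is bookkeeping rather than a new estimate. The normalization in the conclusion uses $\mathcal N_{x,t_0}$, which varies with $x$, while Step 1 produces a bound normalized by $\mathcal N_{x_0,t_0}$. Corollary~\ref{cor:heat-kernel-lower-bound} is stated precisely with this $x$-dependent normalization, since $v$ absorbs $\mathcal N_{x,t_0}$. The gradient bound on $\mathcal N^*_s(\cdot,t_0)$ from Theorem~\ref{thm:gradient-estimate-Nash-entropy} is what makes that corollary valid, and one should double check that its constant depends only on $n,R_{\min}\tau,|E|_{\max}\tau$. One must also confirm that $C_0$ in Corollary~\ref{cor:heat-kernel-gradient-estiamte} can be taken $\ge 2\alpha$, so that $v(x_0)$ is well defined. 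This holds because Theorem~\ref{thm:heat-kernel-upper-bound} forces $C_0\exp(-\mathcal N)\tau^{-n/2}\ge 2K$, and in any case $C_0$ may be enlarged.
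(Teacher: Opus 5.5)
Your proposal is correct and matches the paper's proof: the paper also picks $y$ so that $\log K(x_0,t_0;y,s)$ is at least its $\nu_{x_0,t_0;s}$-average, giving $K(x_0,t_0;y,s)\geq(4\pi)^{-n/2}e^{-n/2}\exp(-\mathcal{N}_{x_0,t_0}(t_0-s))(t_0-s)^{-n/2}$. It then concludes directly from Corollary~\ref{cor:heat-kernel-lower-bound}(2).
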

\begin{proof}
	By the definition of pointed Nash entropy,
	\begin{equation*}
		\mathcal{N}_{x_0,t_0}(t_0-s)
		=
		-\int_M   \log K(x_0,t_0;\cdot,s)   \,d\nu_{x_0,t_0;s}
		-\frac{n}{2}\log(4\pi(t_0-s))-\frac{n}{2}.
	\end{equation*}
	Therefore we can find a point $y\in M$ with
	\begin{equation*}
		\log K(x_0,t_0;y,s)
		\geq
		-\mathcal{N}_{x_0,t_0}(t_0-s)
		-\frac{n}{2}\log(4\pi(t_0-s))-\frac{n}{2},
	\end{equation*}
	or equivalently,
	\begin{equation*}
		K(x_0,t_0;y,s)
		\geq
		(4\pi)^{-n/2}e^{-n/2}
		\frac{\exp(-\mathcal{N}_{x_0,t_0}(t_0-s))}{(t_0-s)^{n/2}}.
	\end{equation*}
	Our results then follows from \hyperref[cor:heat-kernel-lower-bound]{Corollary~\ref*{cor:heat-kernel-lower-bound}}(2).
\end{proof}

The above partial lower bound on heat kernel implies the following no-local-inflating estimate.
\begin{Theorem}[No local inflating]\label{thm:no-local-inflating}
	Given $(x_0,t_0)\in M\times I$, $\lambda>0$ and a scale $r>0$ with $[t_0-r^2,t_0]\subset I$. Then
	\begin{equation*}
		\frac{|B(x_0,t_0,\lambda r)|_{g(t_0)}}{r^n}
		\leq
		C\exp(\mathcal{N}_{x_0,t_0}(r^2))
	\end{equation*}
	for some $C$ depending only on $n$, $R_{\mathrm{min}}r^2$, $|E|_{\mathrm{max}}r^2$ and $\lambda$.
\end{Theorem}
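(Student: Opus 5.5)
The proof integrates the partial lower bound of Theorem~\ref{thm:partial-lower-bound} over the ball, against the $L^1$ bound \eqref{ineq:heat-kernel-integral-bound} on the heat kernel. This mirrors the Ricci flow argument in \cite{bamler2020entropy}. By parabolic rescaling and time-shifting I will assume $r=1$ and $t_0=0$, so that $[-1,0]\subset I$. All constants are then allowed to depend on $n$, $R_{\mathrm{min}}$, $|E|_{\mathrm{max}}$ and $\lambda$, which in the original scale means dependence on $R_{\mathrm{min}}r^2$, $|E|_{\mathrm{max}}r^2$ and $\lambda$.

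\emph{Step 1.} Apply Theorem~\ref{thm:partial-lower-bound} with $s=-1$ and the given $\lambda$. This yields a point $y\in M$ such that
\begin{equation*}
	K(x,0;y,-1)\geq c\exp(-\mathcal{N}_{x,0}(1)),
	\qquad\forall\,x\in B(x_0,0,\lambda).
\end{equation*}
The exponent involves $\mathcal{N}_{x,0}(1)$ at the varying point $x$, not at $x_0$. To replace it, I use the gradient estimate of Theorem~\ref{thm:gradient-estimate-Nash-entropy}, which gives $|\nabla\mathcal{N}^*_{-1}(\cdot,0)|\leq C$. Hence for every $x\in B(x_0,0,\lambda)$,
\begin{equation*}
	\mathcal{N}_{x,0}(1)\leq\mathcal{N}_{x_0,0}(1)+C\lambda,
\end{equation*}
and therefore $K(x,0;y,-1)\geq c'\exp(-\mathcal{N}_{x_0,0}(1))$ on the whole ball. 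The small constant $c'$ absorbs $e^{-C\lambda}$.

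\emph{Step 2.} Integrate this lower bound over $B(x_0,0,\lambda)$ with respect to $dV_{g(0)}$, and bound the integral from above by \eqref{ineq:heat-kernel-integral-bound} with $t=0$, $s=-1$:
\begin{equation*}
	c'\exp(-\mathcal{N}_{x_0,0}(1))\,|B(x_0,0,\lambda)|_{g(0)}
	\leq\int_M K(\cdot,0;y,-1)\,dV_{g(0)}
	\leq\exp\big(-(R-\tfrac{1}{2}\mathrm{tr}\,E)_{\mathrm{min}}\big).
\end{equation*}
The right-hand side is at most $\exp(-R_{\mathrm{min}}+\tfrac{n}{2}|E|_{\mathrm{max}})$, since $|\mathrm{tr}\,E|\leq\sqrt{n}|E|\leq n|E|_{\mathrm{max}}$. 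Rearranging gives $|B(x_0,0,\lambda)|\leq C\exp(\mathcal{N}_{x_0,0}(1))$, and undoing the rescaling yields the stated bound.

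The argument is short, and the only delicate point is bookkeeping in Step 1. There I must check that the constant in Theorem~\ref{thm:gradient-estimate-Nash-entropy} depends only on the scale-invariant quantities $n$, $R_{\mathrm{min}}r^2$ and $|E|_{\mathrm{max}}r^2$. I must also check that the $\lambda$-dependence of $c$ in Theorem~\ref{thm:partial-lower-bound} combines with the Lipschitz correction $e^{-C\lambda}$ to give a constant depending only on the allowed data. The transfer of the heat kernel bound from $x$ to $x_0$ itself is a direct Lipschitz estimate.
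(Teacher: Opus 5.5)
Your proposal is correct and is essentially the paper's proof. Both take the partial lower bound at $s=t_0-r^2$ and move $\mathcal{N}_{x,t_0}$ to $\mathcal{N}_{x_0,t_0}$ at a cost of $e^{-C\lambda}$ via Theorem~\ref{thm:gradient-estimate-Nash-entropy}. Then both integrate over the ball and compare with the $L^1$ bound \eqref{ineq:heat-kernel-integral-bound}, and your bookkeeping of the constants' dependence is accurate.
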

\begin{proof}
	Let $y\in M$ and $c$ be as in \hyperref[thm:partial-lower-bound]{Theorem~\ref*{thm:partial-lower-bound}}. By (\ref{ineq:heat-kernel-integral-bound}), \hyperref[thm:partial-lower-bound]{Theorem~\ref*{thm:partial-lower-bound}} and \hyperref[thm:gradient-estimate-Nash-entropy]{Theorem~\ref*{thm:gradient-estimate-Nash-entropy}}, we know for $s<t_0$ that
	\begin{align*}
		\exp\big(-(R-\tfrac{1}{2}\mathrm{tr}\,E)_{\mathrm{min}}(t_0-s)\big)
		&\geq
		\int_MK(\cdot,t_0;y,s)\,dV_{g(t_0)}\\
		&\geq
		\int_{B(x_0,t_0,\lambda\sqrt{t_0-s})}
		c\cdot\frac{\exp(-\mathcal{N}_{\cdot,t_0}(t_0-s))}{(t_0-s)^{n/2}}\,dV_{g(t_0)}\\
		&\geq
		|B(x_0,t_0,\lambda\sqrt{t_0-s})|_{g(t_0)}
		c\cdot\frac{\exp(-\mathcal{N}_{x_0,t_0}(t_0-s)-C\lambda)}{(t_0-s)^{n/2}}.
	\end{align*}
	The result follows by taking $s=t_0-r^2$.
\end{proof}

\section{\texorpdfstring{$P^*$}{P*}-Parabolic Neighborhoods}
The notion of $P^*$-parabolic neighborhood also carries over to the more general case $E(g)\geq -Ag$, where $A\geq 0$.

For $(x,t)\in M\times I$ and $r,T^+,T^-\geq 0$, the conventional parabolic neighborhood is defined as
\begin{equation}\label{eq:conventional-parabolic-neighborhood}
	P(x,t;r,-T^{-},T^{+})
	:=B(x,t,r)\times([t-T^{-},t+T^{+}]\cap I),
\end{equation}
where we may omit $-T^{-}$ or $T^{+}$ if it is zero.  We use the abbreviations
\begin{align*}
	P(x,t;r)&:=P(x,t;r,-r^2,r^2),  \\
	P^+(x,t;r)&:=P(x,t;r,0,r^2),  \\
	P^-(x,t;r)&:=P(x,t;r,-r^2,0).
\end{align*}
\begin{Definition}[$P^*$-parabolic neighborhood]
	Suppose $(x,t)\in M\times I$, $r,T^+,T^-\geq 0$ are such that $t-T^-\in I$. The $P^*$-parabolic neighborhood $P^*(x,t;r,-T^-,T^+)\subset M\times I$ is defined to be the set of points $(y,s)\in M\times I$ satisfying $s\in[t-T^-,t+T^+]$ and
	\begin{equation*}
		d_{W_1}^{g(t-T^-)}
		(\nu_{x,t;t-T^-},\nu_{y,s;t-T^-})<r.
	\end{equation*}
	For any $A\geq 0$, we define
	\begin{align*}
		&P^*_A(x,t;r):=P^*(x,t;re^{Ar^2},-r^2,r^2),\\
		&P^{*+}_{A}(x,t;r):=P^*(x,t;re^{Ar^2},0,r^2),\\
		&P^{*-}_{A}(x,t;r):=P^*(x,t;re^{Ar^2},-r^2,0).
	\end{align*}
	For $A=0$, we will drop the parameter $A$ and simply write $P^*,P^{*\pm}$. When $A>0$, the additional factor $e^{Ar^2}$ is chosen to make the containment relationships admitting the same coefficients (as the $A=0$ case). See \hyperref[prop:containment-relationship]{Proposition~\ref*{prop:containment-relationship}} (3)(5)(6).
\end{Definition}

The following proposition shows that $P^*$-parabolic neighborhoods satisfy similar containment relationships as conventional ones.
\begin{Proposition}[Containment relationships]\label{prop:containment-relationship}
	Suppose $E(g)\geq-Ag$ for some $A\geq 0$.
	The following holds as long as the corresponding $P^*$-parabolic neighborhoods are defined.
	\begin{enumerate}
		\item If $0\leq T_1^\pm\leq T_2^\pm$ and $0\leq e^{-\frac{A}{2}T_1^-}r_1\leq e^{-\frac{A}{2}T_2^-}r_2$, then
		\begin{equation*}
			P^*(x,t;r_1,-T_1^-,T_1^+)
			\subset
			P^*(x,t;r_2,-T_2^-,T_2^+).
		\end{equation*}
		In particular, for any $0\leq r_1\leq r_2$, we have $P^*_A(x,t;r_1)\subset P^*_A(x,t;r_2)$.
		
		\item If $r,T^\pm\geq 0$ and $(y,s)\in P^*(x,t;r,-T^-,T^+)$, then
		\begin{equation*}
			(x,t)\in P^*(y,s;r,-(s-t+T^-),T^-),
		\end{equation*}
		\begin{equation*}
			P^*(x,t;r,-T^-,T^+)
			\subset
			P^*(y,s;2r,-(s-t+T^-),t-s+T^+).
		\end{equation*}
		
		\item If $r\geq 0$ and $(y,s)\in P^*_A(x,t;r)$, then
		\begin{equation*}
			(x,t)\in P^*_A(y,s;\sqrt{2}r)
			\quad\text{and}\quad
			P^*_A(x,t;r)\subset P^*_A(y,s;2r).
		\end{equation*}
	
		\item If $r_i,T_i^\pm\geq 0$ for $i=1,2$ and $(x_1,t_1)\in P^*(x_2,t_2;r_2,-T_2^-,T_2^+)$, then
		\begin{equation*}
			P^*(x_1,t_1;r_1,-T_1^-,T_1^+)
			\subset
			P^*(x_2,t_2;e^{\frac{A}{2}(t_1-t_2+T_2^-)}r_1+e^{\frac{A}{2}T_1^-}r_2,-(T_1^-+T_2^-),T_1^++T_2^+).
		\end{equation*}
		
		\item If $r_1,r_2\geq 0$ and $(x_1,t_1)\in P^*_A(x_2,t_2;r_2)$, then
		\begin{equation*}
			P^*_A(x_1,t_1;r_1)\subset P^*_A(x_2,t_2;r_1+r_2).
		\end{equation*}
		
		\item If $r_1,r_2\geq 0$ and $P^*_A(x_1,t_1;r_1)\cap P^*_A(x_2,t_2;r_2)\neq\varnothing$, then
		\begin{equation*}
			P^*_A(x_1,t_1;r_1)\subset P^*_A(x_2,t_2;2r_1+r_2).
		\end{equation*}
	\end{enumerate}
\end{Proposition}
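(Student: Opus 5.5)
The whole proposition rests on one monotonicity fact, which I will prove first and then use repeatedly. By Corollary~\ref{cor:monotonicity-of-W1}, for two conjugate heat kernel measures and times $t'\leq t''$ we have $d_{W_1}^{g(t')}\leq e^{\frac{A}{2}(t''-t')}d_{W_1}^{g(t'')}$. The other ingredients are the triangle inequality for $d_{W_1}$ and the semigroup identity $\nu_{x,t;t'}=\int\nu_{y,s;t'}\,d\nu_{x,t;s}(y)$ for $t'\leq s\leq t$.

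For (1), take $(y,s)\in P^*(x,t;r_1,-T_1^-,T_1^+)$. Then $s\in[t-T_2^-,t+T_2^+]$. I would apply monotonicity between the times $t-T_2^-\leq t-T_1^-$ to get
\begin{equation*}
d_{W_1}^{g(t-T_2^-)}\leq e^{\frac{A}{2}(T_2^--T_1^-)}r_1\leq r_2,
\end{equation*}
where the last step is exactly the stated hypothesis. The special case follows because, when $r_1\leq r_2$, the factor $r_ie^{Ar_i^2}e^{-\frac{A}{2}r_i^2}=r_ie^{\frac{A}{2}r_i^2}$ is increasing in $r_i$.

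For (2), I first note that the base time $s-(s-t+T^-)=t-T^-$ is unchanged, so the symmetry claim is immediate. The inclusion then follows from the triangle inequality at the common time $t-T^-$.

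For (4), let $(y,s)\in P^*(x_1,t_1;r_1,-T_1^-,T_1^+)$ and set $t_*=t_2-T_1^--T_2^-$. I split
\begin{equation*}
d_{W_1}^{g(t_*)}(\nu_{x_2,t_2},\nu_{y,s})\leq d_{W_1}^{g(t_*)}(\nu_{x_2,t_2},\nu_{x_1,t_1})+d_{W_1}^{g(t_*)}(\nu_{x_1,t_1},\nu_{y,s}).
\end{equation*}
Both terms are handled by pushing the measures forward to later times using monotonicity.
\begin{itemize}
\item The first term is bounded by $e^{\frac{A}{2}T_1^-}r_2$, by moving from $t_*$ up to $t_2-T_2^-$.
\item The second term is bounded by $e^{\frac{A}{2}(t_1-T_1^--t_*)}r_1$, by moving up to $t_1-T_1^-$. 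Here $t_1-T_1^--t_*=t_1-t_2+T_2^-$.
\end{itemize}
This is where I expect the main obstacle. I must check that $t_*\leq t_1-T_1^-$, which holds because $t_1\geq t_2-T_2^-$. I must also keep the exponents tidy so that they match the statement exactly.

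Items (3), (5) and (6) are then specializations of (2) and (4) with $T^\pm=r^2$. The factor $e^{Ar^2}$ built into $P^*_A$ absorbs the exponentials. For (5), take $T_1^\pm=r_1^2$, $T_2^\pm=r_2^2$, and radii $r_ie^{Ar_i^2}$. Since $t_1-t_2+T_2^-\leq 2r_2^2$, (4) gives a radius at most
\begin{equation*}
e^{Ar_2^2}r_1e^{Ar_1^2}+e^{\frac{A}{2}r_1^2}r_2e^{Ar_2^2}\leq(r_1+r_2)e^{A(r_1+r_2)^2},
\end{equation*}
with time extents $r_1^2+r_2^2\leq(r_1+r_2)^2$. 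I then enlarge the neighborhood using (1), checking that $e^{-\frac{A}{2}(r_1^2+r_2^2)}(\dots)\leq e^{-\frac{A}{2}(r_1+r_2)^2}(r_1+r_2)e^{A(r_1+r_2)^2}$, which holds.

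For (3), the symmetry from (2) gives the base time $s-(s-t+r^2)$. I compare this with the $P^*_A(y,s;\sqrt2 r)$ base time $s-2r^2$, which is earlier since $s\geq t-r^2$. I then apply monotonicity to get the factor $e^{\frac{A}{2}r^2}\leq e^{Ar^2}$, which fits inside $\sqrt2re^{2Ar^2}$. The inclusion in (3) follows from (3)'s first part combined with (5). Finally, (6) follows by picking a common point $(z,u)$, applying (3) to get $(x_1,t_1)\in P^*_A(z,u;\sqrt2 r_1)$, and combining with (5) twice. If the resulting constant comes out slightly off, I would redo that step directly via (4).
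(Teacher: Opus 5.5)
You handle (1), (2), (4), (5) and the first half of (3) correctly, along the paper's lines: monotonicity of $e^{\frac A2 t}d_{W_1}$ plus the triangle inequality. The genuine gap is the constant in the second half of (3), and it carries over to (6).

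Combining $(x,t)\in P^*_A(y,s;\sqrt2 r)$ with (5) only gives $P^*_A(x,t;r)\subset P^*_A(y,s;(1+\sqrt2)r)$, and $1+\sqrt2>2$. The $\sqrt2$ is there only to move the base time back to $s-2r^2\le t-r^2$, but (5) adds it to the radius. Your route to (6) has the same problem: the symmetric half of (3) followed by (5) twice ends at $P^*_A(x_2,t_2;(1+\sqrt2)r_1+r_2)$. So neither stated constant is proved.

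The fix, which is the paper's argument, is to get the inclusion in (3) directly from the inclusion in (2). With $T^\pm=r^2$, (2) gives $P^*_A(x,t;r)\subset P^*(y,s;2re^{Ar^2},-(s-t+r^2),t-s+r^2)$. Since $0\le s-t+r^2,\ t-s+r^2\le 2r^2$, (1) enlarges this to $P^*(y,s;2re^{4Ar^2},-4r^2,4r^2)=P^*_A(y,s;2r)$. This works because $e^{-\frac A2(s-t+r^2)}\,2re^{Ar^2}\le 2re^{Ar^2}\le e^{-2Ar^2}\,2re^{4Ar^2}$.

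With this, (6) needs no fallback. Take $(z,u)$ in the intersection. Then (3) gives $P^*_A(x_1,t_1;r_1)\subset P^*_A(z,u;2r_1)$. A single application of (5), using $(z,u)\in P^*_A(x_2,t_2;r_2)$, then gives $P^*_A(x_2,t_2;2r_1+r_2)$.

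There are also two smaller corrections.

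In (5), your simplified bound $(r_1+r_2)e^{A(r_1+r_2)^2}$ on the radius is too lossy to feed into (1). With it, the required inequality reduces to $(r_1+r_2)^2\le r_1^2+r_2^2$, which fails unless $Ar_1r_2=0$. The check must use your unsimplified middle expression $r_1e^{A(r_1^2+r_2^2)}+r_2e^{A(\frac12 r_1^2+r_2^2)}$. For that expression the check reads $r_1e^{\frac A2(r_1^2+r_2^2)}+r_2e^{\frac A2 r_2^2}\le(r_1+r_2)e^{\frac A2(r_1+r_2)^2}$, which holds.

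In the first half of (3), $s-2r^2\le t-r^2$ holds because $s\le t+r^2$, not because $s\ge t-r^2$. Also, the monotonicity factor is $e^{\frac A2(t-s+r^2)}$, not $e^{\frac A2 r^2}$. It is at most $e^{Ar^2}$ precisely because $s\ge t-r^2$, so your final radius $re^{2Ar^2}\le\sqrt2\, re^{2Ar^2}$ is still correct.
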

\begin{proof}
	Assertion (1) follows from (\ref{ineq:monotonicity-of-W1}), since for any $(y,s)\in P^*(x,t;r_1,-T_1^-,T_1^+)$ we have
	\begin{equation*}
		d_{W_1}^g(\nu_{x,t},\nu_{y,s})(t-T_2^-)
		\leq
		e^{\frac{A}{2}(T_2^--T_1^-)}
		d_{W_1}^g(\nu_{x,t},\nu_{y,s})(t-T_1^-)
		<
		e^{\frac{A}{2}(T_2^--T_1^-)}r_1
		\leq r_2.
	\end{equation*}
	Assertion (2) follows directly from the definition and triangle inequality. Assertion (3) is a consequence of (1) and (2), as
	\begin{equation*}
		(x,t)\in P^*(y,s;re^{Ar^2},-(s-t+r^2),r^2)
		\subset
		P^*(y,s;re^{Ar^2}e^{\frac{A}{2}2r^2},-2r^2,2r^2)
		\subset
		P^*_A(y,s;\sqrt{2}r)
	\end{equation*}
	and
	\begin{align*}
		P^*(x,t;re^{Ar^2},-r^2,r^2)
		&\subset
		P^*(y,s;2re^{Ar^2},-(s-t+r^2),t-s+r^2)\\
		&\subset
		P^*(y,s;2re^{Ar^2}e^{\frac{A}{2}4r^2},-4r^2,4r^2)
		\subset
		P^*_A(y,s;2r).
	\end{align*}
	The remaining assertions can be verified in the same way.
\end{proof}

Next we want to compare conventional and $P^*$-parabolic neighborhoods under a local Ricci curvature bound. The result is stated in \hyperref[cor:comparison-of-parabolic-neighborhoods]{Corollary~\ref*{cor:comparison-of-parabolic-neighborhoods}}. Let's start from the following Hamilton-Perelman type distance distortion estimate, which will also be used in our proof of pseudolocality.
\begin{Lemma}[Distance distortion estimate]\label{lem:distance-distortion}
	Let $g(t)$ be a smooth family of complete Riemannian metrics over $M^n$ with $E(g)\geq-Ag$ for some $A\geq 0$.
	\begin{enumerate}
		\item Assume $\Ric(\cdot,t_0)\leq(n-1)K$ over $B(x_0,t_0,r_0)$. Then the distance function $d(x,t):=d_{t}(x,x_0)$ satisfies
		\begin{equation*}
			\square d
			\geq
			-(n-1)\Big(\frac{2}{3}Kr_0+r_0^{-1}\Big)-\frac{A}{2}d
		\end{equation*}
		at time $t=t_0$, outside $B(x_0,t_0,r_0)$. Here the inequality is understood in the barrier sense when necessary.
		
		\item Assume $d_{t_0}(x_0,y_0)\geq 2r_0$ and $\Ric(\cdot,t_0)\leq(n-1)K$ over $B(x_0,t_0,r_0)\cup B(y_0,t_0,r_0)$. Then
		\begin{equation*}
			\frac{d}{dt}\Big|_{t=t_0}d_t(x_0,y_0)
			\geq
			-2(n-1)\Big(\frac{2}{3}Kr_0+r_0^{-1}\Big)-\frac{A}{2}d_{t_0}(x_0,y_0).
		\end{equation*}
	\end{enumerate}
\end{Lemma}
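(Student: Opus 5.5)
We follow the classical Hamilton--Perelman argument (Perelman's Lemma 8.3), adding the single extra term produced by the deviation $E$. Under $\partial_t g=-2\Ric+E$, the first variation of length along a minimizing $g(t_0)$-geodesic $\gamma$ from $x_0$ to $x$ is
\begin{equation*}
	\partial_t d=\int_\gamma\Big(-\Ric(\dot\gamma,\dot\gamma)+\tfrac12E(\dot\gamma,\dot\gamma)\Big)ds
	\geq-\int_\gamma\Ric(\dot\gamma,\dot\gamma)\,ds-\tfrac{A}{2}d ,
\end{equation*}
where we used $E\geq-Ag$ and $|\dot\gamma|=1$. Hence $\square d=\partial_td-\Delta d\geq -\Delta d-\int_\gamma\Ric(\dot\gamma,\dot\gamma)\,ds-\frac A2 d$. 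It therefore suffices to establish the Ricci flow statement, namely
\begin{equation*}
	\Delta d+\int_\gamma\Ric(\dot\gamma,\dot\gamma)\,ds\leq(n-1)\Big(\tfrac23Kr_0+r_0^{-1}\Big)
\end{equation*}
at time $t_0$ for $x$ outside $B(x_0,t_0,r_0)$.

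This last estimate is a purely static computation at time $t_0$. We use the second variation formula with variation fields $\phi(s)e_i$, where $\{e_i\}$ is a parallel orthonormal frame perpendicular to $\dot\gamma$. Take $\phi(s)=s/r_0$ on $[0,r_0]$ and $\phi=1$ afterwards. Summing over $i$ gives
\begin{equation*}
	\Delta d\leq\int_\gamma\big((n-1)\phi'^2-\phi^2\Ric(\dot\gamma,\dot\gamma)\big)\,ds .
\end{equation*}
Adding $\int_\gamma\Ric(\dot\gamma,\dot\gamma)$ leaves $\int_0^{r_0}(1-\phi^2)\Ric\,ds+(n-1)/r_0$. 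On $[0,r_0]$ the curve $\gamma$ lies in $B(x_0,t_0,r_0)$, so $\Ric\leq(n-1)K$ there, and $\int_0^{r_0}(1-s^2/r_0^2)\,ds=\frac23r_0$. This yields exactly $(n-1)(\frac23Kr_0+r_0^{-1})$.

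When $x$ lies in the cut locus, or $d$ is not smooth, we argue in the barrier sense. Take the upper barrier for $d$ built from the length functional along the varied curves, as Perelman does. For the time derivative, use the one-sided derivative of $d_t(x,x_0)$ as a minimum over minimizing geodesics. The $-\frac A2 d$ term only needs $E\geq -Ag$ along $\gamma$, so it persists for any choice of minimizing geodesic.

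For (2), apply the same construction to a minimizing geodesic from $x_0$ to $y_0$, using cutoffs $\phi$ that ramp up on $[0,r_0]$ and ramp down on $[d-r_0,d]$. This is possible since $d\geq2r_0$, and the Ricci bound holds on both end balls. The second variation of $d$ in the endpoint-fixed directions, $0\leq\sum_i\delta^2L$, gives
\begin{equation*}
	-\int_\gamma\Ric(\dot\gamma,\dot\gamma)\,ds\geq-2(n-1)\Big(\tfrac23Kr_0+r_0^{-1}\Big).
\end{equation*}
Combining this with the first-variation formula, including the $E$ term, gives the claim.

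The main obstacle is only the rigor in the nonsmooth case: justifying the barrier and viscosity interpretation together with the extra term $\frac12\int E(\dot\gamma,\dot\gamma)$. This is handled by evaluating everything along a fixed minimizing geodesic, which gives lower bounds for the forward difference quotients.
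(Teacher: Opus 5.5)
Your proof is correct and follows essentially the paper's route. The paper just cites Perelman's Lemma 8.3, and the only modification needed is the extra first-variation term $\frac12\int_\gamma E(\dot\gamma,\dot\gamma)\,ds\geq-\frac{A}{2}d$ that you isolate; your direct second-variation argument for (2), with cutoffs at both ends, is the standard Hamilton form of Perelman's part (b).

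One small slip in your closing remark: evaluating along a fixed $g(t_0)$-minimizing geodesic gives an upper barrier, so it bounds the forward difference quotients from above and only the backward ones from below. The lower bound on the forward derivative comes from taking the minimum over all minimizing geodesics, which you had already invoked correctly.
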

\begin{proof}
	The proof is a straightforward modification of that in Lemma 8.3 of \cite{perelman2002entropy}.
\end{proof}

\begin{Lemma}\label{lem:consequence-of-distance-distortion}
	Let $(g(t))_{t\in[0,T]}$ be a smooth family of complete Riemannian metrics over $M^n$ with $E(g)\geq-Ag$ for some $A\geq 0$. Assume
	\begin{equation*}
		\Ric(x,t)\leq\frac{A'}{\beta^2\bar{\sigma}_{A}(t)},
		\quad
		\forall\,x\in B\big(x_0,t,\beta\sqrt{\bar{\sigma}_{A}(t)}\big),
		\ t\in(0,T]
	\end{equation*}
	for some $x_0\in M$, $0\leq A'<\infty$ and $\beta>0$. Then the distance function $d(x,t):=d_t(x,x_0)$ satisfies
	\begin{equation}\label{ineq:deformed-square-d}
		(\partial_t^{A/2}-\Delta)
		(  d+2A''\sqrt{\bar{\sigma}_{A}(t)}  )
		\geq 0
	\end{equation}
	outside $B\big(x_0,t,\beta\sqrt{\bar{\sigma}_{A}(t)}\big)$, where $A'':=(\frac{2}{3}A'+n-1)\beta^{-1}$. Recall $\partial_t^{A/2}:=e^{-\frac{A}{2}t}  \partial_t  (e^{\frac{A}{2}t}\cdot)=\partial_t+\frac{A}{2}$.
\end{Lemma}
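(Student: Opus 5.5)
The plan is to combine part (1) of Lemma~\ref{lem:distance-distortion}, applied at each fixed time, with the elementary identity $2\partial_t^{A/2}\sqrt{\bar{\sigma}_A(t)}=1/\sqrt{\bar{\sigma}_A(t)}$ recorded after (\ref{eq:definition-of-auxiliary-functions}).

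Fix $t\in(0,T]$ and set $r_0:=\beta\sqrt{\bar{\sigma}_A(t)}$. The hypothesis gives $\Ric(\cdot,t)\leq (n-1)K$ on $B(x_0,t,r_0)$ with $K:=A'/((n-1)\beta^2\bar{\sigma}_A(t))$. At points outside $B(x_0,t,r_0)$, Lemma~\ref{lem:distance-distortion}(1) then yields, in the barrier sense,
\begin{equation*}
	\square d\geq-(n-1)\Big(\frac{2}{3}Kr_0+r_0^{-1}\Big)-\frac{A}{2}d
	=-\Big(\frac{2}{3}A'+n-1\Big)\frac{1}{\beta\sqrt{\bar{\sigma}_A(t)}}-\frac{A}{2}d,
\end{equation*}
since $(n-1)\cdot\frac{2}{3}Kr_0=\frac{2}{3}A'\beta^{-1}\bar{\sigma}_A^{-1/2}$ and $(n-1)r_0^{-1}=(n-1)\beta^{-1}\bar{\sigma}_A^{-1/2}$. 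The right-hand side equals $-A''/\sqrt{\bar{\sigma}_A(t)}-\frac{A}{2}d$, so moving the $A$-term to the left gives $(\partial_t^{A/2}-\Delta)d\geq -A''/\sqrt{\bar{\sigma}_A(t)}$.

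For the correction term, which is spatially constant, we have
\begin{equation*}
	(\partial_t^{A/2}-\Delta)\big(2A''\sqrt{\bar{\sigma}_A(t)}\big)=A''\cdot 2\partial_t^{A/2}\sqrt{\bar{\sigma}_A(t)}=\frac{A''}{\sqrt{\bar{\sigma}_A(t)}}.
\end{equation*}
This identity can be checked directly: $\frac{d}{dt}\bar{\sigma}_A=e^{-At}=1-A\bar{\sigma}_A$, hence $2\frac{d}{dt}\sqrt{\bar{\sigma}_A}+A\sqrt{\bar{\sigma}_A}=\bar{\sigma}_A^{-1/2}$. Adding the two displays gives (\ref{ineq:deformed-square-d}).

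The only delicate point is interpreting the inequality. Lemma~\ref{lem:distance-distortion}(1) holds in the barrier sense, and the time derivative of $d$ is only a one-sided or forward difference quotient. Since the added term is smooth in $t$ and independent of $x$, barriers for $d$ translate directly into barriers for $d+2A''\sqrt{\bar{\sigma}_A}$, so the conclusion also holds in the barrier sense, which is the form needed for maximum-principle applications.
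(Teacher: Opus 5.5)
Your proposal is correct and follows the paper's proof: apply Lemma~\ref{lem:distance-distortion}(1) at each fixed time with $r_0=\beta\sqrt{\bar{\sigma}_A(t)}$ and $K=A'/((n-1)\beta^2\bar{\sigma}_A(t))$, then rewrite the resulting constant $A''/\sqrt{\bar{\sigma}_A(t)}$ as $2A''\partial_t^{A/2}\sqrt{\bar{\sigma}_A(t)}$. Your direct check of that identity and your remark that the barrier interpretation carries over to $d+2A''\sqrt{\bar{\sigma}_A}$ are sound additions that the paper leaves implicit.
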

\begin{proof}
	This follows from \hyperref[lem:distance-distortion]{Lemma~\ref*{lem:distance-distortion}}, as in \cite[Lemma 5.6]{wang2018local}.
	At time $t_0\in(0,T]$, taking $r_0=\beta\sqrt{\bar{\sigma}_{A}(t_0)}$ and $K=\frac{A'}{(n-1)\beta^2\bar{\sigma}_{A}(t_0)}$ in \hyperref[lem:distance-distortion]{Lemma~\ref*{lem:distance-distortion}}, we have
	\begin{align*}
		\square d
		&\geq
		-(n-1)\left(
		\frac{2}{3}\cdot\frac{A'}{n-1}
		+1
		\right)\beta^{-1}\frac{1}{\sqrt{\bar{\sigma}_{A}(t_0)}}
		-\frac{A}{2}d\\
		&=
		-2A''\partial_t^{A/2}\sqrt{\bar{\sigma}_{A}(t_0)}
		-\frac{A}{2}d,
	\end{align*}
	which is equivalent to (\ref{ineq:deformed-square-d}).
\end{proof}

The following theorem gives a lower bound on the amount of probability that $\nu_{x_0,t_0;t}$ assigns to some neighborhood of $x_0$, under certain Ricci curvature bound.

\begin{Theorem}[c.f. \cite{wang2018local} Proof of Theorem 5.4]\label{thm:concentration-under-Ricci-bound}
	Let $(g(t))_{t\in[0,T]}$ be a smooth family of complete Riemannian metrics over $M^n$ with $E(g)\geq-Ag$ for some $A\geq 0$. Suppose
	\begin{equation*}
		\Ric(x,t)\leq\frac{A'}{\beta^2\bar{\sigma}_{A}(t)},
		\quad
		\forall\,x\in B\big(x_0,t,\beta\sqrt{\bar{\sigma}_{A}(t)}\big),
		\ t\in(0,T]
	\end{equation*}
	for some $x_0\in M$, $A'\in(0,\infty)$ and $\beta\in(0,1]$. Set $A'':=(\frac{2}{3}A'+n-1)\beta^{-1}$,
	\begin{align*}
		&\Omega_T':=B\big(x_0,T,8A''\sqrt{\bar{\sigma}_{A}(T)}\big),\\
		&\Omega_0:=B\big(x_0,0,20A''\sqrt{\sigma_A(T)}\big).
	\end{align*}
	Then for any solution $u$ to the backward heat equation $\square^*u=0$ with $\int_M u \,dV_g=1$ and $u(\cdot,T)$ supported in $\Omega_T'$, it holds
	\begin{equation*}
		\int_{\Omega_0}   u   \,dV_g   \bigg|_{t=0}
		\geq
		\exp\Big(-\frac{1}{10(A'')^2}\Big).
	\end{equation*}
\end{Theorem}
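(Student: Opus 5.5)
We test the conjugate heat solution against a cut-off function built from the deformed distance supersolution of Lemma~\ref{lem:consequence-of-distance-distortion}, following \cite[Theorem 5.4]{wang2018local}. Set
\begin{equation*}
	\Phi(x,t):=d_t(x,x_0)+2A''\sqrt{\bar\sigma_A(t)}.
\end{equation*}
By Lemma~\ref{lem:consequence-of-distance-distortion}, $(\partial_t^{A/2}-\Delta)\Phi\geq0$ outside $B(x_0,t,\beta\sqrt{\bar\sigma_A(t)})$. Since $A''\geq\beta^{-1}\cdot(n-1)\geq 1$, the quantity $2A''\sqrt{\bar\sigma_A(t)}$ dominates the radius of that ball. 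So on the ball itself we have $\Phi\leq 3A''\sqrt{\bar\sigma_A(t)}$, which is small compared with the cut-off scale below.

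Next choose a nonincreasing function $\eta:\mathbb R\to[0,1]$ with $\eta\equiv1$ on $(-\infty,10A''\sqrt{\sigma_A(T)}]$, $\eta\equiv0$ on $[20A''\sqrt{\sigma_A(T)},\infty)$, $\eta'\leq0$, $\eta''\geq -C\eta$-type control, and $|\eta'|^2/\eta$ bounded. Concretely, take $\eta=\zeta^2$ with $\zeta$ linear-type on the transition region, rescaled to width $10A''\sqrt{\sigma_A(T)}$. Define the time-dependent cut-off
\begin{equation*}
	h(x,t):=\eta\big(e^{\frac A2 t}\Phi(x,t)\big).
\end{equation*}
The factor $e^{\frac A2t}$ turns $\partial_t^{A/2}$ into $\partial_t$. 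This function is the $h$ whose gradient bounds are recorded in Remark~\ref{rmk:gradient-bound-on-cut-off-function}, and it is the one used in Theorem~\ref{thm:mu-and-N}.

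We now check the two boundary conditions. At $t=T$, a point $x\in\Omega_T'$ has
\begin{equation*}
	e^{AT/2}\Phi\leq e^{AT/2}\cdot 10A''\sqrt{\bar\sigma_A(T)}=10A''\sqrt{\sigma_A(T)},
\end{equation*}
using $e^{AT}\bar\sigma_A(T)=\sigma_A(T)$. Hence $h(\cdot,T)\equiv1$ on $\operatorname{supp}u(\cdot,T)$ and $\int_M hu\,dV_g(T)=1$. At $t=0$, $h$ is supported where $d_0<20A''\sqrt{\sigma_A(T)}$, that is, in $\Omega_0$, and $h\leq1$. Therefore
\begin{equation*}
	\int_{\Omega_0}u\,dV_g(0)\geq\int_M h u\,dV_g(0).
\end{equation*}

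By (\ref{eq:conjugate-heat-operator}),
\begin{equation*}
	\frac{d}{dt}\int hu\,dV_g=\int(\square h)u\,dV_g.
\end{equation*}
Write $\xi=e^{\frac A2t}\Phi$. Then
\begin{equation*}
	\square h=\eta'(\xi)\,e^{\frac A2t}(\partial^{A/2}_t-\Delta)\Phi-\eta''(\xi)\,e^{At}|\nabla\Phi|^2 .
\end{equation*}
On the support of $\eta'$ we have $\xi\geq10A''\sqrt{\sigma_A(T)}$, so $x$ lies outside the small ball and the first term is $\leq0$, since $\eta'\leq0$. With $|\nabla\Phi|\leq1$ and $e^{At}\leq e^{AT}$, the second term gives $\square h\geq -\sup|\eta''|e^{AT}$. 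If $\eta$ is chosen so that $\eta''\geq -c\,\eta/(A''^2\sigma_A(T))$, for instance via $\eta=e^{-\psi}$ type profiles, we get
\begin{equation*}
	\frac{d}{dt}\log\int hu\leq \frac{c\,e^{AT}}{(A'')^2\sigma_A(T)}.
\end{equation*}
Integrating over $[0,T]$ and using $T\,e^{AT}\leq\sigma_A(T)e^{AT}$, then choosing constants, should give
\begin{equation*}
	\int hu\,dV_g(0)\geq\exp\big(-1/(10(A'')^2)\big).
\end{equation*}

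The main obstacle is this last step. We need the profile $\eta$ to satisfy a one-sided bound $\eta''\geq-\epsilon\eta$ that vanishes at the boundary, which is impossible for a genuine compactly supported function near its zero set. Two fallbacks are available. The first is to replace the log estimate by the linear estimate
\begin{equation*}
	\int hu(0)\geq1-T\sup|\eta''|e^{AT},
\end{equation*}
where $\sup|\eta''|\sim (A''^2\sigma_A(T))^{-1}/c$ with a large width constant, and then use $1-x\geq e^{-2x}$ for small $x$. The second is to absorb the factor $e^{AT}$ by using $\bar\sigma_A$, $\sigma_A$ bookkeeping so that $Te^{AT}\lesssim\sigma_A(T)e^{AT}$ stays comparable. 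The constants $8$, $20$ and $1/10$ are the ones we must verify in this step.
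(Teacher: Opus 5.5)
Your set-up matches the paper's: the same barrier $\xi=e^{\frac A2t}\big(d_t+2A''\sqrt{\bar\sigma_A(t)}\big)$, the same observation that the cut-off's transition region avoids $B(x_0,t,\beta\sqrt{\bar\sigma_A(t)})$ (there $\xi\le 3A''\sqrt{\sigma_A(T)}$), and the same boundary checks at $t=T$ and $t=0$. The gap is that you never prove the final inequality, and the reason you give for abandoning your main route is false.

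A nonincreasing profile $\psi$ with $\psi\equiv1$ on $(-\infty,1]$, $\psi\equiv0$ on $[2,\infty)$ and $\psi''\ge-10\psi$ does exist. This is the standard cut-off of \cite[\S34]{kleiner2008notes} and \cite[Theorem 5.4]{wang2018local}, and it is the one the paper uses. Nothing forces $\psi''<0$ near the zero set. For example, take $\psi(s)=1-2(s-1)^2$ on $[1,\frac32]$, where $\psi\ge\frac12$ so $-4\ge-10\psi$. Take $\psi(s)=2(2-s)^2$ on $[\frac32,2]$, which is convex. Then compress slightly and mollify.

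Set $W:=10A''\sqrt{\sigma_A(T)}$. Since $|\nabla\xi|^2\le e^{At}$, you get $\square\psi(\xi/W)\le\frac{10}{W^2}e^{At}\psi(\xi/W)$. Because $\frac{d}{dt}\sigma_A(t)=e^{At}$, the function $h:=\exp\big(-\frac{10}{W^2}\sigma_A(t)\big)\psi(\xi/W)$ satisfies $\square h\le0$. Hence $\int_M hu\,dV_g$ is non-increasing in $t$, and
\[
\int_{\Omega_0}u\,dV_g\Big|_{t=0}\ \ge\ \int_M hu\,dV_g\Big|_{t=T}=\exp\Big(-\frac{10\,\sigma_A(T)}{W^2}\Big)=\exp\Big(-\frac{1}{10(A'')^2}\Big).
\]

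The second defect is the time bookkeeping, which sinks both of your fallbacks as written. Bounding $e^{At}\le e^{AT}$ and integrating to $Te^{AT}$ loses the factor $Te^{AT}/\sigma_A(T)=ATe^{AT}/(e^{AT}-1)$. This is unbounded as $AT\to\infty$, and your further step $T\le\sigma_A(T)$ still leaves a factor $e^{AT}$. The theorem has no smallness assumption on $AT$, so no choice of constants repairs this. You must integrate exactly, $\int_0^Te^{At}\,dt=\sigma_A(T)$, so that it cancels the $\sigma_A(T)$ in $W^2$; the multiplier $\exp(-\frac{10}{W^2}\sigma_A(t))$ does precisely this.

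The width is also not a free parameter: the radii $8$ and $20$ force the transition into $[W,2W]$.

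With the bookkeeping fixed, your linear fallback does close, and it only needs $\psi''\ge-c$ rather than $\psi''\ge-c\psi$. A profile with $\psi''\ge-5$ (e.g.\ a smoothing of the one above) gives
\[
\int_M hu\,dV_g\Big|_{t=0}\ge1-\frac{5\sigma_A(T)}{W^2}=1-\frac{1}{20(A'')^2}\ge\exp\Big(-\frac{1}{10(A'')^2}\Big),
\]
using $A''\ge n-1\ge1$. Finally, what you need from the $\eta''$ term is the upper bound $\square h\le(\eta'')_-\,e^{At}$, not the lower bound you wrote.
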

\begin{proof}
	Choose $\psi$ as in Theorem 5.4 of \cite{wang2018local} (and \cite{kleiner2008notes} $\S 34$). Namely, $\psi$ is a cut-off function such that $\psi\equiv 1$ on $(-\infty,1)$, $\psi\equiv 0$ on $(2,\infty)$, $-10\leq\psi'\leq 0$, $\psi''\geq-10\psi$, $(\psi')^2\leq 10\psi$. Define
	\begin{equation*}
		\eta(x,t)=\frac{1}{C_0}
		e^{\frac{A}{2}t}\big(d(x,t)+2A''\sqrt{\bar{\sigma}_A(t)}\big),
	\end{equation*}
	where $A''$ is defined as in \hyperref[lem:consequence-of-distance-distortion]{Lemma~\ref*{lem:consequence-of-distance-distortion}} and $C_0:=e^{\frac{A}{2}T}10A''\sqrt{\bar{\sigma}_A(T)}=10A''\sqrt{\sigma_A(T)}$. By \hyperref[lem:consequence-of-distance-distortion]{Lemma~\ref*{lem:consequence-of-distance-distortion}}, $\square\eta(x,t)\geq 0$ outside $B\big(x_0,t,\beta\sqrt{\bar{\sigma}_{A}(t)}\big)$. Over $B\big(x_0,t,\beta\sqrt{\bar{\sigma}_{A}(t)}\big)$, we have
	\begin{equation*}
		\eta(x,t)
		\leq
		\frac{1}{C_0}e^{\frac{A}{2}t}(\beta+2A'')\sqrt{\bar{\sigma}_{A}(t)}
		=
		\frac{(\beta+2A'')\sqrt{{\sigma}_{A}(t)}}{10A''\sqrt{{\sigma}_{A}(T)}}
		\leq 1
	\end{equation*}
		and therefore $\psi(\eta)=1$ over there. These two facts together imply
	\begin{equation*}
		\square(\psi(\eta))
		=
		\psi'(\eta)\square\eta-\psi''(\eta)|\nabla\eta|^2
		\leq
		\frac{10}{C_0^2}e^{At}\psi(\eta).
	\end{equation*}
	Take
	\begin{equation*}
		h(x,t)=\exp\Big(-\frac{10}{C_0^2}\sigma_A(t)\Big)\psi(\eta(x,t)).
	\end{equation*}
	Then $\square h\leq 0$ over $M\times[0,T]$. At time $T$, $\mathrm{supp}(h)\subset B\big(x_0,T,18A''\sqrt{\bar{\sigma}_{A}(T)}\big)$ and $\psi(\eta)\equiv 1$ over $B\big(x_0,T,8A''\sqrt{\bar{\sigma}_{A}(t)}\big)$. Hence
	\begin{align*}
		\int_M   hu   \,dV_g   \bigg|_{t=0}
		\geq
		\int_M   hu   \,dV_g   \bigg|_{t=T}
		&=
		\int_{\Omega_T'}   hu   \,dV_g   \bigg|_{t=T}\\
		&=
		\exp\Big(-\frac{10}{C_0^2}\sigma_A(T)\Big)
		\int_{\Omega_T'}   u   \,dV_g   \bigg|_{t=T}
		=
		\exp\Big(-\frac{10}{C_0^2}\sigma_A(T)\Big).
	\end{align*}
	Note $h=\psi(\eta)$ at time $t=0$. Thus $\mathrm{supp}(h(\cdot,0))\subset B(x_0,0,2C_0)=\Omega_0$. It follows that
	\begin{equation*}
		\int_{\Omega_0}   u   \,dV_g   \bigg|_{t=0}
		\geq
		\int_{M}   hu   \,dV_g   \bigg|_{t=0}
		\geq
		\exp\Big(-\frac{10}{C_0^2}\sigma_A(T)\Big)
		=
		\exp\Big(-\frac{1}{10(A'')^2}\Big).
	\end{equation*}
\end{proof}

\begin{Remark}\label{rmk:gradient-bound-on-cut-off-function}
	The cut-off function $h$ constructed in the proof of \hyperref[thm:concentration-under-Ricci-bound]{Theorem~\ref*{thm:concentration-under-Ricci-bound}} satisfies
	\begin{equation*}
		|\nabla h|(\cdot,0)
		\leq
		\frac{1}{A''\sqrt{\sigma_A(T)}},
	\end{equation*}
	\begin{equation*}
		\big|\nabla\sqrt{h}\big|^2(\cdot,0)
		\leq
		\frac{1}{40(A'')^2\sigma_A(T)}.
	\end{equation*}
\end{Remark}

\begin{Remark}\label{rmk:concentration}
	Set $\Omega_0'=B\big(x_0,0,10A''\sqrt{\sigma_A(T)}\big)$. By a slight modification of the cut-off function (namely replace $C_0$ by $C_0'=C_0/2$), one can show
	\begin{equation*}
		\int_{\Omega_0'}   u   \,dV_g   \bigg|_{t=0}
		\geq
		\exp\Big(-\frac{2}{5(A'')^2}\Big),
	\end{equation*}
	provided we require $u(\cdot,T)$ being supported in $B\big(x_0,T,3A''\sqrt{\bar{\sigma}_{A}(T)}\big)$.
\end{Remark}

\begin{Corollary}\label{cor:concentration-under-Ricci-bound}
	In the same setting as \hyperref[thm:concentration-under-Ricci-bound]{Theorem~\ref*{thm:concentration-under-Ricci-bound}}, we have
	\begin{equation*}
		d_{W_1}^{g(0)}(\nu_{x_0,T;0},\delta_{x_0})
		\leq C(n,A',\beta)\sqrt{\sigma_A(T)}.
	\end{equation*}
\end{Corollary}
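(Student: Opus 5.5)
The bound follows from the Gaussian tail of the concentration estimate, integrated against the distance to $x_0$. We use the standard identity $d_{W_1}(\nu,\delta_{x_0})=\int_M d_0(x_0,\cdot)\,d\nu$ for a probability measure $\nu$ against a Dirac mass. So it suffices to bound $\int_M d_0(x_0,y)\,d\nu_{x_0,T;0}(y)$ by $C(n,A',\beta)\sqrt{\sigma_A(T)}$, where $d_0$ denotes the distance of $g(0)$.

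\emph{Step 1: a single-scale bound.} Apply Theorem~\ref{thm:concentration-under-Ricci-bound} to $u=K(x_0,T;\cdot,\cdot)$. This $u$ is supported at $x_0\in\Omega_T'$ at time $T$; formally we approximate by $\square^*$-solutions with terminal data concentrated near $x_0$ and pass to the limit. The result is
\begin{equation*}
\nu_{x_0,T;0}\big(B(x_0,0,20A''\sqrt{\sigma_A(T)})\big)\geq e^{-1/(10(A'')^2)}\geq c_0>0,
\end{equation*}
with $c_0$ depending only on $n,A',\beta$. This gives a definite fraction of the mass near $x_0$, but not yet control of the first moment.

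\emph{Step 2: upgrade to an integrable tail.} We combine Step 1 with the Gaussian concentration Corollary~\ref{cor:Gaussian-Concentration}. Take $X=B(x_0,0,20A''\sqrt{\sigma_A(T)})$ and $Y=M\setminus B(x_0,0,20A''\sqrt{\sigma_A(T)}+r)$, so that $d_0(X,Y)\geq r$. The corollary gives
\begin{equation*}
\nu_0(Y)\leq c_0^{-1}\exp\Big(-\frac{r^2}{8\sigma_A(T)}\Big).
\end{equation*}
Alternatively, one can obtain the same tail by enlarging the radius in the cut-off construction: replacing $C_0$ by $\lambda C_0$ yields concentration $\geq\exp(-1/(10\lambda^2(A'')^2))$ in the ball of radius $20\lambda A''\sqrt{\sigma_A(T)}$, so the complement has mass $\leq 1/(10\lambda^2(A'')^2)$. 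That decay is not integrable in $\lambda$ against linear growth, however, so the Gaussian concentration route is the one to use.

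\emph{Step 3: integrate.} By the layer-cake formula,
\begin{equation*}
\int_M d_0(x_0,\cdot)\,d\nu_0=\int_0^\infty\nu_0\big(M\setminus B(x_0,0,\rho)\big)\,d\rho\leq 20A''\sqrt{\sigma_A(T)}+c_0^{-1}\int_0^\infty e^{-r^2/(8\sigma_A(T))}\,dr .
\end{equation*}
The right-hand side is at most $C(n,A',\beta)\sqrt{\sigma_A(T)}$, which is the claimed bound.

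The main obstacle is justifying Step 1 for the Dirac terminal data. The theorem requires $u(\cdot,T)$ supported in $\Omega_T'$ with unit mass. I would take $u_\epsilon$ solving $\square^*u_\epsilon=0$ with $u_\epsilon(\cdot,T)$ a smooth bump of mass one inside $B(x_0,T,\epsilon)$. Then $u_\epsilon(\cdot,0)\to K(x_0,T;\cdot,0)$ uniformly on the compact manifold $M$, and the inequality passes to the limit. The remaining steps are routine.
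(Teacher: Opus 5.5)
Your proof is correct, but it follows a different route from the paper's.

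\textbf{The paper's route.} The paper never integrates a tail. It takes an $(H_n,A)$-center $(z,0)$ of $(x_0,T)$. Proposition~\ref{prop:basic-properties-of-center}(3), which is Chebyshev applied to the variance bound $\mathrm{Var}_0(\delta_z,\nu_{x_0,T;0})\leq H_n\sigma_A(T)$, is used with $\delta=2/c_1$. It shows that a ball of radius $\sqrt{\delta H_n\sigma_A(T)}$ about $z$ carries mass at least $1-c_1/2$. That ball must therefore meet the ball about $x_0$ produced by Theorem~\ref{thm:concentration-under-Ricci-bound}, so $d_0(x_0,z)\leq C\sqrt{\sigma_A(T)}$. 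One then concludes by the triangle inequality $d_{W_1}(\nu,\delta_{x_0})\leq d_{W_1}(\nu,\delta_z)+d_0(z,x_0)$, where $d_{W_1}(\nu,\delta_z)\leq\sqrt{H_n\sigma_A(T)}$ by Cauchy--Schwarz.

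\textbf{Comparison.} The paper needs only second-moment (variance) control. As a byproduct it locates the $(H_n,A)$-centers within $C\sqrt{\sigma_A(T)}$ of $x_0$. Your route feeds the same mass lower bound into the log-Sobolev-based Corollary~\ref{cor:Gaussian-Concentration}, which is a heavier input. In return you get a stronger conclusion: a sub-Gaussian tail for $\nu_{x_0,T;0}$ centred at $x_0$ itself, and hence control of all moments rather than just the first.

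\textbf{Dirac terminal data.} Your approximation argument is fine; the paper applies the theorem to the heat kernel without comment. A direct alternative is $\int_M h(\cdot,0)\,d\nu_{x_0,T;0}\geq h(x_0,T)$, which holds because $\square h\leq 0$.

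\textbf{Correction to your aside.} The enlarged cut-off route does work, and your dismissal of it is mistaken. Replacing $C_0$ by $\lambda C_0$ with $\lambda\geq 1$ gives
\begin{equation*}
\nu_0\big(M\setminus B(x_0,0,20\lambda A''\sqrt{\sigma_A(T)})\big)\leq\frac{1}{10\lambda^2(A'')^2}.
\end{equation*}
This is a tail of order $\rho^{-2}$ in the radius $\rho$. Such a tail is integrable in the very layer-cake formula $\int_0^\infty\nu_0(d_0(x_0,\cdot)>\rho)\,d\rho$ you use in Step~3; quadratic decay fails only for the second moment. It yields
\begin{equation*}
\int_M d_0(x_0,\cdot)\,d\nu_0\leq 20A''\Big(1+\frac{1}{10(A'')^2}\Big)\sqrt{\sigma_A(T)}
\end{equation*}
with no Gaussian concentration at all. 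That is arguably the most elementary of the three arguments.
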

\begin{proof}
	By \hyperref[thm:concentration-under-Ricci-bound]{Theorem~\ref*{thm:concentration-under-Ricci-bound}}, there exist $c_0,c_1>0$ depending only on $A',\beta$ such that
	\begin{equation*}
		\nu_{x_0,T;0}\big(B(x_0,0,c_0\sqrt{\sigma_A(T)})\big)\geq c_1.
	\end{equation*}
	Let $(z,0)$ be an $(H_n,A)$-center of $(x_0,T)$. Taking $\delta=2/c_1$ in \hyperref[prop:basic-properties-of-center]{Proposition~\ref*{prop:basic-properties-of-center}}(3), we get
	\begin{equation*}
		B(x_0,0,c_0\sqrt{\sigma_A(T)})
		\cap B\big(z,0,\sqrt{\delta H_n\sigma_A(T)}\big)
		\neq\varnothing.
	\end{equation*}
	Therefore
	\begin{align*}
		d_{W_1}^{g(0)}(\nu_{x_0,T;0},\delta_{x_0})
		&\leq
		d_{W_1}^{g(0)}(\nu_{x_0,T;0},\delta_z)
		+d_{W_1}^{g(0)}(\delta_z,\delta_{x_0})\\
		&\leq
		\sqrt{H_n\sigma_A(T)}
		+c_0\sqrt{\sigma_A(T)}
		+\sqrt{\delta H_n\sigma_A(T)}.
	\end{align*}
\end{proof}

Using the above \hyperref[cor:concentration-under-Ricci-bound]{Corollary~\ref*{cor:concentration-under-Ricci-bound}}, we can obtain the following containment relationships between conventional and $P^*$-parabolic neighborhoods.
\begin{Corollary}\label{cor:comparison-of-parabolic-neighborhoods}
	Suppose $E\geq -Ag$ for some $A\geq 0$. Given $\lambda>0$, $K,T^\pm\geq 0$, $(x,t)\in M\times I$ and a scale $r>0$ with $[t-T^{-}r^2,t]\subset I$.
	\begin{enumerate}
		\item If $|\Ric|\leq Kr^{-2}$ on $P(x,t;Lr,-T^{-}r^2,T^{+}r^2)$ for some $L\geq\underline{L}(Ar^2,\lambda,K,T^{\pm})$, then
		\begin{equation*}
			P(x,t;\lambda r,-T^{-}r^2,T^{+}r^2)
			\subset
			P^*(x,t;Lr,-T^{-}r^2,T^{+}r^2).
		\end{equation*}
	
		\item If $|\Ric|\leq Kr^{-2}$ on $P^*(x,t;Lr,-T^{-}r^2,T^{+}r^2)$ for some $L\geq\underline{L}(Ar^2,\lambda,K,T^{\pm})$, then
		\begin{equation*}
			P(x,t;\lambda r,-T^{-}r^2,T^{+}r^2)
			\subset
			P^*(x,t;Lr,-T^{-}r^2,T^{+}r^2).
		\end{equation*}
		
		\item If $|\Ric|\leq Kr^{-2}$ on $P(x,t;Lr,-T^{-}r^2,T^{+}r^2)$ for some $L\geq\underline{L}(Ar^2,\lambda,K,T^{\pm})$, then
		\begin{equation*}
			P^*(x,t;\lambda r,-T^{-}r^2,T^{+}r^2)
			\subset
			P(x,t;Lr,-T^{-}r^2,T^{+}r^2).
		\end{equation*}
	\end{enumerate}
\end{Corollary}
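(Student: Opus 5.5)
The plan is to prove the three inclusions by comparing $W_1$-distances between conjugate heat kernel measures with Riemannian distances at a single time slice. The tools are the Ricci-bound concentration estimate Corollary~\ref{cor:concentration-under-Ricci-bound} and the monotonicity of $W_1$ in Corollary~\ref{cor:monotonicity-of-W1}. By parabolic rescaling I first reduce to $r=1$; then $A$ enters only through $Ar^2$.

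\emph{Assertion (1).} Fix $(y,s)\in P(x,t;\lambda,-T^-,T^+)$ and set $t^-=t-T^-$. By the triangle inequality,
\begin{equation*}
d_{W_1}^{g(t^-)}(\nu_{x,t;t^-},\nu_{y,s;t^-})\leq d_{W_1}^{g(t^-)}(\nu_{x,t;t^-},\delta_x)+d_{t^-}(x,y)+d_{W_1}^{g(t^-)}(\delta_y,\nu_{y,s;t^-}).
\end{equation*}
I would bound the three terms as follows.
\begin{itemize}
\item For the first and third terms, use Corollary~\ref{cor:concentration-under-Ricci-bound} on the interval $[t^-,t]$ (resp.\ $[t^-,s]$). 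The bound $|\Ric|\leq K$ on $B(x,\cdot,L)$ supplies its hypothesis with $A'=K$ and $\beta=1$, once $\sqrt{\bar\sigma_A}\leq\sqrt{T^-+T^+}\leq L$. Each term is then at most $C(n,K)\sqrt{\sigma_A(T^-+T^+)}$.
\item The third term needs the Ricci bound along balls around $y$. Since $d_s(x,y)<\lambda$, we have $B(y,\cdot,\lambda')\subset B(x,\cdot,L)$ as long as distances do not blow up in time.
\item For the middle term, control $d_{t^-}(x,y)$ by $d_s(x,y)<\lambda$ using the two-sided Ricci bound: the metrics satisfy $|\partial_t g|\leq(2K+|E|)g$ on the region, hence are uniformly equivalent there for time $T^-+T^+$.
\end{itemize}

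The main obstacle is exactly this use of local metric equivalence. Integrating $\partial_t g$ along a minimizing geodesic requires the geodesic to stay inside the region where the Ricci bound holds. I would handle it with a continuity argument: a geodesic of length $<\lambda e^{C(T^-+T^+)}$ starting at $x$ stays in $B(x,\cdot,L)$ once $L\geq\underline L$. The remaining issue is that only $E\geq -Ag$ is assumed, not a bound on $|E|$. Then $\partial_t g\geq -(2K+A)g$ gives distance control only in one direction. For (1), that direction suffices if used carefully, namely to bound $d_{t^-}$ in terms of the later distance $d_s$. With all three bounds in hand, choose $L$ larger than the resulting total.

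\emph{Assertion (3).} Given $(y,s)$ with $d_{W_1}^{g(t^-)}(\nu_{x,t;t^-},\nu_{y,s;t^-})<\lambda$, combine this with the concentration bound for $\nu_{x,t;t^-}$ around $x$ to get $d_{W_1}(\nu_{y,s;t^-},\delta_x)<\lambda+C$. The key step is to show that $y$ is close to $x$ at time $s$. I would argue by contradiction via a continuity argument on the distance. If $d_s(x,y)\geq L$, then along the path from $y$ the curvature bound holds up to distance $L$. Theorem~\ref{thm:concentration-under-Ricci-bound}, applied with a cut-off centered at $y$, would force $\nu_{y,s;t^-}$ to put mass at least $c$ on a ball around $y$ of radius $C\sqrt{T^-+T^+}$ at time $t^-$. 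Distance shrinking on that region is bounded by $e^{(K+A)T}$, so this ball lies far from $x$ at time $t^-$, contradicting the $W_1$ bound once $L\gg\lambda+C$.

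\emph{Assertion (2).} This follows the proof of (1). The only change is that the hypothesis region is $P^*$; I would use (1) with a smaller $L'$ and a bootstrap, applying (3) to show $P(x,t;L',\dots)\subset P^*(x,t;L,\dots)$ so that the Ricci bound holds there.
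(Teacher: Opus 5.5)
Your arguments for (2) and (3) have genuine gaps, and the convention for conventional neighborhoods is misread throughout.

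\textbf{The convention and (3).} By \eqref{eq:conventional-parabolic-neighborhood}, $P(x,t;r,-T^-,T^+)=B(x,t,r)\times([t-T^-,t+T^+]\cap I)$, with the ball taken at the base time $t$. So $(y,s)\in P(x,t;\lambda r,\dots)$ means $d_t(x,y)<\lambda r$, and in (1) and (3) the Ricci bound holds on the fixed set $B(x,t,Lr)$ at all times. You work instead with $d_s(x,y)$ and moving balls. With this corrected, your (1) is essentially the paper's argument: Corollary~\ref{cor:concentration-under-Ricci-bound} plus distance distortion. One caveat: keeping the time-$t'$ balls of Theorem~\ref{thm:concentration-under-Ricci-bound} inside $B(x,t,Lr)$ for $t'<t$ needs control of forward expansion, which is available for Laplacian flow since $|E|\le C\Lambda$.

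In (3) the issue is substantive. Your last step needs $d_{t^-}(x,y)\gtrsim d_s(x,y)$. That is a bound on forward \emph{expansion} and needs an upper bound on $E$; $E\geq -Ag$ only limits forward shrinking. In fact your version of (3) is false. On a large flat torus with $g(t')=e^{2\kappa(t'-t)}g_0$ one has $\Ric=0$ and $E=2\kappa g\geq 0$. Take $T^-=0<T^+$ and $d_{g_0}(x,y)=\lambda/2$. Then $\nu_{y,s;t}$ is a $g_0$-Gaussian about $y$ with second moment at most $n/\kappa$, so $(y,s)\in P^*(x,t;\lambda,0,T^+)$ once $\kappa>4n/\lambda^2$. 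Meanwhile $d_s(x,y)=e^{\kappa(s-t)}\lambda/2\to\infty$ as $\kappa\to\infty$. Separately, you apply Theorem~\ref{thm:concentration-under-Ricci-bound} at $y$ exactly when $y$ lies outside the region where a curvature bound is known. A spatial continuity argument cannot fix this, since points on a path from $x$ to $y$ need not lie in $P^*$.

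Your first step in (3) matches the paper's reduction to $T^-=0$. That reduction itself uses $B(x,t-T^-,cL)\subset B(x,t,L)$, which again needs an upper bound on $E$. The paper then runs continuity in the length of the time interval. Let $T^*$ be maximal with $P^*(x,0;\lambda,T')\subset P(x,0;\frac12L,T')$ for $T'<T^*$. Each such $(y,s)$ then has the Ricci bound on $P(y,s;1,-s)$, so Corollary~\ref{cor:concentration-under-Ricci-bound} gives $d_0(x,y)\leq\lambda+C\leq\frac14L$, and continuity forces $T^*=T^+$. This works because membership in $P(x,0;\cdot,T')$ is decided by $d_0(x,y)$, which is exactly what the $W_1$ condition at time $0$ controls.

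\textbf{Assertion (2).} Your argument for (2) does not close. Assertion (3) gives $P^*\subset P$ under a Ricci bound on a \emph{conventional} neighborhood. So it can neither produce $P(x,t;L',\dots)\subset P^*(x,t;L,\dots)$ nor be invoked under the hypothesis of (2). The missing ingredient is a continuity parameter that keeps the points you use inside $P^*(x,t;Lr,\dots)$, where the curvature bound is known.

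The paper bootstraps in the scale $r^*$ via (1) and Proposition~\ref{prop:containment-relationship}(1). As written, though, applying (1) at scale $r^*$ with $\lambda=L_1$ needs the Ricci bound on a conventional neighborhood larger than $P(x,t;L_1r^*,\dots)$, which the maximality of $r^*$ does not supply.

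A route that avoids (1) and uses only $E\geq -Ag$ is continuity in time along the worldline of $y\in B(x,t,\lambda r)$:
\begin{enumerate}
\item By Corollary~\ref{cor:monotonicity-of-W1}, $(y,t)\in P^*(x,t;e^{\frac{A}{2}T^-r^2}\lambda r,\dots)$ with no curvature input.
\item As long as $(y,s')\in P^*(x,t;\frac12Lr,\dots)$ for all $s'$ between $t$ and $s$, the same corollary places $B(y,s',r)\times\{s'\}$ inside $P^*(x,t;Lr,\dots)$.
\item Hence Corollary~\ref{cor:concentration-under-Ricci-bound} applies to $\nu_{y,\max\{s,t\};\min\{s,t\}}$. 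The reproduction formula and convexity of $W_1$ then give $(y,s)\in P^*(x,t;\frac14Lr,\dots)$ for $L\geq\underline L$, which closes the continuity argument.
\end{enumerate}
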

\begin{proof}
	Assertion (1) follows from \hyperref[cor:concentration-under-Ricci-bound]{Corollary~\ref*{cor:concentration-under-Ricci-bound}} and a standard distance distortion estimate.
	
	Next we prove (2). We temporarily denote the critical constant $\underline{L}$ in assertion (1) by $L_1$. Choose $r^*\in(0,r]$ maximal with the property that $|\Ric|\leq Kr^*{}^{-2}$ on $P(x,t;L_1r^*,-T^{-}r^*{}^2,T^{+}r^*{}^2)$. If $r^*=r$, then we are done by our previous conclusion (1). If $r^*<r$, then another application of assertion (1) with $\lambda$ replaced by $L_1$ yields
	\begin{equation*}
		P(x,t;L_1r^*,-T^{-}r^*{}^2,T^{+}r^*{}^2)
		\subset
		P^*(x,t;L'r^*,-T^{-}r^*{}^2,T^{+}r^*{}^2),
	\end{equation*}
	provided $L'\geq\underline{L}(Ar^2,L_1,K,T^\pm)=\underline{L}(Ar^2,\lambda,K,T^{\pm})$. Set $L=e^{\frac{A}{2}r^2T^{-}}L'$. By \hyperref[prop:containment-relationship]{Proposition~\ref*{prop:containment-relationship}}(1),
	\begin{equation*}
		P^*(x,t;L'r^*,-T^{-}r^*{}^2,T^{+}r^*{}^2)
		\subset
		P^*(x,t;Lr,-T^{-}r{}^2,T^{+}r{}^2),
	\end{equation*}
	The above two relations imply $|\Ric|\leq Kr^{-2}\leq Kr^*{}^{-2}$ on $P(x,t;L_1r^*,-T^{-}r^*{}^2,T^{+}r^*{}^2)$, which contradicts the maximal choice of $r^*$.
	
	Now we prove (3). By parabolic rescaling, we may assume $r=1$. We first show that we can assume without loss of generality that $T^{-}=0$. For any $(y,s)\in P^*(x,t;\lambda,-T^{-},T^{+})$, \hyperref[cor:concentration-under-Ricci-bound]{Corollary~\ref*{cor:concentration-under-Ricci-bound}} gives
	\begin{align*}
		d_{W_1}^{g(t-T^{-})}(\delta_x,\nu_{y,s;t-T^{-}})
		&\leq
		d_{W_1}^{g(t-T^{-})}(\delta_x,\nu_{x,t;t-T^{-}})
		+d_{W_1}^{g(t-T^{-})}(\nu_{x,t;t-T^{-}},\nu_{y,s;t-T^{-}})\\
		&\leq
		C(A,K,T^{-})+\lambda.
	\end{align*}
	Therefore
	\begin{equation*}
		P^*(x,t;\lambda,-T^{-},T^{+})
		\subset
		P^*(x,t-T^{-};C(A,K,T^{-})+\lambda,0,T^{-}+T^{+}).
	\end{equation*}
	By standard distance distortion estimates,
	\begin{equation*}
		P(x,t;L,-T^{-},T^{+})
		\supset
		P(x,t-T^{-};c(A,K,T^{-})L,0,T^{-}+T^{+}).
	\end{equation*}
	Hence it suffices to prove the case $T^-=0$. We may also assume $t=0$. In sum, we only need to show that if $L\geq\underline{L}(A,\lambda,K,T^+)$ and $|\Ric|\leq K$ on $P(x,0;L,T^+)$, then
	\begin{equation*}
		P^*(x,0;\lambda,T^+)
		\subset
		P(x,0;L,T^+).
	\end{equation*}
	
	Choose $T^*\in[0,T^+]$ maximal with the property that for all $T'\in[0,T^*)$ we have
	\begin{equation*}
		P^*(x,0;\lambda,T')
		\subset
		P(x,0;\tfrac{1}{2}L,T').
	\end{equation*}
	Assuming $L>2\lambda$, we must have $T^*>0$. Consider $(y,s)\in P^*(x,0;\lambda,T')$. Then for $L\geq\underline{L}(A,K,T^+)$, the distance distortion estimate implies
	\begin{equation*}
		P(y,s;1,-s)
		\subset
		P(x,0;L,T').
	\end{equation*}
	Thus by \hyperref[cor:concentration-under-Ricci-bound]{Corollary~\ref*{cor:concentration-under-Ricci-bound}},
	\begin{equation*}
		d_0(x,y)
		=d_{W_1}^{g(0)}(\delta_x,\delta_y)
		\leq
		d_{W_1}^{g(0)}(\delta_x,\nu_{y,s;0})
		+d_{W_1}^{g(0)}(\nu_{y,s;0},\delta_y)
		\leq
		\lambda+C(A,K,T^+)
		\leq \tfrac{1}{4}L.
	\end{equation*}
	This implies $(y,s)\in P(x,0;\frac{1}{4}L,T')$. By continuity and the maximal property of $T^*$, we must have $T^*=T^+$.
\end{proof}

Next we discuss a volume upper bound on the time-slices of a $P^*$-parabolic neighborhood.
\begin{Theorem}[Volume of time-slices]\label{thm:volume-of-time-slices}
	Let $\alpha>0$, $\lambda,T^\pm\geq 0$, $(x_0,t_0)\in M\times I$ and $r>0$ be a scale with $[t_0-(T^{-}+\alpha)r^2,t_0]\subset I$. Set
	\begin{equation*}
		S_t:=P^*(x_0,t_0;\lambda r,-T^{-}r^2,T^{+}r^2)\cap(M\times\{t\}).
	\end{equation*}
	Then for any $t\in[t_0-T^{-}r^2,t_0+T^{+}r^2]$, the volume of the time-slice $S_t$ is bounded by 
	\begin{equation}\label{ineq:volume-of-time-slice}
		|S_t|_{g(t)}
		\leq
		C(n,\alpha,\lambda,T^{\pm},R_{\mathrm{min}}r^2,|E|_{\mathrm{max}}r^2)\exp(\mathcal{N}_{x_0,t_0}(T^{-}r^2))r^n.
	\end{equation}
	Moreover, for any $L>0$, the volume of the $Lr$-neighborhood of $S_t$ is bounded by
	\begin{equation}\label{ineq:volume-of-neighborhood-of-time-slice}
		|B(S_t,t,Lr)|_{g(t)}
		\leq
		C(n,\alpha,\lambda,L,T^{\pm},R_{\mathrm{min}}r^2,|E|_{\mathrm{max}}r^2)\exp(\mathcal{N}_{x_0,t_0}(T^{-}r^2))r^n.
	\end{equation}
\end{Theorem}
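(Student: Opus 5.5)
This mirrors the proof of \cite[Theorem 9.8]{bamler2020entropy}, with the heat kernel estimates of the previous section replacing their Ricci flow versions. After parabolic rescaling and a time shift we take $r=1$ and $t_0=0$. The constants $R_{\mathrm{min}}$ and $|E|_{\mathrm{max}}$ then enter only through the dimensionless quantities $R_{\mathrm{min}}r^2$ and $|E|_{\mathrm{max}}r^2$. Fix $t\in[-T^-,T^+]$ and set $s:=-T^--\alpha$, which lies in $I$ by assumption.

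The core step is to show that every point of $S_t$ carries a definite amount of heat kernel mass at time $s$ near one fixed point.
\begin{itemize}
\item For $(y,t)\in S_t$ we have $d_{W_1}^{g(-T^-)}(\nu_{0,0;-T^-},\nu_{y,t;-T^-})<\lambda$ with $0:=x_0$. By \hyperref[cor:monotonicity-of-W1]{Corollary~\ref*{cor:monotonicity-of-W1}} this propagates to $d_{W_1}^{g(s)}(\nu_{x_0,0;s},\nu_{y,t;s})\leq e^{A\alpha/2}\lambda$, where $A=|E|_{\mathrm{max}}$.
\item Let $(z,s)$ be an $(H_n,A)$-center of $(x_0,0)$. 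Then \hyperref[prop:basic-properties-of-center]{Proposition~\ref*{prop:basic-properties-of-center}} gives $d_{W_1}^{g(s)}(\nu_{y,t;s},\delta_z)\leq C(\lambda,\alpha,T^\pm,A)=:D$.
\item By Markov's inequality, $\nu_{y,t;s}(B(z,s,2D))\geq 1/2$.
\end{itemize}

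Integrate this over $S_t$ and exchange the order of integration:
\begin{equation*}
	\tfrac12|S_t|_{g(t)}\leq\int_{S_t}\int_{B(z,s,2D)}K(y,t;\cdot,s)\,dV_{g(s)}\,dV_{g(t)}(y)
	\leq\int_{B(z,s,2D)}\int_M K(y,t;w,s)\,dV_{g(t)}(y)\,dV_{g(s)}(w).
\end{equation*}
By (\ref{ineq:heat-kernel-integral-bound}) the inner integral is at most $\exp(-(R-\tfrac12\mathrm{tr}\,E)_{\mathrm{min}}(t-s))\leq C$. Hence $|S_t|_{g(t)}\leq C|B(z,s,2D)|_{g(s)}$.

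It remains to bound $|B(z,s,2D)|_{g(s)}$ by $C\exp(\mathcal{N}_{x_0,0}(T^-))$, and this is the step I expect to be the main obstacle. Without an upper bound on curvature, the no local inflating estimate (\hyperref[thm:no-local-inflating]{Theorem~\ref*{thm:no-local-inflating}}) is centered at a base point and uses its own Nash entropy. I would therefore redo its argument with the base point $(x_0,0)$ and the lower time $s-\alpha'$, or simply $s$. Use the partial lower bound (\hyperref[thm:partial-lower-bound]{Theorem~\ref*{thm:partial-lower-bound}}) in the dual form: mass conservation of $\nu_{x_0,0;s}$ combined with a lower bound on $K(x_0,0;w,s)$ for $w\in B(z,s,2D)$.

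I would argue the dual form by reversing roles. Apply \hyperref[thm:no-local-inflating]{Theorem~\ref*{thm:no-local-inflating}} at the point $(z,s)$ with scale $\sqrt{\alpha}$; this requires $[s-\alpha,s]\subset I$. If that interval is not available, I would instead shift the choice of $s$ so that $s=-T^--\alpha/2$ and use the remaining $\alpha/2$ of room below it. This gives $|B(z,s,2D)|\leq C\exp(\mathcal{N}_{z,s}(\alpha/2))$.

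Finally, compare $\mathcal{N}_{z,s}$ with $\mathcal{N}_{x_0,0}(T^-)$. Write both as $\mathcal{N}^*_{s-\alpha/2}$ and apply \hyperref[cor:oscillation-of-pointed-Nash-entropy]{Corollary~\ref*{cor:oscillation-of-pointed-Nash-entropy}}, using the $W_1$ bound between $\nu_{z,s}$ and $\nu_{x_0,0}$ obtained from $z$ being a center. The logarithmic terms there are bounded because all time ratios are controlled by $\alpha$ and $T^\pm$. Then the monotonicity bound of \hyperref[prop:time-derivative-of-pointed-Nash-entropy]{Proposition~\ref*{prop:time-derivative-of-pointed-Nash-entropy}} moves from $\mathcal{N}_{x_0,0}(T^-+\alpha)$ to $\mathcal{N}_{x_0,0}(T^-)$ at the cost of a constant, since $\mathcal{N}$ is almost nonincreasing in $\tau$ up to $C\log((T^-+\alpha)/T^-)$ terms. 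In the degenerate case $T^-=0$, where $\mathcal{N}_{x_0,0}(0)=0$, I would instead use the upper bound (\ref{cor:upper-bound-on-N}).

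The neighborhood estimate (\ref{ineq:volume-of-neighborhood-of-time-slice}) follows with the same argument. Any point $y'$ with $d_t(y',S_t)<L$ satisfies $d_{W_1}^{g(t)}(\delta_{y'},\delta_y)<L$ for some $y\in S_t$. By \hyperref[cor:monotonicity-of-W1]{Corollary~\ref*{cor:monotonicity-of-W1}}, this puts $B(S_t,t,L)\times\{t\}$ inside a $P^*$-slice with $\lambda$ replaced by $\lambda+e^{AC}L$, to which (\ref{ineq:volume-of-time-slice}) applies.
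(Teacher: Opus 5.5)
Your argument is correct and is essentially the paper's proof. Every $\nu_{y,t}$ with $(y,t)\in S_t$ gives mass at least $\tfrac12$ to a fixed ball around an $(H_n,A)$-center of $(x_0,t_0)$, and the heat-kernel mass bound turns this into $|S_t|\leq C|B|$. No local inflating at that center, the oscillation corollary and the almost-monotonicity of $\mathcal{N}$ then finish, and the neighborhood estimate reduces to the first one exactly as you do.

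The differences are minor. The paper takes the intermediate time to be the base time $t_0-T^-r^2$ itself, so no $W_1$-propagation is needed and the full $\alpha r^2$ of room lies below; your corrected choice of $s$ also secures that room. The lower bound on $K(x_0,t_0;\cdot,s)$ in the backward variable that you float is not available in the paper, and, as your final argument shows, it is not needed. Finally, after rescaling the almost-monotonicity error is at most $C(n)\big(|E|_{\mathrm{max}}\alpha+|E|_{\mathrm{max}}^2(T^-+\alpha)\alpha\big)$, so no logarithm or separate $T^-=0$ case is required.
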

\begin{proof}
	By parabolic rescaling and time shifting we may assume $r=1$ and $t_0-T^{-}r^2=0$. Set $A=|E|_{\mathrm{max}}$. Then $E\geq -Ag$. Consider a point $x$ in $S_t$. Let $(z_0,0)$ (resp. $(z,0)$) be an $(H_n,A)$-center of $(x_0,t_0)$ (resp. $(x,t)$). Then
	\begin{align*}
		d_0(z_0,z)
		=d_{W_1}^{g(0)}(\delta_{z_0},\delta_z)
		&\leq
		d_{W_1}^{g(0)}(\delta_{z_0},\nu_{x_0,t_0;0})
		+d_{W_1}^{g(0)}(\nu_{x_0,t_0;0},\nu_{x,t;0})
		+d_{W_1}^{g(0)}(\nu_{x,t;0},\delta_z)\\
		&\leq
		\sqrt{H_n\sigma_A(t_0)}+\lambda+\sqrt{H_n\sigma_A(t)}
		\leq C(n,A,\lambda,T^{\pm}).
	\end{align*}
	By \hyperref[prop:basic-properties-of-center]{Proposition~\ref*{prop:basic-properties-of-center}}(3), there is a constant $D(n,A,\lambda,T^{\pm})$ such that for $B:=B(z_0,0,D)$ we have
	\begin{equation}\label{ineq:nu(B)}
		\nu_{x,t;0}(B)
		\geq
		\nu_{x,t;0}\big(B(z,0,\sqrt{2H_n\sigma_A(t)})\big)
		\geq
		\frac{1}{2}.
	\end{equation}
	Let $u\in C^\infty(M\times(0,t])$ solve the heat equation $\square u=0$ with initial condition $u(\cdot,0)=\mathds{1}_B$. Then (\ref{ineq:nu(B)}) implies $u\geq\frac{1}{2}$ on $S_t$. Now
	\begin{equation*}
		\frac{d}{ds}\int_M   u\,dV_g(s)
		=
		-\int_M   u(R-\tfrac{1}{2}\mathrm{tr}\,E)   \,dV_g(s)
		\leq
		-(R-\tfrac{1}{2}\mathrm{tr}\,E)_{\mathrm{min}}
		\int_M   u\,dV_g(s).
	\end{equation*}
	Hence
	\begin{equation}\label{ineq:volume-of-time-slice-temp}
		\frac{1}{2}|S_t|_t
		\leq
		\int_M   u\,dV_g(t)
		\leq
		\exp\big(-(R-\tfrac{1}{2}\mathrm{tr}\,E)_{\mathrm{min}}t\big)
		|B|_0.
	\end{equation}
	By \hyperref[cor:oscillation-of-pointed-Nash-entropy]{Corollary~\ref*{cor:oscillation-of-pointed-Nash-entropy}} and \hyperref[prop:time-derivative-of-pointed-Nash-entropy]{Proposition~\ref*{prop:time-derivative-of-pointed-Nash-entropy}},
	\begin{align*}
		\mathcal{N}^*_{-\alpha}(z_0,0)
		&\leq
		\mathcal{N}^*_{-\alpha}(x_0,t_0)
		+C(n,\alpha,T^{-},R_{\mathrm{min}},|E|_{\mathrm{max}})\\
		&\leq
		\mathcal{N}_{x_0,t_0}(\alpha)
		+C(n,\alpha,T^{-},R_{\mathrm{min}},|E|_{\mathrm{max}}).
	\end{align*}
	By \hyperref[thm:no-local-inflating]{Theorem~\ref*{thm:no-local-inflating}},
	\begin{equation*}
		|B|_0
		\leq
		C\exp(\mathcal{N}_{z_0,0}(\alpha))
		\leq
		C\exp(\mathcal{N}_{x_0,t_0}(\alpha))
	\end{equation*}
	for some $C(n,\alpha,\lambda,T^{\pm},R_{\mathrm{min}},|E|_{\mathrm{max}})$. Combining this with (\ref{ineq:volume-of-time-slice-temp}) gives (\ref{ineq:volume-of-time-slice}).
	
	Next we show (\ref{ineq:volume-of-neighborhood-of-time-slice}) can be reduced to (\ref{ineq:volume-of-time-slice}). For any $y\in B(S_t,t,Lr)$, there exists $(x,t)\in S_t$ with $d_t(x,y)<Lr$. By \hyperref[cor:monotonicity-of-W1]{Corollary~\ref*{cor:monotonicity-of-W1}},
	\begin{equation*}
		d_{W_1}^{g(t_0-T^{-}r^2)}
		(\nu_{x,t;t_0-T^{-}r^2},\nu_{y,t;t_0-T^{-}r^2})
		<
		\exp(\tfrac{A}{2}(T^{-}+T^{+})r^2)
		Lr.
	\end{equation*}
	Since $(x,t)\in S_t\subset P^*(x_0,t_0;\lambda r,-T^{-}r^2,T^{+}r^2)$, we also have
	\begin{equation*}
		d_{W_1}^{g(t_0-T^{-}r^2)}
		(\nu_{x,t;t_0-T^{-}r^2},\nu_{x_0,t_0;t_0-T^{-}r^2})
		<
		\lambda r.
	\end{equation*}
	Combining the above two inequalities, we get
	\begin{equation*}
		B(S_t,t,Lr)
		\subset
		P^*(x_0,t_0;(\exp(\tfrac{1}{2}(T^{-}+T^{+})Ar^2)L+\lambda)r,-T^{-}r^2,T^{+}r^2).
	\end{equation*}
\end{proof}

This volume upper bound can be used to establish the following covering result.
\begin{Theorem}
	Suppose $E(g)\geq -Ag$ for some $A\geq 0$. Let $\alpha_0>0$, $\lambda,T^\pm\geq 0$, $(x_0,t_0)\in M\times I$ and $r>0$ be a scale with $[t_0-(T^{-}+\alpha_0^2)r^2,t_0]\subset I$.
	
	Then for any subset $X\subset P^*(x_0,t_0;\lambda r,-T^{-}r^2,T^{+}r^2)$ and $\alpha\in(0,\alpha_0]$, we can find points $(y_1,s_1),\cdots,(y_N,s_N)\in X$ with the property that
	\begin{equation}\label{rel:covering}
		X\subset\bigcup_{i=1}^N   P^*_A(y_i,s_i;\alpha r),
		\qquad
		N\leq C(n,\alpha_0,\lambda,T^{\pm},R_{\mathrm{min}}r^2,|E|_{\mathrm{max}}r^2)
		\alpha^{-n-2}.
	\end{equation}
\end{Theorem}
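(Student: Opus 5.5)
The plan is the standard maximal-packing argument from Bamler's covering theorem, with the $P^*_A$ containment relations of Proposition~\ref{prop:containment-relationship} replacing the $A=0$ ones, and the volume bound of Theorem~\ref{thm:volume-of-time-slices} used to count.

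\emph{Selecting the centers.} First rescale so that $r=1$. Then choose a maximal collection $(y_1,s_1),\dots,(y_N,s_N)\in X$ whose neighborhoods $P^*_A(y_i,s_i;\tfrac{1}{3}\alpha)$ are pairwise disjoint. Suppose some $(y,s)\in X$ is not covered. By maximality, $P^*_A(y,s;\tfrac13\alpha)$ meets some $P^*_A(y_i,s_i;\tfrac13\alpha)$. Proposition~\ref{prop:containment-relationship}(6) then gives
\[
P^*_A(y,s;\tfrac13\alpha)\subset P^*_A(y_i,s_i;\alpha),
\]
so $(y,s)$ is covered after all. Hence the $P^*_A(y_i,s_i;\alpha)$ cover $X$. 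To make sure the selection terminates, I will first show that any disjoint family has cardinality bounded as below; a maximal family then exists.

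\emph{Counting by volume.} Each small neighborhood $P^*_A(y_i,s_i;\tfrac13\alpha)$ has to be shown to carry spacetime volume at least $c\,\alpha^{n+2}\exp(\mathcal{N}_{x_0,t_0}(T^-))$, while all of them lie in a single $P^*$-neighborhood of $(x_0,t_0)$ of total volume at most $C\exp(\mathcal{N}_{x_0,t_0}(T^-))$.

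For the upper bound, Proposition~\ref{prop:containment-relationship}(4) places every $P^*_A(y_i,s_i;\tfrac13\alpha)$ inside
\[
P^*(x_0,t_0;\lambda',-(T^-+\alpha_0^2),T^++\alpha_0^2),\qquad \lambda'=\lambda'(\lambda,\alpha_0,A,T^\pm).
\]
Integrating the slice bound \eqref{ineq:volume-of-time-slice} in time bounds its spacetime volume by $C\exp(\mathcal{N}_{x_0,t_0}(T^-+\alpha_0^2))$. I then convert this to $C\exp(\mathcal{N}_{x_0,t_0}(T^-))$ using the monotonicity-type bound of Proposition~\ref{prop:time-derivative-of-pointed-Nash-entropy}.

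For the lower bound, I work at times $s\in[s_i,s_i+(\alpha/3)^2]$. The time-$s$ slice of $P^*_A(y_i,s_i;\tfrac13\alpha)$ contains the ball $B(y_i,s,c\alpha)$: for $y$ in that ball, Corollary~\ref{cor:monotonicity-of-W1} gives $d_{W_1}(\nu_{y_i,s},\nu_{y,s})\le e^{A\cdot}\,c\alpha$ at the base time. It also has to be checked that $(y_i,s)$ itself lies at $W_1$-distance $O(\alpha)$ from $(y_i,s_i)$. This follows from Corollary~\ref{cor:monotonicity-of-W1} together with the fact that $d_{W_1}(\nu_{y_i,s;s_i},\delta_{y_i})\le\sqrt{\mathrm{Var}}\le\sqrt{H_n\sigma_A(s-s_i)}$. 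The volume of $B(y_i,s,c\alpha)$ is bounded below by Theorem~\ref{thm:no-local-collapsing-around-center} in terms of $\exp(\mathcal{N}_{\cdot}(\alpha^2))\alpha^n$, applied around an $(H_n,A)$-center, with the time shift arranged so that the center lands at $(y_i,s_i)$'s future slice. Finally I compare $\mathcal{N}_{y,s}(\alpha^2)$ with $\mathcal{N}_{x_0,t_0}(T^-)$ via Corollary~\ref{cor:oscillation-of-pointed-Nash-entropy} and Proposition~\ref{prop:time-derivative-of-pointed-Nash-entropy}; the entropy is essentially monotone in scale up to a bounded error from $|E|_{\max}$.

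Disjointness now yields $N\cdot c\,\alpha^{n+2}\le C$, which is the claimed bound $N\le C\alpha^{-n-2}$.

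\emph{Main obstacle.} The hard step is the lower bound on the volume of the small neighborhoods. I must locate a genuine metric ball inside a $P^*$-neighborhood, which needs the center/variance control to hold uniformly in the constants $A$, $R_{\min}$ and $|E|_{\max}$. I must also control the entropy drop from scale $T^-$ to scale $\alpha^2$ at the base points. I would handle the latter by working at the base scale $\alpha_0^2$, which is allowed since $[t_0-(T^-+\alpha_0^2)r^2,t_0]\subset I$, and pushing all $\alpha$-dependence into the $\alpha^{n+2}$ factor. If the ball-inclusion step proves awkward, the fallback is to bound the slice volumes of the small neighborhoods below by heat-kernel mass instead, as in the proof of Theorem~\ref{thm:volume-of-time-slices}, but in reverse, using Theorem~\ref{thm:partial-lower-bound}.
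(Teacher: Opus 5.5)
Your selection step (a maximal family with pairwise disjoint $P^*_A(y_i,s_i;\tfrac13\alpha)$, then Proposition~\ref{prop:containment-relationship}(6)) and your upper bound via Theorem~\ref{thm:volume-of-time-slices} are fine. Integrating slice volumes in time is just a continuous version of the paper's pigeonhole over times. The gap is the lower bound on the volume of the small neighborhoods. You try to get it on \emph{forward} slices $s\in[s_i,s_i+\alpha^2/9]$ from balls $B(y_i,s,c\alpha)$ centered at $y_i$, and this fails in three ways.

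(i) The estimate $d_{W_1}^{g(s_i)}(\nu_{y_i,s;s_i},\delta_{y_i})\leq\sqrt{H_n\sigma_A(s-s_i)}$ is false in general. The variance estimates only give $\mathrm{Var}_{s_i}(\nu_{y_i,s;s_i})\leq H_n\sigma_A(s-s_i)$, and they give $\mathrm{Var}_{s_i}(\delta_z,\nu_{y_i,s;s_i})\leq H_n\sigma_A(s-s_i)$ only when $(z,s_i)$ is an $(H_n,A)$-center of $(y_i,s)$. Nothing places $y_i$ near such a center: without curvature control the conjugate heat kernel can drift away from its basepoint, which is exactly why Corollary~\ref{cor:concentration-under-Ricci-bound} needs a Ricci bound. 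So neither $(y_i,s)$ nor $B(y_i,s,c\alpha)\times\{s\}$ need lie in $P^*_A(y_i,s_i;\tfrac13\alpha)$.

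(ii) Even granting the inclusion, you have no lower bound on $|B(y_i,s,c\alpha)|_{g(s)}$. Theorem~\ref{thm:no-local-collapsing} requires $R\leq r^{-2}$, which is not assumed. Theorem~\ref{thm:no-local-collapsing-around-center} only controls balls around a center of a \emph{later} point, and $y_i$ at time $s$ is not known to be one.

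(iii) The interval $[s_i,s_i+\alpha^2/9]$ need not lie in $I$ at all, e.g.\ when $X$ sits in the final time slice. Then the forward parts of the neighborhoods carry no spacetime volume.

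The missing idea is to work \emph{backward} in time, where the hypothesis $[t_0-(T^-+\alpha_0^2)r^2,t_0]\subset I$ provides room, and to use balls around centers rather than around $y_i$. In the paper's normalization ($r=1$, disjoint $P^*_A(y_i,s_i;\alpha)$, covering by radius $3\alpha$) it goes as follows.
\begin{itemize}
\item For $t^*\in[s_i-2\beta\alpha^2,s_i-\beta\alpha^2]$, let $(z_i,t^*)$ be an $(H_n,A)$-center of $(y_i,s_i)$.
\item Take two indices whose windows both contain $t^*$. Disjointness forces $d_{W_1}^{g(s_i-\alpha^2)}(\nu_{y_i,s_i},\nu_{y_j,s_j})\geq\alpha e^{A\alpha^2}$. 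Corollary~\ref{cor:monotonicity-of-W1} and the center bound then give $d_{t^*}(z_i,z_j)\geq\alpha-2\sqrt{H_n\sigma_A(2\beta\alpha^2)}\geq\tfrac12\alpha$ for small $\beta$, so the balls $B(z_i,t^*,\tfrac14\alpha)$ are pairwise disjoint.
\item Apply Theorem~\ref{thm:no-local-collapsing-around-center} at scale $s_i-t^*\in[\beta\alpha^2,2\beta\alpha^2]$, where $\sqrt{2H_n\sigma_A(s_i-t^*)}\leq\tfrac14\alpha$. Together with almost-monotonicity of $\tau\mapsto\mathcal{N}_{y_i,s_i}(\tau)$, this bounds each ball below by $c\exp(\mathcal{N}_{y_i,s_i}(\alpha^2))\alpha^n$.
\item The paper then picks one $t^*$ where $k$ windows overlap, with $\beta\alpha^2N\leq k(T^-+T^++\beta\alpha^2)$.
\end{itemize}
Your Fubini version also works once the slice bound is obtained this way. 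The same estimates show $B(z_i(t),t,c\alpha)\times\{t\}\subset P^*_A(y_i,s_i;\tfrac13\alpha)$ for $t\in[s_i-2\beta\alpha^2,s_i-\beta\alpha^2]$ and small $c,\beta$.

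Two bookkeeping fixes remain.
\begin{itemize}
\item Give the enlarged neighborhood a backward extent of at most about $T^-+\alpha_0^2/9$, not $T^-+\alpha_0^2$. Otherwise Theorem~\ref{thm:volume-of-time-slices} has no room below its base time for its auxiliary parameter.
\item Keep both entropy bounds at a common scale of order $T^-+\alpha_0^2$; do not weaken the upper bound to $\exp(\mathcal{N}_{x_0,t_0}(T^-))$. Corollary~\ref{cor:oscillation-of-pointed-Nash-entropy} only yields the lower bound at a scale strictly larger than $T^-$. Passing down to scale $T^-$ costs $\tfrac n2\log$ of the scale ratio, which is infinite for $T^-=0$, where $\mathcal{N}_{x_0,t_0}(0)=0$.
\end{itemize}
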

\begin{proof}
	We may assume $\alpha\leq\frac{1}{3}\alpha_0$ and it suffices to prove (\ref{rel:covering}) with $\alpha$ replaced by $3\alpha$. After parabolic rescaling, we may also assume $r=1$. Choose a maximal collection of points
	\begin{equation*}
		(y_1,s_1),\cdots,(y_N,s_N)\in X
	\end{equation*}
	with
	\begin{equation*}
		P^*_A(y_i,s_i;\alpha)
		\cap
		P^*_A(y_j,s_j;\alpha)
		=\varnothing
		\quad\text{for all }i\neq j.
	\end{equation*}
	Then for any $(y',s')\in X$, we have $P^*_A(y',s';\alpha)\cap P^*_A(y_i,s_i;\alpha)\neq\varnothing$ for some $i$ by the maximality. By \hyperref[prop:containment-relationship]{Proposition~\ref*{prop:containment-relationship}}(6), $P^*_A(y',s';\alpha)\subset P^*_A(y_i,s_i;3\alpha)$. Therefore
	\begin{equation*}
		X\subset\bigcup_{i=1}^N   P^*_A(y_i,s_i;3\alpha).
	\end{equation*}

	It remains to derive an upper bound on $N$. Let $\beta\in(0,\frac{1}{2}]$ be a constant to be determined later. Consider the function over $[t_0-T^{-}-2\beta\alpha^2,t_0+T^{+}-\beta\alpha^2]$ defined by
	\begin{equation*}
		t\mapsto
		\#\{i:t\in[s_i-2\beta\alpha^2,s_i-\beta\alpha^2]\}.
	\end{equation*}
	Let $t^*$ be a time at which the above function takes its maximum $k$. Then
	\begin{equation}\label{ineq:temp-estimate-on-number}
		\beta\alpha^2   \cdot   N
		\leq
		k   \cdot   (T^{-}+T^{+}+\beta\alpha^2).
	\end{equation}
	By relabeling the points, we may assume $t^*\in [s_i-2\beta\alpha^2,s_i-\beta\alpha^2]$ for $1\leq i\leq k$. In what follows, we only consider these first $k$ points. Let $(z_i,t^*)$ be an $(H_n,A)$-center of $(y_i,s_i)$. Then for all $i\neq j$, we have
	\begin{align*}
		d_{t^*}(z_i,z_j)
		&\geq
		d_{W_1}^{g(t^*)}(\nu_{y_i,s_i;t^*},\nu_{y_j,s_j;t^*})
		-d_{W_1}^{g(t^*)}(\delta_{z_i},\nu_{y_i,s_i;t^*})
		-d_{W_1}^{g(t^*)}(\delta_{z_j},\nu_{y_j,s_j;t^*})\\
		&\geq
		d_{W_1}^{g(s_i-\alpha^2)}(\nu_{y_i,s_i;s_i-\alpha^2},\nu_{y_j,s_j;s_i-\alpha^2})
		-2\sqrt{H_n\sigma_A(2\beta\alpha^2)}\\
		&\geq
		\alpha e^{A\alpha^2}
		-2\sqrt{H_n\sigma_A(2\beta\alpha^2)}
		\geq \tfrac{1}{2}\alpha,
	\end{align*}
	provided $\beta\leq\overline{\beta}(n,A,\alpha_0)$. So the balls $B_i:=B(z_i,t^*,\frac{1}{4}\alpha)$ are pairwise disjoint. By \hyperref[thm:no-local-collapsing-around-center]{Theorem~\ref*{thm:no-local-collapsing-around-center}}, \hyperref[prop:time-derivative-of-pointed-Nash-entropy]{Proposition~\ref*{prop:time-derivative-of-pointed-Nash-entropy}} and \hyperref[cor:oscillation-of-pointed-Nash-entropy]{Corollary~\ref*{cor:oscillation-of-pointed-Nash-entropy}},
	\begin{align}
		|B_i|_{t^*}
		&\geq
		c(n,\alpha_0,R_{\mathrm{min}},|E|_{\mathrm{max}})
		\exp(\mathcal{N}_{y_i,s_i}(\alpha^2))\alpha^n  \notag  \\
		&\geq
		c(n,\alpha_0,R_{\mathrm{min}},|E|_{\mathrm{max}},\lambda,T^{\pm})
		\exp(\mathcal{N}_{x_0,t_0}((\tfrac{\alpha_0}{3})^2))\alpha^n\label{ineq:lower-volume-bound-on-balls}
	\end{align}
	for sufficiently small $\beta$. On the other hand, \hyperref[thm:volume-of-time-slices]{Theorem~\ref*{thm:volume-of-time-slices}} implies that for
	\begin{equation*}
		S_{t^*}
		:=
		P^*(x_0,t_0;\lambda,-T^{-},T^{+})\cap(M\times\{t^*\})
	\end{equation*}
	we have
	\begin{equation}\label{ineq:temp-volume-upper-bound-neighborhood}
		|B(S_{t^*},t^*,\tfrac{1}{4}\alpha_0)|_{t^*}
		\leq
		C(n,\alpha_0,\lambda,T^{\pm},R_{\mathrm{min}},|E|_{\mathrm{max}})
		\exp(\mathcal{N}_{x_0,t_0}((\tfrac{\alpha_0}{3})^2)).
	\end{equation}
	Combining (\ref{ineq:temp-estimate-on-number}), (\ref{ineq:lower-volume-bound-on-balls}) and (\ref{ineq:temp-volume-upper-bound-neighborhood}), we obtain
	\begin{equation*}
		N
		\leq
		\frac{T^{-}+T^{+}+\beta\alpha^2}{\beta}\cdot \alpha^{-2}k
		\leq
		C(n,\alpha_0,\lambda,T^{\pm},R_{\mathrm{min}},|E|_{\mathrm{max}})
		\alpha^{-n-2}.
	\end{equation*}
\end{proof}

\newpage
\bibliography{pseudolocality}
\end{document}